\documentclass[12pt]{amsart}
\usepackage{amssymb}
\usepackage{amsmath}
\usepackage{amsthm}
\usepackage{times}
\usepackage{graphicx}
\usepackage{changepage}
\usepackage{caption}
\usepackage{geometry}
\usepackage[curve,all]{xy}
 \xyoption{curve}
 \xyoption{line}
\xyoption{arc}
\xyoption{color}
\usepackage{array}
\usepackage{enumitem}

\usepackage{color}
\usepackage{amsthm}
\newtheorem{theorem}{Theorem}[section]
\newtheorem{lemma}[theorem]{Lemma}
\newtheorem{corollary}[theorem]{Corollary}

\newtheorem{prop}[theorem]{Proposition}

\usepackage{subfig}
 
\theoremstyle{definition}
\newtheorem{definition}[theorem]{Definition}

\theoremstyle{remark}
\newtheorem{remark}[theorem]{Remark}

\numberwithin{equation}{section}

\newcommand{\calO}{\mathcal{O}}

\def\Aut{{\text{Aut}}}

\def\Pic{{\text{Pic}}}
\def\Ker{{\text{Ker}}}
\def\Im{{\text{Im}}}
\def\O{{\text{O}}}

\def\Num{{\rm{Num}}}
\def\NS{{\rm{NS}}}

\def\deg{{\text{deg}}}

\def\Num{{\text{Num}}}

\def\I{{\text{I}}}
\def\II{{\text{II}}}
\def\III{{\text{III}}}
\def\IV{{\text{IV}}}

\def\VI{{\text{VI}}}
\def\VII{{\text{VII}}}
\def\VIII{{\text{VIII}}}
\def\Pic{{\text{Pic}}}
\begin{document}
\title [Enriques surfaces] {Classification of Enriques surfaces with finite automorphism group in characteristic 2}

\author{Toshiyuki Katsura}
\address{Graduate School of Mathematical Sciences, The University of Tokyo,
Meguro-ku, Tokyo 153-8914, Japan}
\email{tkatsura@ms.u-tokyo.ac.jp}

\author{Shigeyuki Kond\=o}
\address{Graduate School of Mathematics, Nagoya University, Nagoya,
464-8602, Japan}
\email{kondo@math.nagoya-u.ac.jp}

\author{Gebhard Martin}
\address{Mathematisches Institut,
Universit\"at Bonn,
Endenicher Allee 60,
53115 Bonn,
Germany}
\email{gmartin@math.uni-bonn.de}

\subjclass[2010]{14J28 (primary), 14J50, 14G17 (secondary).}
\keywords{Enriques surfaces, Automorphism groups, $K3$ surfaces, Characteristic 2.}

\thanks{Research of the first author is partially supported by
Grant-in-Aid for Scientific Research (B) No.15H03614, the second author by (S) No.15H05738 and
by the Simons Visiting Professorship, and the third author by the DFG Sachbeihilfe LI 1906/3 - 1 "Automorphismen von Enriques Fl\"achen".}

\begin{abstract}
We classify supersingular and classical Enriques surfaces with finite automorphism group in characteristic 2 into 8 types according to their dual graphs of all $(-2)$-curves (nonsigular rational curves).  We give 
examples of these Enriques surfaces together with their canonical coverings.  It follows that the classification of all Enriques surfaces with finite automorphism group in any characteristics has been finished. 
\end{abstract}

\maketitle

{\bf Contents}
\begin{itemize}
\item[\S \ref{sec1}]
Introduction \hfill \pageref{sec1}

\item[\S \ref{sec2}] 
Preliminaries \hfill \pageref{sec2}

\item[\S \ref{conductrix}]
Conductrix \hfill \pageref{conductrix}

\item[\S \ref{possibledualgraph}]  
Possible dual graphs  \hfill \pageref{possibledualgraph}

\item[\S \ref{VectFields}]
Construction of vector fields \hfill \pageref{VectFields}

\item[\S \ref{Equation}]
Equations of Enriques surfaces and their automorphisms \hfill \pageref{Equation}

\item[\S \ref{sec3}] 
Enriques surfaces of type $\tilde{E_6}+\tilde{A_2}$ \hfill \pageref{sec3}

\item[\S \ref{secVII}] 
Enriques surfaces of type ${\rm VII}$ \hfill \pageref{secVII}

\item[\S \ref{sec8}] 
Enriques surfaces of type ${\rm VIII}$ \hfill \pageref{sec8}

\item[\S \ref{sec4}] 
Enriques surfaces of type $\tilde{E_8}$ \hfill \pageref{sec4}

\item[\S \ref{sec6}] 
Enriques surfaces of type $\tilde{E_7}+\tilde{A_1}^{(1)}$, $\tilde{E_7}+\tilde{A_1}^{(2)}$ \hfill \pageref{sec6}

\item[\S \ref{sec7}] 
Enriques surfaces of type $\tilde{D_8}$ \hfill \pageref{sec7}

\item[\S \ref{sec5}] 
Enriques surfaces of type $\tilde{D_4}+\tilde{D_4}$ \hfill \pageref{sec5}

\item[\S \ref{appendix}] 
Appendix \hfill \pageref{appendix}

\end{itemize}

\section{Introduction}\label{sec1}

We work over an algebraically closed field $k$ of characteristic 2.
The main purpose of this paper 
is to give a classification of supersingular and classical Enriques surfaces with finite automorphism group in characteristic 2. More precisely, we classify the possible dual graphs of $(-2)$-curves that can occur on these surfaces and give examples realizing each of these graphs.
Recall that, over the complex numbers, 
a generic Enriques surface has an infinite group of automorphisms (Barth and Peters \cite{BP}).  On the other hand, Fano \cite{F} and Dolgachev \cite{D1} found two examples of Enriques surfaces with finite automorphism group. Then
Nikulin \cite{N} proposed a classification of Enriques surfaces with finite automorphism group in terms of the periods, and the second author \cite{Ko} classified and gave constructions of all such Enriques surfaces, geometrically.  There are seven types ${\I, \II,\ldots, \VII}$ of such Enriques surfaces, distinguished by their dual graphs of $(-2)$-curves.  
The Enriques surfaces of type ${\I}$ and 
${\II}$ form irreducible 1-dimensional families, and each of the remaining types
consists of a unique Enriques surface.  
The first two types contain exactly twelve $(-2)$-curves (i.e. non-singular rational curves), while the remaining five types contain exactly twenty $(-2)$-curves.
The Enriques surfaces of type ${\I}$ (resp. of type ${\VII}$) are the examples given by Dolgachev (resp. by Fano).   
We call the dual graphs of all $(-2)$-curves on the Enriques surface of type $K$ the dual graph of type $K$ ($K = {\I, \II,..., \VII}$).  
We remark that if an Enriques surface has the dual graph of this type, then its automorphism group is finite.
 
In positive characteristic, the question of a classification of Enriques surfaces with finite automorphism group has been raised.  In particular, the case of characteristic 2 is most interesting.  In the paper \cite{BM2}, Bombieri and Mumford classified
Enriques surfaces in characteristic 2 into three classes, namely singular, classical and 
supersingular Enriques surfaces.  
As in the case of characteristic $0$ or $p>2$, an Enriques surface
$X$ in characteristic 2 has a canonical double cover
$\pi : \tilde{X} \to X$, which is a separable ${\bf Z}/2{\bf Z}$-cover,
a purely inseparable $\mu_2$- or $\alpha_2$-cover according to $X$ being singular, classical or supersingular. The surface $\tilde{X}$ might have singularities and it might even be non-normal, but it is $K3$-like in the sense that its dualizing sheaf is trivial.  
Recently, Liedtke \cite{L} showed that 
the moduli space of
Enriques surfaces with a polarization of degree $4$ has two $10$-dimensional irreducible components.  A general point of one component (resp. the other component) corresponds to a singular (resp. classical) Enriques surface.  The intersection of the two components parametrizes supersingular Enriques surfaces.
On the other hand, Ekedahl and Shepherd-Barron \cite{ES} studied certain special Enriques surfaces 
called "exceptional Enriques surfaces", whose deformation functors are badly behaved, and Salomonsson \cite{Sa} gave equations of such Enriques surfaces.  
We remark that some of them have a finite group of automorphisms.

Very recently, the first and the second author \cite{KK2} determined 
the existence or non-existence of Enriques surfaces in characteristic 2 whose dual graphs of all 
$(-2)$-curves are of type {\rm I}, {\rm II}, \ldots , or {\rm VII} as in the following Theorem:

\begin{theorem}{\rm (Katsura, Kondo \cite{KK2})}
The existence or non-existence of Enriques surfaces in characteristic $2$ whose dual graphs of all non-singular rational curves are of type {\rm I}, {\rm II}, \ldots , or {\rm VII} is as in the following Table{\rm \ref{Table1}:}

\begin{table}[!htb]
{\offinterlineskip
\halign{\strut\vrule#&\quad\hfil\rm#\hfil\quad&&
\vrule#&\quad#\hfil\quad\cr\noalign{\hrule}
& {\rm Type}&&${\rm I}$ && ${\rm II}$ && ${\rm III}$&&${\rm IV}$&&${\rm V}$ && ${\rm VI}$ &&${\rm VII}$&\cr
\noalign{\hrule}
& {\rm singular} && {$\bigcirc$} && {$\bigcirc$} && {$\times$} && {$\times$} && {$\times$} && {$\bigcirc$} && {$\times$} &\cr
\noalign{\hrule}
& {\rm classical} && {$\times$} && {$\times$} && {$\times$} && {$\times$} && {$\times$} && {$\times$} && {$\bigcirc$} &\cr
\noalign{\hrule}
& {\rm supersingular} && {$\times$} && {$\times$} && {$\times$} && {$\times$} && {$\times$} && {$\times$} && {$\bigcirc$} &\cr
\noalign{\hrule}
}}
\
\caption{}
\label{Table1}
\end{table}
\noindent
In Table {\rm \ref{Table1}} and in the following Table {\rm \ref{Table2}}, $\bigcirc$ means the existence and $\times$ means the non-existence
of an Enriques surface with the dual graph of type ${\rm I},..., {\rm VII}$.  
\end{theorem}

On the other hand, the third author \cite{Ma} gave a classification of Enriques surfaces with finite automorphism group in characteristic $p > 2$ and in the case of singular Enriques surfaces by using the method given by the second author over the complex numbers.  

\begin{theorem}{\rm (Martin \cite{Ma})}
The following Table {\rm \ref{Table2}} gives the classification of Enriques surfaces with finite automorphism group whose
canonical coverings are smooth. The moduli spaces for type {\rm I} and {\rm II} are $1$-dimensional and irreducible.  For each of the other types, there is a unique such surface.
\begin{table}[!htb]
{\offinterlineskip
\halign{\strut\vrule#&\quad\hfil\rm#\hfil\quad&&
\vrule#&\quad#\hfil\quad\cr\noalign{\hrule}
& {\rm Type}&&${\rm I}$ && ${\rm II}$ && ${\rm III}$&&${\rm IV}$&&${\rm V}$ && ${\rm VI}$ &&${\rm VII}$&\cr
\noalign{\hrule}
& {\rm $p=0$ or $p> 5$} && {$\bigcirc$} && {$\bigcirc$} && {$\bigcirc$} && {$\bigcirc$} && {$\bigcirc$} && {$\bigcirc$} && {$\bigcirc$} &\cr
\noalign{\hrule}
& {\rm $p=5$} && {$\bigcirc$} && {$\bigcirc$} && {$\bigcirc$} && {$\bigcirc$} && {$\bigcirc$} && {$\times$} && {$\times$} &\cr
\noalign{\hrule}
& {\rm $p=3$} && {$\bigcirc$} && {$\bigcirc$} && {$\bigcirc$} && {$\bigcirc$} && {$\times$} && {$\times$} && {$\bigcirc$} &\cr
\noalign{\hrule}
& {\rm $p=2$, singular} && {$\bigcirc$} && {$\bigcirc$} && {$\times$} && {$\times$} && {$\times$} && {$\bigcirc$} && {$\times$} &\cr
\noalign{\hrule}
}}
\
\caption{}
\label{Table2}
\end{table}

\end{theorem}

Thus, the classification problem remains only for the classical and supersingular cases.
Now, we state the main results of this paper.  
In the following Theorems \ref{mainSupersingular} \rm{(B)} and \ref{mainClassical} \rm{(B)}, we give examples of Enriques surfaces with finite automorphisms.  
Some of them are constructed as families of such Enriques surfaces.  In the Tables \ref{mainSupersingularTable} and \ref{mainClassicalTable} "dim" denotes the
dimensions of these families of examples.
The families of type
$\tilde{E}_7+\tilde{A}_1^{(1)}$ supersingular, of type $\tilde{E}_6+\tilde{A}_2$ classical, of type $\VII$ classical, and of type $\VIII$ are non-isotrivial.
The family of type $\tilde{E}_7+\tilde{A}_1^{(1)}$ classical surfaces and the family of type $\tilde{D}_4 + \tilde{D}_4$ contain an at least 1-dimensional non-isotrivial family.
The authors do not know the existence of other examples, that is, the problem of determining the moduli space of such Enriques surfaces is still open.
We denote by $\Aut(X)$, $\Aut_{ct}(X)$ or $\Aut_{nt}(X)$ the automorphism group of $X$, 
the cohomologically trivial automorphism group or the numerically trivial automorphism group
(see Definition \ref{cohnumtrivial}), respectively.
Let $\mathfrak{S}_n$ be the symmetric group of degree $n$ 
and $Q_8$ the quaternion group of order $8$.  
%

\begin{theorem}\label{mainSupersingular}
Let $X$ be a supersingular Enriques surface in characteristic $2$.
\begin{enumerate}[label=${\rm(\Alph*)}$]
\item $X$ has a finite group of automorphisms if and only if the dual graph of all $(-2)$-curves on $X$ is one of the graphs in Table {\rm\ref{mainSupersingularTable} (A)}.
\item All cases exist. More precisely, we construct families of these surfaces whose automorphism groups and dimensions are given in Table {\rm\ref{mainSupersingularTable} (B)}.
\end{enumerate}
\begin{table}[!htb]
\centering
\subfloat[Classification]{
\begin{tabular}{|>{\centering\arraybackslash}m{1.7cm}|>{\centering\arraybackslash}m{6cm}|}
\hline

\text{\rm Type} & \text{\rm Dual Graph of $(-2)$-curves} \\

\hline
$\tilde{E_8}$ &
\resizebox{!}{1cm}{
\xy
(-10,25)*{};
@={(-10,10),(0,10),(10,10),(20,10),(30,10),(40,10),(50,10),(60,10),(70,10),(10,20)}@@{*{\bullet}};
(-10,10)*{};(70,10)*{}**\dir{-};
(10,10)*{};(10,20)*{}**\dir{-};
\endxy
}
\\ \hline
$\tilde{E_7}+\tilde{A_1}^{(1)}$ &
\vspace{4mm}
\resizebox{!}{1cm}{
\xy
(0,25)*{};
@={(80,10),(90,10),(0,10),(10,10),(20,10),(30,10),(40,10),(50,10),(60,10),(70,10),(30,20)}@@{*{\bullet}};
(0,10)*{};(80,10)*{}**\dir{-};
(90,10)*{};(80,10)*{}**\dir{=};
(30,10)*{};(30,20)*{}**\dir{-};
(70,10)*{};(90,10)*{}**\crv{(80,20)};
\endxy
} 
\vspace{1mm}
\\ \hline
$\tilde{E}_6+\tilde{A_2}$  &
\vspace{1mm}
\includegraphics[width=32mm]{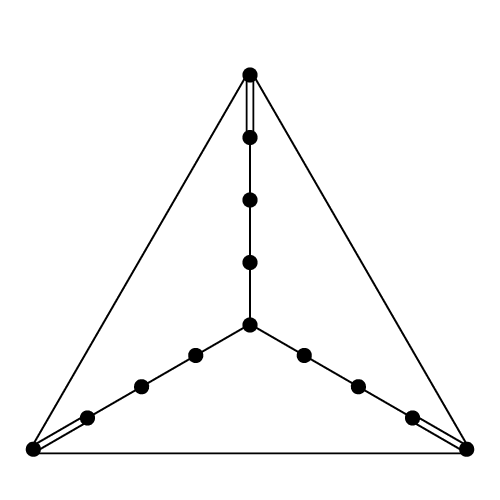}
\vspace{2.4mm}
\\ \hline
$\tilde{D_8}$  &
\resizebox{!}{1cm}{
\xy
(-10,25)*{};
@={(-10,10),(0,10),(10,10),(20,10),(30,10),(40,10),(50,10),(10,20),(60,10),(50,20)}@@{*{\bullet}};
(-10,10)*{};(60,10)*{}**\dir{-};
(10,10)*{};(10,20)*{}**\dir{-};
(50,10)*{};(50,20)*{}**\dir{-};
\endxy
} 
\\ \hline
{\rm VII} & 
\vspace{1mm}
\includegraphics[width=40mm]{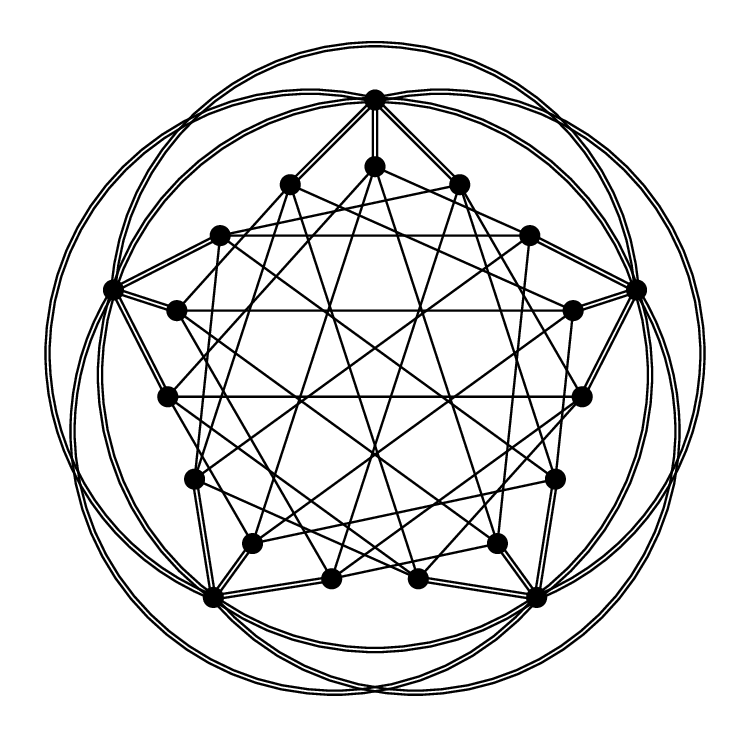}
\\
\hline
\end{tabular}}
\hspace{3mm}
\subfloat[Examples]{
\begin{tabular}{|>{\centering\arraybackslash}m{2cm}|>{\centering\arraybackslash}m{2cm}|>{\centering\arraybackslash}m{0.9cm}|@{}m{0pt}@{}}
\hline

\text{\rm $\Aut(X)$} & \text{\rm $\Aut_{ct}(X)$}& \text{\rm dim}\\

\hline
${\bf Z}/11{\bf Z}$
&
${\bf Z}/11{\bf Z}$
&
${\rm 0}$
&  \resizebox{!}{1cm}{ \phantom{a}}
\\ \hline
${\bf Z}/2{\bf Z}$ or ${\bf Z}/14{\bf Z}$
 &
$\{1\}$ or ${\bf Z}/7{\bf Z}$
&
${\rm 1}$ or\ \ ${\rm 0}$
& 
\resizebox{!}{1.5cm}{ \phantom{a}}
\\  \hline
${\bf Z}/5{\bf Z} \times \mathfrak{S}_3$
&
${\bf Z}/5{\bf Z}$
& 
${\rm 0}$
&  \vspace{2.8cm} \phantom{}
\\ \hline
$Q_8$
&
$Q_8$
& 
${\rm 1}$
&  \resizebox{!}{1.01cm}{ \phantom{a}}
\\  \hline
$\mathfrak{S}_5$
&
$\{1\}$
& 
${\rm 0}$
&  \vspace{3.6cm} \phantom{}
\\
\hline
\end{tabular}}
\vspace{-2mm}
\caption{}
\label{mainSupersingularTable}
\end{table}
\vspace{-5mm}
\end{theorem}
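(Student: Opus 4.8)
The plan is to establish both implications of part (A) together with the existence statement (B) by combining a lattice-theoretic finiteness criterion for $\Aut(X)$ with an explicit, fibration-by-fibration construction of the five families, each organized around a distinguished genus-one fibration whose reducible fiber realizes the affine diagram naming the type. Throughout I would work with $\Num(X)$, which is the Enriques lattice $U \oplus E_8$ of rank $10$ and signature $(1,9)$, so that $\Aut(X)$ acts on the positive cone in hyperbolic $9$-space preserving the nef cone and permuting the classes of $(-2)$-curves.

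First, for the implication that finiteness of $\Aut(X)$ forces the dual graph into the table, I would reduce to a combinatorial problem. The nef cone is a fundamental domain for the nodal Weyl group $W_{\nod}$ generated by reflections in the $(-2)$-curves, and there is the exact sequence
\[
1 \longrightarrow \Aut_{ct}(X) \longrightarrow \Aut(X) \longrightarrow G \longrightarrow 1
\]
with $G \subset \O(\Num(X))$ the image. Finiteness of $\Aut(X)$ is then equivalent to finiteness of $\Aut_{ct}(X)$ together with the condition that $W_{\nod}$ has finite index in the full Weyl group $W$ of $\Num(X)$; by Vinberg's criterion the latter holds precisely when the Coxeter diagram of the $(-2)$-curves spans a finite-volume polytope, which leaves only finitely many candidate dual graphs. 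I would then eliminate every candidate except the five listed ones using characteristic-$2$ input: each candidate carries several genus-one fibrations, and the admissible configurations of singular fibers (which in characteristic $2$ include additive and quasi-elliptic fibers and behave differently at the two multiple fibers), together with the supersingular hypothesis forcing the canonical cover $\pi\colon \tilde X \to X$ to be an inseparable $\alpha_2$-cover, rule out the remaining graphs. This elimination is the content I would draw from the "Possible dual graphs" section.

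Second, for the converse implication and for (B), I would realize each type explicitly. For each of $\tilde E_8$, $\tilde E_7+\tilde A_1$, $\tilde E_6+\tilde A_2$, $\tilde D_8$ and $\VII$, the plan is to write down a genus-one (elliptic or quasi-elliptic) fibration, or directly an equation for $X$ together with its canonical cover, verify that the surface obtained is a supersingular Enriques surface, and check that its $(-2)$-curves assemble into the asserted graph. Finiteness of $\Aut(X)$ then follows from Vinberg's criterion applied to the now-explicit finite graph, which forces $G$ to be finite; combined with finiteness of $\Aut_{ct}(X)$ this yields the "if" direction. To read off Table (B) I would determine $G$ as the group of graph symmetries preserving the nef chamber, and compute $\Aut_{ct}(X)$ from the explicit equations, using the vector-field computations of the earlier section to pin down the numerically trivial and infinitesimal symmetries and to confirm that no positive-dimensional family of automorphisms occurs; the moduli dimensions would come from counting parameters in the defining equations modulo isomorphism.

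The main obstacle will be the characteristic-$2$ analysis that has no analogue over $\bbC$: controlling $\Aut_{ct}(X)$ and proving it finite. Over the complex numbers this subgroup is essentially trivial, but in characteristic $2$ the inseparable structure of $\pi$ and the possible presence of global vector fields make $\Aut_{ct}(X)$ both larger and harder to compute, and a single nonzero equivariant vector field would already destroy finiteness of $\Aut(X)$. I therefore expect the delicate points to be the precise determination of $\Aut_{ct}(X)$ for each supersingular family, together with the verification via the vector-field construction that each family carries exactly the claimed symmetries and no infinitesimal ones beyond them; the elimination of borderline candidate graphs, where quasi-elliptic fibrations and non-normal canonical covers diverge sharply from the separable case, is the other place where the argument is most likely to become technical.
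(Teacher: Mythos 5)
Your outline of the ``if'' direction and of part (B) matches the paper: Vinberg's criterion (Proposition \ref{Vinberg}) together with the finiteness of the kernel of $\rho_n$ yields finiteness of $\Aut(X)$ once the dual graph is known, and the five families are indeed produced by explicit quotient constructions (rational $2$-closed vector fields on Frobenius base changes of rational genus one surfaces), from which $\Aut(X)$ and $\Aut_{ct}(X)$ are computed equation by equation and the moduli dimensions read off from the parameters. One small correction: finiteness of $\Aut_{ct}(X)$ is not a delicate point to be established separately, since the kernel of $\rho_n$ is known to be finite for any Enriques surface; the paper simply quotes this.

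The genuine gap is in your ``only if'' direction. You assert that finiteness of $\Aut(X)$ is \emph{equivalent} to $W_{\nod}$ having finite index in the orthogonal group, and you propose to extract a finite list of candidate dual graphs from this equivalence before eliminating all but five. But the implication ``$\Aut(X)$ finite $\Rightarrow W_{\nod}$ of finite index'' is precisely the step that rests on the Torelli theorem over $\bbC$, and no such theorem is available in characteristic $2$; the paper states explicitly that only the converse direction (Proposition \ref{finiteness}) is known there. Moreover, Vinberg's criterion is a test applied to a given graph and does not by itself generate a finite candidate list. The paper's substitute is geometric: if $\Aut(X)$ is finite, then every genus one fibration on $X$ is extremal, because the Mordell--Weil group of its Jacobian acts effectively on $X$ (Proposition \ref{MW-Dolgachev}); combining this with the existence of a special genus one fibration (Proposition \ref{Cossec}), the Ekedahl--Shepherd-Barron classification of conductrices, and the determination of the isolated singularities of the canonical cover (Lemmas \ref{ellipticCondSing} and \ref{QellipticCondSing}), one enumerates the possible special extremal fibrations for each pair (conductrix, singularity type) and propagates the constraints through the configuration of $(-2)$-curves (Theorem \ref{mainGraph}), with Proposition \ref{Namikawa} guaranteeing that the curves found are all of them. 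Without the Mordell--Weil rank zero condition your elimination has no starting list to eliminate from, so this step of your argument would not go through as written.
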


\begin{theorem}\label{mainClassical}
Let $X$ be a classical Enriques surface in characteristic $2$.
\begin{enumerate}[label=${\rm(\Alph*)}$]
\item $X$ has a finite group of automorphisms if and only if the dual graph of all $(-2)$-curves on $X$ is one of the graphs in Table {\rm\ref{mainClassicalTable} (A)}.
\item All cases exist. More precisely, we construct families of these surfaces whose automorphism groups and dimensions are given in Table {\rm\ref{mainClassicalTable} (B)}.
\end{enumerate}
\vspace{-1mm}
\begin{table}[!htb]
\centering
\subfloat[Classification]{
\begin{tabular}{|>{\centering\arraybackslash}m{1.7cm}|>{\centering\arraybackslash}m{7cm}|}
\hline
\rm Type & \rm Dual Graph of $(-2)$-curves \\
\hline
$\tilde{E_8}$ & 
\resizebox{!}{1cm}{
\xy
(-10,25)*{};
@={(-10,10),(0,10),(10,10),(20,10),(30,10),(40,10),(50,10),(60,10),(70,10),(10,20)}@@{*{\bullet}};
(-10,10)*{};(70,10)*{}**\dir{-};
(10,10)*{};(10,20)*{}**\dir{-};
\endxy
}
\\ \hline
$\tilde{E_7}+\tilde{A_1}^{(1)}$ &
\resizebox{!}{1cm}{
\xy
(0,25)*{};
@={(80,10),(90,10),(0,10),(10,10),(20,10),(30,10),(40,10),(50,10),(60,10),(70,10),(30,20)}@@{*{\bullet}};
(0,10)*{};(80,10)*{}**\dir{-};
(90,10)*{};(80,10)*{}**\dir{=};
(30,10)*{};(30,20)*{}**\dir{-};
(70,10)*{};(90,10)*{}**\crv{(80,20)};
\endxy
}
\\ \hline
$\tilde{E_7}+\tilde{A_1}^{(2)}$ &
\resizebox{!}{1cm}{
\xy
(0,25)*{};
@={(80,10),(90,10),(0,10),(10,10),(20,10),(30,10),(40,10),(50,10),(60,10),(70,10),(30,20)}@@{*{\bullet}};
(0,10)*{};(80,10)*{}**\dir{-};
(90,10)*{};(80,10)*{}**\dir{=};
(30,10)*{};(30,20)*{}**\dir{-};
\endxy 
}
\\ \hline
$\tilde{E}_6+\tilde{A_2}$ &
\vspace{0.5mm}
\includegraphics[width=32mm]{E6S6.eps}
\vspace{2.1mm}
\\ \hline
$\tilde{D_8}$ &
\resizebox{!}{1cm}{
\xy
(-10,25)*{};
@={(-10,10),(0,10),(10,10),(20,10),(30,10),(40,10),(50,10),(10,20),(60,10),(50,20)}@@{*{\bullet}};
(-10,10)*{};(60,10)*{}**\dir{-};
(10,10)*{};(10,20)*{}**\dir{-};
(50,10)*{};(50,20)*{}**\dir{-};
\endxy
}
\\ \hline
$\tilde{D}_4 + \tilde{D}_4$
&
\resizebox{!}{2cm}{
\xy
(0,25)*{};
@={(10,0),(50,0),(0,10),(10,10),(20,10),(30,10),(40,10),(50,10),(10,20),(60,10),(50,20)}@@{*{\bullet}};
(0,10)*{};(60,10)*{}**\dir{-};
(10,0)*{};(10,20)*{}**\dir{-};
(50,0)*{};(50,20)*{}**\dir{-};
\endxy
}
\\ \hline
{\rm VII} & 
\vspace{0.5mm}
\includegraphics[width=35mm]{VII1.eps}
\\ \hline
{\rm VIII} &
\vspace{1mm}
\includegraphics[width=35mm]{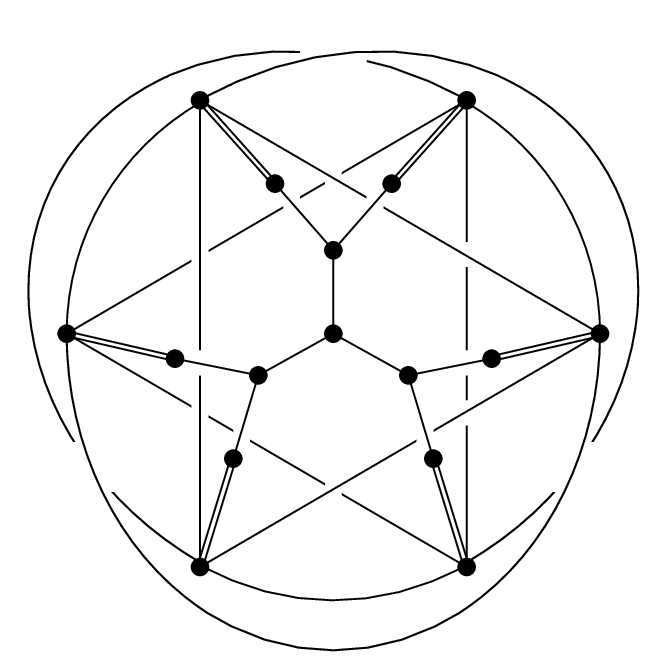}
\\ \hline
\end{tabular}}
\hspace{3mm}
\subfloat[Examples]{
\begin{tabular}{|>{\centering\arraybackslash}m{1.5cm}|>{\centering\arraybackslash}m{1.5cm}|>{\centering\arraybackslash}m{0.8cm}|@{}m{0pt}@{}}
\hline
{\rm $\Aut(X)$} & {\rm $\Aut_{nt}(X)$} & {\rm dim}  \\
\hline
$\{1\}$
&
$\{1\}$
&
${\rm 1}$
& \resizebox{!}{1cm}{ \phantom{a}}
\\ \hline
${\bf Z}/2{\bf Z}$
&
$\{1\}$
& 
${\rm 2}$
& \resizebox{!}{1cm}{ \phantom{a}}
\\  \hline
${\bf Z}/2{\bf Z}$ 
&
${\bf Z}/2{\bf Z}$
& 
${\rm 1}$
& \resizebox{!}{1cm}{ \phantom{a}}
\\ \hline
$\mathfrak{S}_3$
&
$\{1\}$
& 
${\rm 1}$
& \vspace{2.73cm} \phantom{}
\\ \hline
${\bf Z}/2{\bf Z}$
&
${\bf Z}/2{\bf Z}$
& 
${\rm 2}$
& \resizebox{!}{1cm}{ \phantom{a}}
\\ \hline
$({\bf Z}/2{\bf Z})^3$
&
$({\bf Z}/2{\bf Z})^2$
&
${\rm 2}$
& \resizebox{!}{2cm}{ \phantom{a}}
\\  \hline
$\mathfrak{S}_5$
&
$\{1\}$
& 
${\rm 1}$
& \vspace{3.05cm} \phantom{}
\\ \hline
$\mathfrak{S}_4$
&
$\{1\}$
& 
${\rm 1}$
& \vspace{3.1cm} \phantom{}
\\ \hline
\end{tabular}}
\vspace{-2mm}
\caption{}
\label{mainClassicalTable}
\vspace{-30mm}
\end{table}
\clearpage
\end{theorem}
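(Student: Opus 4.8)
The plan is to reduce the finiteness of $\Aut(X)$ to the combinatorics of the nef cone, enumerate the possible dual graphs lattice-theoretically, and then construct and analyze each type by hand. Throughout one uses the identification $\Num(X) \cong U \oplus E_8$, the even unimodular lattice of signature $(1,9)$, inside whose positive cone the nef cone $\mathrm{Nef}(X)$ is a fundamental domain for the subgroup $W_X \subset \O(\Num(X))$ generated by the reflections in the classes of $(-2)$-curves. I would invoke the finiteness principle, valid in arbitrary characteristic, that $\Aut(X)$ is finite if and only if $X$ carries only finitely many $(-2)$-curves, equivalently $\mathrm{Nef}(X)$ is a finite rational polyhedral cone. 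When this holds, the image of $\Aut(X)$ in $\O(\Num(X))$ lands in the finite symmetry group of the dual graph, while its kernel is the group $\Aut_{nt}(X)$ of numerically trivial automorphisms; thus the entire problem is controlled by the dual graph of all $(-2)$-curves. By Vinberg's criterion a root basis in $U \oplus E_8$ can bound a finite-volume Coxeter chamber for only finitely many combinatorial types, and enumerating these produces the candidate list; this is the content of Section \ref{possibledualgraph}.

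For part (A) I would first settle the "if" direction: whenever the $(-2)$-curves of a classical $X$ realize one of the graphs $\Gamma$ in Table \ref{mainClassicalTable} (A), the cone $\mathrm{Nef}(X)$ is the finite polytope cut out by $\Gamma$, so $\Aut(X)$ maps into the finite automorphism group of $\Gamma$ with finite (numerically trivial) kernel and is therefore finite. The "only if" direction is the classification proper: a classical Enriques surface with finite $\Aut(X)$ must realize one of the finitely many candidate graphs, and one then discards those that occur only in the singular case of Table \ref{Table1} or only in the supersingular case of Theorem \ref{mainSupersingular}. The new feature in characteristic $2$, with no analogue over $\mathbb{C}$, is that the genus-one pencils attached to the isotropic vectors of $\Num(X)$ may be quasi-elliptic; each reducible fiber contributes a sub-diagram of $(-2)$-curves, and both the elliptic and the quasi-elliptic pencils have to be tracked. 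This is exactly what separates the two rows labelled $\tilde{E_7}+\tilde{A_1}$: they differ only by the edge joining the two extremal vertices, which records a difference in the associated genus-one pencil.

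For part (B) I would construct each type explicitly and read off the invariants. For a fixed candidate $\Gamma$ I would choose a genus-one pencil $|2F|$ on $X$ whose singular fibers carry the prescribed configuration of components and realize $X$ together with its canonical cover $\pi : \tilde{X} \to X$ through the equations of Section \ref{Equation}, using the vector fields of Section \ref{VectFields} to produce the canonical $\mu_2$-cover in the classical case. For each resulting surface three things must be verified: that the curves exhibited are \emph{all} the $(-2)$-curves, which is done by producing enough genus-one pencils to pin down $\mathrm{Nef}(X)$ and hence the dual graph; that $\Aut(X)$ and $\Aut_{nt}(X)$ are as listed, by combining the faithful action on $\Num(X)$ (landing in the finite symmetry group of $\Gamma$) with a direct determination of the numerically trivial part from the explicit equations; and that the surface is genuinely classical and not supersingular, which is detected by the $\mu_2$-torsor structure of $\pi$, equivalently by $K_X$ being $2$-torsion but nonzero. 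The dimension of each family is then the number of essential parameters in the defining equations, after quotienting by admissible coordinate changes; note that in several types the supersingular surfaces of Theorem \ref{mainSupersingular} arise as a degeneration of the same equations, so the two constructions are carried out in tandem.

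The main obstacle will be the completeness of the classification together with the two verifications peculiar to characteristic $2$. Showing that the candidate list is exhaustive, and that no extra $(-2)$-curves appear on the constructed surfaces, requires a Vinberg-type analysis in which quasi-elliptic fibrations and the $2$-torsion of $\Pic$ are handled simultaneously rather than through the complex-analytic Torelli theorem. The determination of $\Aut_{nt}(X)$ is genuinely positive-characteristic: unlike complex Enriques surfaces, classical ones may carry nontrivial numerically trivial automorphisms, and pinning these down — as well as correctly separating the classical locus from the supersingular locus via the canonical cover — is where the explicit equations and the vector field computations of Sections \ref{VectFields} and \ref{Equation} carry the weight of the proof.
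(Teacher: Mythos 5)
Your outline gets the ``if'' direction and the broad shape of part (B) right, but the engine you propose for the ``only if'' direction of (A) has a genuine gap. You invoke ``the finiteness principle, valid in arbitrary characteristic, that $\Aut(X)$ is finite if and only if $X$ carries only finitely many $(-2)$-curves,'' and then plan to enumerate the finitely many combinatorial types of finite-volume Coxeter chambers in $U\oplus E_8$. Neither half of this is available. The equivalence is false as stated (an unnodal Enriques surface has no $(-2)$-curves and infinite automorphism group), and the direction you actually need --- finite $\Aut(X)$ implies that the $(-2)$-curves cut out a finite rational polyhedral nef cone, i.e.\ that $W(X)$ has finite index in $\O(\Num(X))$ --- is precisely the converse of Proposition \ref{finiteness}, which over $\mathbb{C}$ rests on the Torelli theorem and is not known in characteristic $2$. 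Vinberg's criterion enters the paper only in the opposite direction (Propositions \ref{Vinberg} and \ref{Namikawa}): to show that each listed graph forces $\Aut(X)$ finite and that the exhibited curves are all of the $(-2)$-curves on the constructed surfaces. No enumeration of all finite-volume chambers is performed, and such an enumeration would in any case not tell you which graphs are realized by classical, as opposed to supersingular or singular, surfaces.

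The argument that actually carries the classification is absent from your proposal: assuming $\Aut(X)$ finite, every genus one fibration is extremal because the Mordell--Weil group of its Jacobian acts on $X$ (Proposition \ref{MW-Dolgachev}); there always exists a special such fibration (Proposition \ref{Cossec}); and the pair consisting of the conductrix $A$ (from Ekedahl--Shepherd-Barron's classification) and the isolated singularities ($4A_1$ versus $D_4$) of the normalization of the canonical cover, combined with Lang's and Ito's lists of extremal rational elliptic and quasi-elliptic fibrations, pins down the possible dual graphs case by case (Lemmas \ref{ellipticCondSing}, \ref{QellipticCondSing} and Theorem \ref{mainGraph}). This conductrix bookkeeping is also what distinguishes classical from supersingular realizations (Corollary \ref{SingNormal}) and what separates the two $\tilde{E}_7+\tilde{A}_1$ rows, which you attribute only vaguely to ``a difference in the associated genus-one pencil'' (the actual difference is whether the fiber of type $\III$ is double or simple). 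Your sketch of part (B) --- quotients by vector fields, explicit equations for $\Aut$ and $\Aut_{nt}$, detecting the classical case via $2$-torsion of $K_X$ --- is consistent with the paper, though the paper detects classicality via the two tame double fibers of Proposition \ref{multi-fiber} or via Lemma \ref{SingNormal} rather than directly from the torsor structure.
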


\noindent
We remark that the examples of supersingular Enriques surfaces of type $\tilde{E}_7+\tilde{A}_1^{(1)}$ form a 1-dimensional family, but some of their automorphism groups jump up. 

Over the complex numbers, Enriques surfaces with numerically and cohomologically trivial automorphism groups are completely classified into three types and the groups are cyclic of order $2$ or $4$ (Mukai and Namikawa \cite{MN}, Mukai \cite{Mu}, and also see Kond\=o \cite[Theorem 1.7]{Ko}). This is not true in characteristic 2 and Theorems
\ref{mainSupersingular} \rm{(B)} and \ref{mainClassical} \rm{(B)} give new examples of such automorphisms (that is, the cases of $\tilde{E}_8$ supersingular, 
$\tilde{E}_7+\tilde{A}_1^{(1)}$ supersingular, $\tilde{E}_6+\tilde{A}_2$ supersingular, 
$\tilde{D}_8$ supersingular and $\tilde{D}_4+\tilde{D}_4$ classical) 
(compare with Dolgachev \cite[Theorem 4]{D2}).  Very recently, Dolgachev and the third author \cite{DM} gave
a classification of the possible numerically and cohomologically trivial automorphism groups of Enriques surfaces in positive characteristic up to the examples in Theorems \ref{mainSupersingular} and \ref{mainClassical}. In particular, they show that nothing new happens in characteristic different from 2.

\begin{corollary}
For $G \in \{Q_8,{\bf Z}/11{\bf Z},{\bf Z}/7{\bf Z},{\bf Z}/5{\bf Z}\}$, there exists a supersingular Enriques surface $X$ with $\Aut_{ct}(X) = G$. Moreover, there is a classical Enriques surface $X$ in characteristic $2$ with $\Aut_{nt}(X) = {\bf Z}/2{\bf Z} \times {\bf Z}/2{\bf Z}$.
\end{corollary}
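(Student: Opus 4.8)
The plan is to derive this corollary directly from the existence statements in Theorems~\ref{mainSupersingular} and \ref{mainClassical}, since each of the listed groups appears verbatim in the appropriate column of the classification tables. Concretely, I would match each $G$ to a single row of Part~(B) of the relevant theorem and verify that the surface in that row has the claimed cohomologically (resp. numerically) trivial automorphism group.

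For the supersingular assertion, I read off Table~\ref{mainSupersingularTable}~(B): the quaternion group $Q_8$ is realized as $\Aut_{ct}(X)$ by a surface of type $\tilde{D_8}$; the group ${\bf Z}/11{\bf Z}$ by a surface of type $\tilde{E_8}$; the group ${\bf Z}/5{\bf Z}$ by a surface of type $\tilde{E}_6+\tilde{A_2}$; and the group ${\bf Z}/7{\bf Z}$ by a special member of the one-dimensional family of type $\tilde{E_7}+\tilde{A_1}$, for which the generic value $\{1\}$ jumps up to ${\bf Z}/7{\bf Z}$. For the classical assertion, I read off Table~\ref{mainClassicalTable}~(B): a surface of type $\tilde{D}_4+\tilde{D}_4$ realizes $\Aut_{nt}(X)=({\bf Z}/2{\bf Z})^2={\bf Z}/2{\bf Z}\times{\bf Z}/2{\bf Z}$.

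Thus the corollary is an immediate consequence once Part~(B) of each theorem has been established. The substantive work is therefore not in the corollary itself but in those theorems: for every listed type one must construct an explicit family of Enriques surfaces realizing the prescribed dual graph of $(-2)$-curves, and then compute the full automorphism group together with its cohomologically (resp. numerically) trivial subgroup. The main obstacle I anticipate lies in the $\tilde{E_7}+\tilde{A_1}$ supersingular case, where $\Aut_{ct}(X)$ is \emph{not} constant along the family: one must pin down the special parameter value at which the cohomologically trivial part jumps from $\{1\}$ to ${\bf Z}/7{\bf Z}$, as flagged in the remark following the two theorems, and confirm that the surface at that value is indeed supersingular. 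The remaining identifications---$Q_8$ from $\tilde{D_8}$, ${\bf Z}/11{\bf Z}$ from $\tilde{E_8}$, ${\bf Z}/5{\bf Z}$ from $\tilde{E}_6+\tilde{A_2}$, and $({\bf Z}/2{\bf Z})^2$ from $\tilde{D}_4+\tilde{D}_4$---correspond to rigid (zero- or one-dimensional, generically constant) cases and follow directly from the computations carried out in the respective sections.
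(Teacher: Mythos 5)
Your proposal is correct and matches the paper's (implicit) argument: the corollary carries no separate proof in the paper and is exactly the read-off from Tables \ref{mainSupersingularTable}~(B) and \ref{mainClassicalTable}~(B) that you describe, with the group assignments ($Q_8$ from $\tilde{D}_8$, ${\bf Z}/11{\bf Z}$ from $\tilde{E}_8$, ${\bf Z}/5{\bf Z}$ from $\tilde{E}_6+\tilde{A}_2$, ${\bf Z}/7{\bf Z}$ from the $a^7=1$ member of the $\tilde{E}_7+\tilde{A}_1$ supersingular family, and $({\bf Z}/2{\bf Z})^2$ from $\tilde{D}_4+\tilde{D}_4$) all as established in Theorems \ref{D8S-main2}, \ref{E8S-main2}, \ref{main2aut}, \ref{E7S-mainAut} and \ref{D4C-main2}. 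You also correctly locate the only delicate point, namely the jump of $\Aut_{ct}$ along the supersingular $\tilde{E}_7+\tilde{A}_1$ family.
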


Note that only the dual graph of type $\VII$ in Theorems \ref{mainSupersingular} and \ref{mainClassical} appears over the complex numbers. Moreover, the Enriques surface with the dual graph of type $\VII$ is unique over the complex numbers, whereas our example in characteristic 2 is a 1-dimensional and non-isotrivial family of classical and supersingular Enriques surfaces with such dual graph (see Theorem \ref{TypeVII}).
The canonical cover of any Enriques surface of type $\VII$ has 12 rational double points of type $A_1$ and its minimal resolution is the unique supersingular
$K3$ surface with Artin invariant 1.  The canonical covers of the other Enriques surfaces in Theorems \ref{mainSupersingular} and \ref{mainClassical} are non-normal rational surfaces.

The dual graphs of type $\tilde{E}_8$, $\tilde{E}_7+\tilde{A}_1^{(1)}$, $\tilde{E}_7+\tilde{A}_1^{(2)}$,
$\tilde{D}_8$ and 
$\tilde{D}_4+\tilde{D}_4$ appeared in Cossec and Dolgachev \cite{CD}, Dolgachev and Liedtke \cite{DL} and the first four are called "extra special".  Also,
Enriques surfaces of type $\tilde{E}_8$, $\tilde{E}_7+\tilde{A}_1^{(1)}$, $\tilde{E}_7+\tilde{A}_1^{(2)}$, 
$\tilde{E}_6+\tilde{A}_2$ are called "exceptional" and were studied deeply by Ekedahl and Shepherd-Barron \cite{ES} and Salomonsson \cite{Sa} from a different point of view. In particular, Salomonsson first constructed these exceptional surfaces. However, the existence of Enriques surfaces of type $\tilde{D}_8$ and 
$\tilde{D}_4+\tilde{D}_4$ was not known before our classification.


In case of a classical or supersingular Enriques surface $X$, there exists a non-zero regular global 1-form $\eta$ on $X$.
The divisorial part of the scheme of zeros of $\eta$ is called the bi-conductrix and the half of the biconductrix
the conductrix. By definition, the canonical cover $\pi : \tilde{X} \to X$ has a singularity at $P\in \tilde{X}$ if and only if $\eta$ vanishes at $\pi(P)$.
Ekedahl and Shepherd-Barron \cite{ES} classified possible conductrices of elliptic and quasi-elliptic fibrations on classical and supersingular Enriques surfaces. The conductrix is an additional invariant of Enriques surfaces with non-normal cover and it will play a central role in our classification.

The outline of the proof of Theorems \ref{mainSupersingular} and \ref{mainClassical} is as follows.
First, recall that any Enriques surface $X$ admits a genus one fibration $\pi : X\to {\bf P}^1$, and 
any genus one fibration on an Enriques surface has a double fiber.
Let $J(\pi) : J(X)\to {\bf P}^1$ be the Jacobian fibration associated with $\pi$.  Then the Mordell-Weil group of $J(\pi)$ acts on $X$ effectively as automorphisms.  
Now, assume that the automorphism group $\Aut(X)$ is finite.  
Then, for any genus one fibration $\pi$ on $X$, 
the Mordell-Weil rank of its Jacobian fibration is $0$ (see Proposition \ref{MW-Dolgachev}).  
We will prove that the possible dual graphs of 
$(-2)$-curves on $X$ are nothing but those given in Theorems \ref{mainSupersingular} \rm{(A)} and \ref{mainClassical} \rm{(A)}, by using the condition of Mordell-Weil rank mentioned as above and Ekedahl and Shepherd-Barron's classification of conductrices (Theorem \ref{mainGraph}).  
Then it follows from a result by Vinberg \cite{V} that $\Aut(X)$ is in fact finite for each Enriques 
surface $X$ with one of these dual graphs of $(-2)$-curves (Proposition \ref{Vinberg}).  

On the other hand, for each dual graph $\Gamma$ in Theorems \ref{mainSupersingular} \rm{(A)} and \ref{mainClassical} \rm{(A)}, we will construct Enriques surfaces with $\Gamma$ as the dual graph of $(-2)$-curves (Sections \ref{sec3}--\ref{sec5}).  To do this, we look at a subdiagram
$\Gamma_0$ of $\Gamma$ which is the dual graph of reducible fibers of a special genus one fibration.
Here, a genus one fibration is called special if the fibration has a $(-2)$-curve as a $2$-section.  We first consider a rational genus one fibration $g: R \to {\bf P}^1$ whose dual graph of
reducible fibers is $\Gamma_0$, and we take the Frobenius base change $f : \tilde{R} \to {\bf P}^1$ of $g$.  Then, we construct a rational vector field $D$ on $\tilde{R}$ (in Section \ref{VectFields}, we give a method to find suitable vector fields).  
The vector field $D$ might have isolated singularities and hence we take a resolution of singularities, that is, after blow-ups of $\tilde{R}$ 
we get a non-singular surface $Y$ such that the induced vector field denoted by the same symbol $D$
has no isolated singularities.  Then, the quotient surface $Y^D$ of $Y$ by $D$ is non-singular and the minimal model $X$ of $Y^D$ is the desired Enriques surface. 

Finally, we give a remark on how to calculate the automorphism group $\Aut(X)$, which is isomorphic to a subgroup of the symmetry group of the dual graph $\Gamma$ up to numerically trivial automorphisms.
In cases $\tilde{E}_6+\tilde{A}_2$ (supersingular), 
$\tilde{E}_8$ (supersingular and classical), $\tilde{E}_7+\tilde{A}_1^{(1)}$ (supersingular),  
$\tilde{D}_8$ (supersingular and classical) and $\tilde{D}_4+\tilde{D}_4$, we can not
determine the numerically trivial automorphisms from their dual graphs of $(-2)$-curves geometrically.  
In these cases, we first find an equation of an affine surface birationally equivalent to $X$, and then we reduce the problem to the calculation of 
the automorphism group of this surface (see Section \ref{Equation}).

From Section 3 to 12, unless mentioned otherwise, all our Enriques surfaces are classical or supersingular.

\medskip
\noindent
{\bf Acknowledgement.}
The authors thank I. Dolgachev for stimulating discussions on Enriques surfaces 
in positive characteristics.
The second author thanks N. Shepherd-Barron for explaining the conductrices, and G. van der Geer
for discussions and his hospitality when he was visiting in Oberwolfach and Amsterdam in 2016.
The third author thanks his Ph.D. advisor C. Liedtke for his support and helpful discussions, and the second author for an invitation to Nagoya University in November 2016.
The authors thank the referee for careful reading of the manuscript, for pointing out errors and for many useful suggestions, which greatly improved the readability of this article.

\section{Preliminaries}\label{sec2}

\subsection{Vector fields}\label{derivation}

Let $k$ be an algebraically closed field of characteristic $p > 0$,
and let $S$ be a non-singular complete algebraic surface defined over $k$.
We denote by $K_{S}$ a canonical divisor of $S$.
A rational vector field $D$ on $S$ is said to be $p$-closed if there exists
a rational function $f$ on $S$ such that $D^p = fD$. 
A vector field $D$ is of additive type (resp. of multiplicative type) if $D^p=0$ (resp. $D^p=D$).
Let $\{U_{i} = {\rm Spec} A_{i}\}$ be an affine open covering of $S$. We set 
$A_{i}^{D} = \{\alpha \in A_{i} \mid D(\alpha) = 0\}$. 
The affine varieties $\{U_{i}^{D} = {\rm Spec} A_{i}^{D}\}$ glue together to 
define a normal quotient surface $S^{D}$.

Now, we  assume that $D$ is $p$-closed. Then,
the natural morphism 
\begin{equation}\label{naturalmapPI}
\pi : S \longrightarrow S^D 
\end{equation}
is a purely
inseparable morphism of degree $p$. 
If the affine open covering $\{U_{i}\}$ of $S$ is fine enough, then
taking local coordinates $x_{i}, y_{i}$
on $U_{i}$, we see that there exist $g_{i}, h_{i}\in A_{i}$ and 
a rational function $f_{i}$
such that the divisors defined by $g_{i} = 0$ and by $h_{i} = 0$ have no common components,
and such that
$$
 D = f_{i}\left(g_{i}\frac{\partial}{\partial x_{i}} + h_{i}\frac{\partial}{\partial y_{i}}\right)
\quad \mbox{on}~U_{i}.
$$
By Rudakov and Shafarevich \cite[Section 1]{RS}, the divisors $(f_{i})$ on $U_{i}$
give a global divisor $(D)$ on $S$, and the zero-cycles defined
by the ideal $(g_{i}, h_{i})$ on $U_{i}$ give a global zero cycle 
$\langle D \rangle $ on $S$. A point contained in the support of
$\langle D \rangle $ is called an isolated singular point of $D$.
If $D$ has no isolated singular point, $D$ is said to be divisorial.
Rudakov and Shafarevich \cite[Theorem 1, Corollary]{RS} 
showed that $S^D$ is non-singular
if $\langle D \rangle  = 0$, i.e. $D$ is divisorial.
When $S^D$ is non-singular,
they also showed a canonical divisor formula
\begin{equation}\label{canonical}
K_{S} \sim \pi^{*}K_{S^D} + (p - 1)(D),
\end{equation}
where $\sim$ means linear equivalence.
As for the Euler number $c_{2}(S)$ of $S$, we have a formula
\begin{equation}\label{euler}
c_{2}(S) = \deg \langle D \rangle  - K_{S}\cdot (D) - (D)^2
\end{equation}
(cf. Katsura and Takeda \cite[Proposition 2.1]{KT}). 

Now we consider an irreducible curve $C$ on $S$ and we set $C' = \pi (C)$.
Take an affine open set $U_{i}$ above such that $C \cap U_{i}$ is non-empty.
The curve $C$ is said to be integral with respect to the vector field $D$
if $g_{i}\frac{\partial}{\partial x_{i}} + h_{i}\frac{\partial}{\partial y_{i}}$
is tangent to $C$ at a general point of $C \cap U_{i}$. Then, Rudakov-Shafarevich
\cite[Proposition 1]{RS} showed the following proposition:

\begin{prop}\label{insep}

$({\rm i})$  If $C$ is integral, then $C = \pi^*(C')$ and $C^2 = pC'^2$.

$({\rm ii})$  If $C$ is not integral, then $pC = \pi^*(C')$ and $pC^2 = C'^2$.
\end{prop}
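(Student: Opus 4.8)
The plan is to analyze the finite purely inseparable morphism $\pi\colon S\to S^D$ of degree $p$ one irreducible curve at a time and to reduce both assertions to a computation at the generic point of $C$. Since $\pi$ is a universal homeomorphism, $\pi^{-1}(C')=\pi^{-1}(\pi(C))=C$ as topological spaces in both cases, so the only issue is the scheme-theoretic multiplicity. Concretely, at the generic point of $C$ we get an extension of DVRs $\mathcal{O}_{S^D,C'}\subseteq\mathcal{O}_{S,C}$ with ramification index $m=e$ and residue degree $f=[K(C):K(C')]$, and $\pi^{*}C'=m\,C$ as divisors. My goal is to show $m=1$ (so $C=\pi^{-1}(C')$) in the integral case and $m=p$ (so $pC=\pi^{-1}(C')$) in the non-integral case; the intersection relations then drop out formally.

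The numerical bookkeeping comes first. By the projection formula for the finite degree-$p$ map $\pi$ one has $\pi_{*}\pi^{*}C'=p\,C'$ and $\pi_{*}C=[K(C):K(C')]\,C'$; comparing with $\pi^{*}C'=m\,C$ gives $m\cdot[K(C):K(C')]=p$, so since $p$ is prime exactly one of the two factors equals $p$. Likewise the identity $\pi^{*}A\cdot\pi^{*}B=p\,(A\cdot B)$ (projection formula again, the intersection numbers on the normal surface $S^D$ taken in the sense of Mumford, or after passing to a resolution) converts the value of $m$ into the stated relation: when $m=1$ we obtain $C^{2}=(\pi^{*}C')^{2}=p\,C'^{2}$, and when $m=p$ we obtain $p^{2}C^{2}=(\pi^{*}C')^{2}=p\,C'^{2}$, i.e. $pC^{2}=C'^{2}$.

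It remains to pin down $m$ using integrality, and this is the crux. I work in the DVR $R=\mathcal{O}_{S,C}$ with uniformizer $\phi$ (a local equation of $C$) and residue field $K(C)$, and I use the regular local vector field $\delta=g_i\frac{\partial}{\partial x_i}+h_i\frac{\partial}{\partial y_i}$, which has the same kernel as $D$ and with respect to which integrality is defined; thus $\mathcal{O}_{S^D,C'}=R\cap K(S^D)=R^{\delta}$. In the non-integral case $\delta(\phi)$ is a unit of $R$, and for a nonzero invariant $a=u\phi^{n}$ (with $u$ a unit) the equation $\delta(a)=0$ forces $n\,u\,\delta(\phi)=-\delta(u)\,\phi$; comparing valuations (the left side has valuation $0$ when $p\nmid n$, the right side valuation $\ge 1$) shows $p\mid n$. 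Hence every nonzero element of $R^{\delta}$ has valuation in $p\mathbb{Z}$, while $\phi^{p}\in R^{\delta}$ has valuation $p$; therefore $m=e=p$, giving $pC=\pi^{-1}(C')$ and $pC^{2}=C'^{2}$. In the integral case $\delta$ preserves the ideal $(\phi)$ and so induces a derivation $\overline{\delta}$ on $K(C)$, which is nonzero at a general point of $C$ because there $\delta$ itself is nonzero (its zeros along $C$ are isolated, as $\gcd(g_i,h_i)=1$) and tangent to $C$. Since the restrictions of $\delta$-invariant functions lie in $\ker\overline{\delta}$, we get $K(C')\subseteq K(C)^{\overline{\delta}}$, and $[K(C):K(C)^{\overline{\delta}}]=p$ for a nonzero derivation of the function field of a curve over a perfect field; hence $f=[K(C):K(C')]=p$, so $m=1$, $C=\pi^{-1}(C')$ and $C^{2}=p\,C'^{2}$.

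The step I expect to be delicate is this last identification of $m$. One must check that at the generic point of $C$ the subfield $K(C')\subseteq K(C)$ really is the field of restrictions of $\delta$-invariant rational functions, so that the inclusion $K(C')\subseteq K(C)^{\overline{\delta}}$ is legitimate, and that the induced derivation $\overline{\delta}$ is genuinely nonzero, which is exactly where the hypothesis $\gcd(g_i,h_i)=1$ enters (it prevents $\delta$ from vanishing identically along $C$). By contrast, the numerical reductions of the second paragraph are routine once the intersection pairing on the normal quotient $S^D$ is available.
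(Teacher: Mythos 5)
Your argument is correct, but be aware that the paper contains no proof of this proposition to compare against: it is quoted verbatim from Rudakov--Shafarevich \cite[Proposition 1]{RS}, and what you have reconstructed is essentially their argument. The skeleton is the standard one: localize at the generic point of $C$ to get an extension of discrete valuation rings $\mathcal{O}_{S^D,C'}=R^{\delta}\subseteq R=\mathcal{O}_{S,C}$ with $ef=p$, so that exactly one of the ramification index $e$ and the residue degree $f$ equals $p$; then decide which by examining $\delta(\phi)$ for a local equation $\phi$ of $C$. Your two computations at the crux are sound: when $\delta(\phi)$ is a unit the valuation comparison in $\delta(u\phi^{n})=0$ forces $p\mid n$, so the value group of the invariants is $p\mathbb{Z}$ and $e=p$; when $\delta$ is tangent to $C$ it descends to a derivation $\overline{\delta}$ of $K(C)$ which is nonzero at a general (smooth) point of $C$ because the zero scheme of $(g_i,h_i)$ is finite, and since $K(C)^{\overline{\delta}}=K(C)^{p}$ has index $p$ one gets $f=p$, $e=1$. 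The identifications you flag as delicate ($\mathcal{O}_{S^D,C'}=R\cap K(S^D)=R^{\delta}$, and $K(C')\hookrightarrow K(C)^{\overline{\delta}}$) do hold, the first because a DVR is maximal for domination among local subrings of its fraction field. The only point where you are more casual than the generality of the statement demands is the numerical identity $(\pi^{*}C')^{2}=p\,C'^{2}$ when $S^D$ is singular, which requires Mumford's rational intersection pairing on the normal quotient and the compatibility of the valuation-theoretic pullback $\pi^{*}C'=eC$ with it; this does work out (and in all applications in this paper $D$ has first been made divisorial, so $S^D$ is smooth and the issue evaporates), but if you want the proposition in full generality you should spell that compatibility out rather than leave it parenthetical.
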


\subsection{Enriques surfaces in characteristic 2}\label{surfaces}

In characteristic 2, a minimal algebraic surface with numerically trivial
canonical divisor is called an Enriques surface if the second Betti
number is equal to 10. Such surfaces $X$ are  divided into three classes
(for details, see Bombieri and Mumford \cite[Section 3]{BM2}):
\begin{itemize}
\item[$({\rm i})$] $K_{X}$ is not linearly equivalent to zero 
and $2K_{X}\sim 0$.  Such an Enriques surface is called a classical Enriques surface.
\item[$({\rm ii})$] $K_{X} \sim 0$, ${\rm H}^{1}(X, {\calO}_{X}) \cong k$
and the Frobenius map acts on  ${\rm H}^{1}(X, {\calO}_X)$ bijectively.
Such an Enriques surface is called a singular Enriques surface.
\item[$({\rm iii})$] $K_{X} \sim 0$, ${\rm H}^{1}(X, {\calO}_{X}) \cong k$
and the Frobenius map is the zero map on  ${\rm H}^{1}(X, {\calO}_X)$.
Such an Enriques surface is called a supersingular Enriques surface.
\end{itemize}

\noindent
It is known that $\Pic^{\tau}_X$ is isomorphic to ${\bf Z}/2{\bf Z}$ if $X$ is classical, $\mu_2$ if $X$ is singular or $\alpha_2$ if $X$ is supersingular (Bombieri-Mumford \cite[Theorem 2]{BM2}). 
As in the case of characteristic $0$ or $p>2$, an Enriques surface
$X$ in characteristic 2 has a canonical double cover
$\pi : \tilde{X} \to X$, which is a separable ${\bf Z}/2{\bf Z}$-cover,
a purely inseparable $\mu_2$- or $\alpha_2$-cover according to $X$ being singular, classical or supersingular. The surface $\tilde{X}$ might have singularities and it might even be non-normal
(see Proposition \ref{ES}), but it is $K3$-like in the sense that its dualizing sheaf is trivial 
and ${\rm H}^{1}(\tilde{X}, {\calO}_{\tilde{X}})=0$. 
Note that Ekedahl and Shepherd-Barron \cite{ES} use the terminology "unipotent" Enriques surfaces for supersingular and classical ones.

\subsection{$(-2)$-curves}

Let $X$ be an Enriques surface and let $\Num(X)$ be the quotient of the N\'eron-Severi group $\NS(X)$ of $X$ by its torsion subgroup.  Then $\Num(X)$ together with the intersection product is
an even unimodular lattice of signature $(1,9)$ (Illusie \cite[Corollary 7.3.7]{Ill}), and hence is isomorphic to $U\oplus E_8$ where $U$ is the even unimodular lattice of signature $(1,1)$ and $E_8$ the even negative definite unimodular lattice of rank 8.
We denote by ${\rm O}(\Num(X))$ the orthogonal group of $\Num(X)$. The set 
$$\{ x \in \Num(X)\otimes {\bf R} \ : \ \langle x, x \rangle > 0\}$$ 
has two connected components.
Denote by $P(X)$ the connected component containing an ample class of $X$.  
For $\delta \in \Num(X)$ with $\delta^2=-2$, we define
an isometry $s_{\delta}$ of $\Num(X)$ by
$$s_{\delta}(x) = x + \langle x, \delta\rangle \delta, \quad x \in \Num(X),$$ 
which is nothing but the reflection with respect to the hyperplane perpendicular to $\delta$.
The isometry $s_{\delta}$ is called the reflection associated with $\delta$.
We call a non-singular rational curve on an Enriques surface or a $K3$ surface a $(-2)$-curve.
For a $(-2)$-curve $E$ on an Enriques surface $X$, we identify $E$ with its class in $\Num(X)$.
Let $W(X)$ be the subgroup of
${\rm O}(\Num(X))$ generated by reflections associated with all $(-2)$-curves on $X$.  Then $P(X)$ is divided into chambers 
each of which is a fundamental domain with respect to
the action of $W(X)$ on $P(X)$.
There exists a unique chamber containing an ample
class which is nothing but the closure of the ample cone $D(X)$ of $X$.
It is known that the natural map
\begin{equation}\label{num-trivial}
\rho_n : \Aut(X) \to {\rm O}(\Num(X))
\end{equation}
has a finite kernel.  
Since the image $\Im(\rho_n)$ preserves the ample cone, we see $\Im(\rho_n) \cap W(X) = \{1\}$.
Therefore $\Aut(X)$ is finite if the index $[\O(\Num(X)) : W(X)]$ is finite.  
Thus, we have the following Proposition (see Dolgachev \cite[Proposition 3.2]{D1}).

\begin{prop}\label{finiteness}
If $W(X)$ is of finite index in ${\rm O}({\rm Num}(X))$, then ${\rm Aut}(X)$ is finite.
\end{prop}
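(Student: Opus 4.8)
The plan is to reduce the finiteness of $\Aut(X)$ to the index hypothesis by combining the two facts already recorded in this subsection: that $\rho_n$ has finite kernel, and that $\Im(\rho_n) \cap W(X) = \{1\}$. First I would pass to the image. Since $\Ker(\rho_n)$ is finite, the group $\Aut(X)$ is finite if and only if $\Im(\rho_n) \subseteq \O(\Num(X))$ is finite; indeed $|\Aut(X)| = |\Ker(\rho_n)| \cdot |\Im(\rho_n)|$ whenever either side is finite. So it suffices to bound $|\Im(\rho_n)|$.

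For the core step, set $G = \Im(\rho_n)$ and consider the map of sets
\[
G \longrightarrow \O(\Num(X))/W(X), \qquad g \longmapsto g\,W(X),
\]
sending an element of $G$ to its left coset modulo $W(X)$. If $g_1, g_2 \in G$ satisfy $g_1 W(X) = g_2 W(X)$, then $g_2^{-1} g_1 \in G \cap W(X) = \{1\}$, whence $g_1 = g_2$; so this map is injective. By hypothesis the target is a finite set of cardinality $[\O(\Num(X)) : W(X)]$, and therefore $|G| \le [\O(\Num(X)) : W(X)] < \infty$. Combined with the reduction above, $\Aut(X)$ is finite. (Equivalently, one may observe that $G$ normalizes $W(X)$, since an automorphism permutes the $(-2)$-curves and hence conjugates the generating reflections of $W(X)$ among themselves, so that $G \cong G\,W(X)/W(X)$ embeds in the finite group $\O(\Num(X))/W(X)$; the coset argument avoids even needing normality.)

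As for where the content actually sits: the proposition is a short formal consequence of the structural facts assembled above, namely that $\Num(X)$ is a hyperbolic lattice on whose positive cone $P(X)$ the reflection group $W(X)$ acts with the closure of the ample cone $D(X)$ as fundamental domain, that $\rho_n$ has finite kernel, and that $\Im(\rho_n)$ preserves the ample cone. The only point demanding care is the identity $\Im(\rho_n) \cap W(X) = \{1\}$, which rests on the closure of $D(X)$ being a genuine fundamental domain for $W(X)$, i.e. on no nontrivial element of $W(X)$ fixing the ample chamber. Once this is granted, the deduction is immediate, and I anticipate no further obstacle in the proof of the proposition itself.
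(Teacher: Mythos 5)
Your argument is correct and is essentially the paper's own: the text preceding the proposition records that $\rho_n$ has finite kernel and that $\Im(\rho_n)\cap W(X)=\{1\}$ because the image preserves the ample cone, and then concludes finiteness exactly as you do, by injecting $\Im(\rho_n)$ into the finite coset space $\O(\Num(X))/W(X)$. No difference in substance.
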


\noindent
Over the field of complex numbers, the converse of Proposition \ref{finiteness} 
holds by using the Torelli type theorem for Enriques surfaces (Dolgachev \cite[Theorem 3.3]{D1}).

\begin{definition}\label{cohnumtrivial}
Denote by $\Aut_{nt}(X)$ the kernel of the map $\rho_n$ given by (\ref{num-trivial}).
Similarly, denote by $\Aut_{ct}(X)$ the kernel of the map
\begin{equation}\label{coh-trivial}
\rho_c : \Aut(X) \to {\rm O}(\NS(X)).
\end{equation}
A non-trivial automorphism is called cohomologically or numerically trivial if it is contained in 
$\Aut_{ct}(X)$ or $\Aut_{nt}(X)$, respectively.
If $S$ is not classical, then $\NS(X) = \Num(X)$ and hence $\Aut_{ct}(X) = \Aut_{nt}(X)$.
\end{definition}

\subsection{Genus one fibrations}

We recall some facts on elliptic and quasi-elliptic fibrations on Enriques surfaces.
For simplicity, we call an elliptic or a quasi-elliptic fibration a genus one fibration.

\begin{prop}\label{genus1}{\rm (Bombieri and Mumford \cite[Theorem 3]{BM2})}
Every Enriques surface $S$ has a genus one fibration.  Conversely, for any primitive nef isotropic divisor $D$ on 
$S$, the linear system $|D|$ or $|2D|$ defines a genus one fibration on $S$.
\end{prop}

\begin{prop}\label{multi-fiber}{\rm (Cossec and Dolgachev \cite[Theorems 5.7.5, 5.7.6]{CD})}

Let $f : X \to {\bf P}^1$ be a genus one fibration on an Enriques surface 
$X$ in characteristic $2$.  Then, the following hold.

$({\rm i})$  If $X$ is classical, then $f$ has two tame double fibers, each of which is
either an ordinary elliptic curve or a singular fiber of additive type.

$({\rm ii})$  If $X$ is singular, then $f$ has one wild double 
fiber which is a smooth ordinary elliptic curve or a singular fiber of multiplicative type.

$({\rm iii})$  If $X$ is supersingular, then $f$ has one wild double fiber which is a
supersingular elliptic curve or a singular fiber of additive type.
\end{prop}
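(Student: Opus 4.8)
The plan is to read off both the number of multiple fibres and their nature from the canonical bundle formula combined with the cohomology of the structure sheaf, and then to pin down the fibre types by matching the group scheme governing the double structure against $\Pic^\tau_X$. Since $X$ is minimal, $f$ is relatively minimal, so $f_*\calO_X = \calO_{\mathbf{P}^1}$ and $R^1f_*\calO_X \cong \calL^{-1}\oplus T$, where $\calL$ is invertible on $\mathbf{P}^1$ and $T$ is the torsion subsheaf, supported at the wild fibres. The canonical bundle formula reads
$$\omega_X \cong f^*(\omega_{\mathbf{P}^1}\otimes\calL)\otimes\calO_X\Big(\sum_i a_i P_i\Big), \qquad 0 \le a_i \le m_i-1,$$
with $P_i$ the multiple fibres, $m_i$ their multiplicities, and $a_i = m_i-1$ precisely for the tame $P_i$; moreover $\deg\calL = \chi(\calO_X)+\mathrm{length}(T) = 1+\mathrm{length}(T)$. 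I would first record that every multiple fibre has $m_i = 2$: a genus one fibration corresponds to a primitive isotropic class (the half-fibre) in $\Num(X)\cong U\oplus E_8$, and $m_iP_i\sim F$ together with primitivity forces $m_i\mid 2$.

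Next I extract the numerical constraint. Passing to classes in $\Num(X)\otimes\mathbf{Q}$, using $f^*(\mathrm{pt})\equiv F$, $P_i\equiv \frac{1}{m_i}F$ and $K_X\equiv 0$, the displayed formula gives $(\deg\calL-2) + \sum_i a_i/m_i = 0$, that is
$$\sum_i \frac{a_i}{m_i} = 2-\deg\calL = 1-\mathrm{length}(T).$$
The Leray spectral sequence for $f$, together with $\deg\calL^{-1}<0$ (so $\mathrm{H}^0(\mathbf{P}^1,\calL^{-1})=0$), identifies $\mathrm{H}^1(X,\calO_X)\cong \mathrm{H}^0(\mathbf{P}^1,T)$, whence $\mathrm{length}(T) = h^1(\calO_X)$. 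Since all $m_i=2$, this reads $\sum_i a_i = 2-2h^1(\calO_X)$.

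Now I split into cases using $\S\ref{surfaces}$, which gives $h^1(\calO_X)=0$ for classical $X$ and $\mathrm{H}^1(X,\calO_X)\cong k$ (so $h^1=1$) for singular and supersingular $X$. If $X$ is classical, then $T=0$ and $\sum_i a_i = 2$; every multiple fibre is tame, so $a_i=1$, forcing exactly two double fibres, and $\omega_X\cong f^*\calO(-1)\otimes\calO_X(P_1+P_2)$ with $2K_X\sim 0$ but $K_X\not\sim 0$. If $X$ is singular or supersingular, then $\mathrm{length}(T)=1$ and $\sum_i a_i=0$; no fibre can be tame (a tame one would contribute $a_i=1$), so there is a single wild double fibre, and $\omega_X\cong f^*(\calO(-2)\otimes\calO(2))\cong\calO_X$, recovering $K_X\sim 0$. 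This settles the count and the tame/wild dichotomy in all three cases.

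It remains to determine the reduced fibre $P_{i,\mathrm{red}}$. The class $\calO_X(P_i)|_{P_{i,\mathrm{red}}}$ is $2$-torsion in $\Pic^0(P_{i,\mathrm{red}})$, and by the theory of multiple fibres of Raynaud and Bombieri--Mumford the infinitesimal/\'etale structure of the double fibre is governed by an order-$2$ subgroup scheme of $\Pic^0(P_{i,\mathrm{red}})$ canonically identified with $\Pic^\tau_X$. I would then match the three possibilities $\mathbf{Z}/2\mathbf{Z}$, $\mu_2$, $\alpha_2$ against the order-$2$ subgroup schemes available for each fibre type: a smooth ordinary elliptic curve has $\Pic^0[2]\cong\mathbf{Z}/2\mathbf{Z}\times\mu_2$; a supersingular elliptic curve admits only $\alpha_2$; a multiplicative-type fibre has $\Pic^0=\mathbf{G}_m$ whose unique order-$2$ subgroup scheme is $\mu_2$; and an additive-type fibre has $\Pic^0=\mathbf{G}_a$, which contains the \'etale $\mathbf{Z}/2\mathbf{Z}$ and the infinitesimal $\alpha_2$ but not $\mu_2$. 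Reading off which group scheme occurs reproduces the three lists: $\mathbf{Z}/2\mathbf{Z}$ (classical) permits ordinary elliptic or additive; $\mu_2$ (singular) permits ordinary elliptic or multiplicative; $\alpha_2$ (supersingular) permits supersingular elliptic or additive. The numerical bookkeeping and the Leray computation are routine; the \emph{main obstacle} is the justification that the local group scheme of a wild double fibre is exactly $\Pic^\tau_X$, which rests on the delicate structure theory of wild fibres in characteristic $2$ (the analysis of $R^1f_*\calO_X$ and of the relative Picard scheme near a double fibre). The tame identification is classical, whereas the wild case underlying the singular and supersingular statements is where I expect the real work to lie.
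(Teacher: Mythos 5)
First, note that the paper does not prove this proposition at all: it is quoted verbatim from Dolgachev--Liedtke \cite[Theorem 4.8.3]{DL} and used as a black box, so there is no in-paper argument to compare against. Measured against the standard proof (Bombieri--Mumford \cite{BM2}, reproduced in \cite{DL}), the first two thirds of your sketch are exactly right and essentially complete: the canonical bundle formula, $\deg\calL=\chi(\calO_X)+\operatorname{length}(T)$, the Leray identification $h^1(\calO_X)=\operatorname{length}(T)$, and the resulting relation $\sum a_i/m_i=1-\operatorname{length}(T)$ do force two tame double fibers in the classical case (here the numerics alone give $m_i=2$, so you do not even need the primitivity argument, which as stated is slightly circular --- that $F=2e$ with $e$ primitive is usually \emph{deduced} from the multiplicity statement) and a single wild multiple fiber in the singular and supersingular cases; for the wild fiber one still has to rule out $m_i=2^k$ with $k\ge 2$, which needs the Bombieri--Mumford bound relating $\operatorname{length}(T_x)$ to $h^0(\calO_{m_iP_i})$, not just unimodularity. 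Your determination of the tame fiber types via the order-two point $\calO_{P}(P)\in\Pic^0(P_{\mathrm{red}})$ is also correct and complete, since $\mathbb{G}_m$ and a supersingular elliptic curve have no nontrivial $2$-torsion \emph{points} in characteristic $2$ while an ordinary elliptic curve and $\mathbb{G}_a$ do.

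The genuine gap is the one you flagged yourself: for a \emph{wild} double fiber the normal bundle $\calO_P(P)$ is trivial, so the subgroup-scheme bookkeeping cannot be run through $\Pic^0(P_{\mathrm{red}})[2]$ until you have actually produced an embedding of $\Pic^\tau_X$ (equivalently of the relevant order-two group scheme) into $\Pic^0(P_{\mathrm{red}})$, and you assert this identification rather than prove it. The way the literature closes this is not via $\Pic^\tau_X$ directly but via Frobenius: the torsion stalk $T_x$ is one-dimensional and the induced isomorphism $\mathrm{H}^1(X,\calO_X)\cong T_x$ is compatible with a Frobenius-equivariant injection into $\mathrm{H}^1(P_{\mathrm{red}},\calO_{P_{\mathrm{red}}})$, so bijectivity (resp.\ vanishing) of $F$ on $\mathrm{H}^1(X,\calO_X)$ --- which is the definition of $X$ being singular (resp.\ supersingular) --- transfers to $\mathrm{H}^1(P_{\mathrm{red}},\calO_{P_{\mathrm{red}}})$ and singles out the ordinary-elliptic/multiplicative (resp.\ supersingular-elliptic/additive) fiber types. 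Without either this Frobenius comparison or a proof that $\Pic^\tau_X\to\Pic^0(P_{\mathrm{red}})$ is a closed immersion, parts $({\rm ii})$ and $({\rm iii})$ of the statement are not established; everything else in your proposal stands.
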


\begin{lemma}\label{smoothdoublefiber}
Let $f: X \to {\bf P}^1$ be an isotrivial genus one fibration on an Enriques surface in characteristic $2$.
Let $F$ be a double fiber of $f$ such that the underlying reduced fiber $F_{red}$ is an elliptic curve.
Then $F_{red}$ has $j$-invariant $0$ if and only if the generic fiber of $f$ also has $j$-invariant $0$.
\end{lemma}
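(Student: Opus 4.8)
The plan is to localize the fibration at the fibre $F$ and to compare the ordinary/supersingular type of $F_{red}$ with that of the generic fibre. The key point is that in characteristic $2$ these two $j$-invariants need not be equal, but they differ only by a power of the Frobenius and are therefore simultaneously zero, which is exactly what the stated equivalence records. First I would complete $\mathbf{P}^1$ at the point $t_0$ carrying the double fibre $F = 2F_{red}$, writing $R = \widehat{\mathcal{O}}_{\mathbf{P}^1, t_0} \cong k[[t]]$ and $K = k((t))$. The generic fibre $X_K$ is a smooth genus-one curve, and its Jacobian $E_K = \mathrm{Jac}(X_K)$ satisfies $j(E_K) = j(X_K)$; isotriviality forces this to be a constant $j_0 \in k$. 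Since $j_0$ is integral, $E_K$ has potential good reduction, so over a finite extension $K'/K$ it acquires good reduction with special fibre an elliptic curve $E_0/k$ for which $j(E_0) = j_0$. In particular $j_0 = 0$ if and only if $E_0$ is supersingular.

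Recall next that in characteristic $2$ an elliptic curve has $j$-invariant $0$ precisely when it is supersingular (this is also visible in the classification of double fibres in Proposition \ref{multi-fiber}, where a smooth genus-one double fibre is ordinary for $X$ classical or singular and supersingular for $X$ supersingular). Thus $j(F_{red}) = 0$ if and only if $F_{red}$ is supersingular, and it suffices to match the $p$-rank of $F_{red}$ with that of $E_0$. This is where the characteristic enters: unwinding a multiplicity-$2$ fibre forces a totally ramified degree-$2$ base change which in characteristic $2$ is purely inseparable, such as $s^2 = t$. Tracking the reduced fibre through such a base change, together with the Ogg--Shafarevich comparison of the torsor $X_K$ with its Jacobian $E_K$, should show that $F_{red}$ and $E_0$ coincide up to a Frobenius twist, so that $j(F_{red}) = j_0^{2^a}$ for some $a \in \mathbf{Z}$ (a negative $a$ being read as an iterated square root, which is legitimate since $k$ is perfect).

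Given this comparison the conclusion is immediate: as $x \mapsto x^2$ is a bijection of the perfect field $k$ with $x^{2^a} = 0 \Leftrightarrow x = 0$, we obtain $j(F_{red}) = 0 \Leftrightarrow j_0 = 0$, and both implications follow at once without needing the two $j$-invariants to be literally equal. I expect the middle step to be the main obstacle: pinning down how the reduced multiple fibre sits relative to the reduction of the Jacobian in the wild characteristic-$2$ situation, i.e. verifying that the discrepancy is no worse than a power of Frobenius. The two directions are otherwise symmetric, since being supersingular is a closed condition, stable under Frobenius twists and, for a family of constant $j$-invariant, insensitive to the specialization to $F_{red}$; so the vanishing of $j$ transfers both ways even when equality of $j$-invariants fails.
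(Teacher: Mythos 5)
Your argument has a genuine gap at precisely the step you flag as "the main obstacle": the claim that $F_{red}$ agrees with the reduction $E_0$ of the Jacobian up to a power of Frobenius is never established, only asserted as something the Ogg--Shafarevich comparison and the degree-$2$ base change "should show." This is the entire content of the lemma in your setup --- everything before it (potential good reduction of $E_K$, $j=0\Leftrightarrow$ supersingular in characteristic $2$, stability of supersingularity under Frobenius twists) is routine --- so without it the proof is incomplete. The relation between a reduced multiple fibre and the corresponding fibre of the Jacobian fibration is exactly the delicate point for wild fibres in characteristic $2$, and it is not clear that the discrepancy is controlled by a Frobenius twist in the generality you need. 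A side issue: a totally ramified degree-$2$ extension of $k((t))$ in characteristic $2$ need not be purely inseparable (it can be a wildly ramified Artin--Schreier extension, which is the relevant case for wild fibres), so the model $s^2=t$ you propose for "unwinding" the multiple fibre is not justified either.

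The paper sidesteps all of this by using isotriviality more directly: after a finite cover $C\to{\bf P}^1$ the family becomes trivial, isomorphic to $E\times C$ for a fixed elliptic curve $E$, so $F_{red}$ receives a finite surjection from $E$ and is therefore isogenous to the generic fibre. Since supersingularity is an isogeny invariant and, in characteristic $2$, is equivalent to $j=0$, the equivalence follows at once. If you want to salvage your approach, the cleanest fix is to replace the Jacobian-reduction comparison by this trivializing-cover argument; your framework of localizing at $t_0$ and reducing to a statement about supersingularity is compatible with it, but the Frobenius-twist comparison as stated would itself require a proof at least as long as the lemma.
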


\begin{proof}
We can assume that the general fiber of $f$ is an elliptic curve.
Since $f$ is isotrivial, it becomes trivial after passing to a finite cover of ${\bf P}^1$. Hence, $F$ is isogeneous to the generic fiber of $f$. Since having $j$-invariant $0$ is equivalent to being supersingular in characteristic $2$ and being supersingular is an isogeny-invariant, we get the result.
\end{proof}

We use the symbols $\I_n$ $(n\geq 1)$, $\I_n^*$ $(n\geq 0)$, $\II$, $\III$, $\IV$, $\II^*$, 
$\III^*$, $\IV^*$ of singular fibers of an elliptic or a quasi-elliptic fibration in the sense of Kodaira.  
The dual graph of $(-2)$-curves in a singular fiber of type $\I_n$ $(n\geq 2)$, $\I_n^*$ $(n\geq 0)$, 
$\III$, $\IV$, $\II^*$, $\III^*$ or $\IV^*$ is an extended Dynkin diagram $\tilde{A}_{n-1}$, $\tilde{D}_{n+4}$, 
$\tilde{A_1}$, $\tilde{A_2}$, $\tilde{E_8}$, $\tilde{E_7}$ or $\tilde{E_6}$, respectively.
For a double singular fiber of type $F$, we write $2F$.
Let $f : S \to {\bf P}^1$ be a genus one fibration on a surface $S$.  If, for example, $f$ has a double singular fiber of type $\III$ and a singular fiber of type $\IV^*$, then we say that $f$ has singular fibers $(2\III, \IV^*)$.
If $f$ has a section and its Mordell-Weil group is torsion, then $f$ is called extremal.
We use the following classifications of extremal rational elliptic and rational quasi-elliptic fibrations.

\begin{prop}\label{Lang}{\rm (Lang \cite{L1}, \cite{L2})}
The following are the singular fibers of extremal elliptic fibrations on rational surfaces in characteristic $2:$
$$({\rm II}^*),\ ({\rm II}^*, {\rm I}_1),\ ({\rm III}^*, {\rm I}_2),\ ({\rm IV}^*, {\rm IV}),\ ({\rm IV}^*, {\rm I}_3, {\rm I}_1),\ ({\rm I}_4^*),\ ({\rm I}_1^*, {\rm I}_4),$$
$$({\rm I}_9, {\rm I}_1, {\rm I}_1, {\rm I}_1),\ ({\rm I}_8, {\rm III}),\ ({\rm I}_6, {\rm IV}, {\rm I}_2),\ ({\rm I}_5, {\rm I}_5, {\rm I}_1, {\rm I}_1),\ ({\rm I}_3, {\rm I}_3, {\rm I}_3, {\rm I}_3).$$
\end{prop}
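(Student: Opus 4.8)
The plan is to reduce the classification to lattice theory via the Shioda--Tate formula, and then cut down the resulting finite list using the discriminant-degree constraint, paying special attention to wild ramification in characteristic $2$. Let $g : R \to {\bf P}^1$ be a relatively minimal rational elliptic surface with a section. Then $\chi({\calO}_R) = 1$, the N\'eron--Severi lattice has rank $10$ and is isomorphic to $U \oplus E_8$, and the discriminant $\Delta$ is a section of a line bundle of degree $12$, so that $\sum_v {\rm ord}_v(\Delta) = \deg \Delta = 12$. Writing $m_v$ for the number of irreducible components of the fiber over $v$, the trivial lattice is spanned by the zero section, a general fiber, and the non-identity fiber components, and the Shioda--Tate formula gives
\[
\rank \, {\rm MW}(g) = 8 - \sum_v (m_v - 1).
\]
Hence $g$ is extremal, i.e.\ $\rank\,{\rm MW}(g) = 0$, if and only if the non-identity fiber components span a negative-definite root lattice $R_0 = \bigoplus_v R_v$ of rank $8$; since the orthogonal complement of $U$ in $\NS(R)$ is $E_8$, this lattice embeds into $E_8$.

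Next I would enumerate the rank-$8$ root sublattices of $E_8$ up to isomorphism by the Borel--de Siebenthal procedure, that is, by iterated deletion of nodes from the extended Dynkin diagram $\tilde{E_8}$. This produces a finite list of abstract $ADE$ types ($E_8$, $E_7 + A_1$, $E_6 + A_2$, $D_8$, $A_8$, $A_7 + A_1$, $2D_4$, $4A_2$, and so on), each an a priori candidate. Each simple summand must then be realized by a Kodaira fiber: $A_{n-1}$ by a multiplicative fiber $\I_n$ (with ${\rm ord}\,\Delta = n$) or, for small $n$, by an additive fiber (so $\tilde{A_1}$ may also come from $\III$ and $\tilde{A_2}$ from $\IV$), $D_{n+4}$ by $\I_n^*$, and $E_6, E_7, E_8$ by $\IV^*, \III^*, \II^*$. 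I would then impose $\sum_v {\rm ord}_v(\Delta) = 12$ to decide which fiber types actually occur. The crucial point is this: for additive fibers in characteristic $2$ the local Euler number ${\rm ord}_v(\Delta) = e_{\rm top}(F_v) + \delta_v$ carries a strictly positive wild part $\delta_v$, computed from the Swan conductor via Tate's algorithm, so the naive tame count $\sum_v e_{\rm top}(F_v)=12$ fails. This is exactly what makes the characteristic-$2$ list differ from the classical Oguiso--Shioda and Miranda--Persson classifications: for instance type $\III$ has ${\rm ord}\,\Delta = 4$ in characteristic $2$, so $A_7 + A_1$ is realized as $(\I_8, \III)$ with $8 + 4 = 12$ rather than $(\I_8, \I_2)$, and several semistable characteristic-$0$ configurations are replaced by partly additive ones.

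Finally, for each numerically admissible configuration I would establish existence and rigidity by writing down an explicit Weierstrass equation over $k(t)$ in characteristic $2$ and running Tate's algorithm to confirm the fiber types, simultaneously checking that the Mordell--Weil rank equals $0$ (equivalently that $R_0$ attains rank $8$); the torsion subgroup, read off as the quotient of the primitive closure of $R_0$ in $E_8$ by $R_0$, provides an additional consistency check. I expect the main obstacle to be precisely the step involving the wild conductors: computing $\delta_v$ for every additive fiber type in characteristic $2$ and verifying that exactly the twelve configurations listed---and no others---satisfy $\deg \Delta = 12$ while remaining geometrically realizable, is the delicate part, since it requires a careful case-by-case analysis of degenerate Weierstrass models rather than the purely combinatorial lattice count that suffices in the tame setting.
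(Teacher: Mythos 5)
The paper offers no proof of this statement: Proposition \ref{Lang} is quoted verbatim from Lang's two papers on extremal rational elliptic surfaces in characteristic $p$, so there is no internal argument to compare yours against. Judged on its own terms, your outline is the standard (and essentially Lang's) strategy: Shioda--Tate reduces extremality to the non-identity fiber components spanning a rank-$8$ root sublattice of $E_8$, Borel--de Siebenthal enumerates the candidates, and the constraint $\deg \Delta = 12$ with Ogg's formula $\mathrm{ord}_v(\Delta) = f_v + m_v - 1$ (wild part included) prunes the list; your observation that $(\I_8,\III)$ replaces $(\I_8,\I_2)$ because $\mathrm{ord}_v(\Delta) \geq 4$ for a fiber of type $\III$ in characteristic $2$ is exactly the right phenomenon.

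The one place where your plan understates the difficulty is the elimination step. The lattice-plus-degree sieve does not by itself rule out configurations such as $(\I_8,\I_2,\I_1,\I_1)$ or $(\I_4,\I_4,\I_2,\I_2)$: both pass the rank-$8$ embedding test and sum to $12$ with no wild contribution, yet neither occurs in characteristic $2$. Their non-existence is not detected by running Tate's algorithm on a candidate equation (there is no candidate to run it on); it requires showing that \emph{every} Weierstrass model with the prescribed torsion in its Mordell--Weil group (here $\bbZ/4\bbZ$ and $\bbZ/4\bbZ\times\bbZ/2\bbZ$ respectively, read off from the primitive closure of the root lattice in $E_8$) forces a degeneration of the fiber configuration in characteristic $2$ --- in practice, an analysis of the universal curves with level structure and how their singular fibers collide or change type at $p=2$. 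Your final paragraph gestures at this ("and no others \ldots geometrically realizable"), but this case-by-case non-existence argument, together with the computation of the minimal wild conductors $\delta_v$ for each additive type at $p=2$, is the actual mathematical content of Lang's papers rather than a routine verification appended to the lattice count.
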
 

\begin{prop}\label{Itoh}{\rm (Ito \cite{Ito})}
The following are the singular fibers of quasi-elliptic fibrations on rational surfaces in characteristic $2:$
$$({\rm II}^*),\ ({\rm III}^*, {\rm III}),\ ({\rm I}_4^*),\ ({\rm I}_2^*, {\rm III}, {\rm III}), \ 
({\rm I}_0^*, {\rm I}_0^*),$$
$$({\rm I}_0^*, {\rm III}, {\rm III}, {\rm III}, {\rm III}),\ ({\rm III}, {\rm III}, {\rm III}, {\rm III}, {\rm III}, {\rm III}, {\rm III}, {\rm III}).$$
\end{prop}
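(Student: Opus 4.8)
The plan is to mirror Lang's classification (Proposition~\ref{Lang}): first impose a numerical constraint coming from the topological Euler number, then run a purely lattice-theoretic enumeration, and finally exhibit each surviving configuration by an explicit construction. The first step is to determine the admissible local fiber types of a quasi-elliptic fibration in characteristic $2$. Writing such a fibration in its Weierstrass normal form $y^2 = x^3 + a(t)x + b(t)$ (where the inseparability in $y$ is exactly the quasi-elliptic condition) and using that the cusps $x = a(t)^{1/2}$ of the fibers sweep out a curve finite over the base, Tate's algorithm shows that the only fibers that occur are of types $\II$, $\III$, $\I_{2m}^*$, $\III^*$ and $\II^*$; in particular the index of a starred fiber is always even, and $\I_n$ ($n \ge 1$), $\IV$ and $\IV^*$ never appear. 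This is the genuinely characteristic-$2$ input, and I expect it to be the main obstacle: everything afterwards is formal, whereas pinning down exactly this list — especially the parity of $\I_n^*$ and the exclusion of $\IV, \IV^*$ — requires the local analysis of the inseparable (cuspidal) structure.

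Next I would extract the global constraint. For a relatively minimal rational quasi-elliptic surface $S$ one has $c_2(S) = 12$ and $\rho(S) = b_2(S) = 10$. Since the topological Euler characteristic is additive and the generic (cuspidal) fiber has Euler number $2$, the fibration satisfies $\sum_v (e(F_v) - 2) = 8$. For each admissible type a direct count gives $e(F_v) = m_v + 1$, where $m_v$ is the number of components, so $\sum_v (m_v - 1) = 8$. The Shioda-Tate formula $\rho(S) = 2 + \sum_v (m_v - 1) + \rank\,\mathrm{MW}$ then forces $\rank\,\mathrm{MW} = 0$: every rational quasi-elliptic fibration is automatically extremal, and its reducible fibers span a negative-definite root lattice $R$ of rank exactly $8$.

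The enumeration is now immediate. The lattice $R$ embeds into the frame lattice of $S$, which is isometric to $E_8$, and by the first step its irreducible summands are of type $A_1$ (from $\III$), $D_{2m+4}$ (from $\I_{2m}^*$), $E_7$ (from $\III^*$) or $E_8$ (from $\II^*$). The usable summand ranks $\le 8$ are thus $1$, $4$, $6$, $8$ (for $A_1, D_4, D_6, D_8$), $7$ (for $E_7$) and $8$ (for $E_8$), so the rank-$8$ possibilities are exactly $E_8$, $E_7 + A_1$, $D_8$, $D_6 + 2A_1$, $D_4 + D_4$, $D_4 + 4A_1$ and $8A_1$; each is readily seen to embed into $E_8$ (through $D_8$ in all but the $E$-cases), so no further constraint arises. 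These seven root lattices are precisely the configurations in the statement.

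It remains to show that all seven occur. For this I would construct each fibration explicitly, most uniformly by choosing a pencil of cuspidal cubics — equivalently, by prescribing the vanishing orders of $a(t)$ and $b(t)$ in the Weierstrass equation — so that the reducible fibers are the desired ones, blowing up the base points to obtain a relatively minimal rational surface, and reading off the fiber types by Tate's algorithm. This half is routine but computational; the one point requiring care is to confirm that each constructed surface is genuinely quasi-elliptic, i.e.\ that the locus of cusps moves with $t$, rather than elliptic.
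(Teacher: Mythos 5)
The paper offers no proof of this proposition: it is quoted as a theorem of Ito \cite{Ito}, so there is nothing internal to compare your argument against. Your outline is the standard route and is, in substance, how such classifications are established: a local determination of admissible fibre types, the Shioda--Tate/Euler-number argument forcing the reducible fibres to span a rank-$8$ root sublattice of the frame lattice $E_8$ with trivial Mordell--Weil rank, a root-lattice enumeration, and explicit Weierstrass equations for existence (Ito's Proposition~5.1, which this paper uses in Section~\ref{VectFields}, is exactly such a list of equations).

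Two points deserve attention. First, the only genuinely nontrivial input is the local one, and you assert rather than prove it; in particular the claim that a fibre of type ${\rm I}_n^*$ on a quasi-elliptic surface in characteristic $2$ must have $n$ even is the fragile part. The safe and standard statement is only that the degenerate fibres are of types ${\rm III}$, ${\rm I}_n^*$ $(n\ge 0)$, ${\rm III}^*$, ${\rm II}^*$ (no multiplicative fibres, no ${\rm IV}$ or ${\rm IV}^*$). Fortunately your conclusion does not depend on the parity claim: if odd indices are allowed, the extra rank-$8$ candidates are $D_5\oplus 3A_1$ and $D_7\oplus A_1$, and neither embeds into $E_8$ (the orthogonal complements of $D_5$ and $D_7$ in $E_8$ are $A_3$ and a rank-one lattice generated by a vector of square $-4$, and $3A_1\not\hookrightarrow A_3$). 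I would restructure the enumeration so that the embedding condition does this work explicitly, rather than treating it as an afterthought; as written, a failure of the parity claim would leave a gap. Second, extremality is obtained more cleanly by noting that the Mordell--Weil group of a quasi-elliptic fibration is a finite $2$-group (the smooth locus of the generic fibre is a form of $\mathbb{G}_a$), which combined with $\rho=10$ and Shioda--Tate gives $\mathrm{rank}\,R=8$ directly; this avoids having to justify the absence of wild contributions in the fibrewise Euler-number formula $e(F_v)=m_v+1$.
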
 

\begin{remark}
Any quasi-elliptic fibration on a rational surface is extremal. This follows from the fact
that any section of a quasi-elliptic fibration with a section is of finite order
(cf. Ito \cite[Theorems 2.2, 2.4 and Corollary 2.12]{I1}.
\end{remark}

Consider a genus one fibration on an Enriques surface $\pi : X \to {\bf P}^1$.  Then the Mordell-Weil group of the Jacobian of $\pi$ acts on $X$ effectively as automorphisms.  This implies the following Proposition.

\begin{prop}\label{MW-Dolgachev}{\rm (Dolgachev \cite[$\S 4$]{D1})}
Assume that the automorphism group of an Enriques surface $X$ is finite.  Then any genus one fibration on $X$ is extremal.  
\end{prop}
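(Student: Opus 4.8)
The plan is to use the action of the Mordell-Weil group of the Jacobian fibration on $X$, as indicated in the paragraph preceding the statement. Given a genus one fibration $\pi : X \to {\bf P}^1$, let $J(\pi) : J(X) \to {\bf P}^1$ be its Jacobian fibration; its generic fiber $J_\eta$ is the Jacobian of the generic fiber $X_\eta$ of $\pi$, and $X_\eta$ is a torsor under $J_\eta$ over the function field $k({\bf P}^1)$. Since $J(\pi)$ carries the zero section, the Mordell-Weil group $J_\eta(k({\bf P}^1))$ is defined; note that $\pi$ itself has no section (it has a double fiber), so the convention is that a genus one fibration on $X$ is called extremal precisely when the Mordell-Weil group of its Jacobian is torsion, i.e. when the Mordell-Weil rank of $J(\pi)$ is $0$. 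Hence it suffices to show that finiteness of $\Aut(X)$ forces $J_\eta(k({\bf P}^1))$ to be finite.

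First I would recall the translation construction. A section $s \in J_\eta(k({\bf P}^1))$ acts on the torsor $X_\eta$ by translation, an automorphism of $X_\eta$ over $k({\bf P}^1)$, and hence a birational self-map of $X$ commuting with $\pi$. Since $X$ is an Enriques surface it is a minimal surface, so $\pi$ is relatively minimal and its fibers contain no $(-1)$-curves; consequently this birational self-map is biregular. One thus obtains a homomorphism $J_\eta(k({\bf P}^1)) \to \Aut(X)$ whose image consists of automorphisms preserving $\pi$.

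Next I would check injectivity. A nonzero section $s$ translates the torsor $X_\eta$ by a nonzero element of $J_\eta$, hence acts on $X_\eta$ without fixed points, so the induced automorphism of $X$ is nontrivial; thus the kernel of the homomorphism is trivial and $J_\eta(k({\bf P}^1))$ embeds into $\Aut(X)$. Granting this, if $\Aut(X)$ is finite then its subgroup $J_\eta(k({\bf P}^1))$ is finite. But the Mordell-Weil group is finitely generated---for instance by the Shioda-Tate formula, which realizes it as a subquotient of the (finitely generated) N\'eron-Severi group of $J(X)$---so finiteness is equivalent to rank $0$, i.e. to the group being torsion. By the above convention this says exactly that $\pi$ is extremal.

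The step I expect to be the main obstacle is the effectiveness of the Mordell-Weil action: that a nontrivial translation of $X_\eta$ really extends to a nontrivial \emph{biregular} automorphism of $X$ and does not degenerate over the multiple or singular fibers. This is precisely where relative minimality of $\pi$ and the fixed-point-freeness of translation on the generic fiber enter. In characteristic $2$ one must also treat the quasi-elliptic case, where the smooth locus of the generic fiber of $J(\pi)$ carries an additive group law instead of that of an elliptic curve; the translation argument carries over verbatim, and the finite-generation/rank-zero dichotomy again produces extremality.
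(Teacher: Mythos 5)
Your proposal is correct and follows exactly the route the paper indicates: the paper cites Dolgachev and, in the paragraph immediately preceding the proposition, sketches the same argument — the Mordell--Weil group of the Jacobian of $\pi$ acts effectively on $X$ by automorphisms, so finiteness of $\Aut(X)$ forces the Mordell--Weil group to be finite (hence torsion), i.e.\ $\pi$ is extremal. Your fleshing out of the translation action, its biregularity via relative minimality, and injectivity is a faithful expansion of that sketch.
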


Let $X$ be an Enriques surface.
A genus one fibration $f : X \to {\bf P}^1$ is called special if there exists a $(-2)$-curve $R$ with
$R \cdot f^{-1}(P) = 2$ $(P\in {\bf P}^1)$, that is, $f$ has a $(-2)$-curve as a $2$-section.  In this case, $R$ is called a special 2-section.  Note that for any quasi-elliptic fibration, the locus of singular points of irreducible fibers gives a special 2-section of the fibration.  We call this special 2-section
the curve of cusps of the fibration.
The following result is due to Cossec \cite{C} in which he assumed the characteristic $p\not=2$, but the assertion for $p=2$ holds, too.

\begin{prop}\label{Cossec}{\rm (Lang \cite[$\II$, Theorem A3]{L0})}
Assume that an Enriques surface $X$ contains a $(-2)$-curve.  
Then there exists a special genus one fibration on $X$.
\end{prop}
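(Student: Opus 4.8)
The plan is to recast the statement in terms of the lattice $\Num(X)\cong U\oplus E_8$ and the correspondence between genus one fibrations and nef isotropic classes. Recall that a primitive isotropic class $f\in\Num(X)$ that is nef gives a genus one pencil $|2f|$ whose fibers lie in the class $2f$; consequently an irreducible $(-2)$-curve $R$ is a special $2$-section of this pencil exactly when $\langle f,R\rangle=1$. Hence it is enough to produce, for the given $(-2)$-curve $R$, a primitive nef isotropic class $f$ with $\langle f,R\rangle=1$, because then $R$ itself is the special $2$-section and $|2f|$ the desired special fibration.

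First I would establish the arithmetic existence of an isotropic class meeting $R$ once. Since $U\oplus E_8$ is even, unimodular and indefinite, all primitive $(-2)$-vectors lie in a single $\O(\Num(X))$-orbit (Eichler/Witt), so transporting the model pair $(e,\,g+\alpha)$, where $e,g$ is a hyperbolic basis of $U$ and $\alpha\in E_8$ a root, yields a primitive isotropic $f_0$ with $\langle f_0,R\rangle=1$. By Riemann-Roch, $\chi(f_0)=\tfrac12 f_0\cdot(f_0-K_X)+\chi(\calO_X)=1$ and $\langle K_X,f_0\rangle=0$, so one of $\pm f_0$ is effective. After replacing $f_0$ by $-f_0$ and, if necessary, applying the reflection $s_R$ (which, as $R$ is an irreducible curve, converts $\langle f_0,R\rangle=-1$ into $+1$ and preserves isotropy), I may assume that $f_0$ is effective with $\langle f_0,R\rangle=1$.

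Next I would make $f_0$ nef. Reflecting in the $(-2)$-curves $C$ with $\langle f_0,C\rangle<0$ strictly decreases the degree $\langle f_0,H\rangle$ against a fixed ample $H$, so this process terminates at $f=w(f_0)$ with $w\in W(X)$; throughout, the class stays in the positive cone $P(X)$ and, since its $H$-degree is positive while $\langle K_X-f,H\rangle<0$, it stays effective by Riemann-Roch. The resulting $f$ is primitive, nef and isotropic, so $|2f|$ is a genus one pencil, and $\langle f,w(R)\rangle=1$ with $w(R)$ a $(-2)$-class.

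The main obstacle is precisely this last step: one must ensure that $f$ is met with multiplicity one by an irreducible $(-2)$-\emph{curve}, not merely by a $(-2)$-class, and not by an irreducible horizontal curve of arithmetic genus one (an elliptic $2$-section has self-intersection $0$, and nothing proven so far excludes this). The strategy I would adopt is a descent on the invariant $k:=\langle f,R\rangle$: among all primitive nef isotropic $f$ with $\langle f,R\rangle>0$ choose one with $k$ minimal and aim to show $k=1$, whence $R$ is the sought $(-2)$-curve $2$-section. When $k\geq 2$ one studies the pencil $|2f|$, on which $R$ is a $k$-section, and exploits the configuration of its reducible fibers and half-fibers to build a genus one pencil meeting $R$ with strictly smaller positive multiplicity. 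This is where the fiber geometry genuinely enters and where the characteristic $2$ features must be handled with care: both elliptic and quasi-elliptic pencils occur, and by Proposition \ref{multi-fiber} the double-fiber structure differs between the classical and supersingular cases (tame versus wild double fibers). Carrying out this descent rigorously, replacing the Hodge-theoretic input available over ${\bf C}$, is the technical core provided by Lang and by Dolgachev and Liedtke; once $k=1$ is reached the proof is complete.
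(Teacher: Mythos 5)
First, note that the paper does not actually prove this proposition: it is quoted from Lang \cite[$\II$, Theorem A3]{L0} and Dolgachev--Liedtke \cite[Theorem 5.3.4]{DL} (the result goes back to Cossec \cite{C} for $p\neq 2$), so there is no internal argument to compare yours against. Judged on its own terms, your proposal sets up the standard lattice-theoretic frame correctly: the identification of special pencils with primitive nef isotropic classes $f$ satisfying $\langle f,R\rangle=1$, the transitivity of ${\rm O}(\Num(X))$ on $(-2)$-vectors in $U\oplus E_8$ producing a primitive isotropic $f_0$ with $\langle f_0,R\rangle=1$, the Riemann--Roch argument for effectivity, and the $W(X)$-reflection process landing in the nef cone. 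All of that is sound.

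The genuine gap is the one you yourself flag: after the reflections, the $(-2)$-class $w(R)$ meeting $f$ with multiplicity one is merely a class, not a priori an irreducible curve, and your proposed remedy --- a descent on $k=\langle f,R\rangle$ over nef primitive isotropic classes, reducing $k\geq 2$ by analyzing the fibers of $|2f|$ on which $R$ is a $k$-section --- is announced but not carried out. That descent is not a routine verification; it is the entire content of the theorem, and exactly the part where the argument has to be redone in characteristic $2$, where quasi-elliptic pencils occur and the double-fiber structure is governed by Proposition \ref{multi-fiber}. Deferring it to Lang and Dolgachev--Liedtke leaves your write-up in the same state as the paper's citation: an accurate roadmap, but not a proof. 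To close it you would need, for $k\geq 2$, to exhibit a concrete effective primitive isotropic class built from $R$ and components of fibers (or half-fibers) of $|2f|$ whose pairing with $R$ is positive and strictly smaller than $k$, and to verify its effectivity, nefness after reflection, and primitivity --- none of which is present.
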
 

\begin{remark}\label{quasi-elliptic-special}
Any quasi-elliptic fibration is special because the cuspidal curve (that is, the curve consisting of 
the cusps of the fibers) is a special 2-section.
\end{remark}

For future reference, let us state the following well-known facts about automorphisms of curves of arithmetic genus $1$ (compare e.g. \cite{DM}).

\begin{lemma}\label{genus1auto}
Let $C$ be a curve of arithmetic genus $1$ over an algebraically closed field of characteristic $2$ and let ${\rm id} \neq g \in {\rm Aut}(C)$ be an automorphism of finite order.
\begin{enumerate}
\item If $g$ has even (resp. odd) order and $C$ is smooth, then $g$ fixes at most $2$ (resp. $3$) points on $C$.
\item If $g$ has even (resp. odd) order and $C$ is cuspidal, then $g$ fixes exactly $1$ (resp. $2$) points on $C$.
\end{enumerate}
\end{lemma}

\subsection{Vinberg's criterion}

Let $X$ be an Enriques surface.
We recall Vinberg's criterion,
which guarantees that a group generated by a finite number of reflections is
of finite index in ${\rm O}(\Num(X))$.

Let $\Delta$ be a finite set of $(-2)$-vectors in $\Num(X)$.
Let $\Gamma$ be the graph of $\Delta$, that is,
$\Delta$ is the set of vertices of $\Gamma$ and two vertices $\delta$ and $\delta'$ are joined by $m$-tuple lines if $\langle \delta, \delta'\rangle=m$.
We assume that the cone
$$K(\Gamma) = \{ x \in \Num(X)\otimes {\bf R} \ : \ \langle x, \delta_i \rangle \geq 0, \ \delta_i \in \Delta\}$$
is a strictly convex cone. Such a $\Gamma$ is called non-degenerate.
A connected parabolic subdiagram $\Gamma'$ in $\Gamma$ is a  Dynkin diagram of type $\tilde{A}_m$, 
$\tilde{D}_n$ or $\tilde{E}_k$ (see Vinberg \cite[p. 345, Table 2]{V}).  If the number of vertices of 
$\Gamma'$ is $r+1$, then $r$ is called the rank of $\Gamma'$.  A disjoint union of connected parabolic subdiagrams is called a parabolic subdiagram of $\Gamma$.  We denote by $\tilde{K_1}\oplus \cdots \oplus  \tilde{K_s}$ a parabolic subdiagram which is a disjoint union of 
connected parabolic subdiagrams of type $\tilde{K_1}, \ldots, \tilde{K_s}$, where
$K_i$ is $A_m$, $D_n$ or $E_k$. The rank of a parabolic subdiagram is the sum of the ranks of its connected components.  Note that the dual graph of reducible singular fibers of a genus one fibration on 
$X$ gives a parabolic subdiagram.  
We denote by $W(\Gamma)$ the subgroup of ${\rm O}(\Num(X))$ 
generated by reflections associated with $\delta \in \Gamma$.

\begin{prop}\label{Vinberg}{\rm (Vinberg \cite[Theorem 2.3]{V})}
Let $\Delta$ be a set of $(-2)$-vectors in ${\rm Num}(X)$
and let $\Gamma$ be the graph of $\Delta$.
Assume that $\Delta$ is a finite set, $\Gamma$ is non-degenerate and $\Gamma$ contains no $m$-tuple lines with $m \geq 3$.  Then $W(\Gamma)$ is of finite index in ${\rm O}({\rm Num}(X))$ if and only if every connected parabolic subdiagram of $\Gamma$ is a connected component of some
parabolic subdiagram in $\Gamma$ of rank $8$ {\rm (}= the maximal one{\rm )}.
\end{prop}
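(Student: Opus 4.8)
The plan is to recover Vinberg's criterion from the hyperbolic geometry carried by $\Num(X)$. Since $\Num(X)\cong U\oplus E_8$ is an even unimodular lattice of signature $(1,9)$, the projectivised positive cone $P(X)$ is a model of hyperbolic $9$-space $\mathbb{H}^9$, and ${\rm O}(\Num(X))$ acts on $\mathbb{H}^9$ as a discrete group of isometries of finite covolume; this arithmeticity, which is where unimodularity enters, I would take from reduction theory. Each $(-2)$-vector $\delta\in\Delta$ determines a hyperplane $H_\delta=\delta^\perp\cap\mathbb{H}^9$ together with the reflection $s_\delta$, so that $W(\Gamma)$ is a group generated by hyperbolic reflections. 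First I would realise the cone $K(\Gamma)$ as a convex polyhedron $\Pi=K(\Gamma)\cap\mathbb{H}^9$: non-degeneracy makes $\Pi$ strictly convex and line-free, finiteness of $\Delta$ gives it finitely many facets, and the absence of $m$-tuple lines with $m\geq 3$ forces every dihedral angle between adjacent walls to be $\pi/2$ (when $\langle\delta,\delta'\rangle=0$), $\pi/3$ (when $\langle\delta,\delta'\rangle=1$), or $0$, i.e. a cusp (when $\langle\delta,\delta'\rangle=2$). By Vinberg's theory of acute-angled polyhedra this makes $W(\Gamma)$ a discrete reflection group with fundamental domain $\Pi$.

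Next I would replace the index condition by a volume condition. As ${\rm O}(\Num(X))$ is a finite-covolume lattice in ${\rm O}(1,9)$ and $W(\Gamma)\subseteq{\rm O}(\Num(X))$ has fundamental domain $\Pi$, the covolume of $W(\Gamma)$ equals the index $[{\rm O}(\Num(X)):W(\Gamma)]$ times the covolume of ${\rm O}(\Num(X))$, up to the finite symmetry group of $\Pi$; hence the index is finite if and only if $\mathrm{vol}(\Pi)<\infty$. The problem thus becomes to characterise finite volume of the Coxeter polyhedron $\Pi$.

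The core of the argument is the dictionary between faces of $\Pi$ and subdiagrams of $\Gamma$, read from the signature of the Gram matrix of the corresponding walls. A collection of walls meeting along a genuine codimension-$k$ face has positive definite Gram matrix, i.e. forms an elliptic subdiagram of rank $k$; in particular ordinary vertices of $\Pi$ in $\mathbb{H}^9$ correspond to rank-$9$ elliptic subdiagrams, while ideal vertices on $\partial\mathbb{H}^9$ correspond to positive semidefinite configurations with one-dimensional radical, that is, to rank-$8$ parabolic subdiagrams, the isotropic radical being the cusp at infinity. A convex polyhedron with finitely many facets in $\mathbb{H}^9$ has finite volume exactly when it is the convex hull of its finitely many ordinary and ideal vertices, i.e. when it has no unbounded ``free'' direction at infinity. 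I would then show that a connected parabolic subdiagram of rank $r<8$ which is \emph{not} a component of any rank-$8$ parabolic subdiagram produces precisely such a free direction: the corresponding affine face escapes to infinity without closing into a finite-volume cusp, so $\mathrm{vol}(\Pi)=\infty$. Conversely, if every connected parabolic subdiagram lies in a rank-$8$ parabolic subdiagram, all boundary points at infinity assemble into finitely many honest cusps with compact Euclidean cross-sections, and $\Pi$ is the convex hull of its finitely many proper and ideal vertices, hence of finite volume. This gives the stated equivalence.

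The hard part will be the third step: pinning down the face--subdiagram dictionary through Gram-matrix signatures and, above all, proving rigorously that the failure of the rank-$8$ parabolic condition is equivalent to the presence of an infinite-volume cusp neighbourhood. This rests on the general theory of finite-volume convex polyhedra in hyperbolic space together with Vinberg's analysis of the local structure of acute-angled polyhedra at their faces at infinity, which is exactly the content packaged in the cited Theorem 2.3.
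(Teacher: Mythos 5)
The paper offers no proof of this proposition: it is stated as a quotation of Vinberg's Theorem 2.3 and is used as a black box throughout, so there is no internal argument to compare yours against. Your sketch does reconstruct, correctly, the architecture of Vinberg's own proof: $P(X)$ modulo scaling is hyperbolic $9$-space, ${\rm O}({\rm Num}(X))$ acts on it as a lattice of finite covolume, $K(\Gamma)$ cuts out an acute-angled Coxeter polyhedron $\Pi$ serving as a fundamental domain for $W(\Gamma)$ (the hypothesis on $m$-tuple lines ensuring all dihedral angles are $\pi/2$, $\pi/3$ or $0$), finiteness of the index is equivalent to finiteness of ${\rm vol}(\Pi)$, and the volume is governed by the structure of $\Pi$ at infinity, where rank-$8$ parabolic subdiagrams correspond to cusps. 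That is the right picture and the right chain of reductions.

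The difficulty is that the argument stops exactly where the theorem begins. The assertion that $\Pi$ has finite volume if and only if every connected parabolic subdiagram of $\Gamma$ is a component of a parabolic subdiagram of rank $8$ is not a routine consequence of the face--subdiagram dictionary; it \emph{is} the theorem, and you concede as much when you write that this step is ``exactly the content packaged in the cited Theorem 2.3''. As a proof the proposal is therefore circular: both directions require a genuine analysis of an acute-angled polyhedron near the isotropic radical of a parabolic subconfiguration --- showing that a connected parabolic subdiagram which does not extend to rank $8$ forces an end of infinite volume, and conversely that when all of them do extend, $\Pi$ is the convex hull of finitely many ordinary and ideal vertices. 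Two smaller points would also need care in a complete write-up: one must justify that $K(\Gamma)\cap\mathbb{H}^9$ is a fundamental domain for the group generated by \emph{all} the $s_\delta$, $\delta\in\Delta$, and not only those supporting walls of $K(\Gamma)$; and the finite covolume of ${\rm O}({\rm Num}(X))$ is itself a nontrivial arithmetic input (Siegel, Borel--Harish-Chandra). Since the paper itself treats the proposition as a citation, deferring these points to Vinberg is legitimate expository practice, but then what you have written is a guide to the proof rather than a proof.
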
 
\noindent

\begin{remark}\label{Vinbergremark}
Note that $\Gamma$ as in the above proposition is automatically non-degenerate if it contains the components of the reducible fibers of a special extremal genus one fibration and a special $2$-section of this fibration. Indeed, these curves will generate ${\rm Num}(X)\otimes {\bf Q}$ and hence $K(\Gamma)$ is strictly convex.
\end{remark}

\begin{prop}\label{Namikawa}{\rm (Namikawa \cite[Proposition 6.9]{Na})}
Let $\Delta$ be a finite set of $(-2)$-curves on an Enriques surface $X$ and let
$\Gamma$ be the graph of $\Delta$. 
Assume that $W(\Gamma)$ is of finite index in ${\rm O}({\rm Num}(X))$.  Then $\Delta$ is the set of all
$(-2)$-curves on $X$.
\end{prop}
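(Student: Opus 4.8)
The plan is to translate the statement into the geometry of the reflection action on the hyperbolic lattice $\Num(X)\cong U\oplus E_8$ and to compare the chamber cut out by $\Delta$ with the nef cone. Write $W(X)$ for the group generated by the reflections in \emph{all} $(-2)$-curves, so that $W(\Gamma)\subseteq W(X)\subseteq {\rm O}(\Num(X))$; the hypothesis that $W(\Gamma)$ has finite index then forces $W(X)$ to have finite index as well. Recall that $D(X)$, the closure of the ample cone, is a fundamental domain for $W(X)$ whose walls are exactly the hyperplanes $E^{\perp}$ as $E$ runs through all $(-2)$-curves (the nef cone is $\{x:\langle x,E\rangle\geq 0\ \forall\,(-2)\text{-curves }E\}$). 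Since the members of $\Delta$ are $(-2)$-curves and distinct irreducible curves satisfy $\langle\delta,\delta'\rangle\geq 0$, the system $\Delta$ is acute, so by Vinberg's theory $K(\Gamma)$ is a fundamental domain for $W(\Gamma)$ whose walls are the $\delta^{\perp}$, $\delta\in\Delta$. As an ample class meets every curve positively, $D(X)\subseteq K(\Gamma)$. Thus the assertion ``$\Delta$ is the set of all $(-2)$-curves'' is equivalent to ``$K(\Gamma)=D(X)$'': if a $(-2)$-curve $E$ is omitted, the constraint $\langle\cdot,E\rangle\geq 0$ is dropped from the description of $K(\Gamma)$, so $K(\Gamma)$ acquires points with $\langle\cdot,E\rangle<0$ and strictly contains $D(X)$.

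Assume for contradiction that some $(-2)$-curve $E\notin\Delta$. First, $E^{\perp}$ meets the interior of $K(\Gamma)$: a generic point $p$ of the facet $E^{\perp}\cap D(X)$ satisfies $\langle p,\delta\rangle>0$ for every wall of $D(X)$ other than $E^{\perp}$, in particular for every $\delta\in\Delta$, so $p\in\mathrm{int}\,K(\Gamma)\cap E^{\perp}$. Since $s_E$ fixes $p$, while interior points of a fundamental domain have trivial stabilizer, we get $s_E\notin W(\Gamma)$, whence $W(\Gamma)\subsetneq W(X)$. Moreover the reflected chamber $s_E D(X)$ again lies in $K(\Gamma)$, because for $y\in D(X)$ and $\delta\in\Delta$ one has $\langle s_E y,\delta\rangle=\langle y,\delta\rangle+\langle\delta,E\rangle\langle y,E\rangle\geq 0$ using $\langle\delta,E\rangle\geq 0$. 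So $K(\Gamma)$ contains at least the two distinct $W(X)$-chambers $D(X)$ and $s_E D(X)$.

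It remains to upgrade ``$K(\Gamma)\supsetneq D(X)$'' to a genuine contradiction with finiteness, and this is the heart of the matter. The plan is to apply Vinberg's criterion (Proposition \ref{Vinberg}) to both the full graph $\Gamma_{\mathrm{full}}$ of all $(-2)$-curves and to $\Gamma$: finiteness of index is equivalent to every connected parabolic subdiagram being completable to one of rank $8$. One then studies the codimension-two faces of $K(\Gamma)$ lying on an omitted wall $E^{\perp}$; around such a face the chambers $D(X),\,s_E D(X),\dots$ of $W(X)$ accumulate, and the reflected roots $s_E(\delta)=\delta+\langle\delta,E\rangle E$ produced there satisfy $\langle\delta,s_E\delta\rangle=-2+\langle\delta,E\rangle^{2}$, which equals $-1$ precisely when $\langle\delta,E\rangle=1$. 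Because two walls of the convex Coxeter polytope $K(\Gamma)$ must meet non-negatively, such configurations cannot close $K(\Gamma)$ off after finitely many chambers, so the relevant connected parabolic subdiagram of $\Gamma$ fails to extend to rank $8$ and $[{\rm O}(\Num(X)):W(\Gamma)]=\infty$, the desired contradiction. I expect this final step --- showing that omitting even a single $(-2)$-curve necessarily destroys the rank-$8$ completion property, equivalently opens $K(\Gamma)$ up to infinite volume --- to be the main obstacle, since a priori a finite-volume Coxeter chamber could properly contain the nef cone (a ``truncation'' phenomenon); ruling this out requires the specific structure of the $U\oplus E_8$ lattice together with Vinberg's classification of parabolic subdiagrams, rather than soft convexity alone.
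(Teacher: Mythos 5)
The paper offers no proof of this proposition --- it is quoted from Namikawa --- so your attempt can only be judged on its own terms. The setup is correct as far as it goes: $D(X)\subseteq K(\Gamma)$, each omitted $(-2)$-curve $E$ cuts a genuine wall of $D(X)$ whose relative interior lies in the interior of $K(\Gamma)$, $s_E D(X)\subseteq K(\Gamma)$, and hence $W(\Gamma)\subsetneq W(X)$. But, as you yourself flag, none of this contradicts finiteness of the index, and the concluding ``plan'' --- that the chamber configuration around a codimension-two face on $E^{\perp}$ must violate the rank-$8$ completion condition in Vinberg's criterion --- is not an argument: you exhibit no connected parabolic subdiagram of $\Gamma$ that fails to complete, and the ``truncation'' worry you raise is precisely the point left unresolved. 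The proof is therefore genuinely incomplete at its decisive step.

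The missing idea is much more direct and uses the irreducibility of $E$ one more time. Since $E\notin\Delta$ and every $\delta\in\Delta$ is an irreducible curve distinct from $E$, one has $\langle E,\delta\rangle\geq 0$ for all $\delta\in\Delta$, so the class $E$ itself lies in the cone $K(\Gamma)$ even though $E^{2}=-2<0$. Take an ample class $h$, which lies in the interior of $K(\Gamma)$ and satisfies $\langle h,E\rangle>0$; the segment $x_t=(1-t)h+tE$ then stays in the interior of $K(\Gamma)$ for $0\le t<1$ (since $\langle x_t,\delta\rangle\geq(1-t)\langle h,\delta\rangle>0$) and crosses the isotropic cone at some $t_0\in(0,1)$ because $x_0^2>0$ and $x_1^2=-2$. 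Thus $K(\Gamma)$ contains a nonzero isotropic vector in its interior, so the image of $K(\Gamma)\cap P(X)$ in hyperbolic $9$-space contains a full neighbourhood of an ideal boundary point and has infinite volume. Since $\Delta$ consists of pairwise non-negatively intersecting $(-2)$-classes, $K(\Gamma)$ is a fundamental domain for $W(\Gamma)$ on $P(X)$, whereas ${\rm O}(\Num(X))$ acts with finite covolume and any finite-index subgroup would have a finite-volume fundamental domain. This contradiction shows no such $E$ exists, with no analysis of parabolic subdiagrams required.
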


\section{Conductrix}\label{conductrix}

Let $X$ be a classical or supersingular Enriques surface.  Then it is known that there exists a global 
regular 1-form $\eta$ on $X$.  The canonical cover $\pi : \tilde{X} \to X$ has a singularity at $P\in \tilde{X}$
if and only if $\eta$ vanishes at $\pi(P)$. Since $c_2(X)=12$, $\eta$ always vanishes somewhere, and hence
$\tilde{X}$ is singular.  
The divisorial part $B$ of the zero scheme of $\eta$ is called the bi-conductrix of $X$.
The divisor $B$ is of the form $2A$, where $A$ is a divisor called the conductrix of $X$.

The purpose of this section is to recall the results of Ekedahl and Shepherd-Barron \cite{ES} and study the interplay between the conductrix and genus one fibrations. In particular, we will make extensive use of their tables \cite[p.13]{ES} and \cite[pp.16-18]{ES}. Then, we will apply this knowledge to special extremal genus one fibrations. This will lead to the classification of dual graphs of Enriques surfaces with finite automorphism group in the next section.

In this section, unless mentioned otherwise, all Enriques surfaces are classical or supersingular in characteristic 2. For simplicity, we write $A_1$-singularity or $D_4$-singularity for a rational double
point of type $A_1$ or of type $D_4$, respectively.  Also, we will use the symbol $nA_1$ for $n$ rational double points of type $A_1$ (in characteristic 2, there are two types $D_4^0$, $D_4^1$ of singularities 
with the same dual graph $D_4$ of exceptional curves.  However, we need only the dual graph as 
information and do not keep track of the isomorphism class).

\subsection{Singularities of the canonical cover}
In \cite{ES}, Ekedahl and Shepherd-Barron studied "exceptional" Enriques surfaces using the conductrix associated to their canonical cover. Recall the following structural result.

\begin{prop}\label{ES}{\rm (Ekedahl and Shepherd-Barron \cite[Proposition 0.5]{ES}
)}
Let $X$ be an Enriques surface and $A$ its conductrix.
Assume $A\not= 0$.  Then, $A$ is $1$-connected. Moreover, $A^2=-2$, $A$ is supported on $(-2)$-curves and the normalization of the canonical cover has either four rational double points of type $A_1$ as singularities or one rational double point of type $D_4$.
\end{prop}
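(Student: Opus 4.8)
The plan is to reduce everything to the behaviour of the essentially unique global $1$-form $\eta\in H^0(X,\Omega^1_X)$, working in the local model of the canonical cover. On a small chart the inseparable double cover $\pi\colon\tilde X\to X$ is given by an equation $z^2=f$; for $X$ supersingular (the $\alpha_2$-cover) the form $\eta$ restricts to the exact form $df$, and for $X$ classical (the $\mu_2$-cover) to the logarithmic form $df/f$, and in both cases $\tilde X$ is singular exactly over the zero locus of $\eta$, as already recorded in the excerpt. First I would show that the divisorial part of $Z(\eta)$ is divisible by $2$, which is the source of the relation $B=2A$. This rests on the characteristic-$2$ fact that a regular closed $1$-form vanishes in codimension one only to even order: if an irreducible $p$ divides both partials $f_x,f_y$ of a local potential, then, since $\partial_x f_x=\partial_y f_y=0$ forces $f_x\in k[x^2,y]$ and $f_y\in k[x,y^2]$, the inseparability of $k[x,y]$ over these subrings makes $p^2$ already divide $f_x$ and $f_y$. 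The same computation identifies $A$ with the locus along which $\tilde X$ fails to be normal: writing $f=g^2h$ locally, the normalization adjoins $w=z/g$ and turns $z^2=g^2h$ into the normal equation $w^2=h$.

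Next I would strip off the divisorial zeros and count the isolated ones by a Chern-class computation. Writing $\eta=\sigma_B\cdot\eta'$ with $\sigma_B\in H^0(\mathcal{O}_X(B))$ and $\eta'\in H^0(\Omega^1_X(-B))$ having only isolated zeros $Z$, the Koszul resolution of $\eta'$ gives $\mathrm{length}(Z)=c_2(\Omega^1_X(-B))=c_2(X)-K_X\cdot B+B^2$. Since $c_2(X)=12$ and $K_X$ is numerically trivial, this equals $12+B^2=12+4A^2$. Under normalization these isolated zeros are precisely the singular points of $\tilde X^\nu$, each one (via the normal local equation $w^2=h$ with $h$ having an isolated critical point) a rational double point whose length equals the colength of $(h_x,h_y)$. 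Thus the whole statement reduces to proving $A^2=-2$, which forces total length $4$, together with the classification of how that length $4$ can occur.

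For the local types I would use the two model equations in characteristic $2$: $h=xy$ gives $(h_x,h_y)=(y,x)$ of colength $1$ and $z^2=xy$, a point of type $A_1$; while $h=xy(x+y)=x^2y+xy^2$ gives $h_x=y^2$, $h_y=x^2$ of colength $4$ and $z^2=x^2y+xy^2$, a point of type $D_4$. Invoking Artin's classification of the rational double points of the form $z^2=h$ in characteristic $2$, I would argue that a single isolated zero of length $4$ must be a $D_4$ point, whereas a length splitting into reduced points can only be four nodes, that is, four $A_1$ points; the remaining configurations of total length $4$ are excluded because they either fail to be canonical or are incompatible with the fact that $\eta$ is the unique global $1$-form. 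This produces exactly the dichotomy in the statement.

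The hard part will be pinning down $A^2=-2$ and the $1$-connectedness of $A$, and it is here that the global $K3$-like geometry enters through $\omega_{\tilde X}\cong\mathcal{O}_{\tilde X}$. The plan is to feed this into the conductor--adjunction formula for the normalization, $\omega_{\tilde X^\nu}\cong\nu^*\omega_{\tilde X}\otimes\mathcal{O}(-\tilde\Delta)\cong\mathcal{O}(-\tilde\Delta)$, where $\tilde\Delta$ is the curve over $A$; pulling back to the minimal resolution and comparing with $p_a(A)=1+\tfrac12A^2$ (using $K_X\cdot A=0$) and with the inequality $\mathrm{length}(Z)=12+4A^2\ge 0$ should force $A^2=-2$. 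For $1$-connectedness I would show that a splitting $A=A_1+A_2$ with $A_1\cdot A_2\le 0$ would either disconnect the non-normal locus of the connected surface $\tilde X$ or violate $p_a(A)\ge 0$. Rather than run the full case analysis, I expect the cleanest route is to reduce to Ekedahl and Shepherd-Barron's explicit enumeration of the conductrices of elliptic and quasi-elliptic fibrations on classical and supersingular Enriques surfaces; that enumeration is the genuinely technical input, and from it the value $A^2=-2$, the $1$-connectedness of $A$, and the four-$A_1$ versus $D_4$ dichotomy all follow by inspection.
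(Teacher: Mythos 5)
The paper does not prove this proposition: it is imported verbatim from Ekedahl--Shepherd-Barron \cite{ES} and Dolgachev--Liedtke \cite{DL}, so there is no internal proof to compare yours against. Your sketch does follow the strategy of \cite{ES} in outline (local equation $z^2=f$ of the $\mu_2$- or $\alpha_2$-torsor, singularities of $\tilde X$ located at the zeros of $\eta$, and the count $\mathrm{length}(Z)=c_2(X)-K_X\cdot B+B^2=12+4A^2$ for the isolated part, which correctly yields total length $4$ once $A^2=-2$ is known).

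Within that outline there are three real gaps. (1) Your local argument for evenness of the bi-conductrix is not a valid deduction: $\partial_x f_x=0$ does give $f_x\in k[x^2,y]$, but that does not force $p^2\mid f_x$ for an irreducible $p$ dividing $f_x$ (take $f_x=y$). A correct argument must play the two partials against each other: writing $f=f_0+xf_1+yf_2+xyf_3$ with $f_i\in k[x^2,y^2]$, if $y$ divides $f_x=f_1+yf_3$ to order exactly one then $y\nmid f_3$, and then $y\mid f_y=f_2+xf_3$ would force $x$ to be a square modulo $y$, a contradiction. (2) The dichotomy ``four $A_1$ versus one $D_4$'' does not follow from the length count together with Artin's list: a length-$4$ singular scheme could a priori split as $2+2$ or $2+1+1$, and ``fails to be canonical or is incompatible with uniqueness of $\eta$'' is an assertion, not an argument; in \cite{ES} the intermediate configurations are excluded using the normal form of the $\mu_2$- resp. $\alpha_2$-action (equivalently of the dual $2$-closed vector field) at an isolated zero, and that input is absent from your sketch. (3) Your route to $A^2=-2$ only gives the lower bound $A^2\ge -2$ (from $12+4A^2\ge 0$ and evenness of $\mathrm{Num}(X)$); excluding $A^2\ge 0$, and proving $1$-connectedness, is exactly the hard part, and your fallback of ``reducing to Ekedahl and Shepherd-Barron's enumeration of conductrices'' inverts the logical order: that enumeration (Theorems 2.2 and 3.1 of \cite{ES}, used in Lemmas \ref{ellipticCondSing} and \ref{QellipticCondSing} of this paper) is carried out \emph{using} $A^2=-2$ and the $1$-connectedness of $A$ as constraints, so the proposal is circular precisely where it is hardest.
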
 

The following Lemma is the reason why there is a relation between the conductrix of an Enriques surface and singular fibers of its genus one fibrations.
\begin{lemma}\label{frobenius}{\rm (}Ekedahl and Shepherd-Barron \cite[Lemma 0.9]{ES}{\rm )}
Let $X$ be an Enriques surface, $\rho: \tilde{X} \to X$ its canonical cover and $\pi: X \to {\bf P}^1$ a genus one fibration.
Then the morphism $\rho$ factors through the pullback $X_F$ of $\pi$ by the Frobenius map on ${\bf P}^1$. The map $\tilde{X} \to X_F$ is an isomorphism outside of the double fibers of $\pi$.
\end{lemma}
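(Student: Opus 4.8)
The plan is to use the universal property of the fibre product. By definition $X_F=X\times_{{\bf P}^1,F}{\bf P}^1$, so a morphism $\tilde X\to X_F$ over $X$ (with first component $\rho$) is the same datum as a morphism $q\colon\tilde X\to{\bf P}^1$ with $F\circ q=\pi\circ\rho$. Writing $t$ for an affine coordinate on the base, $F^{*}t=t^{2}$, so $(F\circ q)^{*}t=(q^{*}t)^{2}$ must equal $\rho^{*}\pi^{*}t$; hence such a $q$ exists exactly when $\rho^{*}\pi^{*}t$ is a square in $k(\tilde X)$, equivalently when the pulled-back $1$-form $\rho^{*}(\pi^{*}dt)=d(\rho^{*}\pi^{*}t)$ vanishes. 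Thus the factorisation statement reduces to producing $\sqrt{\rho^{*}\pi^{*}t}\in k(\tilde X)$, and then $q$ is the associated map to ${\bf P}^1$.

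First I would treat the classical case, where $2K_X\sim0$ but $K_X$ is not trivial and, by Proposition \ref{multi-fiber}, $\pi$ has two double fibres $2F_0,2F_\infty$ with $K_X\sim F_0-F_\infty$. Choosing $t$ with a simple zero (resp. pole) at the image of $F_0$ (resp. $F_\infty$), the rational function $\pi^{*}t$ has divisor $2(F_0-F_\infty)$. Let $w$ be a rational section of $\omega_X=\mathcal{O}_X(K_X)$ with $\mathrm{div}(w)=F_0-F_\infty$; then $w^{2}$ is a rational section of $\omega_X^{\otimes2}\cong\mathcal{O}_X$, i.e. a rational function with the same divisor as $\pi^{*}t$, whence $\pi^{*}t=c\,w^{2}$ for some $c\in k^{*}$. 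Since the canonical cover trivialises $\omega_X$, i.e. $\rho^{*}\omega_X\cong\mathcal{O}_{\tilde X}$, the pullback $h:=\rho^{*}w$ is an honest rational function on $\tilde X$ and $\rho^{*}\pi^{*}t=c\,h^{2}$, so $\sqrt{\rho^{*}\pi^{*}t}\in k(\tilde X)$ and the factorisation follows.

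In the supersingular case $\omega_X\cong\mathcal{O}_X$, so the line-bundle computation degenerates and I would instead use the regular $1$-form $\eta$ defining the $\alpha_2$-cover. Locally the cover is $z^{2}=f$ with $\eta=df$, so $\rho^{*}f=z^{2}$ gives $\rho^{*}\eta=d(z^{2})=0$; this vanishing plays the role of $\rho^{*}\omega_X\cong\mathcal{O}_{\tilde X}$. As $\rho^{*}(\pi^{*}dt)=d(\rho^{*}\pi^{*}t)$, it then suffices to show $\pi^{*}dt=\lambda\,\eta$ for some $\lambda\in k(X)$, for then $\rho^{*}(\pi^{*}dt)=\rho^{*}(\lambda)\,\rho^{*}\eta=0$. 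This proportionality says that a general fibre of $\pi$ is tangent to the foliation $\ker\eta$, i.e. $\eta|_{F}=0$, and I expect this to be the main obstacle. The relevant mechanism is characteristic-$2$ specific: over the double fibre $2F_0$ the coordinate $t$ pulls back to a function vanishing to order two along $F_0$, so $\pi^{*}dt$ vanishes there, and matching the divisorial zeros of $\eta$ (its conductrix, Proposition \ref{ES}) against those of $\pi^{*}dt$ should force the two forms to define the same foliation; I would make this precise using the Ekedahl–Shepherd-Barron classification of conductrices.

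Finally, for the claim that $\tilde X\to X_F$ is an isomorphism away from the double fibres, I would argue locally. Over the complement $U\subset{\bf P}^1$ of the images of the double fibres, $\eta$ is nowhere zero, so $\tilde X|_{U}$ is smooth, while $X_F|_{U}$ is normal; the map $\phi\colon\tilde X\to X_F$ built above is finite and birational, hence an isomorphism over $U$ by Zariski's main theorem. Over a double fibre $2F_0$ the two covers genuinely differ, since on $X_F$ the relation $t=u^{2}$ turns the fibre non-reduced, which is precisely where $\tilde X$ carries the singularities of its normalisation, so $\phi$ fails to be an isomorphism there. The two technical points I would still need to watch are the promotion of the birational $\phi$ to an actual morphism of the possibly non-normal scheme $\tilde X$, which I would verify on the explicit affine algebras $\rho_{*}\mathcal{O}_{\tilde X}=\mathcal{O}_X\oplus\omega_X^{-1}$ and $\mathrm{pr}_{1*}\mathcal{O}_{X_F}=\mathcal{O}_X[\zeta]/(\zeta^{2}-\pi^{*}t)$ by sending $\zeta\mapsto\sqrt{c}\,h$, and the chart-by-chart regularity of this section.
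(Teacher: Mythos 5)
The paper offers no proof of this lemma at all---it is quoted directly from Ekedahl and Shepherd-Barron \cite[Lemma 0.9]{ES}---so your argument can only be measured against the standard one. Your reduction of the factorization to exhibiting $\sqrt{\rho^{*}\pi^{*}t}\in k(\tilde{X})$ is correct, and your classical-case argument via $K_X\sim F_0-F_\infty$ and the canonical trivialization of $\rho^{*}\omega_X$ on the $\mu_2$-cover is sound and rather elegant. However, there are two genuine gaps. First, in the supersingular case you reduce everything to the proportionality $\pi^{*}dt=\lambda\,\eta$ and then only say you ``expect'' it and would extract it by comparing zero divisors with the conductrix tables. That proportionality is precisely the nontrivial input: it is Katsura's theorem that the regular $1$-form on a classical or supersingular Enriques surface is the pullback of a rational $1$-form on the base of \emph{any} genus one fibration, which this paper itself invokes with the citation \cite{Ka2} in the proof of Lemma \ref{quellorell}. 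Matching divisors of zeros cannot substitute for it (two $1$-forms with the same divisor need not be proportional), and the Ekedahl--Shepherd-Barron classification of conductrices is derived from the structure of the canonical cover, so leaning on it here is close to circular. What is actually needed is $h^0(\Omega^1_X)=1$ together with a local computation showing that a suitable twist $\pi^{*}(g\,dt)$ is regular and non-zero; you must either reprove this or cite \cite{Ka2}.

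Second, your argument that $\tilde{X}\to X_F$ is an isomorphism away from the double fibers rests on the premise that $\eta$ is nowhere zero over the complement $U$, so that $\tilde{X}|_U$ is smooth and $X_F|_U$ is normal. This premise is false: $\eta$ vanishes along the biconductrix $2A$ and also at isolated points (since $c_2(X)=12>0$ it must vanish somewhere, as noted in Subsection \ref{conductrix}), and by Lemma \ref{quellorell} the conductrix of a quasi-elliptic fibration contains the curve of cusps, which is \emph{horizontal} and meets every fiber; for an elliptic fibration the conductrix may lie in a simple fiber. Hence $\tilde{X}|_U$ is in general non-normal, $X_F|_U$ is non-normal along the same locus, and the ``finite birational map onto a normal variety'' argument collapses. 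The isomorphism statement must instead be proved by comparing the two rank-two $\mathcal{O}_X$-algebras $\rho_{*}\mathcal{O}_{\tilde{X}}=\mathcal{O}_X\oplus\omega_X^{-1}$ and $\mathrm{pr}_{1*}\mathcal{O}_{X_F}\cong\mathcal{O}_X\oplus\pi^{*}\mathcal{O}_{{\bf P}^1}(-1)$ directly: the inclusion of the latter into the former is the identity on the $\mathcal{O}_X$-summands and is given on the complementary summands by a map of invertible sheaves whose degeneracy divisor lies in $|\omega_X^{-1}\otimes\pi^{*}\mathcal{O}_{{\bf P}^1}(1)|$, and one checks that this divisor is exactly the sum of the half-fibers. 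Your closing remark about verifying the morphism on the explicit algebras points in the right direction, but as written the proof of the second assertion does not go through.
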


\begin{lemma}\label{quellorell}
Let $X$ be an Enriques surface with conductrix $A$. Let $\pi$ be a genus one fibration on $X$.
\begin{enumerate}
\item[$(1)$] If $\pi$ is a quasi-elliptic fibration, then the curve of cusps of $\pi$ is a component of $A$ with multiplicity $1$ and all other components of $A$ are contained in fibers of $\pi$.
\item[$(2)$] If $\pi$ is an elliptic fibration, then $A$ is contained in one fiber of $\pi$. 
\end{enumerate}
In particular, $\pi$ is elliptic if and only if $A$ is contained in a fiber of $\pi$.
\end{lemma}

\begin{proof}
A non-zero regular $1$-form $\eta$ on $X$ is given by the pullback of a regular $1$-form on ${\bf P}^1$ (see \cite{Ka2}). Assume $\pi$ is quasi-elliptic. Let $F$ be a general cuspidal fiber and $t$ a local parameter at $\pi(F)$. Then, locally around the cusp, $F$ is given by the equation $\pi^*t = y^2 + x^3$
(Bombieri-Mumford \cite[Proposition 4]{BM2}), hence $\pi^*(dt) = x^2dx$ which vanishes twice at the cusp. 
Therefore, the curve of cusps is a component of $A$ with multiplicity $1$.  Similarly, for an arbitrary genus one fibration, one shows that $\eta$ does not vanish on any smooth point of a fiber of $\pi$. Since $A$ is connected by Proposition \ref{ES}, this yields the second claim.
\end{proof}

Recall that the minimal dissolution of a double cover $Y \to X$ of surfaces with $X$ smooth and $Y$ normal is the successive blow-up of points on $X$ lying under singular points of $Y$. For an Enriques surface $X$ we call the minimal dissolution of the double cover $\tilde{X}_{norm} \to X$, where $\tilde{X}_{norm}$ is the normalization of the canonical cover $\tilde{X}$, the minimal dissolution of $X$ and denote it by $X_{diss}$. The normalization $\tilde{X}_{sm}$ of $X_{diss}$ in $K(\tilde{X})$ is the minimal resolution of singularities of $\tilde{X}_{norm}$ if $\tilde{X}_{norm}$ has only rational singularities (which holds e.g. if $\tilde{X}_{norm} \neq \tilde{X}$ by Proposition \ref{ES}), but it is not minimal in general. The following diagram, where the vertical arrows are finite morphisms of degree $2$ and the horizontal arrows are birational morphisms, summarizes this discussion.

\centerline{
$
\xymatrix{
\tilde{X}_{sm} \ar[r] \ar[d] & \tilde{X}_{norm} \ar[r] & \tilde{X} \ar[d] & \text{canonical cover} \\
X_{diss}  \ar[rr] & & X & \text{Enriques surface}\\
}
$
}

Now, we recall the results of Ekedahl and Shepherd-Barron \cite{ES} on what happens to $(-2)$-curves on $X$ when taking their inverse image in $\tilde{X}_{sm}$ and additionally study curves of arithmetic genus $1$.

\begin{lemma} \label{blowups}
With the notation introduced above, let $C$ be an irreducible curve of arithmetic genus at most $1$ on an Enriques surface $X$ with conductrix $A$. Denote the irreducible curve on $\tilde{X}_{sm}$ mapping surjectively to $C$ by $\tilde{C}$ and let $\rho: \tilde{X}_{sm} \to \tilde{X}$ and $\pi: \tilde{X} \to X$ be the morphisms from the normalization of the minimal dissolution of $X$ to $\tilde{X}$ and from $\tilde{X}$ to $X$, respectively. We fix the following invariants{\rm :}
\begin{itemize}
\item[$({\rm i})$] The degree $s$ of $(\pi \circ \rho)|_{\tilde{C}}: \tilde{C} \to C$.
\item[$({\rm ii})$] The number $r$ of points $($possibly including infinitely near ones$)$ on $C$ which are 
blown up during the minimal dissolution of $X$, and their multiplicity $m$.
\item[$({\rm iii})$] The intersection number $A\cdot C$.
\item[$({\rm iv})$] The self-intersection numbers $\tilde{C}^2$ and $C^2$.
\item[$({\rm v})$] The arithmetic genera $p_a(C)$ and $p_a(\tilde{C})$.
\item[$({\rm vi})$] If $p_a(C) = 1$, the type $\rm{Sing}$ of singularity of $C$. This is either nodal $n$, cuspidal $c$ or smooth $sm$.
\end{itemize}

Then $\tilde{C}$ satisfies the following{\rm :}
\begin{enumerate}
\item[$(1)$]  $\tilde{C}^2 = (C^2 - m^2r)s^2/2$ and $2p_a(\tilde{C}) - 2 = \tilde{C}^2 - s A\cdot C$
\item[$(2)$] If two curves meet transversally on $X$ and both have $s$-invariant $1$, then they do not meet on $X_{diss}$.
\item[$(3)$] For $A\cdot C \geq -2$ and $p_a(C) = 0$, we have the following possibilities

\begin{center}
$
\begin{array}{ccccc}
 r & s & A\cdot C & \tilde{C}^2 & p_a(\tilde{C}) \\ \hline 
0 & 1 & 1 & -1 & 0 \\
0 & 2 & -1 & -4 & 0 \\
2 & 1 & 0 & -2 & 0 \\
4 & 1 &-1 & -3 & 0 \\
6 & 1 & -2 & -4 & 0 \\
1 & 2 & -2 & -6 & 0 
\end{array}
$
\end{center}

\item[$(4)$] For $p_a(C) = 1$, we have the following possibilities

\begin{center}
$
\begin{array}{ccccccc}
\rm{Sing} &  r & m &  s & A\cdot C & \tilde{C}^2 & p_a(\tilde{C}) \\ \hline 
sm &  0 &  & 1 & 0 & 0 & 1 \\
sm &  0 &  & 2 & 0 & 0 & 1 \\
n & 1 & 2 & 1 & 0 & -2 & 0 \\
c &  0 & & 1 & 0 & 0 & 1 \\
c &  0 & & 2 & 0 & 0 & 1 \\
c &  1 & 2 & 1 & 0 & -2 & 0 \\
c &  4 & 1 & 1 & 0 & -2 & 0 \\
c & 2 & 1 & 1 & 1 & -1 & 0 \\
c & 0 &  & 1 & 2 & 0 & 0 \\
\end{array}
$
\end{center}

\item[$(5)$] If $C$ is a cuspidal curve such that
\begin{itemize}
\item $|C|$ defines a quasi-elliptic fibration, then $r = 0$ and $s = 1$
\item $|C|$ defines an elliptic fibration, then $r = 1$, $m = 2$ and $s = 1$
\item $|C|$ does not define a quasi-elliptic fibration and $|2C|$ defines a quasi-elliptic fibration, then $r = 2$, $m = 1$ and $s = 1$.
\end{itemize}
\end{enumerate}

\end{lemma}

\begin{proof}
Similar to Ekedahl and Shepherd-Barron \cite{ES}, the formulas for the self-intersection number and the genus of $\tilde{C}$ are obtained by observing that the self-intersection number of $C$ drops by $m^2$ for every point of multiplicity $m$ on $C$ which is blown up during the minimal dissolution and from $\omega_{\tilde{X}/X} = \pi^*(\mathcal{O}_X(-A))$. Also, 
the claim (2) is in \cite{ES}.

The first table is contained in \cite{ES} and we will only establish the second one. Therefore, assume that $p_a(C) = 1$. If $C$ is smooth, then $A\cdot C = 0$ by Lemma \ref{quellorell} which only leaves the two possibilities listed. If $C$ has a node, then $|C|$ defines an elliptic fibration $\varphi$ with $C$ as a simple fiber. Therefore, formally locally around $C$, $X$ is isomorphic to the Jacobian of $\varphi$ and by Lemma \ref{frobenius} we can find $\tilde{C}$ by doing Frobenius pullback along the base. But on an $\I_1$ fiber, an elliptic surface acquires an $A_1$-singularity at the singular point of the nodal curve after Frobenius pullback. Therefore, the node of $C$ is blown up during the minimal dissolution. A similar argument works if $C$ is cuspidal and $|C|$ defines an elliptic fibration.

If $C$ is cuspidal, we have enumerated all numerical possibilities except for the ones where $p_a(\tilde{C}) = 0$ and $s = 2$. These cases do not occur. In fact, assume that $s = 2$ and $p_a(\tilde{C}) = 0$. Denote the image of $\tilde{C}$ on $\tilde{X}_{norm}$ by $\tilde{C}'$. Since the singular point of $C$ is not blown up during the dissolution (by the self-intersection formula), we have $\tilde{C}' \cong \tilde{C} \cong {\bf P}^1$. Then, the flat morphism $\varphi: \tilde{X}_{norm} \to X$ restricts to a morphism $\varphi|_{\tilde{C}'}: \tilde{C}' \to C$. Since $s = 2$, we have $\varphi^*C = \tilde{C}'$ so $\varphi|_{\tilde{C}'}$ is nothing but the base change of $\varphi$ along the closed immersion $C \to X$ and as such it is a flat morphism. But a morphism from ${\bf P}^1$ to the cuspidal cubic is never flat.

For the last statement (5), observe that $|C|$ defines a quasi-elliptic fibration if and only if $A\cdot C = 2$, and $|2C|$ defines a quasi-elliptic fibration if and only if $A\cdot C = 1$. This follows immediately from Lemma \ref{quellorell}, which implies that $A\cdot C = D\cdot C$ where $D$ is the curve of cusps of $|C|$ (resp. $|2C|$).
\end{proof}

\begin{remark}
Several of the numerical possibilities in Lemma \ref{blowups} might be excluded by using Lang's list of possible configurations of singular fibers on rational elliptic surfaces in characteristic $2$ \cite{L3} together with Lemma \ref{frobenius}. However, we will not pursue this here.
\end{remark}

\begin{lemma}\label{TwoSingsOnDoubleFiber}
Let $X$ be an Enriques surface with a quasi-elliptic fibration $\varphi$. Let $F$ be a fiber of $\varphi$. If $F$ is a double fiber, then two points on $F$ $($possibly including infinitely near ones$)$ are blown up during the minimal dissolution. If $F$ is simple, then no point on $F$ is blown up.
\end{lemma}

\begin{proof}
If $F$ is reducible, this can be read off from the table in \cite[p.13]{ES}, since every $(-2)$-curve on a simple fiber has $r$-invariant $0$ and exactly one $(-2)$-curve on a double fiber has $r$-invariant $2$ while the others have $r$-invariant $0$. If $F$ is irreducible, this is the last statement of Lemma \ref{blowups}.
\end{proof}

\begin{corollary}\label{SingNormal}
Let $X$ be an Enriques surface with a quasi-elliptic fibration. Then the normalization $\tilde{X}_{norm}$ of the canonical cover has an isolated $D_4$-singularity if and only if $X$ is supersingular.
\end{corollary}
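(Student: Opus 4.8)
The plan is to turn the statement into a dichotomy and then separate the two cases by counting how many points are blown up during the minimal dissolution, organized fiber by fiber for the given quasi-elliptic fibration. First I would record that the conductrix $A$ is nonzero: by Lemma \ref{quellorell} the curve of cusps of the quasi-elliptic fibration is a component of $A$ of multiplicity $1$, so $A \neq 0$. Hence Proposition \ref{ES} applies and the normalization $\tilde{X}_{norm}$ has either exactly four $A_1$-singularities or exactly one $D_4$-singularity. As these are the only two possibilities, it suffices to show that $4A_1$ occurs precisely when $X$ is classical and $D_4$ precisely when $X$ is supersingular.

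The key numerical input comes from counting blown-up points. By definition the minimal dissolution blows up exactly the points of $X$ (together with their infinitely near points) lying under the singularities of $\tilde{X}_{norm}$. Lemma \ref{TwoSingsOnDoubleFiber} says that, for the quasi-elliptic fibration, every double fiber carries exactly two such points while every simple fiber carries none; in particular all singularities of $\tilde{X}_{norm}$ lie over double fibers. By Proposition \ref{multi-fiber} the fibration has exactly two double fibers when $X$ is classical and exactly one double fiber when $X$ is supersingular. Therefore the total number of points blown up is $4$ in the classical case and $2$ in the supersingular case.

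Next I would match these counts to the singularity types. In the classical case the blown-up points occur on two distinct (hence disjoint) double fibers, so the singular locus of $\tilde{X}_{norm}$ maps onto at least two distinct points of $X$ lying on two different fibers; since a single $D_4$-point has a single image in $X$ (the canonical cover $\tilde{X} \to X$ is purely inseparable, hence a homeomorphism), the $D_4$-case is excluded and we must be in the $4A_1$-case. In the supersingular case I would argue in the opposite direction: four distinct $A_1$-points are isolated rational double points lying over four distinct points of $X$, and each forces at least one blow-up of its image, so a $4A_1$-configuration would require blowing up at least four distinct points of $X$; this contradicts the count of $2$ obtained above, leaving only the $D_4$-case. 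Combining the two directions with the dichotomy of Proposition \ref{ES} yields the equivalence.

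The main obstacle is the supersingular direction, where I must guarantee that the four $A_1$-points genuinely impose four distinct blow-ups. Because the canonical cover of a supersingular surface may be non-normal, with non-normal locus the bi-conductrix $2A$, one has to verify that the four isolated rational double points of $\tilde{X}_{norm}$ lie over four distinct points of $X$ and are not identified by the normalization map along the bi-conductrix; equivalently, that resolving $4A_1$ really needs four rather than two blow-ups. I would settle this by a local analysis of the purely inseparable double cover $z^2 = f$ near each singular point in characteristic $2$, using $\omega_{\tilde{X}/X} = \pi^*\mathcal{O}_X(-A)$ and $A^2=-2$ as in Lemma \ref{blowups} to pin down the multiplicities of the blown-up points and thereby distinguish the local $A_1$-picture from the local $D_4$-picture. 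The explicit $D_4$-singularities produced in the $\tilde{E}_8$, $\tilde{E}_7+\tilde{A}_1$ and $\tilde{D}_8$ supersingular examples, obtained by contracting a $\tilde{D}_4$-configuration, serve as a consistency check on this local computation.
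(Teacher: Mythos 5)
Your proposal is correct and follows essentially the same route as the paper: nonempty conductrix via Lemma \ref{quellorell} gives the $4A_1$/$D_4$ dichotomy of Proposition \ref{ES}, and then the count of blown-up points per double fiber (Lemma \ref{TwoSingsOnDoubleFiber}) combined with the number of double fibers (Proposition \ref{multi-fiber}) separates the two cases exactly as you describe. The local analysis you flag as the "main obstacle" in the supersingular direction is not needed — the observation you already make, that the purely inseparable cover is a homeomorphism so four $A_1$-points lie over four distinct points of $X$, each requiring a blow-up, already yields the contradiction with the count of two.
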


\begin{proof}
Let $\varphi$ be a quasi-elliptic fibration on $X$. Since the conductrix is non-empty by Lemma \ref{quellorell}, $\tilde{X}$ is not normal. Therefore, $\tilde{X}_{norm}$ has either four $A_1$- or one $D_4$-singularity by Proposition \ref{ES}. If $\varphi$ has two double fibers, at least two distinct points on $X$ are blown up during the minimal dissolution by Lemma \ref{TwoSingsOnDoubleFiber}. In this case, $X$ is classical (Proposition \ref{multi-fiber}) and $\tilde{X}$ has four $A_1$-singularities. If $\varphi$ has only one double fiber, at most two distinct points on $X$ are blown up. In this case, $X$ is supersingular and $\tilde{X}$ has one $D_4$-singularity.
\end{proof}

\subsection{Special extremal genus one fibrations}

In this subsection, we present a detailed study of Enriques surfaces with special genus one fibrations, their conductrices and isolated singularities on their canonical cover. Throughout, we will use the observations summed up in the following Lemma.

\begin{lemma}\label{observations}
Let $X$ be an Enriques surface with conductrix $A$ and let $\tilde{X}$ be its canonical cover. The following hold.

\begin{enumerate}
\item[$(1)$] If two $(-2)$-curves which meet transversally have $s$-invariant $1$, then their intersection is blown up.
\item[$(2)$] Every $(-2)$-curve $C$ satisfies $C.A \leq 1$.
\item[$(3)$] Every $(-2)$-curve which is not a component of the support of 
the conductrix has $s$-invariant $1$. 
\end{enumerate}

Now let $\pi: X \to {\bf P}^1$ be a genus one pencil. Then the following hold.

\begin{enumerate}[label=$(\alph*)$]
\item A singular fiber of type ${\rm I}_n$ of $\pi$ gives $n$ $A_1$-singularities on $\tilde{X}$.
\item If $A \neq \emptyset$ and $\pi$ has a singular fiber of type ${\rm I}_n$, then $\tilde{X}$ has four $A_1$-singularities.
\item If $A \neq \emptyset$ and two disjoint $(-2)$-curves have positive $r$-invariant, then $\tilde{X}$ has four $A_1$-singularities.
\item If $A \neq \emptyset$ and the sum of all $r$-invariants of fiber components is less than $4$, then $\tilde{X}$ has one $D_4$-singularity.
\end{enumerate}
\end{lemma}

\begin{proof}
The first claim is obtained by checking intersection numbers, as was done by Ekedahl and Shepherd-Barron in \cite[Definition-Lemma 0.8 (iii)]{ES} and the second is immediate from Lemma \ref{blowups} (3). Since a curve $C$ which is not contained in $A$ has $A\cdot C \geq 0$, the third claim follows from Lemma \ref{blowups}.

For the statements about $\pi$: By Lemma \ref{frobenius}, the singularities lying over a simple fiber of $\pi$ can be read off from the Frobenius base change of the Jacobian fibration. Since an $\I_n$ fiber is simple and we can easily check (e.g. using Tate's algorithm) that the Frobenius base change of a fiber of type $\I_n$ acquires an $A_1$ singularity at every point where two curves in the fiber meet, we obtain Claim $(a)$.

Claim $(b)$ follows immediately from $(a)$, since $\tilde{X}$ has either four $A_1$-singularities or one $D_4$-singularity if $A \neq \emptyset$ (see Proposition \ref{ES}).
Two disjoint curves having positive $r$-invariant means that distinct points are blown up during the dissolution, excluding the possibility of a $D_4$-singularity on the cover. Hence, we obtain Claim $(c)$.
For Claim $(d)$, note that the sum of $r$-invariants of fiber components being less than $4$ means that less than $4$ distinct points are blown up, so the singularity can only be a $D_4$-singularity.
\end{proof}

\begin{remark}
Observe that we have used that the singularities lying over a simple fiber of $\pi$ are the same as the singularities of the Frobenius base change of the Jacobian fibration over the corresponding fiber. This follows from Lemma \ref{frobenius} since in a formal neighborhood of a simple fiber, an elliptic fibration is isomorphic to its Jacobian.
\end{remark}

\begin{lemma}
There are no special elliptic fibrations on Enriques surfaces with a double fiber of type $2{\rm III}^*,2{\rm II}^*$ or $2{\rm I}_4^*$. Moreover, if the conductrix is nonempty, a special elliptic fibration with a double fiber of type ${\rm IV}$ can not exist.
\end{lemma}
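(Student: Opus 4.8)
The plan is to argue by contradiction, assuming such a special elliptic fibration $\pi\colon X\to {\bf P}^1$ exists, with special $2$-section a $(-2)$-curve $R$ and a double fiber $2F_0$ of the prescribed type. The first step is purely numerical: since $R$ is a $2$-section and the fiber class equals $2F_0$, we have $R\cdot F_0 = 1$. Writing $F_0=\sum_i m_iC_i$ as the sum of its components with their multiplicities, the relation $R\cdot F_0 = \sum_i m_i\,(R\cdot C_i)=1$ forces $R$ to meet exactly one component $\Theta$, necessarily a \emph{simple} component ($m_\Theta=1$), transversally in a single point $p$. Moreover $R$ is not contained in any fiber, so $R\not\subseteq A$; hence by Lemmas \ref{blowups} and \ref{observations} the curve $R$ has $s$-invariant $1$ and $0\le A\cdot R\le 1$.

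The second step localizes the conductrix. Because $\pi$ is elliptic, Lemma \ref{quellorell} shows that $A$ is contained in a single fiber, while Lemma \ref{frobenius} puts the non-normal locus of $\tilde X$ over the double fibers of $\pi$; since a divisorial zero of $\eta$ is the same as non-normality of the canonical cover, whenever $A\neq 0$ it must be supported on a reduced double fiber, i.e. $A\le F_0$. For the types $2\II^*$, $2\III^*$ and $2\I_4^*$ I would invoke the Ekedahl--Shepherd-Barron classification of conductrices for elliptic fibrations (Table \ref{ExtremalFibrationsConductrix}, cf. \cite{ES}) to conclude first that $A\neq 0$ — these ``deep'' additive double fibers force a divisorial zero of $\eta$ — and then to read off the exact shape of $A\le F_0$ and the location of its components relative to the simple components of $F_0$. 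For the type $2\IV$ the hypothesis $A\neq\emptyset$ is precisely what makes the same analysis available; without it the configuration genuinely occurs (for instance in a fibration $(\I_6,2\IV,\I_2)$ on a surface of type $\VII$, where $A=0$), which is why that part of the statement must be conditional.

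The contradiction is then a blow-up count on $F_0$. If $\Theta\not\subseteq A$ then $s(\Theta)=1$, and since $R\cdot\Theta=1$ transversally, Lemma \ref{observations}(1) forces the point $p=R\cap\Theta$ to be blown up during the minimal dissolution; if instead $\Theta\subseteq A$ then $A\cdot R=m_\Theta=1$, and one checks against Table \ref{ExtremalFibrationsConductrix} that the simple component is then already required to carry a dissolution point. Either way a blow-up is forced at a \emph{simple} component of $F_0$. On the other hand Proposition \ref{ES} says the normalization of $\tilde X$ carries exactly four $A_1$-singularities or a single $D_4$-singularity, so Lemma \ref{observations}(c),(d) bounds the total $r$-invariant of the fiber components; the blow-ups already dictated by the conductrix sit on the non-simple part of $F_0$ and either exhaust the budget of four distinct blown-up points or fix the $D_4$-pattern. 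The extra blow-up at $p$ on $\Theta$ then either yields a fifth blown-up point or makes two disjoint components acquire positive $r$-invariant, contradicting Lemma \ref{observations}(c),(d) and hence the dichotomy of Proposition \ref{ES}.

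I expect the hard part to be exactly this Step~2--3 bookkeeping: one must determine, case by case for $\II^*$, $\III^*$, $\I_4^*$ and $\IV$, the precise Ekedahl--Shepherd-Barron conductrix inside the double fiber and verify that each admissible attachment point of the $2$-section is incompatible with the four-$A_1$-or-$D_4$ count. The $2\I_4^*$ case (type $\tilde{D}_8$) should be the most delicate, since $\tilde{D}_8$ has four simple components and the $2$-section has several a priori possible attachment points, each of which must be excluded separately.
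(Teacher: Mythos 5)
Your setup (the $2$-section meets exactly one simple component $\Theta$ of the double fiber, transversally; both $R$ and $\Theta$ have $s$-invariant $1$; the conductrix sits inside the additive fiber) matches the paper's, and your treatment of the $2{\rm IV}$ case is essentially the paper's argument: the triple point of the fiber plus the point $R\cap\Theta$ force four $A_1$-singularities, and the $r$-invariant $2$ of $N$ and of all three fiber components cannot be realized by only four blown-up points. However, for the main claim (types $2{\rm II}^*$, $2{\rm III}^*$, $2{\rm I}_4^*$) your proof has a genuine gap: the decisive step is a global count of dissolution points against the ``four $A_1$ or one $D_4$'' budget, which you explicitly defer (``one must determine, case by case, \dots and verify that each admissible attachment point \dots is incompatible'') and never carry out. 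That bookkeeping is exactly where the content of the lemma lies, and it is not obviously routine — one must locate all four dissolution points (including infinitely near ones and the $s=2$ components of the conductrix, for which $r=0$) before the count closes. The paper avoids all of this with a purely local contradiction at the single point $p=R\cap\Theta$: reading the Ekedahl--Shepherd-Barron conductrices, one finds $A\cdot\Theta=0$ when $\Theta$ lies in $A$ (with multiplicity $1$) and $A\cdot\Theta=1$ when it does not, so that exactly one of $R$, $\Theta$ has $A$-intersection $1$ and hence $r$-invariant $0$ by Lemma \ref{blowups}; since both have $s$-invariant $1$, Lemma \ref{observations}(1) forces $p$ to be blown up, and $p$ lies on a curve with $r$-invariant $0$ — contradiction, with no global counting needed.

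Two smaller points. First, your justification that $A$ is supported on the double fiber (``non-normality of the canonical cover occurs over double fibers, hence $A\le F_0$'') is not a correct general principle — the conductrix of an elliptic fibration can be supported on a \emph{simple} additive fiber (this happens for the simple ${\rm II}^*$ fiber on the $\tilde{D}_8$ surfaces). What is true, and what you need, is only that for the configurations at hand the additive double fiber is the unique fiber that can contain the connected divisor $A$ with $A^2=-2$. Second, the identity ``$A\cdot R=m_\Theta=1$ when $\Theta\subseteq A$'' conflates the multiplicity of $\Theta$ in the fiber with its multiplicity in $A$; the correct statement is $A\cdot R=\mathrm{mult}_A(\Theta)\cdot(R\cdot\Theta)$, which happens to equal $1$ here only because the ES tables show the relevant simple component occurs in $A$ with multiplicity $1$.
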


\begin{proof}
The statement about $\II^*,\III^*$ and $\I_4^*$ is contained in Ekedahl and Shepherd-Barron \cite[Corollary 3.2]{ES}. We will 
give another argument here. Let $N$ be a special $2$-section and let $C$ be a simple component of the double fiber we want to exclude. Assume that $C$ meets $N$. By checking all possible conductrices $A$ of \cite[Theorem 3.1]{ES}, we obtain that $C$ and $N$ have $s$-invariant $1$. Moreover, $A\cdot C = 0$ if $C$ is a component of $A$ with multiplicity $1$, whereas $A\cdot C = 1$ if $C$ does not occur in the conductrix. Therefore, $N\cdot A = 1$ if and only if $C\cdot A = 0$. Now by Lemma \ref{observations} $(1)$, the intersection of $N$ and $C$ is blown up. But one of them has $r$-invariant $0$ by Lemma \ref{blowups} $(3)$. This is a contradiction.

Now, we prove the second claim. Note that the fiber of type $\IV$ is disjoint from $A$ 
by \cite[Theorem 3.1]{ES}.
It follows from Lemma \ref{quellorell} $(2)$ that the 2-section $N$ is not contained in $A$.
Hence $N$ has $s$-invariant $1$ by Lemma \ref{observations} $(3)$ and every component of the fiber of type $\IV$ also has $s$-invariant $1$ by the same Lemma.  Therefore the intersection of $N$ and the fiber of type $\IV$ is blown up during the dissolution (Lemma \ref{observations}). Additionally, the intersection of the three components of the fiber of type $\IV$ is blown up. Therefore, the canonical cover has four $A_1$-singularities by Proposition \ref{ES} and all blow-ups happen on distinct point of the fiber of type $\IV$. However, since at least one point of $N$ is blown up, $N$ has positive $r$-invariant and, by Lemma \ref{quellorell}, $N.A \geq 0$. Therefore, by Lemma \ref{observations} $(3)$, $N$ has $r$-invariant $2$, i.e., two (possibly infinitely near) points on $N$ are blown up during the dissolution. Since $N$ meets the fiber of type $\IV$ only once, this is impossible.
\end{proof}

Recall that Ekedahl and Shepherd-Barron gave a list of possibilities for the restriction of the conductrix $A$ to the fiber of any elliptic fibration (\cite[Theorem 3.1]{ES}).  Their list contains not only the case $A^2=-2$ but also more general cases (for elliptic surfaces that are not necessarily Enriques surfaces).  Moreover, in the case where $A^2=-2$, there are several possibilities for $A$ for a fixed elliptic fibration.

\begin{remark}
A priori, \cite{ES} gives the restriction of $A$ to a fiber $F$ only up to multiples of $F$ (resp. up to multiples of the half-fiber underlying $F$ if $F$ is a double fiber). However, it is clear that 
the support of $A$ cannot contain $F$ since $h^0(X,\Omega_X) = 1$ and the explicit classification of ``exceptional Enriques surfaces" in \cite{ES} shows that also in the cases where $A$ contains a half-fiber, $A$ is in fact already contained in the tables of \cite{ES}. This implies that the tables in \cite{ES} do in fact give all possibilities for the restriction of $A$ to a fiber, not only up to multiples.
\end{remark}
In the following Lemma \ref{ellipticCondSing}, by using the list of \cite[Theorem 3.1]{ES} and the above observations, we give the list of
possible conductrices together with possible isolated singularities on the canonical cover for any extremal special elliptic fibration on an Enriques 
surface.  In our case, it will turn out that the conductrix is in fact uniquely determined by the type of singular fibers.
The following Tables \ref{ExtremalFibrationsConductrix} and \ref{QuasiellipticFibrationsConductrix} will enable us to set up a case-by-case analysis in the next section.

\begin{lemma}\label{ellipticCondSing}
The isolated singularities on the normalization of the canonical cover of an Enriques surface with a special extremal elliptic fibration and the conductrix are summed up in Table {\rm \ref{ExtremalFibrationsConductrix}}. The self-intersection number of the reduced inverse image of a curve on the minimal resolution of singularities of the canonical cover is given as an index to the multiplicity.

\begin{table}[!htbp]
\centering
\begin{tabular}{|>{\centering\arraybackslash}m{2.5cm}|>{\centering\arraybackslash}m{7cm}|>{\centering\arraybackslash}m{3.8cm}|}
\hline
\rm{Singular fibers} & \rm{Conductrix} & \rm{Isolated singularities}  \\ \hline
$({\rm I}_4^*)$& 
\resizebox{!}{1cm}{
\centerline{
\xy
(0,20)*{};
@={(-10,10),(0,10),(10,10),(20,10),(30,10)}@@{*{\bullet}};
(-10,10)*{};(30,10)*{}**\dir{-};
(-7,13)*{1_{-4}};
(3,13)*{1_{-2}};
(13,13)*{1_{-2}};
(23,13)*{1_{-2}};
(33,13)*{1_{-4}};
\endxy	
}
}		& $4A_1$ \\ \hline
$({\rm II}^*)$&
\resizebox{!}{1.5cm}{
\centerline{
\xy
(0,25)*{};
@={(-10,10),(0,10),(10,10),(20,10),(30,10),(40,10),(50,10),(10,20)}@@{*{\bullet}};
(-10,10)*{};(50,10)*{}**\dir{-};
(10,10)*{};(10,20)*{}**\dir{-};
(-7,13)*{1_{-2}};
(3,13)*{2_{-2}};
(14,13)*{3_{-4}};
(23,13)*{2_{-1}};
(33,13)*{2_{-4}};
(43,13)*{1_{-1}};
(53,13)*{1_{-4}};
(13,23)*{1_{-1}};
\endxy
}
}
& $D_4$ \\ \hline
$(2{\rm III},{\rm I}_8)$&
$\emptyset$
& $12A_1$ \\ \hline
$({\rm III},{\rm I}_8)$&
$\emptyset$
& $D_4, 8A_1$ \\ \hline
$(2{\rm I}_1^*,{\rm I}_4)$ &
\resizebox{!}{1.5cm}{
\centerline{
\xy
(0,25)*{};
@={(20,10),(0,10),(10,10),(10,20)}@@{*{\bullet}};
(0,10)*{};(20,10)*{}**\dir{-};
(10,10)*{};(10,20)*{}**\dir{-};
(23,13)*{1_{-4}};
(3,13)*{1_{-4}};
(14,13)*{1_{-1}};
(13,23)*{1_{-4}};
\endxy
}
}
& $4A_1$ \\ \hline
$({\rm I}_1^*,{\rm I}_4)$ &
\resizebox{7cm}{0.6cm}{
\centerline{
\xy
(13,15)*{};
@={(20,10),(10,10)}@@{*{\bullet}};
(10,10)*{};(20,10)*{}**\dir{-};
(23,13)*{1_{-4}};
(13,13)*{1_{-4}};
\endxy
}
}
& $4A_1$\\ \hline
$({\rm III}^*,{\rm I}_2)$ &
\resizebox{!}{1.5cm}{
\centerline{
\xy
(13,25)*{};
@={(0,10),(10,10),(20,10),(30,10),(40,10),(20,20)}@@{*{\bullet}};
(0,10)*{};(40,10)*{}**\dir{-};
(20,10)*{};(20,20)*{}**\dir{-};
(3,13)*{1_{-4}};
(13,13)*{1_{-1}};
(24,13)*{2_{-4}};
(33,13)*{1_{-1}};
(43,13)*{1_{-4}};
(23,23)*{1_{-2}};
\endxy
}
}
& $4A_1$ \\ \hline
$({\rm II}^*,{\rm I}_1)$ &
\resizebox{!}{1.5cm}{
\centerline{
\xy
(0,25)*{};
@={(-10,10),(0,10),(10,10),(20,10),(30,10),(40,10),(50,10),(10,20)}@@{*{\bullet}};
(-10,10)*{};(50,10)*{}**\dir{-};
(10,10)*{};(10,20)*{}**\dir{-};
(-7,13)*{1_{-2}};
(3,13)*{2_{-2}};
(14,13)*{3_{-4}};
(23,13)*{2_{-1}};
(33,13)*{2_{-4}};
(43,13)*{1_{-1}};
(53,13)*{1_{-4}};
(13,23)*{1_{-1}};
\endxy
}
}
& $4A_1$ \\ \hline
$({\rm IV},2{\rm IV}^*)$
&
\resizebox{!}{2cm}{
\centerline{
\xy
(13,35)*{};
@={(0,10),(10,10),(20,10),(30,10),(40,10),(20,20),(20,30)}@@{*{\bullet}};
(0,10)*{};(40,10)*{}**\dir{-};
(20,10)*{};(20,30)*{}**\dir{-};
(3,13)*{1_{-4}};
(13,13)*{1_{-1}};
(24,13)*{2_{-4}};
(33,13)*{1_{-1}};
(43,13)*{1_{-4}};
(24,23)*{1_{-1}};
(24,33)*{1_{-4}};
\endxy
}
}
& $D_4$ \\ \hline
$({\rm IV},{\rm IV}^*)$
&
\resizebox{!}{1.5cm}{
\centerline{
\xy
(13,25)*{};
@={(10,10),(20,10),(30,10),(20,20)}@@{*{\bullet}};
(10,10)*{};(30,10)*{}**\dir{-};
(20,10)*{};(20,20)*{}**\dir{-};
(13,13)*{1_{-4}};
(24,13)*{1_{-1}};
(33,13)*{1_{-4}};
(24,23)*{1_{-4}};
\endxy
}
}
& $D_4$ \\ 
\hline
$(2{\rm IV},{\rm I}_2,{\rm I}_6)$
&
$\emptyset$
& $12A_1$ \\ \hline
$({\rm IV},{\rm I}_2,{\rm I}_6)$
&
$\emptyset$
& $D_4,8A_1$ \\ \hline
$(2{\rm IV}^*,{\rm I}_1,{\rm I}_3)$
&
\resizebox{!}{2cm}{
\centerline{
\xy
(13,35)*{};
@={(0,10),(10,10),(20,10),(30,10),(40,10),(20,20),(20,30)}@@{*{\bullet}};
(0,10)*{};(40,10)*{}**\dir{-};
(20,10)*{};(20,30)*{}**\dir{-};
(3,13)*{1_{-4}};
(13,13)*{1_{-1}};
(24,13)*{2_{-4}};
(33,13)*{1_{-1}};
(43,13)*{1_{-4}};
(24,23)*{1_{-1}};
(24,33)*{1_{-4}};
\endxy
}
}
& $4A_1$ \\ \hline
$({\rm IV}^*,{\rm I}_1,{\rm I}_3)$
&
\resizebox{!}{1.5cm}{
\centerline{
\xy
(13,25)*{};
@={(10,10),(20,10),(30,10),(20,20)}@@{*{\bullet}};
(10,10)*{};(30,10)*{}**\dir{-};
(20,10)*{};(20,20)*{}**\dir{-};
(13,13)*{1_{-4}};
(24,13)*{1_{-1}};
(33,13)*{1_{-4}};
(24,23)*{1_{-4}};
\endxy
}
}
& $4A_1$ \\ \hline
$({\rm I}_9,{\rm I}_1,{\rm I}_1,{\rm I}_1)$
&
$\emptyset$
& $12A_1$ \\ \hline
$({\rm I}_5,{\rm I}_5,{\rm I}_1,{\rm I}_1)$
&
$\emptyset$
& $12A_1$ \\ \hline
$({\rm I}_3,{\rm I}_3,{\rm I}_3,{\rm I}_3)$
&
$\emptyset$
& $12A_1$ \\ \hline
\end{tabular}
\caption{Singularities on the canonical cover of an Enriques surface with a special and extremal elliptic fibration} \label{ExtremalFibrationsConductrix}
\end{table}
\end{lemma}
\begin{proof}
For the list of rational extremal elliptic fibrations see Proposition \ref{Lang}. Since the conductrix $A$ is contained in one fiber, the tables in \cite[pp.16-18]{ES} will give us the possibilities for $A$. In every case, we denote the special $2$-section by $N$. Recall that $A^2 = -2$ by Proposition \ref{ES}.
\begin{itemize}
\item $(\I_4^*):$ There is only one possibility for $A$ with $A^2 = -2$ in the list of \cite{ES}. The canonical cover has four $A_1$-singularities by Lemma \ref{observations} $(c)$.
\item $(\II^*):$ There are two possible conductrices with $A^2 = -2$ in the list of \cite{ES}. However, since $N\cdot A \leq 1$ by Lemma \ref{observations} $(2)$, we get the one in the table. Since all fibers different from the fiber of type $\II^*$ are smooth and no point on a smooth fiber is blown up during the dissolution by Lemma \ref{blowups}, the sum of all $r$-invariants of fibers is less than $4$. Hence the cover has one $D_4$-singularity by Lemma \ref{observations} $(d)$.
\item $(2\III,\I_8):$ In this case $A = \emptyset$. Thus, any $(-2)$-curve has $s$-invariant $1$ (Lemma \ref{blowups}, (3)).  Since the intersection of $N$ with a component of the 
fiber of type $\III$ is blown up by Lemma \ref{observations}, (1), there are at least $11$ distinct points which are blown up during the dissolution by Lemma \ref{observations} $(a)$. Therefore, the cover has $12$ $A_1$-singularities.
\item $(\III,\I_8):$ Again, we have $A = \emptyset$. By \cite{L3}, the fiber of type $\III$ acquires a $D_4$-singularity after Frobenius pullback. The $8$ $A_1$-singularities come from the fiber of type $\I_8$ by Lemma \ref{observations} $(a)$.
\item $(2\I_1^*,\I_4):$ In this case and the next, there are two possibilities for $A$ with $A^2 = -2$ in the list of \cite{ES}.
By Lemma \ref{observations} $(b)$, we have $4$ $A_1$-singularities. Since $N$ is a 2-section and every point which is blown up lies on the fiber of type $\I_4$, the $r$-invariant of $N$ is at most $1$ and therefore $N\cdot A = 1$ (Lemma \ref{blowups}, (3)). This is only possible for the conductrix in our table.
\item $(\I_1^*,\I_4):$ By the same argument as in the previous case, we have $N\cdot A = 1$. Moreover, $N$ can not meet distinct components of the fiber of type $\I_1^*$ since we would obtain a different fibration with a double fiber of type $\I_4$ or $\I_5$ in these cases. Therefore, $N$ meets a multiplicity $2$ component of the fiber of type $\I_1^*$ . Now, $N$ and some components of the fiber of type $\I_1^*$ form a fiber of type $\I_0^*$ of a different fibration and the only possible conductrix for this behaviour is the one in our table.
\item $(\III^*,\I_2):$ There are two possible conductrices with $A^2 = -2$ in the list of \cite{ES}. If the conductrix has the full fiber as support, $N$ meets the central multiplicity $2$ component since $N\cdot A \leq 1$ by Lemma \ref{observations} $(2)$. But then, there is a fiber of type $\IV^*$ of a different fibration such that two components of the conductrix meet the fiber without being contained in it. This is not possible
by Lemma \ref{quellorell}. Hence, we have the conductrix in our table and the isolated singularities because of Lemma \ref{observations} $(b)$.
\item $(\II^*,\I_1):$ The conductrix is the one in the table by the same argument as in the $(\II^*)$ case. By Lemma \ref{observations} $(b)$, we get the types of isolated singularities.
\item $(\IV,2\IV^*):$ In this case and the next, there are two possibilities for $A$ with $A^2 = -2$ in the list of \cite{ES}.
Since $N$ meets a simple component of the fiber of type $\IV^*$, we can exclude the case where the conductrix does not have the full fiber as support, since in this case every simple component of the fiber of type $\IV^*$ has $s$-invariant $1$ and $r$-invariant $0$ while $N$ has $s$-invariant $1$ by
$N\cdot A=0$, contradicting Lemma \ref{observations} $(1)$. The isolated singularities are as in the table, since by \cite{L3} the fibers of type $\IV$ acquires a $D_4$-singularity after Frobenius pullback.
\item $(\IV,\IV^*):$ Suppose that $A$ has the full fiber of type $\IV^*$ as support. Then $N$ meets a multiplicity $2$ component of this fiber, since $A\cdot N \leq 1$. But then $N$ and components of the fiber of type $\IV^*$ form a 
fiber of type $\I_1^*$ of a different elliptic fibration such that two components of the conductrix meet the fiber without being contained in it. This is not possible by Lemma \ref{quellorell}. As in the previous case, we get a $D_4$-singularity.
\item $(2\IV,\I_2,\I_6)$ and $(\IV,\I_2,\I_6)$: The argument is essentially the same as in the $(2\III,\I_8)$ and $(\III,\I_8)$ cases.
\item $(2\IV^*,\I_1,\I_3)$ and $(\IV^*,\I_1,\I_3)$: The argument is similar to the cases with singular fibers $(\IV,2\IV^*)$ and  
$(\IV,\IV^*)$, except that the fibers of type $\I_n$ give $4$ $A_1$-singularities by Lemma \ref{observations} $(a)$.
\item All singular fibers multiplicative: In these cases, we get $12$ $A_1$-singularities by Lemma \ref{observations} $(a)$.
\end{itemize}
\vspace*{-\baselineskip}\leavevmode
\end{proof}

For the convenience of the reader, we give the corresponding table for quasi-elliptic fibrations. This does not require proof, since the conductrices are uniquely determined (see \cite{ES}) and the isolated singularities depend on the number of double fibers (see Lemma \ref{TwoSingsOnDoubleFiber}). Since fibers of type $\III$ do not contribute to the conductrix, we will not specify their multiplicity. Also, recall that by Lemma \ref{quellorell} the curve of cusps is the only component of the conductrix which is not contained in fibers of the fibration.

\begin{lemma}\label{QellipticCondSing}
The isolated singularities on the normalization of the canonical cover of an Enriques surface with a quasi-elliptic fibration and the conductrix are summed up in table {\rm \ref{QuasiellipticFibrationsConductrix}}. The self-intersection number of the reduced inverse image of the curve on the minimal resolution of singularities of the canonical cover is given as an index to the multiplicity. We do not give multiplicities of the fibers of type ${\rm III}$. The curve of cusps is encircled.

\begin{table}[!htbp]
\centering
\begin{tabular}{|>{\centering\arraybackslash}m{2.5cm}|>{\centering\arraybackslash}m{7cm}|>{\centering\arraybackslash}m{3.8cm}|}
\hline
\rm{Singular fibers} & \rm{Conductrix} & \rm{Isolated singularities}  \\ \hline
$(2{\rm II}^*)$&
\resizebox{!}{1.3cm}{
\centerline{
\xy
(0,25)*{};
@={(-10,10),(0,10),(10,10),(20,10),(30,10),(40,10),(50,10),(10,20),(60,10),(70,10)}@@{*{\bullet}};
(-10,10)*{};(70,10)*{}**\dir{-};
(10,10)*{};(10,20)*{}**\dir{-};
(69.95,10.19) *++={.} *\frm<4.5pt>{o};
(-7,13)*{2_{-4}};
(3,13)*{3_{-1}};
(14,13)*{5_{-4}};
(23,13)*{4_{-1}};
(33,13)*{4_{-4}};
(43,13)*{3_{-1}};
(53,13)*{3_{-4}};
(63,13)*{2_{-2}};
(73,13)*{1_{-2}};
(13,23)*{2_{-1}};
\endxy
}
}	& $4A_1$ or $D_4$\\ \hline
$({\rm II}^*)$&
\resizebox{!}{1.5cm}{
\centerline{
\xy
(0,25)*{};
@={(-10,10),(0,10),(10,10),(20,10),(30,10),(40,10),(50,10),(20,20),(60,10)}@@{*{\bullet}};
(-10,10)*{};(60,10)*{}**\dir{-};
(20,10)*{};(20,20)*{}**\dir{-};
(-10.05,10.19) *++={.} *\frm<4.5pt>{o};
(-7,13)*{1_{-2}};
(3,13)*{2_{-4}};
(13,13)*{2_{-1}};
(24,13)*{3_{-4}};
(33,13)*{2_{-1}};
(43,13)*{2_{-4}};
(53,13)*{1_{-1}};
(63,13)*{1_{-4}};
(24,23)*{1_{-1}};
\endxy
}
}	& $4A_1$ or $D_4$ \\ \hline
$(2{\rm I}_4^*)$& 
\resizebox{!}{1.5cm}{
\centerline{
\xy
(0,25)*{};
@={(-10,10),(0,10),(10,10),(20,10),(30,10),(40,10),(50,10),(10,20)}@@{*{\bullet}};
(-10,10)*{};(50,10)*{}**\dir{-};
(10,10)*{};(10,20)*{}**\dir{-};
(-7,13)*{1_{-2}};
(3,13)*{2_{-2}};
(14,13)*{3_{-4}};
(23,13)*{2_{-1}};
(33,13)*{2_{-4}};
(43,13)*{1_{-1}};
(53,13)*{1_{-4}};
(13,23)*{1_{-1}};
(-10.05,10.19) *++={.} *\frm<4.5pt>{o};
\endxy
}
}	& $4A_1$ or $D_4$ \\ \hline
$({\rm I}_4^*)$& 
\resizebox{!}{1.5cm}{
\centerline{
\xy
(13,25)*{};
@={(0,10),(10,10),(20,10),(30,10),(40,10),(20,20)}@@{*{\bullet}};
(0,10)*{};(40,10)*{}**\dir{-};
(20,10)*{};(20,20)*{}**\dir{-};
(3,13)*{1_{-4}};
(13,13)*{1_{-1}};
(24,13)*{2_{-4}};
(33,13)*{1_{-1}};
(43,13)*{1_{-4}};
(23,23)*{1_{-2}};
(19.95,20.19) *++={.} *\frm<4.5pt>{o};
\endxy
}
}	& $4A_1$ or $D_4$ \\ \hline
$(2{\rm III}^*,{\rm III})$&
\resizebox{!}{1.5cm}{
\centerline{
\xy
(0,25)*{};
@={(-10,10),(0,10),(10,10),(20,10),(30,10),(40,10),(50,10),(20,20),(60,10)}@@{*{\bullet}};
(-10,10)*{};(60,10)*{}**\dir{-};
(20,10)*{};(20,20)*{}**\dir{-};
(-7,13)*{1_{-2}};
(3,13)*{2_{-4}};
(13,13)*{2_{-1}};
(24,13)*{3_{-4}};
(33,13)*{2_{-1}};
(43,13)*{2_{-4}};
(53,13)*{1_{-1}};
(63,13)*{1_{-4}};
(24,23)*{1_{-1}};
(59.95,10.19) *++={.} *\frm<4.5pt>{o};
\endxy
}
}	& $4A_1$ or $D_4$ \\ \hline
$({\rm III}^*,{\rm III})$&
\resizebox{!}{2cm}{
\centerline{
\xy
(13,35)*{};
@={(0,10),(10,10),(20,10),(30,10),(40,10),(20,20),(20,30)}@@{*{\bullet}};
(0,10)*{};(40,10)*{}**\dir{-};
(20,10)*{};(20,30)*{}**\dir{-};
(3,13)*{1_{-4}};
(13,13)*{1_{-1}};
(24,13)*{2_{-4}};
(33,13)*{1_{-1}};
(43,13)*{1_{-4}};
(24,23)*{1_{-1}};
(24,33)*{1_{-4}};
(19.95,30.19) *++={.} *\frm<4.5pt>{o};
\endxy
}
}
& $4A_1$ or $D_4$ \\ \hline
$(2{\rm I}_0^*,2{\rm I}_0^*)$&
\resizebox{!}{1cm}{
\centerline{
\xy
(0,20)*{};
@={(-10,10),(0,10),(10,10),(20,10),(30,10)}@@{*{\bullet}};
(-10,10)*{};(30,10)*{}**\dir{-};
(-7,13)*{1_{-4}};
(3,13)*{1_{-2}};
(13,13)*{1_{-2}};
(23,13)*{1_{-2}};
(33,13)*{1_{-4}};
(9.95,10.19) *++={.} *\frm<4.5pt>{o};
\endxy	
}
}		& $4A_1$ \\ \hline
$(2{\rm I}_0^*,{\rm I}_0^*)$&
\resizebox{!}{1cm}{
\centerline{
\xy
(0,20)*{};
@={(-10,10),(0,10),(10,10),(20,10)}@@{*{\bullet}};
(-10,10)*{};(20,10)*{}**\dir{-};
(-7,13)*{1_{-4}};
(3,13)*{1_{-2}};
(13,13)*{1_{-2}};
(23,13)*{1_{-4}};
(-0.05,10.19) *++={.} *\frm<4.5pt>{o};
\endxy	
}
}		& $4A_1$ or $D_4$  \\ \hline
$({\rm I}_0^*,{\rm I}_0^*)$&
\resizebox{!}{1cm}{
\centerline{
\xy
(0,20)*{};
@={(-10,10),(0,10),(10,10)}@@{*{\bullet}};
(-10,10)*{};(10,10)*{}**\dir{-};
(-7,13)*{1_{-4}};
(3,13)*{1_{-2}};
(13,13)*{1_{-4}};
(-0.05,10.19) *++={.} *\frm<4.5pt>{o};
\endxy	
}
}		& $4A_1$ or $D_4$  \\ \hline
$(2{\rm I}_2^*,{\rm III},{\rm III})$ &
\resizebox{!}{1.5cm}{
\centerline{
\xy
(13,25)*{};
@={(0,10),(10,10),(20,10),(30,10),(40,10),(20,20)}@@{*{\bullet}};
(0,10)*{};(40,10)*{}**\dir{-};
(20,10)*{};(20,20)*{}**\dir{-};
(3,13)*{1_{-4}};
(13,13)*{1_{-1}};
(24,13)*{2_{-4}};
(33,13)*{1_{-1}};
(43,13)*{1_{-4}};
(23,23)*{1_{-2}};
(-0.05,10.19) *++={.} *\frm<4.5pt>{o};
\endxy
}
}	& $4A_1$ or $D_4$ \\ \hline
$({\rm I}_2^*,{\rm III},{\rm III})$ &
\resizebox{!}{1.5cm}{
\centerline{
\xy
(13,25)*{};
@={(10,10),(20,10),(30,10),(20,20)}@@{*{\bullet}};
(10,10)*{};(30,10)*{}**\dir{-};
(20,10)*{};(20,20)*{}**\dir{-};
(13,13)*{1_{-4}};
(24,13)*{1_{-1}};
(33,13)*{1_{-4}};
(24,23)*{1_{-4}};
(19.95,20.19) *++={.} *\frm<4.5pt>{o};
\endxy
}
}
& $4A_1$ or $D_4$ \\ \hline
$(2{\rm I}_0^*, 4 \times {\rm III})$ &
\resizebox{!}{1cm}{
\centerline{
\xy
(0,20)*{};
@={(-10,10),(0,10),(10,10)}@@{*{\bullet}};
(-10,10)*{};(10,10)*{}**\dir{-};
(-7,13)*{1_{-4}};
(3,13)*{1_{-2}};
(13,13)*{1_{-4}};
(-10.05,10.19) *++={.} *\frm<4.5pt>{o};
\endxy	
}
}		& $4A_1$ or $D_4$  \\ \hline
$({\rm I}_0^*,4 \times {\rm III})$ &
\resizebox{!}{1cm}{
\centerline{
\xy
(0,20)*{};
@={(-10,10),(0,10)}@@{*{\bullet}};
(-10,10)*{};(0,10)*{}**\dir{-};
(-7,13)*{1_{-4}};
(3,13)*{1_{-4}};;
(-10.05,10.19) *++={.} *\frm<4.5pt>{o};
\endxy	
}
}		& $4A_1$ or $D_4$  \\ \hline
$(8 \times {\rm III})$ &
\resizebox{!}{1cm}{
\centerline{
\xy
(0,20)*{};
@={(-10,10)}@@{*{\bullet}};
(-7,13)*{1_{-6}};
(-10.05,10.19) *++={.} *\frm<4.5pt>{o};
\endxy	
}
}		& $4A_1$ or $D_4$  \\ \hline
\end{tabular}
\caption{Singularities on the canonical cover of an Enriques surface with a quasi-elliptic fibration} \label{QuasiellipticFibrationsConductrix}
\end{table}
\end{lemma}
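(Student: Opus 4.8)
The plan is to assemble the table from facts already in hand, so that---in contrast to the elliptic case treated in Lemma \ref{ellipticCondSing}---there is essentially nothing to rule out. First I would record that a quasi-elliptic fibration $\varphi$ forces the conductrix $A$ to be nonempty: by Lemma \ref{quellorell}$(1)$ the curve of cusps of $\varphi$ is a component of $A$ of multiplicity $1$. Hence Proposition \ref{ES} applies, so $A$ is $1$-connected with $A^2=-2$, and the normalization $\tilde X_{norm}$ of the canonical cover carries either four $A_1$-singularities or a single $D_4$-singularity.

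For the conductrix column, the decisive input is that the conductrices of quasi-elliptic fibrations are \emph{uniquely} determined by the type of singular fibers (Ekedahl and Shepherd-Barron \cite[Theorem 3.1]{ES}). Thus, for each configuration in Ito's list (Proposition \ref{Itoh}) transported to the Enriques surface, one simply reads off $A$; no auxiliary fibration has to be invoked to exclude competing candidates, which is exactly why the statement needs no genuine argument. The self-intersection indices attached to the components are then pure bookkeeping: for each component $C$ I would read the invariants $r,m,s$ and $A\cdot C$ off the shape of $A$ (using that a component of $A$ may have $s=2$ while every $(-2)$-curve off $A$ has $s=1$, Lemma \ref{observations}$(3)$) and substitute into $\tilde C^2=(C^2-m^2r)s^2/2$ from Lemma \ref{blowups}$(1)$; the admissible quadruples are precisely the rows tabulated in Lemma \ref{blowups}$(3)$--$(4)$.

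Finally, the isolated-singularity column is governed entirely by the number of double fibers. By Corollary \ref{SingNormal}, $\tilde X_{norm}$ has a $D_4$-singularity exactly when $X$ is supersingular and four $A_1$-singularities exactly when $X$ is classical. By Proposition \ref{multi-fiber} a classical surface carries two double fibers and a supersingular surface only one, while by Lemma \ref{TwoSingsOnDoubleFiber} each double fiber of $\varphi$ contributes exactly two blown-up points during the minimal dissolution. Consequently a configuration that forces two double fibers---such as $(2\I_0^*,2\I_0^*)$---can only be classical and yields $4A_1$, whereas every configuration realizable with a single double fiber also admits a supersingular model, explaining the entries ``$4A_1$ or $D_4$''.

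The only point demanding any care, hence the mildest conceivable obstacle, is the consistency of the self-intersection bookkeeping: one must check that the invariants forced by the global shape of $A$ match an admissible row of Lemma \ref{blowups}. For instance, in the $(2\II^*)$ case the curve of cusps has $r=2$, $s=1$ and $A\cdot C=0$, giving $\tilde C^2=-2$, while in the $(8\times\III)$ case it instead has $r=1$, $m=1$, $s=2$, giving $\tilde C^2=-6$; both match the tables of Lemma \ref{blowups}. Since the configurations are finite and the conductrices explicit, this is routine verification rather than a real difficulty.
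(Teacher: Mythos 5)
Your proposal is correct and takes essentially the same route as the paper, which states explicitly that this lemma ``does not require proof, since the conductrices are uniquely determined (see \cite{ES}) and the isolated singularities depend on the number of double fibers (see Lemma \ref{TwoSingsOnDoubleFiber})''. You simply spell out that remark in full, invoking exactly the ingredients the paper points to: Lemma \ref{quellorell} for the non-emptiness of the conductrix, Proposition \ref{ES} for the $4A_1$/$D_4$ dichotomy, the uniqueness of quasi-elliptic conductrices from \cite{ES}, the bookkeeping formulas of Lemma \ref{blowups}, and Corollary \ref{SingNormal} together with Proposition \ref{multi-fiber} and Lemma \ref{TwoSingsOnDoubleFiber} for the last column.
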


\begin{remark}
Recall that any Enriques surface has a genus one fibration (Proposition \ref{genus1}) and if an Enriques surface $X$ has a finite group of automorphisms, then any genus one fibration on
$X$ is extremal (Proposition \ref{MW-Dolgachev}). Therefore, $X$ has an extremal special genus one fibration by Proposition \ref{Cossec}. Lemmas {\rm \ref{ellipticCondSing} and \ref{QellipticCondSing}} imply that the canonical cover of any Enriques surface with finite automorphism group has only $A_1$- or $D_4$-singularities as isolated singularities. In particular, this excludes the exotic case where the cover has an elliptic singularity and we refer the reader to \cite{S}, \cite{Mat} for an overview of the types of singularities that can occur in general.
\end{remark}

\section{Possible dual graphs}\label{possibledualgraph}


\begin{theorem}\label{mainGraph}
Assume that $X$ is a classical or supersingular Enriques surface in characteristic $2$ with a finite group of automorphisms.
Then, the dual graph of $(-2)$-curves
on $X$ is one of the dual graphs given in Theorems {\rm \ref{mainSupersingular} (A)} and
{\rm \ref{mainClassical} (A)}.
\end{theorem}
\begin{proof}
Recall that there exists a special genus one fibration on $X$ (Proposition \ref{Cossec}) which 
is extremal (Proposition \ref{MW-Dolgachev}). Hence, the pair $(A,I)$, where $A$ is the conductrix of $X$ and $I$ is the set of isolated singularities of the normalization of the canonical cover of $X$, is contained in Table \ref{ExtremalFibrationsConductrix} or \ref{QuasiellipticFibrationsConductrix}.

Using Lemmas \ref{ellipticCondSing} and \ref{QellipticCondSing}, and the fact that every genus one fibration on $X$ is extremal by Proposition \ref{MW-Dolgachev}, we obtain a list of all possible special genus one fibration on $X$ depending on $(A,I)$. On the other hand, by Proposition \ref{genus1}, every configuration of $(-2)$-curves whose dual graph is an extended Dynkin diagram is in fact a fiber of a genus one fibration of $X$. These two observations together will allow us to control the $(-2)$-curves on $X$. 
It remains to go through the possibilities for $(A,I)$ and, starting with one of the possible special extremal fibrations, to show that either we arrive at a contradiction or $X$ contains one of the dual graphs of $(-2)$-curves in our list. In the latter case, Vinberg's criterion (Propositions \ref{Vinberg}, \ref{Namikawa} and Remark \ref{Vinbergremark}) shows immediately that these are in fact all $(-2)$-curves on $X$. For the fibrations that actually occur on Enriques surfaces with finite automorphism group, we refer the
reader to the list of the genus one fibrations in the Appendix, Section \ref{AppendixGenus1fibration}.

We will make extensive use of Lemma \ref{quellorell} together with the Tables \ref{ExtremalFibrationsConductrix}, \ref{QuasiellipticFibrationsConductrix}, \cite[p.13]{ES} and \cite[pp.16-18]{ES}, which tell us how reducible fibers of genus one fibrations can meet $A$. We will give details on how to use these results in the first few cases, so that the reader can get familiar with the techniques and fill in the details of the later cases in a similar way.
Also, we denote by $N$ a special $(-2)$-section for a given special genus one fibration.  If the fibration is quasi-elliptic, then $N$ denotes the curve of cusps.

\noindent \hspace{-7mm} (1)\ Conductrix:
\xy
(0,25)*{};
@={(-10,10),(0,10),(10,10),(20,10),(30,10),(40,10),(50,10),(60,10),(70,10),(10,20)}@@{*{\bullet}};
(-10,10)*{};(70,10)*{}**\dir{-};
(10,10)*{};(10,20)*{}**\dir{-};
(-10,13)*{2};
(0,13)*{3};
(13,13)*{5};
(20,13)*{4};
(30,13)*{4};
(40,13)*{3};
(50,13)*{3};
(60,13)*{2};
(70,13)*{1};
(10,23)*{2};
\endxy  \quad
Singularities: $D_4$ or $4A_1$  \\
Possible special extremal fibrations: $(2\II^*)$ quasi-elliptic
\\

This is nothing but the dual graph of $(-2)$-curves of Enriques surfaces of type $\tilde{E}_8$.  The Enriques surfaces are supersingular or 
classical according to the type of singularities (Corollary \ref{SingNormal}).
These are the $\tilde{E}_8$ exceptional surfaces studied in \cite{ES}.


\noindent \hspace{-7mm}  (2)\ Conductrix:
\xy
(0,27)*{};
@={(-20,10),(-10,10),(0,10),(10,10),(20,10),(30,10),(40,10),(50,10),(10,20)}@@{*{\bullet}};
(-20,10)*{};(50,10)*{}**\dir{-};
(10,10)*{};(10,20)*{}**\dir{-};
(-10,13)*{2};
(0,13)*{2};
(13,13)*{3};
(20,13)*{2};
(30,13)*{2};
(40,13)*{1};
(50,13)*{1};
(-20,13)*{1};
(10,23)*{1};
\endxy \quad
Singularities: $D_4$ or $4A_1$ \\
Possible special extremal fibrations: $(\II^*)$ quasi-elliptic, $(2\III^*,\III)$ quasi-elliptic,
 and \\ $(2\III^*,2\III)$ quasi-elliptic
\\

In this case, we will show that $X$ has the dual graph of an Enriques surface of type $\tilde{E_7}+\tilde{A_1}^{(1)}$ or $\tilde{E_7}+\tilde{A_1}^{(2)}$.

First, note that if $X$ admits a quasi-elliptic fibration of type $(2\III^*,\III)$, the support of $A$ is exactly the support of the $\III^*$ fiber and the curve of cusps $N$ by Table \ref{QuasiellipticFibrationsConductrix} and Lemma \ref{quellorell}. More precisely, the right-most vertex of $A$ corresponds to $N$. The $2$-section $N$ meets each component of the singular fiber of type $\III$ because otherwise there would be a $(-2)$-curve in the fiber of type $\III$ meeting $N$ (the conductrix) more than once, contradicting Lemma \ref{observations} (2).
Now, fiber components and the special $2$-section $N$ of the fibration form the dual graph of type 
$\tilde{E}_7 + \tilde{A}_1^{(1)}$.  

If $X$ admits a quasi-elliptic fibration of type $(\II^*)$ or
of type $(2\III^*,2\III)$, then we immediately get the dual graph of type $\tilde{E}_7 + \tilde{A}_1^{(2)}$.
These are the $\tilde{E}_7$ exceptional surfaces of \cite{ES}. 


\noindent \hspace{-7mm}  (3)\ Conductrix:
\xy
(0,27)*{};
@={(-10,10),(0,10),(10,10),(20,10),(30,10),(40,10),(50,10),(10,20)}@@{*{\bullet}};
(-10,10)*{};(50,10)*{}**\dir{-};
(10,10)*{};(10,20)*{}**\dir{-};
(-10,13)*{1};
(0,13)*{2};
(13,13)*{3};
(20,13)*{2};
(30,13)*{2};
(40,13)*{1};
(50,13)*{1};
(10,23)*{1};
\endxy \quad
Singularities: $D_4$ or $4A_1$ \\
Possible special extremal fibrations: $(2\I_4^*)$ quasi-elliptic, $(\II^*)$ elliptic and $(\II^*,\I_1)$ elliptic
\\

In this case, we will show that $X$ has the dual graph of an Enriques surface of type $\tilde{D_8}$.
Starting from the quasi-elliptic fibration of type $(2\I_4^*)$, we immediately obtain the dual graph of Enriques surfaces of type $\tilde{D}_8$.

If we start with a special elliptic fibration with a singular fiber of type $\II^*$, the $2$-section $N$ has to meet this fiber in a component with multiplicity $2$.  Indeed, otherwise $N$ would meet the simple component $E_1$ of the fiber of type $\II^*$ twice and by Table \ref{ExtremalFibrationsConductrix} and Lemma \ref{quellorell}, $E_1$ is not contained in $A$ while the adjacent component $E_2$ of the $\II^*$ fiber is contained in $A$. By Proposition \ref{genus1}, and since $(E_1+N).E_2 = 1$ prevents $|E_1+N|$ from inducing a fibration, $|2(E_1+N)|$ would induce a special genus one fibration with special $2$-section $E_2$ and $E_1$ and $N$ form a fiber of type $2\III$. However, such a fibration cannot occur on $X$ by the above list of possible special extremal fibrations.

Hence, we either get a special genus one fibration with a double fiber of type $\III^*$ (if $N$ meets the left-most vertex of $A$. Then, the right-most vertex is a $2$-section) or a special genus one fibration with a double fiber of type $\I_4^*$ (if $N$ meets the right-most vertex of $A$. Then, the left-most vertex is a $2$-section). The first case is not allowed (it does not appear in the above list of possible special extremal fibrations). Therefore, we arrive at the dual graph of Enriques surfaces of type $\tilde{D}_8$.


\noindent \hspace{-7mm}  (4)\ Conductrix:
\xy
(13,35)*{};
@={(0,10),(10,10),(20,10),(30,10),(40,10),(20,20),(20,30)}@@{*{\bullet}};
(0,10)*{};(40,10)*{}**\dir{-};
(20,10)*{};(20,30)*{}**\dir{-};
(0,13)*{1};
(10,13)*{1};
(23,13)*{2};
(30,13)*{1};
(40,13)*{1};
(23,23)*{1};
(23,33)*{1};
\endxy \quad
Singularities: $D_4$ or $4A_1$ \\
Possible special extremal fibrations: $(\III^*,2\III)$ quasi-elliptic, $(\III^*,\III)$ quasi-elliptic,\\  
$(2\IV^*,\IV)$ elliptic and $(2\IV^*,\I_3,\I_1)$ elliptic
\\

In this case, we will show that $X$ has the dual graph of an Enriques surface of type $\tilde{E_6}+\tilde{A}_2$.

If we start with $(2\IV^*,\IV)$ or 
$(2\IV^*,\I_3,\I_1)$, then $N$ meets a component $E$ of the $\IV$ or $\I_3$ fiber twice. Indeed, otherwise we would find a special genus one fibration with a double fiber of type $\IV$ or of type $\I_3$ consisting of $N$ and two components of the fiber of type $\IV$ or $\I_3$, which is not allowed in the above list of special fibrations.  Then, using our list of possible fibrations, we see that $|2(E+N)|$ induces a quasi-elliptic fibration of type $(\III^*,2\III)$. We add the two simple components of the $\III^*$ fiber to the above diagram and compute the intersection numbers by repeating the above argument that a special $2$-section of the fibration of type $(2\IV^*,\IV)$ or 
$(2\IV^*,\I_3,\I_1)$ meets a component of the reducible simple fiber twice. Finally, we arrive at the dual graph of Enriques surfaces of type $\tilde{E}_6 + \tilde{A}_2$.
This is an $\tilde{E}_6$ exceptional Enriques surface of \cite{ES}.

If $X$ has a quasi-elliptic fibration with a singular fiber of type $\III^*$, 
we will show that $X$ admits a special elliptic fibration with a singular fiber of type $2\IV^*$, returning us to the argument of the previous paragraph.
In this case,
Lemma \ref{quellorell} and Table \ref{QuasiellipticFibrationsConductrix} show that the curve of cusps $N$ meets the component on the short tail of the $\III^*$ fiber. Thus, $N$ and components of the $\III^*$ fiber form a configuration of type $\tilde{E}_6$. By Proposition \ref{genus1}, and since the simple components of the original fiber of type $\III^*$ are $2$-sections, this configuration is the support of a fiber of a special genus one fibration on $X$. Using our list, we see that it is of type $(2\IV^*,\IV)$ or 
$(2\IV^*,\I_3,\I_1)$. 



%
%


\noindent \hspace{-7mm}  (5)\ Conductrix:
\xy
(13,25)*{};
@={(0,10),(10,10),(20,10),(30,10),(40,10),(20,20)}@@{*{\bullet}};
(0,10)*{};(40,10)*{}**\dir{-};
(20,10)*{};(20,20)*{}**\dir{-};
(0,13)*{1};
(10,13)*{1};
(23,13)*{2};
(30,13)*{1};
(40,13)*{1};
(20,23)*{1};
\endxy \quad
Singularities: $D_4$ \\
Possible special extremal fibrations: $(\I_4^*)$ quasi-elliptic, $(2\I_2^*,\III,\III)$ quasi-elliptic and $(2\I_2^*,2\III,\III)$ quasi-elliptic
\\

We will show that this case does not occur on an Enriques surface with finite automorphism group.

In cases $(2\I_2^*,\III,\III)$ and $(2\I_2^*,2\III,\III)$, the curve of cusps $N$ corresponds to the left-most (or right-most) vertex of $A$ by Table \ref{QuasiellipticFibrationsConductrix}. Therefore, $N$ meets both components of a simple $\III$ fiber once by Lemma \ref{observations} (2), hence there exists a genus one fibration with a fiber of type $\III^*$ (here we use Proposition \ref{genus1}). Since the fiber of type $\III^*$ contains the conductrix, the induced fibration is elliptic by Lemma \ref{quellorell} and hence of type $(\III^*,\I_2)$ by Proposition \ref{Lang}. This contradicts the type of singularities (Lemma \ref{observations} (b)).

If we start with $(\I_4^*)$, then we find a special fibration with a double fiber of type $\I_2^*$ (again, use Table \ref{QuasiellipticFibrationsConductrix}, Lemma \ref{quellorell} and a component of $\I_4^*$ as $2$-section).  Thus, we reduce this case to the case of a quasi-elliptic fibration with a singular fiber of type $2\I_2^*$, which we have excluded above.



\noindent \hspace{-7mm} (6)\ Conductrix:
\xy
(13,25)*{};
@={(0,10),(10,10),(20,10),(30,10),(40,10),(20,20)}@@{*{\bullet}};
(0,10)*{};(40,10)*{}**\dir{-};
(20,10)*{};(20,20)*{}**\dir{-};
(0,13)*{1};
(10,13)*{1};
(23,13)*{2};
(30,13)*{1};
(40,13)*{1};
(20,23)*{1};
\endxy \quad
Singularities: $4A_1$ \\
Possible special extremal fibrations: $(2\I_2^*,2\III,\III)$ quasi-elliptic, $(2\I_2^*,\III,\III)$ quasi-elliptic, $(\I_4^*)$ quasi-elliptic and $(\III^*,\I_2)$ elliptic
\\

We will prove that this case does not occur on an Enriques surface with finite automorphism group.

First we show that in every case, there is a quasi-elliptic fibration with a singular fiber of type $\I_4^*$ and with the curve of cusps meeting the central component. 

In the cases with a double fiber of type $\I_2^*$, we may assume that the curve of cusps is the left most vertex of $A$ by Table \ref{QuasiellipticFibrationsConductrix}. 
We observe that the curve of cusps can not meet a component of a simple fiber of type $\III$ twice, because of Lemma \ref{observations} $(2)$. Hence, we obtain a quasi-elliptic fibration with a singular fiber of type $\I_4^*$.

In the case of the special elliptic fibration with singular fibers of type $(\III^*,\I_2)$, note that if the $2$-section meets a simple component of the fiber of type $\III^*$ twice or two simple components, we get a quasi-elliptic fibration with a singular fiber of type $2\III$ or $2\I_8$.  The latter case is impossible.  Thus, we get
a quasi-elliptic fibration with singular fibers of type $(2\I_2^*, 2\III, \III)$ by the above list of
possible special extremal fibrations, and hence reduce this case to the previous case.
If the 2-section meets a component of multiplicity $2$ on one of the long tails, we get a quasi-elliptic fibration with a singular fiber of type $2\I_2^*$ and if it meets the component of multiplicity $2$ on the short tail, there would be a special elliptic fibration with a double fiber of type $\IV^*$.  The last case does not occur by the list of possible special extremal fibrations.

We now start from a quasi-elliptic fibration with a singular fiber of type $\I_4^*$ and exclude this case. By Table \ref{QuasiellipticFibrationsConductrix}, the curve of cusps $N$ 
(which is denoted by the encircled vertex) 
meets the central component of the $\I_4^*$ fiber.
Two of the blown up points lie on the conductrix and two do not (by Table \ref{QuasiellipticFibrationsConductrix} and Lemma \ref{blowups}, $(3)$). Any $(-2)$-curve not meeting the conductrix has $r$-invariant $2$ and therefore it passes through the $2$ blown up points not lying on the conductrix. In particular, any two $(-2)$-curves not meeting the conductrix meet each other at least twice.

The configuration we start with is the following:

\xy
(-40,25)*{};
@={(10,0),(50,0),(10,10),(20,10),(30,10),(40,10),(50,10),(10,20),(50,20),(30,20)}@@{*{\bullet}};
(10,10)*{};(50,10)*{}**\dir{-};
(10,0)*{};(10,20)*{}**\dir{-};
(30,10)*{};(30,20)*{}**\dir{-};
(50,0)*{};(50,20)*{}**\dir{-};
\endxy 
\vspace{2mm}

There are four subdiagrams of type $\tilde{E}_7$. If the automorphism group of an Enriques surface with this conductrix is finite, the elliptic fibrations induced by those subdiagrams have singular fibers of type $(\III^*,\I_2)$ or $(2\III^*,\I_2)$. For any of these diagrams of type $\tilde{E}_7$, the two remaining curves are either $4$- or $2$-sections of the fibration, depending on whether the fiber of type $\III^*$ is double or not
(we do not know whether the fibration is special or not if the $\III^*$ fiber is a double fiber and hence it might not be in our list). 
If such a multisection meets a component of the fiber of type $\I_2$ only once, we obtain a quasi-elliptic fibration with singular fiber of type $\II^*$ (since the conductrix $A$ is not contained in the fiber of type $\II^*$, the fibration is quasi-elliptic (Lemma \ref{quellorell}, (2)) and hence special (Remark \ref{quasi-elliptic-special})), which is not allowed by our list. Hence, the intersection number of each of the multisections with a component of the fiber of type $\I_2$ is $0,2$ or $4$.
If one of the multisections meets only one component of the fiber of type $\I_2$, the other multisection and the other component of the fiber of type $\I_2$ are disjoint from a diagram of type $\tilde{D}_6 + A_1$, hence they meet each other twice. This yields the following dual graph $(A)$, in which the new
vertex is the other component as above. The dual graph $(B)$ occurs if both multisections meet both components of the fiber of type $\I_2$ (this means that the multisections are $4$-sections).



\vspace{2mm}
$
\begin{array}{cc}
 \hspace{-1cm} (A) &  \hspace{3cm} (B) \\

\xy
@={(10,0),(50,0),(10,10),(20,10),(30,10),(40,10),(50,10),(10,20),(50,20),(30,20),(60,30)}@@{*{\bullet}};
(10,10)*{};(50,10)*{}**\dir{-};
(10,0)*{};(10,20)*{}**\dir{-};
(30,10)*{};(30,20)*{}**\dir{-};
(50,0)*{};(50,20)*{}**\dir{-};
(50,20)*{};(60,30)*{}**\dir{=};
\endxy 

%

 & 
 \hspace{3cm}
 
 \xy
(10,25)*{};
@={(10,0),(50,0),(10,10),(20,10),(30,10),(40,10),(50,10),(10,20),(50,20),(30,20),(20,30),(40,30)}@@{*{\bullet}};
(10,10)*{};(50,10)*{}**\dir{-};
(10,0)*{};(10,20)*{}**\dir{-};
(30,10)*{};(30,20)*{}**\dir{-};
(50,0)*{};(50,20)*{}**\dir{-};
(10,20)*{};(20,30)*{}**\dir{=};
(50,20)*{};(40,30)*{}**\dir{=};
(10,20)*{};(40,30)*{}**\dir{=};
(50,20)*{};(20,30)*{}**\dir{=};
(20,30)*{};(40,30)*{}**\dir{=};
\endxy 

\end{array}
$


%

\medskip 
(i)\ We first exclude Case $(B)$. Using one of the diagrams of type $\tilde{A}_1$ which yields a quasi-elliptic fibration with singular fibers $(2\I_2^*,2\III,\III)$, we get the following graph

\vspace{2mm}
 \xy
(-40,25)*{};
@={(10,0),(50,0),(10,10),(20,10),(30,10),(40,10),(50,10),(10,20),(50,20),(30,20),(20,30),(40,30),(30,0)}@@{*{\bullet}};
(10,10)*{};(50,10)*{}**\dir{-};
(10,0)*{};(10,20)*{}**\dir{-};
(30,10)*{};(30,20)*{}**\dir{-};
(50,0)*{};(50,20)*{}**\dir{-};
(10,20)*{};(20,30)*{}**\dir{=};
(50,20)*{};(40,30)*{}**\dir{=};
(10,20)*{};(40,30)*{}**\dir{=};
(50,20)*{};(20,30)*{}**\dir{=};
(20,30)*{};(40,30)*{}**\dir{=};
(30,0)*{};(50,10)*{}**\dir{-};
(30,0)*{};(40,30)*{}**\dir{~};
(20,30)*{};(40,30)*{}**\dir{=};
\endxy 

\noindent
where the new vertex corresponds to a simple component of the fiber of type $\I_2^*$ and where a wiggly line means that the intersection number of the adjacent vertices is 4.
Therefore, there is a subdiagram of type $\tilde{D}_4$ which defines a quasi-elliptic fibration with
a double fiber of type $\I_0^*$ (by Proposition \ref{genus1}).  This is not allowed by our list of special extremal fibrations.

\medskip
(ii)\ Now we exclude Case $(A)$. Since Case $(B)$ does not occur, the three other $\tilde{E}_7$ diagrams give rise to $(-2)$-curves as in case $(A)$ and in particular, we get the following graph

\vspace{2mm}
 \xy
(-40,25)*{};
@={(10,0),(50,0),(10,10),(20,10),(30,10),(40,10),(50,10),(10,20),(50,20),(30,20),(60,30),(60,-10)}@@{*{\bullet}};
(10,10)*{};(50,10)*{}**\dir{-};
(10,0)*{};(10,20)*{}**\dir{-};
(30,10)*{};(30,20)*{}**\dir{-};
(50,0)*{};(50,20)*{}**\dir{-};
(50,20)*{};(60,30)*{}**\dir{=};
(50,00)*{};(60,-10)*{}**\dir{=};
(60,-10)*{};(60,30)*{}**\dir{~};
\endxy 

Here, the wiggly line denotes a non-negative intersection number.
The $\tilde{D}_6$ diagram on the left gives rise to a quasi-elliptic fibration.
Since the two $\tilde{A}_1$ diagrams plus the wiggly line are perpendicular to the $\tilde{D}_6$ diagram, 
they are supported on fibers of this fibration which is only possible if
the wiggly line denotes the intersection number zero. However, this implies that the fibration has singular fibers of type $(2\I_2^*,2\III^*,2\III^*)$ which is a contradiction.
Therefore, an Enriques surface with finite automorphism group and this conductrix can not exist.


\noindent \hspace{-7mm} (7)\ Conductrix:
\xy
(13,25)*{};
@={(0,10),(10,10),(20,10),(30,10),(40,10)}@@{*{\bullet}};
(0,10)*{};(40,10)*{}**\dir{-};
(0,13)*{1};
(10,13)*{1};
(20,13)*{1};
(30,13)*{1};
(40,13)*{1};
\endxy \quad
Singularities: $4A_1$ \\
Possible special extremal fibrations: $(2\I_0^*,2\I_0^*)$ quasi-elliptic and $(\I_4^*)$ elliptic
\\

In this case, we will show that $X$ has the dual graph of an Enriques surface of type $\tilde{D_4}+\tilde{D}_4$.
If we start with a special elliptic fibration with a singular fiber of type $(2\I_0^*,2\I_0^*)$, then
we immediately obtain the dual graph of Enriques surface of type $\tilde{D}_4 + \tilde{D}_4$.

In case of a special elliptic fibration with a singular fiber of type $(\I_4^*)$, we will reduce to the previous paragraph. First, note that the support of $A$ is equal to the support of the components of $\I_4^*$ of multiplicity $2$ by Table \cite[pp.16-18]{ES}. Now, we have to observe that a special $2$-section $N$ has to meet the conductrix, for otherwise we would obtain a special genus one fibration with a singular fiber of type $2\III$, $\I_4$ or $\I_8$, which is not allowed by the above list of special extremal fibrations. If the $2$-section $N$ meets the conductrix, we obtain a special genus one fibration with a singular double fiber of type $\I_2^*$, $\I_1^*$ or $\I_0^*$. The first two are not allowed by the above list. Thus, we get a quasi-elliptic fibration with a double fiber of type $\I_0^*$.


\noindent \hspace{-7mm} (8)\ Conductrix:
\xy
(13,25)*{};
@={(0,10),(10,10),(20,10),(30,10)}@@{*{\bullet}};
(0,10)*{};(30,10)*{}**\dir{-};
(0,13)*{1};
(10,13)*{1};
(20,13)*{1};
(30,13)*{1};
\endxy \quad
Singularities: $D_4$ or $4A_1$ \\
Possible special extremal fibrations: $(2\I_0^*,\I_0^*)$ quasi-elliptic
\\

We will show that this case does not occur on an Enriques surface with finite automorphism group.
Starting with a fibration with singular fibers of type $(2\I_0^*,\I_0^*)$, the curve of cusps $N$ meets
the component with multiplicity 2 of the singular fiber of type $\I_0^*$ (use Tables \ref{QuasiellipticFibrationsConductrix} and \cite[p.13]{ES}), and hence there is a subdiagram of type $\tilde{D}_7$ which defines a non-extremal fibration (Propositions \ref{Lang} and \ref{Itoh}). Therefore, an Enriques surface with this conductrix can not have a finite automorphism group.


\noindent \hspace{-7mm} (9)\ Conductrix:
\xy
(13,25)*{};
@={(0,10),(10,10),(20,10)}@@{*{\bullet}};
(0,10)*{};(20,10)*{}**\dir{-};
(0,13)*{1};
(10,13)*{1};
(20,13)*{1};
\endxy \quad
Singularities: $D_4$ or $4A_1$ \\
Possible special extremal fibrations: $(\I_0^*, \I_0^*)$ quasi-elliptic, $(2\I_0^*,2\III,\III,\III,\III)$ quasi-elliptic and $(2\I_0^*$, $\III$, $\III,\III,\III)$ quasi-elliptic 
%
\\

We will show that this case does not occur on an Enriques surface with finite automorphism group.
Starting with a quasi-elliptic fibration with singular fibers of type $(\I_0^*, \I_0^*)$, we obtain an elliptic fibration with a singular fiber of type $\I_2^*$ (again use Tables \ref{QuasiellipticFibrationsConductrix} and \cite[p.13]{ES}), which is not extremal by Proposition \ref{Lang}.

As for the fibrations with a double fiber of type $2\I_0^*$, by Lemma \ref{observations} (2), the curve of cusps $N$ meets two components of
each simple fiber of type $\III$.  Therefore there is a diagram of type $\tilde{D}_6$ containing the conductrix. By Lemma \ref{quellorell}, the corresponding fibration is elliptic. But an elliptic fibration with a fiber of type $\I_2^*$ can not be extremal by Propositions \ref{Lang}.



\noindent \hspace{-9mm} (10)\ Conductrix:
\xy
(13,25)*{};
@={(0,10),(10,10)}@@{*{\bullet}};
(0,10)*{};(10,10)*{}**\dir{-};
(0,13)*{1};
(10,13)*{1};
\endxy \quad
Singularities: $D_4$ or $4A_1$ \\
Possible special extremal fibrations: $(\I_0^*,\III,\III,\III,\III)$ quasi-elliptic, $(\I_0^*,2\III$, $\III,\III,\III)$ quasi-elliptic, $(\I_0^*,2\III,2\III,\III,\III)$ quasi-elliptic and $(\I_1^*,\I_4)$ elliptic.
\\

We will show that this case does not occur on an Enriques surface with finite automorphism group.
If there is a quasi-elliptic fibration on this surface (i.e., in one of the first four cases in the above list), we see that there is a configuration of type $\tilde{D}_4$ containing the conductrix by using Tables \ref{QuasiellipticFibrationsConductrix} and \cite[p.13]{ES}. 
It defines an elliptic fibration (Proposition \ref{genus1}) which is not extremal by Proposition \ref{Lang}.

Starting with a special elliptic fibration with singular fibers of type $(\I_1^*, \I_4)$, we look at the intersection of $N$ with the fiber of type $\I_1^*$. Using Table \ref{ExtremalFibrationsConductrix} and \cite[Theorem 3.1]{ES}, we can see that the conductrix consists of the two components 
of the fiber of type $\I_1^*$ with multiplicity 2.  If the special 2-section $N$ meets distinct components, we obtain a configuration giving a double fiber of type $\I_4$ or $\I_5$, which is a contradiction. If $N$ meets a double component once, then there is a special fibration with a fiber of type  
$\I_0^*$ containing the coductrix. Such a fibration is not contained in our list.
 If $N$ meets a simple component twice, we get a double fiber of type $\III$ of a quasi-elliptic fibration. Thus, we have reduced this case to the quasi-elliptic case.   


\noindent \hspace{-9mm} (11)\ Conductrix:
\xy
(13,25)*{};
@={(0,10)}@@{*{\bullet}};
(0,13)*{1};
\endxy \quad
Singularities: $D_4$ or $4A_1$ \\
Possible special extremal fibrations: $(\III,\III,\III,\III,\III,\III,\III,\III)$ quasi-elliptic, any multiplicities
\\

We will show that this case does not occur on an Enriques surface with finite automorphism group.
The 2-section $N$ is nothing but the conductrix and hence $N$ meets two components of each simple fiber of type $\III$ by Lemma \ref{observations} (2).  Thus, we have an elliptic fibration with a fiber of type $\I_0^*$ which is not extremal by
Proposition \ref{Lang}.


\noindent \hspace{-9mm} (12)\ Conductrix:
\xy
(13,25)*{};
@={(10,10),(20,10),(30,10),(20,20)}@@{*{\bullet}};
(10,10)*{};(30,10)*{}**\dir{-};
(20,10)*{};(20,20)*{}**\dir{-};
(10,13)*{1};
(23,13)*{1};
(30,13)*{1};
(23,23)*{1};
\endxy		\quad
Singularities: $4A_1$ \\ 
Possible special extremal fibrations: $(\I_2^*,2\III,2\III)$ quasi-elliptic, $(\I_2^*,\III,2\III)$ quasi-elliptic, $(\I_2^*,\III,\III)$ quasi-elliptic, $(2\I_1^*,\I_4)$ elliptic and $(\IV^*,\I_1,\I_3)$ elliptic
\\

In this case, we will show that $X$ has the dual graph of an Enriques surface of type ${\rm VIII}$.
If there is a quasi-elliptic fibration with singular fibers of type $(\I_2^*,2\III,2\III)$, we have the following configuration of $(-2)$-curves (use Tables \ref{QuasiellipticFibrationsConductrix} and \cite[p.13]{ES}):

\xy
(-40,25)*{};
@={(20,20),(40,20),(10,10),(20,10),(40,10),(50,10),(30,0),(30,-10),(20,-20),(40,-20),(10,-30),(50,-30)}@@{*{\bullet}};
(10,-30)*{};(20,-20)*{}**\dir{=};
(50,-30)*{};(40,-20)*{}**\dir{=};
(10,10)*{};(20,10)*{}**\dir{-};
(20,20)*{};(20,10)*{}**\dir{-};
(50,10)*{};(40,10)*{}**\dir{-};
(40,20)*{};(40,10)*{}**\dir{-};
(20,-20)*{};(30,-10)*{}**\dir{-};
(40,-20)*{};(30,-10)*{}**\dir{-};
(30,0)*{};(30,-10)*{}**\dir{-};
(30,0)*{};(40,10)*{}**\dir{-};
(30,0)*{};(20,10)*{}**\dir{-};
\endxy

The special elliptic fibration induced by the diagram of type $\tilde{D}_5$ meeting the two curves at the bottom gives four more $(-2)$-curves. We leave it to the reader to check that the resulting intersection graph is of type $\VIII$.

Next, we consider the case of a special elliptic fibration with singular fibers of type $(2\I_1^*,\I_4)$.
Let $C$ be the component of the fiber of type $2\I_1^*$ with $C\cdot N=1$. We have seen in the proof of Lemma \ref{ellipticCondSing} that $A.N = 1$, hence $C$ is contained in $A$.  The $2$-section $N$ has to meet a component $E$ of the fiber of type $\I_4$ twice, since special genus one fibrations with a double fiber of type $\IV$, $\I_3$ or $\I_4$ are not allowed by the above list.  
Therefore, there is a quasi-elliptic fibration with the double singular fiber $2(N+E)$ of type $\III$ and curve of cusps $C$.  
This has to be a fibration with singular fibers of type $(\I_2^*,2\III,2\III)$, since the $C$ does not meet the component of the second fiber $F$ of type $\III$
which is also a component of the fiber of type $\I_4$ and $C$ can not meet the other component of $F$ twice by Lemma \ref{observations} (2).  Thus, we reduce this case to the previous case.

Starting with a quasi-elliptic fibration with singular fibers of type $(\I_2^*,\III,2\III)$  or $(\I_2^*,\III,\III)$, we immediately get the existence of a special elliptic fibration with a singular double fiber of type $\I_1^*$ (use Tables \ref{QuasiellipticFibrationsConductrix} and \cite[p.13]{ES}), returning us to the case above.

Finally, we consider the case of 
a special elliptic fibration with singular fibers of type $(\IV^*,\I_1,\I_3)$.
If the $2$-section meets two simple components of the fiber of type $\IV^*$, then there is a fibration
with a double fiber of type $\I_6$, which is a contradiction.  Thus,
the $2$-section  meets either a simple component of the fiber of type $\IV^*$ twice or a double component once. In the first case, we get a quasi-elliptic fibration with a singular fiber of type $2\III$ and in the second case, we get a special elliptic fibration with a double fiber of type $\I_1^*$. Both cases have already been dealt with.


\noindent \hspace{-9mm} (13)\ Conductrix:
\xy
(13,25)*{};
@={(10,10),(20,10),(30,10),(20,20)}@@{*{\bullet}};
(10,10)*{};(30,10)*{}**\dir{-};
(20,10)*{};(20,20)*{}**\dir{-};
(10,13)*{1};
(23,13)*{1};
(30,13)*{1};
(23,23)*{1};
\endxy		\quad
Singularities: $D_4$ \\ 
Possible special extremal fibrations:  $(\I_2^*,\III,2\III)$ quasi-elliptic, $(\I_2^*,\III,\III)$ quasi-elliptic and $(\IV^*,\IV)$ elliptic
\\

This case does not occur on an Enriques surface with finite automorphism group. In fact, this follows immediately from the arguments in the last two paragraphs of the previous case.
%
\\

\noindent \hspace{-9mm} (14)\ Conductrix: $\emptyset$ \quad
Singularities: $D_4,8A_1$ \\
Possible special extremal fibrations: $(\IV,\I_2,\I_6)$ elliptic and $(\III,\I_8)$ elliptic
\\

We will show that this case does not occur on an Enriques surface with finite automorphism group.
We start from any of the two special fibrations and a special $2$-section $N$. 
Consider the intersection of $N$ with the fibers of type $\I_6$ or $\I_8$.  If $N$ meets two disjoint components of the fiber, then there is a genus one fibration with a double fiber of type $\I_n$, which is a contradiction.  Hence, either $N$ meets a component of the fiber twice or meets two adjacent components.  Then we can find a special genus one fibration with an additive double fiber of type $\III$ or $\IV$.
However, these fibrations are not allowed by the above list. Hence a surface with these singularities can not have finite automorphism group.
\\


\noindent \hspace{-9mm} (15)\ Conductrix: $\emptyset$ \quad
Singularities: $12A_1$ \\
Possible special extremal fibrations: $(\I_9,\I_1,\I_1,\I_1)$ elliptic, $(\I_5,\I_5,\I_1,\I_1)$ elliptic, $(2\IV,\I_2,\I_6)$ elliptic, $(2\III,\I_8)$ elliptic and $(\I_3,\I_3,\I_3,\I_3)$ elliptic 
\\

In this case, we will show that $X$ has  the dual graph of an Enriques surface of type $\VII$.
If we start with a special fibration with singular fibers of type $(2\III,\I_8)$, the $2$-section has to meet two adjacent components of the fiber of type $\I_8$. Indeed, the twelve blow-ups for the dissolution all happen on the singular fibers and the eight of them occurring on the fiber of type $\I_8$ are the blow-ups of the intersections of any two adjacent components. By Lemma \ref{blowups}, (3), the $r$-invariant of any $(-2)$-curve is 2.  Thus we have to blow up two points on the special $2$-section, and hence 
it has to meet such a point of intersection. 
The dual graph obtained is the one given in \cite[Fig.4.19.1, (iii)]{Ko}.
From this configuration, we leave it to the reader to verify, using the above list, that the dual graph we obtain is the one of type $\VII$ (the argument is similar to \cite[(4.19.2)]{Ko}).

Starting with a special extremal fibration with singular fibers of type $(2\IV,\I_2,\I_6)$, we can check that there is a special fibration with double fiber of type $2\III$, which returns us to the case above. Indeed, if the $2$-section meets distinct components of every fiber, we either obtain a genus one fibration with a double fiber of type $\I_n$, which is impossible, or a special fibration with a singular fiber of type $\II^*$, which is not allowed by our list.

For the other configurations, we also obtain a special elliptic fibration with a degenerate double fiber from the $2$-section and components of the fiber of type $\I_n$ with $n \geq 3$. Hence, the argument of the previous two cases applies.
\vspace*{-\baselineskip}\leavevmode
\end{proof}

\bigskip

\section{Construction of vector fields}\label{VectFields}
In this section, we explain two methods to construct a candidate of a vector field $D$ on 
an algebraic surface $Y$ such that the quotient surface $Y^D$ becomes
an Enriques surface.

\subsection{Enriques surfaces with an elliptic pencil}
Let $f : Y \longrightarrow {\bf P}^1$ be an elliptic surface
with a section. Assume that
$Y$ is either a $K3$ surface or a rational surface. Then, the generic fiber is
an elliptic curve $E$ over the field $k(t)$ with one variable $t$. Therefore,
there exists a non-zero regular vector field $\delta$ on $E$ which we can regard
as a non-zero rational vector field on $Y$. Taking a suitable vector field $g(t)\frac{\partial}{\partial t}$
and a suitable function $f(t)$ on ${\bf P}^1$, we look for a vector field 
$$
       D = f(t)\{g(t)\frac{\partial}{\partial t} + \delta\}
$$
such that $Y^D$ is birational to an Enriques surface. In many cases, 
double fibers of the Enriques surface $Y^D$ exist over the zero points 
of $g(t)$ by the theory of vector fields (cf. Proposition 2.1).
In this way, we construct Enriques surfaces of type $\tilde{E_6} + \tilde{A_2}$ in Section \ref{sec3}, 
of type $\VII$ in Section \ref{secVII} and of type $\VIII$ in Section \ref{sec8}.

\subsection{Enriques surfaces with a quasi-elliptic pencil}\label{VectFieldsQueen}
 By Queen \cite[Theorem 2]{Q1}, we have two normal forms for the generic fibers
of a quasi-elliptic fiber space over the field $K = k(s)$ with a variable $s$:

(1) $u^2 = a + v + cv^2 + dv^4$ 
with $a , c, d  \in K$ and $d \notin k$,

(2) $ u^2 + u = a + dv^4$
with $a , d\in K$ and $d \notin k$.

Here, $u, v$ are variables. Note that the case (3) in Queen \cite[Theorem 2]{Q1}
does not occur in our case, because the transcendental degree of $K = k(s)$ over $k$
is 1.
As for the relative generalized Jacobians of these quasi-elliptic surfaces, 
Queen \cite[Theorem 1]{Q2} showed
the following:

The generalized Jacobian for (1) : $u^2 = v + cv^2 + dv^4$,

The generalized Jacobian for (2) : $ u^2 + u = dv^4$.

Let us explain how to use case (1) to construct our Enriques surfaces (case (2) works similarly).
By the change of coordinates $x = 1/v + c, ~y =u/v^2$, the generalized Jacobian for (1)
is birational to 
$$
   y^2 = x^3 + c^2x + d,
$$
which is a Weierstrass normal form. By Bombieri-Mumford \cite{BM2}, the relative
Jacobian of the quasi-elliptic Enriques surface is a rational surface. Therefore, this surface is birational to a rational quasi-elliptic surface in the list of Ito \cite[Proposition 5.1]{Ito}.

Starting from Ito's list of rational quasi-elliptic surfaces, 
we pursue the converse procedure to construct
a candidate of an Enriques surface $X$, and using the candidate,
we find a vector field $D$ on a rational surface $Y$ such that $Y^D$ is birational 
to the Enriques surface $X$.
Using this technique, we will obtain Enriques surfaces of type $\tilde{E_8}$ in Section \ref{sec4},
of type $\tilde{E_7}+\tilde{A_1}^{(1)}$, $\tilde{E_7}+\tilde{A_1}^{(2)}$ in Section \ref{sec6}, and of type 
$\tilde{D_8}$ in Section \ref{sec7} and of type 
$\tilde{D_4} + \tilde{D_4}$ in Section \ref{sec5}.
Of course, finding a suitable vector field $D$ is the hardest part of this construction and in the next subsection, we will give a detailed explanation on how to find a candidate for $D$ in the case of Enriques surfaces of type $\tilde{D_4} + \tilde{D_4}$.
 
\subsection{Example: Vector fields for Enriques surfaces of type $\tilde{D}_4 + \tilde{D}_4$}\label{ExampleD4D4}
Following Ito \cite[Proposition 5.1]{Ito}, we take the rational quasi-elliptic surface
defined by
$$
y^2 = x^3 + a^4s^2x + s^3  \quad \mbox{with} ~a \in k.
$$
This quasi-elliptic surface has two singular fibers of type $\I_0^{*}$ (namely,
of type $\tilde{D}_4$) over the points on ${\bf P}^1$ defined by
$s = 0$ and $s = \infty$. Taking the change of coordinates
$$
         x = 1/v + a^2s, ~y =s^2u/v^2,~  s = 1/S
$$
we get 
$$
   u^2 = S^4v + a^2S^3v^2 + Sv^4
$$
Now, to introduce double fibers at $s = 0$ and $s = \infty$ without changing the Jacobian fibration, we add a term $S^7 + S^3$ and a parameter $b$ ($b \neq 0$)
as follows:
\begin{equation}\label{equation0}    
u^2 = b^2S^4v + a^2S^3v^2 + Sv^4 + S^7 + S^3.
\end{equation}
We claim that these surfaces are quotients of rational surfaces by a vector field.
For this purpose, we take the base change by the Frobenius morphism:
$$
        S = t^2.
$$
Then, the surface becomes
$$
   u^2 + b^2 t^8v + a^2t^6v^2 + t^2v^4 + t^{14} + t^6 = 0.
$$
Therefore, by this equation we have
$$
\{(u + at^3v  + tv^2 +t^7 + t^3)/bt^4\}^2 = v.
$$
Now, by the change of coordinates
$$
  w = (u + at^3v  + tv^2 +t^7 + t^3)/bt^4,~ v = v,~ t = t,
$$
we have
$$
       u = bt^4w + at^3w^2 + tw^4 + t^7 + t^3, ~v = w^2.
$$
Via these relations, $k(u, v, t) = k(w, t)$, which is a rational function field of two variables.
Since 
$$
\left\{
\begin{array}{l}
u = bt^4w + at^3w^2 + tw^4 + t^7 + t^3  \\
S = t^2 \\
v =w^2,
\end{array}
\right.
$$
we have
$$
\left\{
\begin{array}{l}
\frac{\partial u}{\partial w} = bt^4  \\
\frac{\partial u}{\partial t} = at^2w^2 + w^4 + t^6 + t^2.
\end{array}
\right.
$$
We put
$$
    D' = (1/t^3)\left(bt^4\frac{\partial}{\partial t} + 
(at^2w^2 + w^4 + t^6 + t^2)\frac{\partial}{\partial w}\right).
$$
Then, we see $D'(u) = 0$, $D'(v) = 0$, $D'(S) = 0$  and
$k(t, w)^{D'} = k(u, v, S)$ with the equation (\ref{equation0}).
For the later use, taking new coordinates $(x, y)$, we consider the change of coordinates
$$
    x = 1/t,~y = t/w.
$$
Then, we have
$$
   \frac{\partial}{\partial t} = x^2 \frac{\partial}{\partial x} + xy \frac{\partial}{\partial y}, ~
 \frac{\partial}{\partial w} = xy^2 \frac{\partial}{\partial y}.
$$
By this change of coordinates, $D'$ becomes
\begin{equation}
D= {1\over x^2y^2} \left(bx^3y^2\frac{\partial}{\partial x} + (ax^2y^2 + x^2 + x^4y^4 + y^4 + bx^2y^3)\frac{\partial}{\partial y}\right)
\end{equation}
where $a, b \in k, b\not=0$. These are the vector fields that we will use in Section \ref{sec5} to construct Enriques surfaces of type $\tilde{D}_4 + \tilde{D}_4$.

\section{Equations of Enriques surfaces and their automorphisms}\label{Equation}

In this section, we give a method to calculate the automorphism groups of Enriques surfaces 
$X$ with finite automorphism group in Theorems \ref{mainSupersingular} (B) and \ref{mainClassical} (B).  
We will use this method in cases of type $\tilde{E}_6+\tilde{A}_2$ (supersingular), type 
$\tilde{E}_8$ (supersingular and classical), type $\tilde{E}_7+\tilde{A}_1^{(1)}$ (supersingular),  type 
$\tilde{D}_8$ (supersingular and classical) and type $\tilde{D}_4+\tilde{D}_4$.  

Let $X$ be an Enriques surface and assume that $X$ has a structure of 
a quasi-elliptic fibration $\varphi : X \longrightarrow {\bf P}^1$. Let $t$ be a parameter
of an affine line ${\bf A}^1$ in the base curve ${\bf P}^1$.
We denote by $C$ the curve of cusps of the quasi-elliptic fibration, and assume that
over the point defined by $t = \infty$ it has  a double fiber $2F_{\infty}$.
We assume that
\begin{equation}\label{Equation-equation}
 y^2 = tx^4 + g_1(t)x^2 + g_{2}(t)x + g_3(t) \quad (g_1(t), g_2(t), g_3(t) \in k[t])
\end{equation} 
is the defining equation of an affine normal surface whose resolution of singularities is 
isomorphic to the open set 
$X \setminus (C \cup F_{\infty})$ of $X$.
Under these conditions, let $\sigma$ be an automorphism of X which preserves $F_{\infty}$. Then
$\sigma$ preserves the structure of the quasi-elliptic fibration $\varphi : X \longrightarrow {\bf P}^1$,
and it acts on the base curve ${\bf P}^1$ with a fixed point at infinity:
$$
\begin{array}{lccc}
    \sigma : &{\bf P}^1& \longrightarrow& {\bf P}^1\\
                  &\cup  &                                & \cup \\
                  & {\bf A}^1 &     &   {\bf A}^1  \\
       &t       & \mapsto   &         c_1 t  + c_2 
\end{array}
$$
Here, $c_1, c_2$ are elements of $k$ with $c_1 \neq 0$.

We set $A = k[t, x, y]/(y^2 + tx^4 + g_1(t)x^2 + g_{2}(t)x + g_3(t))$. Then $A$ is normal
by our assumption. As $k[t,x]$-module, we have
\begin{equation}\label{ring}
          A = k[t, x] \oplus k[t, x]y,\hspace{5.0cm}
\end{equation}
which is a free $k[t, x]$-module.
Since $\sigma$ preserve $C$ and $F_{\infty}$, $\sigma$ acts
on the open set $X \setminus (C \cup F_{\infty})$ of $X$. 

\begin{lemma}
$\sigma$ induces an automorphism of ${\rm Spec}(A)$.
\end{lemma}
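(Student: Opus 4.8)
The plan is to avoid any explicit computation with $\sigma$ and instead realize $A$ intrinsically as the ring of global regular functions on the smooth open surface
$$
U := X \setminus (C \cup 2F_{\infty}),
$$
after which the automorphism $\sigma$, which already acts on $U$ as recorded just before the lemma, will automatically induce a ring automorphism of $A$. So the whole point is to descend $\sigma$ from the resolution $U$ to the (possibly singular) normal affine model ${\rm Spec}(A)$.

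First I would set $V := {\rm Spec}(A)$ and let $\rho : U \to V$ be the resolution morphism, so that by hypothesis $U$ is a resolution of singularities of $V$. Since $A$ is normal and $\rho$ is a proper birational morphism onto the normal affine surface $V$, we have $\rho_* \calO_U = \calO_V$; taking global sections gives
$$
A = \Gamma(V, \calO_V) = \Gamma(U, \calO_U).
$$
Thus $A$ is canonically the coordinate ring of the smooth open set $U$, independently of the chosen resolution, and in particular every regular function on $U$ extends over the exceptional locus of $\rho$.

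Next I would verify that $\sigma$ restricts to an automorphism of $U$. Because $\sigma$ preserves the quasi-elliptic fibration $\varphi$, it carries fibers to fibers and hence cusps to cusps, so it preserves the curve of cusps $C$; and it preserves the double fiber $2F_{\infty}$ by assumption (equivalently it fixes $t=\infty$ on the base ${\bf P}^1$). Hence $\sigma(C \cup 2F_{\infty}) = C \cup 2F_{\infty}$, and $\sigma|_U$ is a well-defined automorphism of $U$. Pulling back regular functions along $\sigma|_U$ then yields a $k$-algebra automorphism $(\sigma|_U)^{*}$ of $\Gamma(U,\calO_U) = A$, that is, an automorphism of $V = {\rm Spec}(A)$, which is exactly the assertion.

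The only real content, and the step I would treat as the main (if modest) obstacle, is the identification $A = \Gamma(U,\calO_U)$: it is here that the normality of $A$ must be used, to guarantee that passing between the singular affine model $V$ and its resolution $U$ neither loses nor gains global functions. Once this is granted, the descent of $\sigma$ is formal, since an automorphism of a variety acts contravariantly on its ring of global functions.
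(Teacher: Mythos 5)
Your proof is correct, and it reaches the conclusion by a different technical route than the paper. The paper constructs the induced map on ${\rm Spec}(A)$ geometrically: it forms the morphism $(\pi,\pi\circ\sigma)$ from $U=X\setminus(C\cup 2F_\infty)$ into ${\rm Spec}(A)\times{\rm Spec}(A)$, observes that the exceptional curves collapse because the target is affine, and then applies the Zariski main theorem to the finite birational projection $p_1|_\Gamma$ onto the normal surface ${\rm Spec}(A)$ to invert it and obtain the automorphism as $p_2|_\Gamma\circ p_1|_\Gamma^{-1}$. You instead make the descent formal by first identifying $A=\Gamma(V,\calO_V)$ with $\Gamma(U,\calO_U)$ via $\rho_*\calO_U=\calO_V$ for the proper birational resolution $\rho:U\to V$ onto the normal affine $V$, after which $\sigma|_U$ acts on global functions and hence on ${\rm Spec}(A)$ by the universal property. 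The two arguments use the same two inputs --- affineness of ${\rm Spec}(A)$ to contract the exceptional locus, and normality of $A$ (through some form of Zariski's main theorem) to land back on ${\rm Spec}(A)$ itself --- but your version isolates the content in the single statement $A=\Gamma(U,\calO_U)$, which makes the equivariance of $\rho$ and the fact that the construction is independent of choices come for free; the paper's graph construction is more hands-on but produces the induced morphism without first proving anything about rings of global sections. Both are complete; yours is arguably the cleaner packaging, and you correctly flag the identification $A=\Gamma(U,\calO_U)$ as the only step where normality is genuinely needed.
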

\begin{proof}
We consider the change of coordinates
$$
    u = \frac{1}{x}, ~ v = \frac{y}{x^2}.
$$
Then, the equation becomes $v^2 =t + g_1(t)u^2 + g_2(t)u^3 + g_3(t)u^4$,
and the curve $C$ of cusps is given by $u = 0$. 
The open set $X \setminus (C \cup F_{\infty})$
is constructed as some blow-ups of ${\rm Spec}(A)$:
$$
    \pi : X \setminus (C \cup F_{\infty}) \longrightarrow {\rm Spec}(A).
$$
Note that $\pi$ is surjective.
Since $\sigma$ is an automorphism of  $X \setminus (C \cup F_{\infty})$,
we have a morphism
$$
(\pi, \pi\circ \sigma) :  X \setminus (C \cup F_{\infty}) \longrightarrow {\rm Spec}(A) \times {\rm Spec}(A).
$$
We denote by $\Gamma$ the image of the morphism $(\pi, \pi\circ \sigma)$. 
We denote by 
$$p_i : {\rm Spec}(A) \times {\rm Spec}(A) \longrightarrow  {\rm Spec}(A)\ (i = 1, 2)$$
the first and the second projection.
Then, restricting the projection $p_1$ to $\Gamma$, we have a morphism
$$
      p_1|_{\Gamma} : \Gamma \longrightarrow {\rm Spec}(A).
$$
Since ${\rm Spec}(A)$ is affine, the exceptional curves of the blow-ups collapse
by the morphism $(\pi, \pi\circ \sigma)$. Therefore, the morphism $p_1|_{\Gamma}$
is a finite birational morphism. Since ${\rm Spec}(A)$ is normal by our assumption,
we see that by the Zariski main theorem $p_1|_{\Gamma}$ is an isomorphism.
Therefore, we have  a morphism $p_2|_{\Gamma} \circ p_1|_{\Gamma}^{-1} :
{\rm Spec}(A) \longrightarrow {\rm Spec}(A)$ which is the induced automorphism by $\sigma$.
\end{proof}

By this lemma, $\sigma$ acts on 
${\rm Spec}(A)$ and induces an automorphism 
\begin{equation}\label{auto}
   \sigma^* : A \longrightarrow A.\hspace{6.0cm}
\end{equation}

Now we consider  the generic fiber of $\varphi : X \longrightarrow {\bf P}^1$. 
It is a curve  of arithmetic genus one
over $k(t)$ whose affine part is given by the equation (\ref{Equation-equation}). The curve $C$ of cusps
gives a point $P_{\infty}$ of degree 2 on the curve of genus one. We denote by 
${\tilde{L}}(P_{\infty})$ the vector space over $k(t)$ 
associated with the linear system $|P_{\infty}|$
on the curve of genus one. By the Riemann-Roch theorem, we have $\dim {\tilde{L}}(P_{\infty}) = 2$
and we see that $1$ and $x$ give the basis of ${\tilde{L}}(P_{\infty})$. Since $\sigma$ preserves
the curve $C$ of cusps, $\sigma^{*}(x)$ is contained in ${\tilde{L}}(P_{\infty})$. Therefore,
there exist $d_1(t) , d_2(t) \in k(t)$ such that 
$$
   \sigma^{*}(x) =d_1(t)x + d_2(t).
$$
By (\ref{ring}) and (\ref{auto}), there exist $d_3(t,x), d_4(t, x) \in k[t, x]$ such that 
$$
\sigma^{*}(x) = d_3(t,x) + d_4(t, x)y.
$$
Therefore, considering $\sigma^{*}(x)^2$, we have
$$
d_1(t)^2x^2 + d_2(t)^2 = d_3(t,x)^2 + d_4(t, x)^2(tx^4 + g_1(t)x^2 + g_{2}(t)x + g_3(t)).
$$
Since the right-hand-side is in $k[t, x]$, we see that $d_1(t)$ and $d_2(t)$ are also
polynomials of $t$. Therefore, we see that $\sigma$ is of the following form:
\begin{equation}\label{automorphism}
 \sigma: \left\{
\begin{array}{lcl}
   t     & \mapsto   &    c_1 t  + c_2 \quad (c_1, c_2 \in k; c_1 \neq 0)\\
           x   & \mapsto &   d_1(t)x + d_2(t) \quad (d_1(t), d_2(t) \in k[t]; d_1(t) \not\equiv 0)\\
      y & \mapsto & e_1(t,x)y +e_2(t,x) \quad  (e_1(t, x), e_2(t, x) \in k[t, x]; e_1(t, x) \not\equiv 0)
\end{array}
\right.
\end{equation}

\noindent
We will use this method in Theorems \ref{main2aut}, \ref{E8S-main2}, \ref{E8C-mainAut}, \ref{E7S-mainAut}, \ref{D8S-main2},
\ref{D8C-mainAut}, \ref{D4C-main2}. 

\begin{remark}\label{equ-automorphism}
Let $X$ be an Enriques surface which has a structure of 
elliptic or quasi-elliptic fibration $\varphi : X \longrightarrow {\bf P}^1$
defined by
$$
y^2 + g_0(t) y = tx^4 + g_1(t) x^2 + g_2(t) x + g_3(t)
$$
with $g_0(t), g_1(t), g_2(t), g_3(t) \in k[t]$.
Here,  $t$ is a parameter
of an affine line ${\bf A}^1$ in the base curve ${\bf P}^1$.
We denote by $C$ the 2-section defined by $x = \infty$, and by $F_{\infty}$
the fiber over the point on ${\bf P}^1$ defined by $t = \infty$.
We assume that the equation is the defining equation of an affine normal surface 
whose resolution of singularities is isomorphic to the open set 
$X \setminus (C \cup F_{\infty})$ of $X$. 
Under these conditions, let $\sigma$ be an automorphism of X which preserves 
the curve $C$ and the fiber $F_{\infty}$. 
Then, the automorphism $\sigma$ is also expressed as the
form (\ref{automorphism}), and a similar argument to the above works.
\end{remark}

We use the following trivial lemma.
\begin{lemma}\label{trivial}
$k[x, y]$ is a free $k[x^2, y^2]$-module of rank $4$. A basis is given by $1,x, y, xy$.
\end{lemma}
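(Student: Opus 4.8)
$k[x,y]$ is a free $k[x^2,y^2]$-module of rank $4$, with basis $1, x, y, xy$.

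This is a statement about polynomial rings in characteristic $2$, and the plan is to verify the two defining properties of a free module with the given basis: that $1, x, y, xy$ span $k[x,y]$ over $k[x^2,y^2]$, and that they are linearly independent over $k[x^2,y^2]$.

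First I would establish spanning. The idea is that any monomial $x^a y^b$ can be rewritten by splitting the exponents according to parity: write $a = 2a' + \epsilon$ and $b = 2b' + \delta$ with $\epsilon, \delta \in \{0,1\}$. Then
\[
x^a y^b = (x^2)^{a'} (y^2)^{b'} \cdot x^{\epsilon} y^{\delta},
\]
and the factor $x^{\epsilon} y^{\delta}$ is one of $1, x, y, xy$ depending on the pair $(\epsilon, \delta)$, while $(x^2)^{a'}(y^2)^{b'} \in k[x^2, y^2]$ is a coefficient. Since every element of $k[x,y]$ is a $k$-linear combination of such monomials, collecting terms according to which of the four basis elements appears shows that $1, x, y, xy$ generate $k[x,y]$ as a $k[x^2,y^2]$-module.

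For linear independence, suppose
\[
p_0(x^2,y^2) + p_1(x^2,y^2)\,x + p_2(x^2,y^2)\,y + p_3(x^2,y^2)\,xy = 0
\]
with $p_i \in k[x^2,y^2]$. Here the key observation is that the four products $p_i \cdot (\text{basis element})$ are supported on four disjoint sets of monomials, indexed by the parity class $(\epsilon,\delta)$ of the exponents: $p_0$ contributes only monomials with both exponents even, $p_1 x$ only those with odd $x$-exponent and even $y$-exponent, and so on. Since distinct monomials are linearly independent in $k[x,y]$, each of the four groups must vanish separately, forcing every $p_i = 0$.

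I do not expect any genuine obstacle here; the statement is elementary and the characteristic $2$ hypothesis plays no special role beyond fixing the ambient ring (the same argument gives a free rank-$4$ module over $k[x^2,y^2]$ in any characteristic, though in characteristic $2$ the subring $k[x^2,y^2]$ is exactly the image of the Frobenius, which is why the decomposition is natural in this setting). The only point requiring mild care is phrasing the parity bookkeeping cleanly, so that both the spanning and the independence follow transparently from the disjointness of the four monomial supports; this is why the statement is labelled as trivial and its proof is typically omitted.
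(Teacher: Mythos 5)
Your argument is correct: the parity decomposition of monomial exponents gives both the spanning and the linear independence, and the four parity classes having disjoint monomial supports is exactly the right observation. The paper itself states this lemma without proof (labelling it trivial), so there is no authorial argument to compare against; your write-up supplies the standard and essentially unique elementary proof.
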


\section{Enriques surfaces of type $\tilde{E_6}+\tilde{A_2}$}\label{sec3}

From Section \ref{sec3} to Section \ref{sec5}, we will construct the examples of 
Enriques surfaces given in Theorem \ref{mainSupersingular} (B) and Theorem \ref{mainClassical} (B). 
First, we consider the Enriques surfaces of type $\tilde{E_6}+\tilde{A_2}$, of type $\VII$ and of type $\VIII$.  In these cases, we use rational elliptic fibrations. 
Next we consider the remaining cases.  In these cases, we use a rational quasi-elliptic fibration.
In this section, we construct Enriques surfaces of type $\tilde{E_6}+\tilde{A_2}$.

\subsection{Supersingular case}

We consider the relatively minimal
non-singular complete  elliptic surface $\psi : R \longrightarrow {\bf P}^1$
associated with a Weierstrass equation
$$
y^2 + sy = x^3
$$
with a parameter $s$.
This surface is a unique rational elliptic surface with a singular fiber of type $\IV$
over the point given by $s = 0$ and a singular fiber of type $\IV^*$
over the point given by $s = \infty$ (Lang \cite[\S 2]{L2}).
Note that all non-singular fibers are supersingular elliptic curves.
We consider the base change of $\psi : R \longrightarrow {\bf P}^1$
by $s = t^2$. 
Then, we have the elliptic surface defined by 
\begin{equation}\label{pencil31}
 y^2 + t^2y = x^3.
\end{equation}
We consider the relatively minimal non-singular complete model 
of this elliptic surface :
\begin{equation}
f : \tilde{R} \longrightarrow {\bf P}^1.
\end{equation}
By considering the change of coordinates defined by $x'=x/t^2, y'=y/t^3, t' = 1/t$,
we have 
$$
y'^2 + t'y' = x'^3.
$$
Thus the surface $\tilde{R}$ is isomorphic to $R$.
The rational elliptic surface $f : \tilde{R} \to {\bf P}^1$ 
has a singular fiber of type $\IV^*$ over the point
given by $t=0$ and a singular fiber of type $\IV$ over the point given by $t=\infty$.

The elliptic surface $f: \tilde{R} \longrightarrow {\bf P}^1$
has three sections $s_{i}$ $(i = 0, 1, 2)$ given as follows:
$$
\begin{array}{ll}
s_0 : \mbox{the zero section}. \\
s_1 : x = y = 0. \\
s_2 : x = 0, y = t^2. \\
\end{array}
$$

On the singular elliptic surface (\ref{pencil31}), we denote by $F_0$ the fiber
over the point defined by $t = 0$, and by $F_{\infty}$ the fiber over the point defined by
$t=\infty$. Both $F_0$ and $F_{\infty}$ are irreducible, and on each $F_i$ $(i=0, \infty)$
the surface (\ref{pencil31}) has only one singular point $P_i$.
The surface $\tilde{R}$ is the surface obtained by the minimal resolution of singularities of the surface (\ref{pencil31}).
We denote the proper transform of $F_i$ on $\tilde{R}$ again by $F_i$ if no confusion can occur.
We have six exceptional curves $E_{0,k}$ $(k = 1,2, \ldots, 6)$ over the point $P_0$ such that
$F_0$ and these six exceptional curves form a singular fiber of type $\IV^*$ of the elliptic surface 
$f : \tilde{R} \longrightarrow {\bf P}^1$ as follows: 
The blow-up at the singular point $P_0$ gives one exceptional curve
$E_{0,1}$, and the surface is non-singular along $F_0$ and has a unique singular point $P_{0,1}$ on
$E_{0,1}$. The blow-up
at the singular point $P_{0,1}$ gives two exceptional curves $E_{0,2}$ and $E_{0,3}$. 
We denote the proper transform of $E_{0,1}$ by $E_{0,1}$.
The three curves  $E_{0,1}$, $E_{0,2}$ and $E_{0,3}$ meet at one point $P_{0,2}$ which
is a singular point of the surface obtained. The blow-up
at the singular point $P_{0,2}$ again gives two exceptional curves $E_{0,4}$ and $E_{0,5}$. 
The three curves  $E_{0,1}$, $E_{0,4}$ and $E_{0,5}$ meet at one point $P_{0,3}$ which
is a singular point of the surface obtained.
The curve $E_{0,2}$ (resp. $E_{0,3}$) intersects $E_{0,4}$ (resp. $E_{0,5}$) and does not
meet other curves.  Finally, the blow-up
at the singular point $P_{0,3}$ gives an exceptional curve $E_{0,6}$ and the resulting surface is non-singular over these curves.
The curve $E_{0,6}$ meets $E_{0,1}$, $E_{0,4}$ and $E_{0,5}$ transversally.
The dual graph of the curves $F_0, E_{0,1},\ldots , E_{0,6}$ is of type $\tilde{E}_6$.
Thus, the cycle 
$$F_0 + E_{0,2} + E_{0,3} + 2(E_{0,1} + E_{0,4} + E_{0,5}) + 3E_{0,6}$$
is a singular fiber of type $\IV^*$.
On the other hand, the blow-up
at the singular point $P_{\infty}$ gives two exceptional curves $E_{\infty,1}$ and $E_{\infty,2}$.
The surface obtained is now relatively minimal and non-singular, that is, nothing but $\tilde{R}$.
The three curves $F_{\infty}$, $E_{\infty,1}$ and $E_{\infty,2}$ form a singular fiber of type $\IV$.
The configuration of these curves is as in the following Figure \ref{enriques1}.
\begin{figure}[!htb]
 \begin{center}
  \includegraphics[width=100mm]{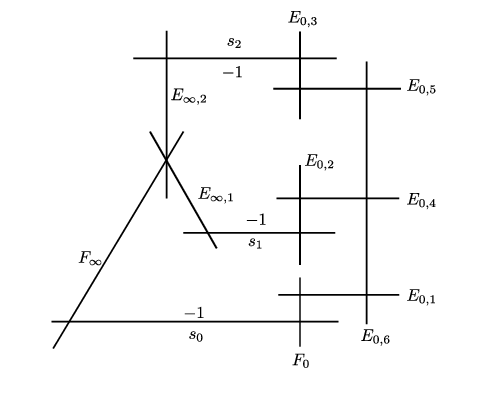}
 \end{center}
 \caption{}
 \label{enriques1}
\end{figure}

\noindent
The sections $s_i$ has the self-intersection number $-1$ and the fiber components have the self-intersection number $-2$.

Now, we consider a rational vector field on $\tilde{R}$ induced from
\begin{equation}\label{E6A2SSderivation}
 D = \frac{\partial}{\partial t} + t^2\frac{\partial}{\partial x}.
\end{equation}
Then, we have $D^2 = 0$, that is, $D$ is $2$-closed. 
However $D$ has an isolated singularity at the point $P$ which is the singular point of
the fiber of type $\IV$, that is, the intersection point of three curves 
$F_{\infty}$, $E_{\infty,1}$ and $E_{\infty,2}$ (note that $(t,x)$ is not a local parameter along the fiber defined by $t=0$). To resolve this singularity, we first blow up at $P$.
Denote by $E_{\infty, 3}$ the exceptional curve.  We denote the proper transforms of 
$F_{\infty}$, $E_{\infty,1}$ and $E_{\infty,2}$  by the same symbols.  Then blow up at three points
$E_{\infty, 3}\cap (F_{\infty} + E_{\infty,1} + E_{\infty,2}).$  
Let $Y$ be the resulting surface and $\psi : Y \to \tilde{R}$ the successive blow-ups.
We denote by $E_{\infty,4}$, $E_{\infty,5}$ or $E_{\infty,6}$ the exceptional curve over the point 
$E_{\infty, 3}\cap F_{\infty}$, $E_{\infty, 3}\cap E_{\infty,1}$ or $E_{\infty, 3}\cap E_{\infty,2}$
respectively.  Then we have the following Figure \ref{enriques2}.  In this Figure \ref{enriques2} we give the self-intersection numbers of the curves except for the curves with the self-intersection number $-2$, and the thick lines are integral curves with respect to $D$.

\begin{figure}[!htb]
 \begin{center}
  \includegraphics[width=100mm]{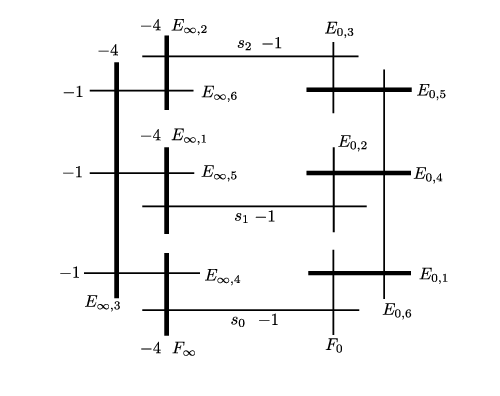}
 \end{center}
 \caption{}
 \label{enriques2}
\end{figure}

Now, according to the above blow-ups, we see the following:

\begin{lemma}\label{divisorialintegral}
{\rm (i)}  The divisorial part $(D)$ on $Y$ is given by
$$-2(E_{0,1} + E_{0,4} + E_{0,5} + E_{0,6} + E_{\infty, 3} + E_{\infty,4}+ E_{\infty,5} + E_{\infty,6})
-(F_{\infty} + E_{\infty,1} + E_{\infty,2}).$$

{\rm (ii)} The integral curves in Figure {\rm \ref{enriques2}} are 
$$E_{0,1}, E_{0,4}, E_{0,5}, F_{\infty}, E_{\infty,1}, E_{\infty,2}, E_{\infty,3}.$$ 

\end{lemma}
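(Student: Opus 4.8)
The plan is to reduce both assertions to local computations of $D$ in suitable affine charts covering the curves of Figure \ref{enriques2}, using the formalism of \S\ref{derivation}. Recall that if, on a chart with local coordinates $(u,v)$, we write $D = f\bigl(g\,\frac{\partial}{\partial u} + h\,\frac{\partial}{\partial v}\bigr)$ with $g,h$ regular and sharing no common factor, then $(D)$ is locally the divisor $(f)$, while a curve $C$ in this chart is integral precisely when the reduced field $g\,\frac{\partial}{\partial u} + h\,\frac{\partial}{\partial v}$ is tangent to $C$. Thus for each curve $C$ in Figure \ref{enriques2} it suffices to produce a chart in which $C=\{u=0\}$, factor out the largest power of $u$ (and any other common factor) dividing the coefficients, and read off both the order of $f$ along $C$ and the tangency. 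The integrality data feed, in turn, into Proposition \ref{insep} for later self-intersection computations.

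First I would dispose of the generic locus. On the chart where $(t,x)$ are local coordinates --- valid away from $t=0$, where $\partial_y(y^2+t^2y-x^3)=2y+t^2=t^2\neq 0$, so that $y$ is determined by $(t,x)$ through (\ref{pencil31}) --- one computes $D(t)=1$, $D(x)=t^2$, and, differentiating (\ref{pencil31}) in characteristic $2$, $D(y)=x^2$. Hence $D=\frac{\partial}{\partial t}+t^2\frac{\partial}{\partial x}$ has $f=1$ with coprime coefficients $1$ and $t^2$, so it contributes nothing to $(D)$ and has no isolated singularity there; in particular the sections $s_i$ and the reduced fibres away from $t=0,\infty$ play no role. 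All contributions to $(D)$ and $\langle D\rangle$ are therefore concentrated over $t=0$ and $t=\infty$.

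Next I would treat the fibre $F_0$ of type $\IV^*$ over $t=0$. Here $(t,x)$ is not a coordinate system and (\ref{pencil31}) is singular at $P_0$, so I would transform $D$ step by step through the chain of blow-ups that resolves $P_0$ and produces $E_{0,1},\dots,E_{0,6}$. Using the standard blow-up substitution $v=uz$, which turns $g\frac{\partial}{\partial u}+h\frac{\partial}{\partial v}$ into $g\frac{\partial}{\partial u}+\frac{h-gz}{u}\frac{\partial}{\partial z}$, and factoring out the emerging powers of the exceptional coordinate at each stage, I expect to find that $f$ acquires a double pole along $E_{0,1},E_{0,4},E_{0,5},E_{0,6}$ and neither zero nor pole along $F_0,E_{0,2},E_{0,3}$, that no isolated singularity of $D$ survives over $t=0$, and that the reduced field is tangent to exactly $E_{0,1},E_{0,4},E_{0,5}$. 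This yields the $t=0$ part of (i) and (ii).

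Finally, and this is the main obstacle, I would treat the fibre $F_\infty$ of type $\IV$ over $t=\infty$. In the coordinates $t'=1/t,\ x'=x/t^2$ one finds $D=t'^2\frac{\partial}{\partial t'}+\frac{\partial}{\partial x'}$, and after resolving the surface singularity $P_\infty$ into $F_\infty,E_{\infty,1},E_{\infty,2}$ the field $D$ still has an isolated singularity at their common point $P$. The delicate step is to track $D$ through the four further blow-ups --- at $P$ and then at the three points $E_{\infty,3}\cap(F_\infty+E_{\infty,1}+E_{\infty,2})$ --- correctly accounting for the factors of exceptional coordinates that $f$ absorbs at each stage; I expect this to produce simple poles of $f$ along $F_\infty,E_{\infty,1},E_{\infty,2}$ and double poles along $E_{\infty,3},E_{\infty,4},E_{\infty,5},E_{\infty,6}$, with the reduced field tangent to exactly $F_\infty,E_{\infty,1},E_{\infty,2},E_{\infty,3}$. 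Assembling the two fibre contributions then gives (i) and (ii). As a consistency check I would verify, using the resulting $(D)$ together with formulas (\ref{canonical}) and (\ref{euler}), that $\langle D\rangle=0$ on $Y$, i.e. that $D$ has become divisorial, which is exactly what is needed for $Y^D$ to be nonsingular and, after passing to the minimal model, an Enriques surface.
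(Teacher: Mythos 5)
Your proposal is correct and takes essentially the same route as the paper, which offers no written proof beyond ``according to the above blowing-ups, we see the following'' — i.e.\ a direct chart-by-chart computation of $D$ through the resolution, exactly as you set up (and your preliminary checks $D(y)=x^2$, $D=t'^2\frac{\partial}{\partial t'}+\frac{\partial}{\partial x'}$ at infinity, and the blow-up transformation rule are all correct). The only thing separating your write-up from a complete proof is actually carrying out the local factorizations over $t=0$ and $t=\infty$ whose outcomes you state as ``expected''; the framework guarantees they produce the asserted multiplicities and integral curves.
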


\begin{lemma}\label{canonical3}
{\rm (i)} $(D)^2 = -12$. 

{\rm (ii)} The canonical divisor $K_Y$ of $Y$ is given by
$$K_{Y} = -2(E_{\infty, 3} + E_{\infty,4}+ E_{\infty,5} + E_{\infty,6})
-(F_{\infty} + E_{\infty,1} + E_{\infty,2}).$$

{\rm (iii)} $K_Y\cdot (D) = -4.$
\end{lemma}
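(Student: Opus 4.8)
The plan is to establish the three assertions in the order (ii), (i), (iii): the canonical class computed in (ii) enters the self-intersection and canonical-intersection computations, and once $(D)^2$ is known, (iii) drops out of the Euler number formula (\ref{euler}).

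For (ii), I would start from the canonical bundle formula for the rational elliptic surface $\tilde R$. Since $\tilde R$ is rational, $\chi(\calO_{\tilde R})=1$, and its Jacobian fibration $f$ has a section and hence no multiple fibers, so $K_{\tilde R}\sim -F$ for any fiber $F$. Taking $F$ to be the $\IV$-fiber over $t=\infty$ gives $K_{\tilde R}\sim-(F_\infty+E_{\infty,1}+E_{\infty,2})$. I would then push this through the four blow-ups $\psi:Y\to\tilde R$ by means of $K_Y=\psi^*K_{\tilde R}+\sum_j\widetilde{E}_j$, where $\widetilde{E}_j$ is the total transform in $Y$ of the $j$-th exceptional curve. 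The first blow-up is at the triple point $P=F_\infty\cap E_{\infty,1}\cap E_{\infty,2}$, at which the fiber $F_\infty+E_{\infty,1}+E_{\infty,2}$ has multiplicity $3$; the remaining three are at the points where $E_{\infty,3}$ meets the proper transforms of $F_\infty,E_{\infty,1},E_{\infty,2}$. Expanding the total transform of $E_{\infty,3}$ (which picks up $E_{\infty,4}+E_{\infty,5}+E_{\infty,6}$) and of the fiber, and collecting terms, should yield exactly $K_Y=-2(E_{\infty,3}+E_{\infty,4}+E_{\infty,5}+E_{\infty,6})-(F_\infty+E_{\infty,1}+E_{\infty,2})$.

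For (i), I would first assemble the intersection matrix of the curves occurring in $(D)$. The curves $E_{0,1},E_{0,4},E_{0,5},E_{0,6}$ lie in the $\IV^*$-fiber over $t=0$, which the blow-ups leave untouched; they are therefore $(-2)$-curves with their $\tilde E_6$-incidences, $E_{0,6}$ meeting each of $E_{0,1},E_{0,4},E_{0,5}$. Reading off the tower at $t=\infty$, the proper transforms of $F_\infty,E_{\infty,1},E_{\infty,2}$ and of $E_{\infty,3}$ all become $(-4)$-curves, while $E_{\infty,4},E_{\infty,5},E_{\infty,6}$ are $(-1)$-curves; the surviving incidences are $E_{\infty,4}$ with $F_\infty$ and $E_{\infty,3}$, $E_{\infty,5}$ with $E_{\infty,1}$ and $E_{\infty,3}$, and $E_{\infty,6}$ with $E_{\infty,2}$ and $E_{\infty,3}$, all former triple-point incidences having been separated. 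Writing $(D)=-2A_0-2A_\infty-B$ with $A_0=E_{0,1}+E_{0,4}+E_{0,5}+E_{0,6}$, $A_\infty=E_{\infty,3}+E_{\infty,4}+E_{\infty,5}+E_{\infty,6}$ and $B=F_\infty+E_{\infty,1}+E_{\infty,2}$, and using that $A_0$ is orthogonal to both $A_\infty$ and $B$ since they lie in distinct fibers, the computation reduces to $A_0^2=-2$, $A_\infty^2=-1$, $A_\infty\cdot B=3$ and $B^2=-12$, whence $(D)^2=4A_0^2+4A_\infty^2+4A_\infty\cdot B+B^2=-12$.

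For (iii), the cleanest route is the Euler number formula (\ref{euler}). By construction the resolution was chosen so that $D$ has no isolated singular point on $Y$, i.e. $D$ is divisorial and $\deg\langle D\rangle=0$; moreover $Y$ is the rational elliptic surface $\tilde R$, with $c_2(\tilde R)=12$, blown up four times, so $c_2(Y)=16$. Substituting into $c_2(Y)=\deg\langle D\rangle-K_Y\cdot(D)-(D)^2$ gives $K_Y\cdot(D)=-16-(-12)=-4$. Alternatively, since $K_Y=-2A_\infty-B$ and $A_0$ is orthogonal to the $t=\infty$ part, one computes directly $K_Y\cdot(D)=(2A_\infty+B)^2=4A_\infty^2+4A_\infty\cdot B+B^2=-4$, which is a useful consistency check against (i). The only genuinely delicate part of the whole argument is the bookkeeping of the blow-up tower at $t=\infty$: one must correctly track how the self-intersections and adjacencies of $F_\infty,E_{\infty,1},E_{\infty,2},E_{\infty,3}$ evolve as the three infinitely near points on $E_{\infty,3}$ are blown up, and get the total-transform multiplicities right in (ii). Once this intersection data is pinned down, both (i) and (iii) are routine linear algebra.
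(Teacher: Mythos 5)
Your computations are all correct, and they amount to exactly the "direct calculation" that the paper leaves implicit here: your bookkeeping of the tower at $t=\infty$ (the four $(-4)$-curves $F_\infty,E_{\infty,1},E_{\infty,2},E_{\infty,3}$, the three $(-1)$-curves $E_{\infty,4},E_{\infty,5},E_{\infty,6}$, and the separated incidences) reproduces the data of Figure \ref{enriques2}; the decomposition $(D)=-2A_0-2A_\infty-B$ with $A_0^2=-2$, $A_\infty^2=-1$, $A_\infty\cdot B=3$, $B^2=-12$ gives $(D)^2=-12$; and the total-transform computation for (ii), starting from $K_{\tilde{R}}\sim -F$ and tracking the multiplicities $3$ and $4$ of the fiber through the four blow-ups, yields exactly the stated class.

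The one thing you must fix is the logical order in (iii). Your "cleanest route" invokes the Euler number formula (\ref{euler}) together with $\deg\langle D\rangle=0$, justified by saying the resolution was chosen so that $D$ has no isolated singular point. But at this stage of the paper divisoriality of $D$ is \emph{not yet known}: it is precisely the content of Lemma \ref{e6nonsingular}, which is proved by substituting $(D)^2=-12$ and $K_Y\cdot(D)=-4$ into (\ref{euler}) to conclude $\deg\langle D\rangle=0$. Using (\ref{euler}) to prove (iii) and then (iii) to prove divisoriality is circular. Fortunately, the computation you relegate to a "consistency check," namely
$$K_Y\cdot(D)=(2A_\infty+B)\cdot(2A_0+2A_\infty+B)=(2A_\infty+B)^2=4A_\infty^2+4A_\infty\cdot B+B^2=-4,$$
is the correct, non-circular argument (using $A_0\cdot A_\infty=A_0\cdot B=0$ since the supports lie in distinct fibers). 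Promote it to the main proof of (iii), and let the Euler formula then do its intended job of establishing that $D$ is divisorial, rather than the other way around.
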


\begin{lemma}\label{e6non-singular}
$D$ is divisorial and the quotient surface $Y^{D}$ is non-singular.
\end{lemma}
 \begin{proof}
Since $\tilde{R}$ is a rational elliptic surface and $Y$ is the blow-up at 4 points, we have $c_{2}(Y) = 16$.
Using $(D)^2 = -12$, $K_Y\cdot (D) = -4$ and the equation (\ref{euler}), we have 
$$
16 = c_{2}(Y) = \deg \langle D\rangle - K_Y\cdot (D) - (D)^2 = \deg \langle D\rangle  + 4 + 12.
$$
Therefore, we have $\deg  \langle D\rangle = 0$. This means that $D$ is divisorial, and that 
$Y^{D}$ is non-singular.
\end{proof}

Let $\pi : Y\to Y^D$ be the natural map.
By the result on the canonical divisor formula (\ref{canonical}),
we have
$$
        K_Y = \pi^{*} K_{Y^D} + (D).
$$
\begin{lemma}\label{exceptional}
{\rm (i)} The images of the curves $E_{0,1}, E_{0,4}, E_{0,5}$ in $Y^D$
are exceptional curves.

{\rm (ii)} The self-intersection numbers of the images of $F_0, E_{0,2}, E_{0,3}, E_{0,6}$ in $Y^D$ are $-4$.

{\rm (iii)} 
The self-intersection numbers of the images of 
$F_{\infty}, E_{\infty,i}$ $(i=1,\ldots, 6)$ and the three sections $s_i$ $(i=0,1,2)$ in $Y^D$ are $-2$.
\end{lemma}
\begin{proof}
The assertions follow from Proposition \ref{insep} and Lemma \ref{divisorialintegral}, (ii).
\end{proof}

Let $E_{0,1}', E_{0,4}', E_{0,5}', E_{0,6}'$ be the image of $E_{0,1}, E_{0,4}, E_{0,5}, E_{0,6}$ in 
$Y^D$, respectively.  Then we have the following Figure \ref{enriques3} in which we give the self-intersection numbers of the curves except for the curves with self-intersection number $-2$.

\begin{figure}[!htb]
 \begin{center}
  \includegraphics[width=80mm]{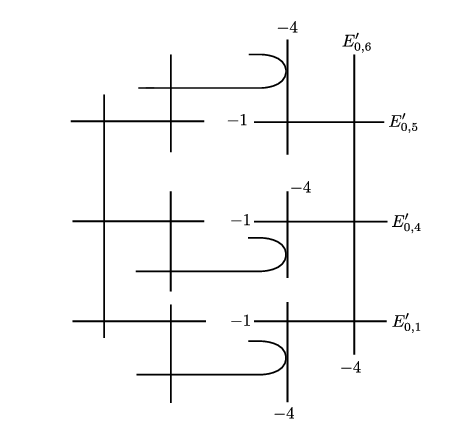}
 \end{center}
 \caption{}
 \label{enriques3}
\end{figure}

Let 
$$\varphi_1 : Y^{D} \to X'$$ 
be the blow-downs of $E_{0,1}', E_{0,4}', E_{0,5}'$. 
Then the image of $E_{0,6}'$ in $X'$ is an exceptional curve.  
Let 
$$\varphi_2 : X' \to X$$ 
be the blow-down of this exceptional curve.
Now we have the following diagram
$$
\begin{array}{ccc}\label{maps}
       \quad    Y^D & \stackrel{\pi}{\longleftarrow} & Y \\
                \varphi_1 \downarrow &    & \quad  \downarrow \psi \\
      \quad        X'     &        & \tilde{R} \\
         \varphi_2   \downarrow & \quad   &  \\
    \quad        X &   &
\end{array}
$$
 
We have thirteen $(-2)$-curves $E_1,\ldots, E_{13}$ with the self-intersection number
$-2$ which form the following Figure \ref{enriques4}.  

\begin{figure}[!htb]
 \begin{center}
  \includegraphics[width=80mm]{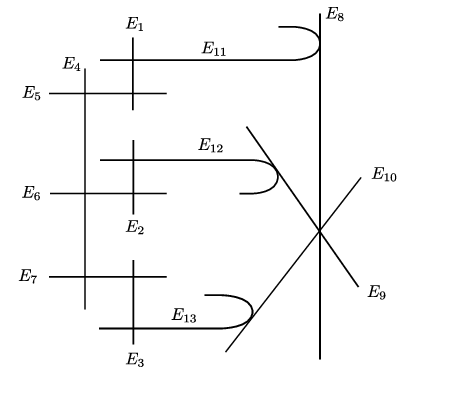}
 \end{center}
 \caption{}
 \label{enriques4}
\end{figure}

Then, we have
$$K_{Y^D} = \varphi_1^{*}(K_{X'}) + E_{0,1} + E_{0,4} + E_{0,5}$$
$$= \varphi_1^* \circ \varphi_2^{*}(K_{X}) + E_{0,6} + 2(E_{0,1} + E_{0,4} + E_{0,5}).$$
\begin{lemma} 
The canonical divisor $K_{X}$ of $X$ is numerically equivalent to $0$.
\end{lemma}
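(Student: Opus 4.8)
The plan is to compute the canonical class $K_{Y^D}$ in two independent ways and to compare them along the tower $Y \xrightarrow{\pi} Y^D \xrightarrow{\varphi_1} X' \xrightarrow{\varphi_2} X$ constructed above.

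First I would determine $\pi^{*}K_{Y^D}$. Substituting the explicit expression from Lemma~\ref{canonical3}~(ii) for $K_Y$ and from Lemma~\ref{divisorialintegral}~(i) for the divisorial part $(D)$ into the canonical bundle formula $K_Y = \pi^{*}K_{Y^D} + (D)$ (this is \eqref{canonical} with $p=2$, already recorded just after Lemma~\ref{e6nonsingular}), all the contributions supported on $F_{\infty}$ and on $E_{\infty,1},\dots,E_{\infty,6}$ cancel, leaving
$$\pi^{*}K_{Y^D} = K_Y - (D) = 2\bigl(E_{0,1} + E_{0,4} + E_{0,5} + E_{0,6}\bigr).$$

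Next I would descend this class to $Y^D$. By Lemma~\ref{divisorialintegral}~(ii) the curves $E_{0,1}, E_{0,4}, E_{0,5}$ are integral with respect to $D$, while $E_{0,6}$ is not. Writing $E_{0,k}'$ for the image of $E_{0,k}$ in $Y^D$, Proposition~\ref{insep} then gives $\pi^{*}E_{0,k}' = E_{0,k}$ for $k\in\{1,4,5\}$ but $\pi^{*}E_{0,6}' = 2E_{0,6}$. Hence
$$\pi^{*}K_{Y^D} = \pi^{*}\bigl(2(E_{0,1}' + E_{0,4}' + E_{0,5}') + E_{0,6}'\bigr),$$
and since $\pi_{*}\pi^{*} = 2\cdot\mathrm{id}$ forces $\pi^{*}$ to be injective on numerical classes, I conclude $K_{Y^D} = 2(E_{0,1}' + E_{0,4}' + E_{0,5}') + E_{0,6}'$. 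Comparing with the blow-down formula displayed just above the statement, namely $K_{Y^D} = \varphi_1^{*}\varphi_2^{*}K_X + E_{0,6}' + 2(E_{0,1}' + E_{0,4}' + E_{0,5}')$, and subtracting the two expressions for $K_{Y^D}$, all the contracted classes cancel and one obtains $\varphi_1^{*}\varphi_2^{*}K_X \equiv 0$. As pullback along the birational morphisms $\varphi_1$ and $\varphi_2$ is injective on numerical classes, $K_X$ is numerically equivalent to $0$.

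The step requiring the most care is the descent along $\pi$: the factor-$2$ discrepancy between the integral curves $E_{0,1}, E_{0,4}, E_{0,5}$ (pulled back with multiplicity $1$) and the non-integral curve $E_{0,6}$ (pulled back with multiplicity $2$) must be tracked exactly through Proposition~\ref{insep}, so that the coefficient $1$ on $E_{0,6}'$ and the coefficient $2$ on the remaining $E_{0,k}'$ match precisely those appearing in the blow-down formula. No hard computation is involved; the argument is essentially bookkeeping of these coefficients, and the one point I would double-check is the orientation of the two canonical formulas, so that the contracted classes annihilate one another rather than reinforcing, which is exactly what makes $\varphi_1^{*}\varphi_2^{*}K_X$ vanish.
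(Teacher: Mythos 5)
Your argument is correct and is essentially the paper's own proof: both rest on exactly the same ingredients, namely Lemma \ref{canonical3} (ii), Lemma \ref{divisorialintegral}, Proposition \ref{insep} (integrality of $E_{0,1},E_{0,4},E_{0,5}$ versus non-integrality of $E_{0,6}$, giving the factor-$2$ discrepancy you track), and the blow-down formula for $K_{Y^D}$. The only difference is organizational — you solve for $K_{Y^D}$ on $Y^D$ via injectivity of $\pi^{*}$ on numerical classes and compare there, whereas the paper pulls the blow-down formula back to $Y$ and cancels $K_Y$ from both sides — and the coefficient bookkeeping you single out as the delicate point is precisely the step the paper carries out.
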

\begin{proof}
By Lemma \ref{canonical3}, (ii),
$$K_Y = -2(E_{\infty, 3} + E_{\infty,4}+ E_{\infty,5} + E_{\infty,6})
-(F_{\infty} + E_{\infty,1} + E_{\infty,2}).$$
On the other hand,  
$$
K_Y  = \pi^{*}K_{Y^D} + (D) 
 = \pi^{*}( \varphi_1^{*}\circ \varphi_2^*(K_{X}) + E_{0,6} + 2(E_{0,1} + E_{0,4} + E_{0,5}))  + (D) = 
$$
$$
 \pi^{*}( \varphi_1^{*}\circ \varphi_2^*(K_{X})) + 2(E_{0,6} + E_{0,1} + E_{0,4} + E_{0,5})  + (D)
=\pi^{*}( \varphi_1^{*}\circ \varphi_2^*(K_{X})) + K_Y.
$$
Here we use the fact that $E_{0,1}, E_{0,4}, E_{0,5}$ are integral and $E_{0,6}$ is non-integral (Lemma \ref{divisorialintegral}, (ii) and Lemma \ref{insep}).
Therefore, $K_{X}$ is numerically equivalent to zero.
\end{proof}

\begin{lemma} 
The surface $X$ has $b_{2}(X) = 10$.
\end{lemma}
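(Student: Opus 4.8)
The plan is to obtain $b_2(X)$ from the topological Euler number $c_2(X)$ via the standard relation $c_2 = 2 - 2b_1 + b_2$ valid for any smooth complete surface (Poincar\'e duality gives $b_3=b_1$, $b_4=b_0=1$), after separately checking $b_1(X)=0$. So the whole statement reduces to computing the single integer $c_2(X)=12$, which I would do by propagating the Euler number along the chain $Y \to Y^D \to X' \to X$ appearing in the diagram above.

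First I would recall from the proof of Lemma \ref{e6nonsingular} that $c_2(Y)=16$: the surface $\tilde R$ is a rational elliptic surface, hence $c_2(\tilde R)=12$, and $Y$ is its blow-up at $4$ points, each raising the Euler number by $1$. Since $Y$ is a blow-up of the rational surface $\tilde R$, it is rational and $b_1(Y)=0$. The key step is then the passage through the purely inseparable quotient $\pi : Y \to Y^D$. As $\pi$ is finite, surjective and purely inseparable of degree $2$, it is a universal homeomorphism, so by the topological invariance of the \'etale site it induces isomorphisms on $\ell$-adic cohomology $H^i(Y^D,\bbQ_\ell)\cong H^i(Y,\bbQ_\ell)$ for every $i$ and every $\ell\neq 2$. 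Using that $Y^D$ is nonsingular (Lemma \ref{e6nonsingular}), the $\ell$-adic Betti numbers of $Y^D$ and $Y$ agree, and in particular
$$c_2(Y^D)=c_2(Y)=16, \qquad b_1(Y^D)=b_1(Y)=0.$$

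Finally I would descend from $Y^D$ to $X$ through the two contractions recorded after Lemma \ref{exceptional}: $\varphi_1 : Y^D \to X'$ contracts the three exceptional curves $E_{0,1}', E_{0,4}', E_{0,5}'$, and $\varphi_2 : X' \to X$ contracts the remaining exceptional curve, namely the image of $E_{0,6}'$. Each contraction of a single $(-1)$-curve lowers $c_2$ by $1$ and leaves $b_1$ unchanged, so
$$c_2(X)=16-4=12, \qquad b_1(X)=0.$$
Substituting $b_1(X)=0$ into $c_2(X)=2-2b_1(X)+b_2(X)$ gives $b_2(X)=10$. The only genuinely nontrivial ingredient is the invariance $c_2(Y^D)=c_2(Y)$ under the inseparable quotient; the rest is bookkeeping of the four blow-downs. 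I expect this invariance to be the point to argue carefully, and it rests exactly on the fact that a finite purely inseparable surjection of integral surfaces is a universal homeomorphism, so that \'etale cohomology, and hence the Euler number, is preserved.
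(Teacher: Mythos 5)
Your proof is correct and rests on exactly the same key ingredient as the paper's: the finite purely inseparable quotient $\pi : Y \to Y^D$ is a universal homeomorphism, so \'etale cohomology (hence all Betti numbers, hence $c_2$) is preserved, and the four blow-downs account for the rest. The paper tracks $b_2$ directly ($b_2(\tilde R)=10$, so $b_2(Y)=b_2(Y^D)=14$, dropping to $10$ after contracting four exceptional curves) rather than routing through $c_2$ and the relation $c_2 = 2 - 2b_1 + b_2$, but this is only a cosmetic repackaging of the same argument.
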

\begin{proof}
Since $\pi : Y \longrightarrow Y^{D}$ is finite and
purely inseparable, the \'etale cohomology of $Y$ is isomorphic to 
the \'etale cohomology of $Y^{D}$. Therefore, we have
$b_{1}(Y^{D}) = b_{1}(Y) = 0$, 
$b_{3}(Y^{D})= b_{3}(Y) = 0$ and $b_{2}(Y^{D}) 
= b_{2}(Y) = 14$. Since $\varphi_2\circ \varphi_1$ is the blow-down
of four exceptional curves, we see
$b_{0}(X) =b_{4}(X) = 1$, $b_{1}(X) =b_{3}(X) = 0$ and $b_{2}(X) = 10$.
\end{proof}

\begin{theorem}\label{main}
With the notation above, $X$ is a supersingular Enriques surface.
\end{theorem}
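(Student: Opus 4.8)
The statement asserts two things: that $X$ is an Enriques surface, and that it is of supersingular type. The first is already essentially in hand. We have shown that $K_X$ is numerically equivalent to $0$ and that $b_2(X)=10$; since a numerically trivial canonical class admits no $(-1)$-curve (such a curve $E$ would satisfy $K_X\cdot E=-1$), the surface $X$ is minimal. By the definition recalled in \S\ref{surfaces}, a minimal surface with numerically trivial canonical divisor and $b_2=10$ is an Enriques surface. It therefore remains only to decide which of the three types---classical, singular, or supersingular---occurs.

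For this I would use the induced genus one fibration together with Proposition \ref{multi-fiber}. The key observation is that in characteristic $2$ we have $D(t^2)=2t\,D(t)=0$, so $T=t^2$ is a $D$-invariant parameter; hence the elliptic fibration $f:\tilde{R}\to{\bf P}^1$ (with parameter $t$) descends through $\pi$ to a genus one fibration $\phi:X\to{\bf P}^1$ with parameter $T$, the map on bases being the purely inseparable $t\mapsto t^2$. I would then identify the two special fibers of $\phi$. The configuration over $t=\infty$, which was enlarged by the blow-ups resolving the isolated singularity of $D$, yields the seven curves $F_\infty,E_{\infty,1},\dots,E_{\infty,6}$; these survive in $X$ as $(-2)$-curves (Lemma \ref{exceptional}(iii)) and form a fiber of type $\IV^*$ ($\tilde{E}_6$). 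The $\IV^*$-configuration over $t=0$, by contrast, is partly contracted by $\varphi_1,\varphi_2$ (the three integral curves $E_{0,1},E_{0,4},E_{0,5}$ and then $E_{0,6}$ are blown down) and descends to a fiber of type $\IV$ ($\tilde{A}_2$), namely three $(-2)$-curves through a common point.

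The heart of the argument is to pin down the multiplicities of these two fibers. Using Proposition \ref{insep} and the list of integral and non-integral curves in Lemma \ref{divisorialintegral}(ii), together with the relation $2C=\pi^{-1}(\pi(C))$ for non-integral $C$, I would show that the $\IV^*$-fiber over $T=\infty$ occurs in $\phi$ with multiplicity $2$---a wild double fiber, since $p=2$---while the $\IV$-fiber over $T=0$ is simple. Thus $\phi$ has exactly one multiple fiber, namely $2\IV^*$, and it is of additive type. This is consistent with the earlier observation that every smooth fiber of $\tilde{R}$ is a supersingular elliptic curve, so that the generic fiber of $\phi$ is itself supersingular.

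With this in place the conclusion is immediate from Proposition \ref{multi-fiber}. Since $\phi$ has a single (wild) double fiber, $X$ cannot be classical, as the classical case requires two double fibers. Since that double fiber is of additive type $\IV^*$---neither a smooth ordinary elliptic curve nor of multiplicative type---$X$ cannot be singular either. Hence $X$ is supersingular. I expect the main obstacle to lie in the multiplicity bookkeeping of the third paragraph: correctly tracking the scheme-theoretic fibers of $\phi$ through the inseparable quotient $\pi$ and the subsequent blow-downs $\varphi_1,\varphi_2$, and in particular certifying that the $\IV^*$-fiber is exactly double while the $\IV$-fiber is reduced. As an alternative that sidesteps the fiber analysis, one could instead compute cohomology directly: pushing $\mathcal{O}_Y$ forward along the purely inseparable $\pi$ and using $H^1(Y,\mathcal{O}_Y)=0$ (as $Y$ is rational) computes $h^1(Y^D,\mathcal{O}_{Y^D})=h^1(X,\mathcal{O}_X)=1$, which already excludes the classical case, while the additive condition $D^2=0$ forces the $2$-power Frobenius to vanish on $H^1(X,\mathcal{O}_X)$, giving supersingular.
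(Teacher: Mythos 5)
Your argument is correct and follows essentially the same route as the paper: identify the genus one fibration on $X$ induced by $f$, verify that the $\IV^*$-configuration over $t=\infty$ is a double fiber while the $\IV$-fiber over $t=0$ is simple, and conclude via Proposition \ref{multi-fiber}. The one step to make fully explicit is that the supersingularity of the smooth fibers (Lemma \ref{smoothdoublefiber}) is what excludes a second, tame double fiber and hence the classical case — the paper closes the argument exactly this way, and it disposes of the singular type even more cheaply by observing that $Y$ is rational, rather than by appealing (as you do, also correctly) to the additive type of the double fiber.
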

\begin{proof}
Since $K_{X}$ is numerically trivial,
$X$ is minimal and the Kodaira dimension $\kappa (X) $  is equal to $0$.
Since $b_2(X) = 10$, $X$ is an Enriques surface.
Since ${\tilde Y}$ is a rational surface, $X_a$ is either supersingular or classical.
Consider the elliptic fibration $g : X \longrightarrow {\bf P}^1$ induced by
$f : \tilde{R} \longrightarrow {\bf P}^1$.
Note that the fiber over the point given by $t=\infty$ is a double fiber of 
type $\IV^*$ and the fiber over the point given by $t=0$ is a simple fiber of type $\IV$.
Since $f$ has only two singular fibers and any smooth fiber is a supersingular elliptic curve,
the other fibers of $g$ are smooth and supersingular elliptic curves by Lemma \ref{smoothdoublefiber}, and 
hence they are simple by Proposition \ref{multi-fiber}.  
Therefore $X$ is a supersingular Enriques surface by Proposition \ref{multi-fiber}.
\end{proof}

The dual graph of the thirteen $(-2)$-curves $E_1,\ldots, E_{13}$ is as in the following Figure \ref{enriques5}.

\begin{figure}[!htb]
 \begin{center}
  \includegraphics[width=70mm]{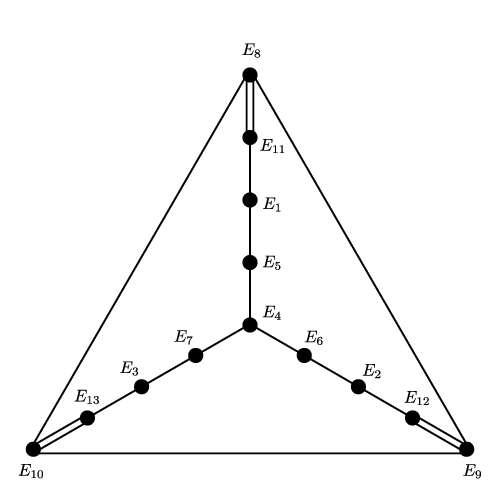}
 \end{center}
 \caption{}
 \label{enriques5}
\end{figure}

\noindent 
We can easily determine all divisors of Kodaira type in Figure \ref{enriques5}.  It follows from Proposition \ref{genus1} that each of them defines a genus one fibration on $X$.  Thus, on $X$, 
there exist exactly one elliptic fibration with singular fibers of type $(2\IV^*, \IV)$ defined by the linear system $|E_8+E_9+E_{10}|$ and 
three quasi-elliptic fibrations with singular fibers of type $(\III^*, 2\III)$ defined by $|2(E_9+E_{12})|, |2(E_8+E_{11})|, |2(E_{10}+E_{13})|$
respectively.

We now have the following theorem.

\begin{theorem}\label{main2}
The automorphism group ${\rm Aut}(X)$ is finite and $X$ contains exactly thirteen $(-2)$-curves.
\end{theorem}
\begin{proof} Consider the dual graph $\Gamma$ of the 13 $(-2)$-curves in Figure \ref{enriques5}. By Remark \ref{Vinbergremark}, $\Gamma$ is non-degenerate.
We can easily prove that any maximal parabolic subdiagram in $\Gamma$ is of type $\tilde{E}_6\oplus \tilde{A}_2$ or of type $\tilde{E}_7\oplus \tilde{A}_1$.  It follows from Propositions \ref{finiteness}, \ref{Vinberg} and \ref{Namikawa} that
$\Aut(X)$ is finite and $X$ contains exactly 13 $(-2)$-curves.
\end{proof}

\begin{theorem}\label{main2aut}
The automorphism group ${\rm Aut}(X)$ is isomorphic to ${\bf Z}/5{\bf Z} \times \mathfrak{S}_3$
and the numerically trivial automorphism group ${\rm Aut}_{nt}(X)$ is isomorphic to ${\bf Z}/5{\bf Z}$.
\end{theorem}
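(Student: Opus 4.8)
The plan is to study the homomorphism $\rho_n : \Aut(X) \to {\rm O}(\Num(X))$ of (\ref{num-trivial}) and to separate its image from its kernel, the latter being $\Aut_{nt}(X)$ by Definition \ref{cohnumtrivial}. By Theorem \ref{main2}, $X$ carries exactly the thirteen $(-2)$-curves $E_1,\dots,E_{13}$ of Figure \ref{enriques5}, and their classes span $\Num(X)\otimes{\bf R}$; hence every $g\in\Aut(X)$ permutes these classes and $\rho_n$ factors through the symmetry group $\Aut(\Gamma)$ of the dual graph $\Gamma$. Because an automorphism must preserve the configuration of singular fibers of each genus one pencil, it fixes the unique elliptic pencil $|E_8+E_9+E_{10}|$ and permutes the three quasi-elliptic pencils $|2(E_9+E_{12})|$, $|2(E_8+E_{11})|$, $|2(E_{10}+E_{13})|$. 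This forces $\rho_n(\Aut(X))$ into the subgroup of $\Aut(\Gamma)$ permuting the three symmetric arms, which is isomorphic to $\mathfrak{S}_3$; so $|\rho_n(\Aut(X))|$ divides $6$.

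For the lower bound I would use the explicit model $y^2+ty=tx^4+x^3+t^3x+t^7$ of Subsection \ref{LIST}(1)(i). One checks directly that $\sigma:(t,x,y)\mapsto(\zeta t,\zeta^4x,\zeta y)$ with $\zeta^5=1$ and $\tau:(t,x,y)\mapsto(t,x,y+t)$ preserve the equation. For $\rho$, the Jacobian of the elliptic pencil $(2\IV^*,\IV)$ is the extremal rational elliptic surface with fibers $(\IV^*,\IV)$ appearing in Proposition \ref{Lang}, whose Mordell–Weil group is ${\bf Z}/3{\bf Z}$; translation by the $3$-torsion section induces an automorphism $\rho$ of order $3$. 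Computing the relations shows $\sigma$ is central of order $5$ while $\langle\tau,\rho\rangle\cong\mathfrak{S}_3$, so $\langle\sigma,\tau,\rho\rangle\cong{\bf Z}/5{\bf Z}\times\mathfrak{S}_3\subseteq\Aut(X)$. Since $\rho$, being a Mordell–Weil translation of the additive fibers, acts as a $3$-cycle on the three arms and $\tau$ as a transposition, $\langle\tau,\rho\rangle$ already surjects onto $\mathfrak{S}_3$, whence $\rho_n(\Aut(X))=\mathfrak{S}_3$ exactly, and the sequence $1\to\Aut_{nt}(X)\to\Aut(X)\to\mathfrak{S}_3\to1$ is exact.

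With the image pinned down, the element $\sigma$ is automatically numerically trivial: $\rho_n(\sigma)$ has order dividing $5$ inside a group of order $6$, so $\rho_n(\sigma)=1$ and $\langle\sigma\rangle\cong{\bf Z}/5{\bf Z}\subseteq\Aut_{nt}(X)$. Geometrically this matches the fact that $\sigma$ fixes the two base points $t=0,\infty$ and, having order coprime to the symmetries of the $\tilde{E}_6$- and $\tilde{A}_2$-configurations, fixes each of the thirteen curves. Thus the theorem reduces to the single reverse inclusion $\Aut_{nt}(X)\subseteq\langle\sigma\rangle$.

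That reverse inclusion is the main obstacle, since numerically trivial automorphisms are invisible in the dual graph and the complex bound of Mukai–Namikawa cannot be invoked here (a group of order $5$ already violates it). I would argue via the equation using the method of Section \ref{Equation}: any $g\in\Aut_{nt}(X)$ preserves the elliptic pencil and fixes $t=0,\infty$, so by the normal form (\ref{automorphism}) it acts on the base by $t\mapsto c_1t$ and has the shape $x\mapsto d_1(t)x+d_2(t)$, $y\mapsto e_1(t,x)y+e_2(t,x)$. Substituting this ansatz into the defining equation and matching coefficients should force $g$ to be a power of $\sigma$, giving $\Aut_{nt}(X)=\langle\sigma\rangle\cong{\bf Z}/5{\bf Z}$. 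Combined with the exact sequence and the centrality of $\sigma$, this yields $\Aut(X)\cong{\bf Z}/5{\bf Z}\times\mathfrak{S}_3$; the coefficient-matching computation bounding $\Aut_{nt}(X)$ is the technical heart of the argument.
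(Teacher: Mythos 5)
Your proposal is correct and follows essentially the same route as the paper: the paper likewise passes to the affine model $y^2+ty=tx^4+x^3+t^3x+t^7$, uses the normal form (\ref{automorphism}) for automorphisms preserving the elliptic pencil and the $2$-section at infinity, and carries out exactly the coefficient-matching computation you defer (finding $c_1^5=1$, $d_2=0$, $e_2=0$ or $\zeta t$, hence the stabilizer $\langle\sigma,\tau\rangle\cong{\bf Z}/10{\bf Z}$), before adjoining the Mordell--Weil translation $\rho$ of order $3$ and checking the commutation relations to get ${\bf Z}/5{\bf Z}\times\mathfrak{S}_3$ with $\sigma$ numerically trivial. Your reorganization via the exact sequence $1\to\Aut_{nt}(X)\to\Aut(X)\to\mathfrak{S}_3\to 1$ is only a cosmetic difference; the technical heart you identify is precisely the computation the paper performs.
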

\begin{proof} 
To calculate $\Aut(X)$ we first give an equation of a birational model $X$ which has a structure of
a special elliptic fibration with singular fibers of type $(2\IV^*, \IV)$.
Then we calculate the gouup of automorphisms preserving this fibaration and a 2-secton.
Finally, using this result, we determine the automorphism group of $X$.

We consider the elliptic surface defined by the equation (\ref{pencil31})
and the vector field given in (\ref{E6A2SSderivation}).
Put $T = t^2$, $u = x + t^3$, $v = y + tx^2$.
Then, we have $D(T) = 0$, $D(u) =0$, $D(v) = 0$ and we have the relation
\begin{equation}\label{E6A2SSequation2}
v^2 + Tv = Tu^4 + u^3 + T^3u + T^7.  
\end{equation}
Since we have $k(x, y, t)^{D} = k(u, v, T)$, 
the quotient surface by $D$ is
birational to the surface defined by the equation (\ref{E6A2SSequation2}).
Note that the minimal normal completion of this surface is a normal elliptic surface 
$f : X \longrightarrow {\bf P}^1$ which is birational to our Enriques surface.
We replace variables $u$, $v$, $T$ 
by new variables $x$, $y$, $t$, respectively for convenience, and set
$$
A = k[t, x, y]/(y^2 + ty + tx^4 + x^3 + t^3x + t^7).
$$ 

Let $\sigma$ be an automorphism of
our Enriques surface. The double fiber of $f$, denoted by $2F_{\infty}$, of type $\IV^*$ exists over the point defined by $t = \infty$.
Since $\sigma$ preserves the diagram of $(-2)$-curves, $\sigma$ preserves
$2F_{\infty}$. Therefore, $\sigma$ preserves the structure of this elliptic surface.
We denote by $\tilde{C}$ be the 2-section at infinity and assume that $\sigma$
preserves $\tilde{C}$. Then, as in the case of a quasi-elliptic surface,
$\sigma$ has the form in (\ref{automorphism}) (cf. Remark \ref{equ-automorphism}).
Moreover, since this elliptic surface has a unique second singular fiber over the point defined by $t = 0$,
$\sigma$ preserves also the singular fiber. Therefore, we know $c_2 = 0$ and  we have
$\sigma^{*}(t) = c_1t$.

Therefore, 
together with the equation 
(\ref{E6A2SSequation2}), we have an identity
$$
\begin{array}{l}
e_1(t,x)^2(ty + tx^4 + x^3 + t^3x + t^7) +e_2(t,x)^2+ 
c_1t(e_1(t,x)y +e_2(t,x)) \\
= c_1 t (d_1(t)x + d_2(t))^4 + (d_1(t)x + d_2(t))^3\\
\quad + (c_1t)^3(d_1(t)x + d_2(t)) + (c_1t)^7.
\end{array}
$$
$A$ is a free $k[t, x]$-module, and $1$ and $y$ are linearly independent over $k[t, x]$.
Taking the coefficient of $y$,  we have $e_1(t,x)^2t + c_1te_1(t,x) = 0$.
Since $e_1(t,x) \not\equiv 0$, we have $e_1(t,x) = c_1 \in k^*$.
Therefore, we have
$$
\begin{array}{l}
c_1^2(tx^4 + x^3 + t^3x + t^7) +e_2(t,x)^2+ 
c_1te_2(t,x) \\
= c_1 t (d_1(t)x + d_2(t))^4 + (d_1(t)x + d_2(t))^3\\
\quad + (c_1t)^3(d_1(t)x + d_2(t)) + (c_1t)^7.
\end{array}
$$
As a polynomial of $x$, if $e_2(t,x)$ has a term of degree greater than or equal to 3,
then $e_2(t,x)^2$ has a term greater than or equal to 6. We cannot kill this term in the equation.
Therefore, we can put $e_2(t,x) = a_0(t) + a_1(t) x + a_2(t)x^2$ with $a_0(t), a_1(t), a_2(t) \in k[t]$. We take terms which contain only the variable $t$. Then, we have an equality
$$
c_1^2t^7  + a_0(t)^2 + c_1ta_0(t) = c_1td_2(t)^4 + d_2(t)^3+  c_1^3t^3d_2(t) + c_1^7t^7.
$$
Put $\deg~ d_2(t) = \ell$. Suppose $\ell \geq 2$. Then, the right-hand-side has an odd term
whose degree is equal to $4\ell + 1 \geq 9$. Therefore, the left-hand-side must have 
an odd term which is of degree $4\ell + 1$. This means $\deg~ a_0(t) = 4\ell$.
However, in the equation we cannot kill the term of degree $8\ell$ which comes from 
$a_0(t)^2$. Therefore, we can put $d_2(t) = b_0 + b_1t$ with $b_0, b_1 \in k$.
Then, the equation becomes
$$
\begin{array}{l}
a_0(t)^2 + c_1ta_0(t) + c_1^2t^7\\ 
= c_1b_0^4t +c_1b_1^4t^5 +
b_0^3 + b_0^2b_1t + b_0b_1^2t^2 + b_1^3t^3 
+  c_1^3b_0t^3 +
c_1^3b_1t^4 + c_1^7t^7.
\end{array}
$$
If $\deg ~a_0(t) \geq 4$, we cannot kill the term of degree greater than or equal to 8
which comes from $a_0(t)^2$. Therefore, we can put 
$a_0(t) = \alpha_0 + \alpha_1t + \alpha_2t^2 + \alpha_3t^3$.
Then, we have equations:
$$
\begin{array}{l}
c_1^2 = c_1^7, \alpha_3^2 = 0, 0 = c_1b_1^4, 
\alpha_2^2 + c_1\alpha_3 = c_1^3b_1, c_1\alpha_2 = b_1^3 +  c_1^3b_0,\\
\alpha_1^2 + c_1\alpha_1 = b_0b_1^2, c_1\alpha_0 = c_1b_0^4 + b_0^2b_1,
\alpha_0^2 = b_0^3.
\end{array}
$$
Solving these equations, we have
$$
b_0 = 0, b_1 = 0,\alpha_0 = 0,  \alpha_2 = 0, \alpha_3 = 0, c_1^5 = 1, 
\alpha_1 = 0~\mbox{or}~ c_1.
$$
Therefore, we have
$
c_1 =\zeta, e_1(t, x) = \zeta, a_0(t) = 0~\mbox{or}~\zeta t, d_2(t) = 0.
$
with $\zeta^5 = 1$, $\zeta \in k$.
Putting these date into the original equation, we have
$$
\begin{array}{l}
\zeta^2(tx^4 + x^3 + t^3x) + a_1(t)^2x^2 + a_2(t)^2x^4 + \zeta ta_1(t)x + \zeta ta_2(t)x^2 \\
\quad = \zeta td_1(t)^4x^4 + d_1(t)^3x^3 + \zeta^3t^3d_1(t)x.
\end{array}
$$
Considering the coefficients of $x^4$,
we have 
$\zeta^2 t + a_2(t)^2 + \zeta t d_1(t)^4 =0$. Therefore, we have $a_2(t) = 0$ and 
$d_1(t) ={\zeta}^4$. Considering the coefficients of $x^2$, we have $a_1(t) = 0$.
Therefore we have 
$$
c_1 =\zeta,\quad d_1(t) ={\zeta}^4,\quad d_2(t) = 0,\quad  e_1(t, x) = \zeta,\quad e_2(t, x) = 0~\mbox{or}~\zeta t.
$$
Now we set
$$
\begin{array}{l}
\sigma : t \mapsto \zeta t,\quad x \mapsto {\zeta}^4x,\quad  y \mapsto \zeta y\\
\tau : t \mapsto t,\quad x \mapsto x,\quad y \mapsto y + t.
\end{array}
$$
Then, we have
$$
\sigma\circ \tau : t \mapsto \zeta t,\quad x \mapsto {\zeta}^4x,\quad y \mapsto  \zeta y + \zeta t
$$
and $\langle\sigma\circ \tau\rangle \cong {\bf Z}/10{\bf Z}$. 
We now conclude that the group of automorphisms of $X$ preserving a 2-section of the elliptic fibration
with singular fibers of type $(\VI^*,\VI)$ is isomorphic to ${\bf Z}/10{\bf Z}$.

It is easy to see that the automorphism $\tau$ of order $2$ is not numerically trivial. Indeed, if it were, it would preserve all $2$-sections of $f$. Therefore, it would fix at least $2$ points on them, namely the intersection with the reducible fibers (note that they touch the fiber of type $\IV$, since the other fibrations on this surface have a fiber of type $\III$). Hence, $\tau$ would fix all $2$-sections of $f$ pointwise and therefore it would also fix a general fiber of $f$ by Lemma \ref{genus1auto}. This is a contradiction, hence $\tau$ is not numerically trivial.

Finally, consider the relative Jacobian variety of $f : X \longrightarrow {\bf P}^1$.
It has singular fibers of types ${\rm IV}$ and ${\rm IV}^*$, 
and the Mordell-Weil group is isomorphic to ${\bf Z}/3{\bf Z}$ (cf. Ito \cite{I}). 
We denote by $\rho$ 
a generator of the group. It acts on
$X$ and permutes the three smooth rational 2-sections because if it fixes a 2-section, then it is contained
in the above group ${\bf Z}/10{\bf Z}$.

Therefore, considering the action of the subgroup
$\langle\tau, \rho\rangle$ generated by $\tau$ and $\rho$ on the dual graph of $(-2)$-curves, 
we see that $\langle\tau, \rho\rangle$ acts as the symmetric group $\mathfrak{S}_3$ of degree 3. The automorphism $\sigma$ is numerically trivial because the dual graph of nodal curves has no symmetries of order $5$.
We conclude that ${\rm Aut}(X)$ is an extension of $\mathfrak{S}_3$ by ${\bf Z}/5{\bf Z}$. By the above calculations, $\tau$ commutes with $\sigma$, hence ${\rm Aut}(X) \cong {\bf Z}/5{\bf Z} \times \mathfrak{S}_3$.
\end{proof}

\begin{remark}
Note that $\Aut_{ct}(X) = \Aut_{nt}(X)$ because $X$ is supersingular.  The numerically trivial automorphism $\sigma$ of order $5$ found here leads to one of the exceptions in \cite{DM}.
\end{remark}

\subsection{Classical case}

We consider the relatively minimal
non-singular complete  elliptic surface $\psi : R \longrightarrow {\bf P}^1$
associated with the Weierstrass equation
$$
y^2 + xy + sy = x^3
$$
with a parameter $s$.
This surface is a rational elliptic surface with a singular fiber of type $\I_3$
over the point given by $s = 0$, a singular fiber of type $\I_1$ over the point given by $s=1$ and a singular fibers of type $\IV^*$
over the point given by $s = \infty$ (cf. Lang \cite[\S 2]{L2}).
We consider the base change of $\psi : R \longrightarrow {\bf P}^1$
by $s = t^2$. 
Then, we have the elliptic surface associated with the Weierstrass equation 
\begin{equation}\label{E6CWeierstrass}
y^2 + xy + t^2y = x^3.
\end{equation}
We consider the relatively minimal non-singular complete model 
of this elliptic surface :
\begin{equation}
f : \tilde{R} \longrightarrow {\bf P}^1.
\end{equation}
The rational elliptic surface $f : \tilde{R} \to {\bf P}^1$ 
has a singular fiber of type $\I_6$ over the point
given by $t=0$, a singular fiber of type $\I_2$ over the point given by $t=1$ and a singular fiber of type $\IV$ over the point given by $t=\infty$ (see Figure \ref{E6C1}).
The fibration $f$ has six sections.  In Figure \ref{E6C1}, $(-1)$-curves denote the $0$-section and the two sections defined by
the equations
$$x=y=0, \ \quad  x=y+t^2=0$$
respectively.

\begin{figure}[!htb]
 \begin{center}
  \includegraphics[width=70mm]{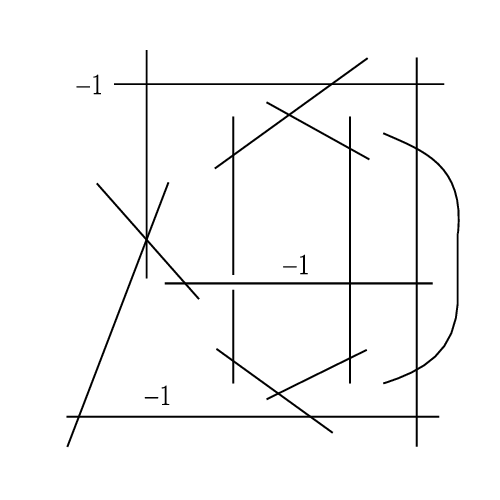}
 \end{center}
 \caption{}
 \label{E6C1}
\end{figure}

\noindent
Now, we consider a rational vector field on $\tilde{R}$ defined by
\begin{equation}\label{DE6C}
D=D_a = (t+a) \frac{\partial}{\partial t} + (x + t^2)\frac{\partial}{\partial x}
\end{equation}
where $a\in k \setminus \{0,1\}$.
We see that $D^2 = D$, that is, $D$ is $2$-closed. 
Note that the non-singular fiber $F_a$ over the point defined by $t=a$ is integral with respect to $D$.
The vector field $D$ has an isolated singularity at the point $P$ which is the singular point of
the fiber of type $\IV$.
Denote by $F_{\infty}$, $E_{\infty,1}$ and $E_{\infty,2}$ the three components of the singular fiber
of type $\IV$.  Then $P$ is the intersection point of these three curves. 
To resolve this singularity, we first blow up at $P$.
Denote by $E_{\infty, 3}$ the exceptional curve.  We denote the proper transforms of 
$F_{\infty}$, $E_{\infty,1}$ and $E_{\infty,2}$  by the same symbols.  Then blow up at three points
$E_{\infty, 3}\cap (F_{\infty} + E_{\infty,1} + E_{\infty,2}).$  
Let $Y$ be the resulting surface and $\psi : Y \to \tilde{R}$ the successive blow-ups.
We denote by the same symbol $D$ the induced vector field on $Y$. 
We denote by $E_{\infty,4}$, $E_{\infty,5}$ or $E_{\infty,6}$ the exceptional curve over the point 
$E_{\infty, 3}\cap F_{\infty}$, $E_{\infty, 3}\cap E_{\infty,1}$ or $E_{\infty, 3}\cap E_{\infty,2}$
respectively.  Then we have the following Figure \ref{E6C2} in which we give the self-intersection numbers of the curves, and the thick curves are integral with respect to $D$.

\begin{figure}[!htb]
 \begin{center}
  \includegraphics[width=80mm]{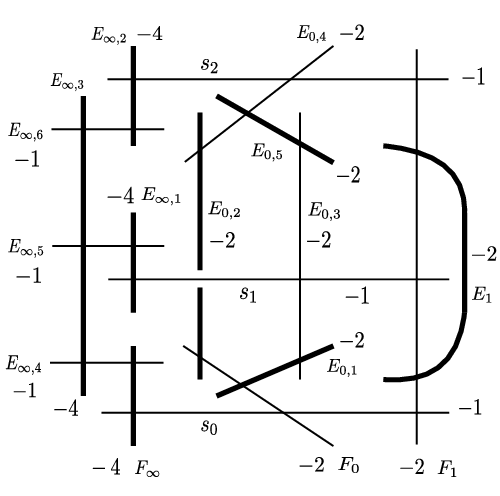}
 \end{center}
 \caption{}
 \label{E6C2}
\end{figure}

\noindent

A direct calculation shows the following Lemmas.

\begin{lemma}\label{E6Clemma1}
{\rm (i)}  The divisorial part $(D)$ of $D$ on $Y$ is given by
$$-(E_1 + E_{0,1} + E_{0,2} + E_{0,5} + F_{\infty} + E_{\infty,1} + E_{\infty,2}) - 2(E_{\infty, 3} + E_{\infty,4}+ E_{\infty,5} + E_{\infty,6}).$$

{\rm (ii)} The integral curves in Figure {\rm \ref{E6C2}} are
$$E_{0,1}, E_{0,2}, E_{0,5}, F_{\infty}, E_{\infty,1}, E_{\infty,2}, E_{\infty,3}, E_1.$$
\end{lemma}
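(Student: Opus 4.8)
The plan is to prove both statements by the same local analysis used in the supersingular case (Lemma~\ref{divisorialintegral}), now carried out over the three reducible or degenerate fibers at $t=0,1,\infty$. First I would extend $D$ to a derivation of the affine Weierstrass ring by differentiating $F:=y^2+xy+t^2y+x^3=0$: in characteristic $2$ the relation $D(F)=0$ collapses to $(x+t^2)\big(D(y)+y+x^2\big)=0$, hence $D(y)=y+x^2$, and a direct check gives $D^2=D$, so $D$ is $2$-closed of multiplicative type. In the chart where $(t,x)$ are \'etale coordinates the two coefficients $t+a$ and $x+t^2$ are coprime, so there $D$ already has the form $f\,(g\frac{\partial}{\partial t}+h\frac{\partial}{\partial x})$ with $f$ a unit; thus $(D)$ is trivial on the generic part of $\tilde R$. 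Since $D$ is a regular derivation and is nonzero along each section and along the ramification curve $\{x=t^2,\,y=t^3\}$, these horizontal curves contribute nothing to $(D)$ and are not integral. Consequently both $(D)$ and the integral curves appearing in Figure~\ref{E6C2} are supported on the components over $t=0,1,\infty$ and on the exceptional curves of $\psi\colon Y\to\tilde R$.

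The key input at the finite fibers is the shape of $D$ at the singular points of the Weierstrass model. Solving $\partial_xF=y+x^2=0$, $\partial_yF=x+t^2=0$ and $F=0$ shows that the surface is singular exactly at $(t,x,y)=(0,0,0)$ and $(1,1,1)$, and these are the points whose resolution produces the fiber of type $\I_6$ over $t=0$ and of type $\I_2$ over $t=1$. At both points $D$ is regular, nonzero, and points purely in the base direction: $D=a\frac{\partial}{\partial t}$ at $(0,0,0)$ and $D=(1+a)\frac{\partial}{\partial t}$ at $(1,1,1)$, both nonzero since $a\neq 0,1$. I would then carry $D$ through the successive blow-ups of the minimal resolution at these two points; in each blow-up chart I clear the common factor to write $D=f(g\frac{\partial}{\partial u}+h\frac{\partial}{\partial v})$, read off $\mathrm{ord}_C(f)$ as the multiplicity of each exceptional component $C$ in $(D)$, and test whether $g$ vanishes along $C$ to decide integrality via Proposition~\ref{insep}. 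This is what singles out the three components $E_{0,1},E_{0,2},E_{0,5}$ and the component $E_1$ lying along the $D$-flow as the integral ones, each occurring in $(D)$ with multiplicity $-1$, while the remaining components and the proper transform $F_0$ are transversal and absent from $(D)$.

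Over $t=\infty$ the analysis is driven by the base field. Writing $s=1/t$ gives $\bar D=(t+a)\frac{\partial}{\partial t}=s(1+as)\frac{\partial}{\partial s}$, which vanishes at $s=0$; hence $D$ is tangent to the fiber direction there and the three components $F_\infty,E_{\infty,1},E_{\infty,2}$ of the type~$\IV$ fiber are integral, each contributing $-1$ to $(D)$. The isolated singularity of $D$ sits at their common point $P$, and I would perform the four blow-ups of $\psi$ as described, compute $D$ in the chart of $E_{\infty,3}$ and then in the charts of $E_{\infty,4},E_{\infty,5},E_{\infty,6}$, finding that $E_{\infty,3}$ stays integral while the last three become transversal, with the pole order jumping so that all four of $E_{\infty,3},E_{\infty,4},E_{\infty,5},E_{\infty,6}$ occur in $(D)$ with multiplicity $-2$. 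Collecting the contributions over $t=0,1,\infty$ then yields formula~(i), and collecting the tangent components yields list~(ii).

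The main obstacle is the bookkeeping through the iterated blow-ups: at $(0,0,0)$ one must follow $D$ through the full resolution of the $\I_6$ point to decide which three of the six components lie along the flow, and at $P$ one must track the jump in pole order from $-1$ to $-2$ across the two layers of blow-ups. As an independent check on the multiplicities I would compute $(D)^2$ and $K_Y\cdot(D)$ from formula~(i) and verify through the Euler-number formula~(\ref{euler}), using $c_2(Y)=16$, that $\deg\langle D\rangle=0$; this simultaneously confirms the divisor and shows that $D$ is divisorial, as required for the quotient construction that follows.
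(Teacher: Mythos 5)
Your proposal is correct and follows essentially the same route as the paper, which disposes of this lemma with the single phrase ``a direct calculation shows'': you set up exactly that calculation, correctly deriving $D(y)=y+x^2$ and $D^2=D$, locating the Weierstrass singularities at $(0,0,0)$ and $(1,1,1)$, and reading off pole orders and tangency chart by chart through the blow-ups. Your closing consistency check via the Euler-number formula (\ref{euler}) reproduces precisely the verification the paper performs in Lemma \ref{E6Clemma2} and the divisoriality argument, so nothing is missing.
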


\begin{lemma}\label{E6Clemma2}
{\rm (i)} $(D)^2 = -12$. 

{\rm (ii)} The canonical divisor $K_Y$ of $Y$ is given by
$$K_Y = -(F_{\infty} + E_{\infty,1} + E_{\infty,2}) -2(E_{\infty, 3} + E_{\infty,4}+ E_{\infty,5} + E_{\infty,6}).$$

{\rm (iii)} $K_Y\cdot (D) = -4.$
\end{lemma}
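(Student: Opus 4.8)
The plan is to establish (ii) first, since (i) and (iii) will then follow from a short intersection computation. To compute $K_Y$ I would start from the fact that $\tilde{R}$ is a relatively minimal rational elliptic surface, so that $K_{\tilde R}\sim -F$ for any fiber $F$; taking $F$ to be the fiber of type $\IV$ we may write $K_{\tilde R}\sim -(F_\infty+E_{\infty,1}+E_{\infty,2})$. The surface $Y$ is obtained from $\tilde R$ by the four blow-ups $\psi$ described before the statement (one at the triple point $P$ of the $\IV$-fiber, then three at the points $E_{\infty,3}\cap(F_\infty+E_{\infty,1}+E_{\infty,2})$), so I would apply the blow-up formula $K=\psi^\ast K+(\text{exceptional})$ step by step, using total transforms. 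Since $P$ lies on all three components with multiplicity one, the first blow-up gives $K=\psi_1^\ast K_{\tilde R}+E_{\infty,3}=-(F_\infty+E_{\infty,1}+E_{\infty,2})-2E_{\infty,3}$ on proper transforms; the three further blow-ups, each at a transverse intersection of $E_{\infty,3}$ with one component, then yield exactly the asserted formula. I expect this to reproduce verbatim the canonical divisor of the supersingular case in Lemma \ref{canonical3}(ii), because the modification of the $\IV$-fiber at infinity is carried out identically in both cases.

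For (i) and (iii) I would exploit the explicit divisorial part from Lemma \ref{E6Clemma1}(i). Comparing it with the formula in (ii) gives the clean identity $(D)=K_Y-C$, where $C=E_1+E_{0,1}+E_{0,2}+E_{0,5}$ is the sum of the four reduced components carried by the $\I_2$- and $\I_6$-fibers. Expanding, $(D)^2=K_Y^2-2\,K_Y\cdot C+C^2$ and $K_Y\cdot(D)=K_Y^2-K_Y\cdot C$, so everything reduces to the three numbers $K_Y^2$, $C^2$, and $K_Y\cdot C$. Here $K_Y^2=K_{\tilde R}^2-4=-4$, since $Y\to\tilde R$ is four blow-ups of a surface with $K^2=0$; $K_Y\cdot C=0$, because $K_Y$ is supported on the $\IV$-fiber and its exceptional curves, which lie over $t=\infty$ and are therefore disjoint from $C$; and $C^2=-8$ once one checks that $E_{0,1},E_{0,2},E_{0,5}$ are pairwise non-adjacent components of the $\I_6$-fiber, so that the four $(-2)$-curves in $C$ are pairwise disjoint. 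This gives $(D)^2=-4-0-8=-12$ and $K_Y\cdot(D)=-4-0=-4$.

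Alternatively, one can bypass the relation with $K_Y$ and expand $(D)^2$ and $K_Y\cdot(D)$ directly from the intersection matrix read off from Figure \ref{E6C2}: the self-intersections to record are $F_\infty^2=E_{\infty,1}^2=E_{\infty,2}^2=E_{\infty,3}^2=-4$ and $E_{\infty,4}^2=E_{\infty,5}^2=E_{\infty,6}^2=-1$ after the blow-ups, while $E_1,E_{0,1},E_{0,2},E_{0,5}$ remain $(-2)$-curves. Summing the eleven diagonal terms weighted by $(1,1,1,1,1,1,1,2,2,2,2)$ together with the off-diagonal incidences ($E_{\infty,3}$ meeting each of $E_{\infty,4},E_{\infty,5},E_{\infty,6}$, and these meeting $F_\infty,E_{\infty,1},E_{\infty,2}$ respectively) again yields $-12$, and similarly $-4$ for $K_Y\cdot(D)$.

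The main obstacle is purely bookkeeping: one must track the self-intersection numbers and the coefficients on the exceptional curves correctly through the non-reduced blow-up sequence. The only genuinely delicate point is the first exceptional curve $E_{\infty,3}$, which is blown up three further times and therefore drops from $-1$ to $-4$ and acquires coefficient $-2$ in both $(D)$ and $K_Y$; one must also confirm from the configuration that the three $\I_6$-components appearing in $(D)$ form a pairwise disjoint triple, which is what forces $C^2=-8$ and hence the value $(D)^2=-12$.
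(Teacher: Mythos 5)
Your proposal is correct: the blow-up bookkeeping for $K_Y$ (starting from $K_{\tilde R}\sim -(F_\infty+E_{\infty,1}+E_{\infty,2})$ and tracking the four exceptional curves), the identity $(D)=K_Y-(E_1+E_{0,1}+E_{0,2}+E_{0,5})$ with $K_Y^2=-4$, $K_Y\cdot C=0$, $C^2=-8$, and the resulting values $-12$ and $-4$ all check out against Lemma \ref{E6Clemma1} and the configuration in Figure \ref{E6C2}. The paper offers no argument beyond "a direct calculation," and your computation is precisely that calculation, organized slightly more systematically.
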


Now, by taking the quotient by $D$, we have the following Figure \ref{E6C3}.
Here the numbers $-1, -4$ denote the self-intersection numbers of curves.  The other curves have the self-intersection number $-2$.

\begin{figure}[!htb]
 \begin{center}
  \includegraphics[width=70mm]{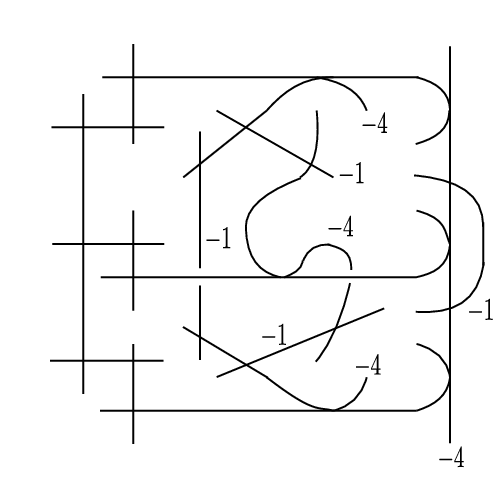}
 \end{center}
 \caption{}
 \label{E6C3}
\end{figure}

We now contract four $(-1)$-curves in Figure \ref{E6C3}, and denote by $X_a$ the surface obtained
which has the dual graph of $(-2)$-curves given in Figure \ref{enriques5}
(recall that the vector field (\ref{DE6C}) contains a parameter $a$). We use the notation of Figure \ref{enriques5}.
On $X_a$, there exist exactly one elliptic fibration with singular fibers of type $(2\IV^*, \I_3, \I_1)$ defined by the linear system $|E_8+E_9+E_{10}|$ and 
three quasi-elliptic fibrations with singular fibers of type $(\III^*, 2\III)$ defined by $|2(E_9+E_{12})|, |2(E_8+E_{11})|, |2(E_{10}+E_{13})|$
respectively.

\begin{theorem}\label{E6Cmain}
The surfaces $\{X_a\}$ form a $1$-dimensional non-isotrivial family of classical Enriques surfaces with the dual graph given in Figure {\rm \ref{enriques5}}. 
\end{theorem}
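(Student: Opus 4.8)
The plan is to run the supersingular argument (Theorems \ref{main} and \ref{main2}) essentially verbatim, and then supply the two points that are genuinely new in the classical setting: the classical-versus-supersingular dichotomy and the non-isotriviality of the family.

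First I would show that each $X_a$ is a classical Enriques surface. The divisoriality of $D$ is proved exactly as in Lemma \ref{e6nonsingular}: since $\tilde R$ is a rational elliptic surface and $\psi:Y\to\tilde R$ consists of four blow-ups, $c_2(Y)=16$, and substituting $(D)^2=-12$ and $K_Y\cdot(D)=-4$ from Lemma \ref{E6Clemma2} into (\ref{euler}) gives $\deg\langle D\rangle=0$; hence $D$ is divisorial and $Y^D$ is nonsingular by Rudakov--Shafarevich. The same canonical-divisor bookkeeping as in the supersingular case---using the formula (\ref{canonical}), the divisorial part $(D)$ and the list of integral curves in Lemma \ref{E6Clemma1}, and Proposition \ref{insep} to account for the contracted curves---shows $K_{X_a}\equiv 0$, and the comparison of \'etale cohomology $b_i(Y^D)=b_i(Y)$ together with the four contractions yields $b_2(X_a)=10$. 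Thus $X_a$ is a minimal surface with numerically trivial canonical class and $b_2=10$, i.e. an Enriques surface, and since $Y$ is rational it is classical or supersingular. To single out the classical case I would count double fibers of the induced genus one fibration $g:X_a\to{\bf P}^1$: the coefficient $t+a$ of $\partial/\partial t$ in (\ref{DE6C}) vanishes at $t=a$, so---since the base coordinate of $g$ is $T=t^2$ and $F_a$ is $D$-integral---the fiber over $t=a$ descends to a smooth double fiber $2F_a'$, while the fiber of type $\IV^*$ over $t=\infty$ is a second double fiber. By Proposition \ref{multi-fiber} a supersingular surface has exactly one double fiber on any genus one fibration, so the existence of two forces $X_a$ to be classical.

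The dual graph is identified word for word as in the proof of Theorem \ref{main2}. The thirteen $(-2)$-curves of Figure \ref{enriques5} are produced by the quotient and the contractions $\varphi_1,\varphi_2$; one checks that every maximal parabolic subdiagram of $\Gamma$ is of type $\tilde E_6\oplus\tilde A_2$ or $\tilde E_7\oplus\tilde A_1$, both of rank $8$. Vinberg's criterion (Proposition \ref{Vinberg}) then gives $[{\rm O}(\Num(X_a)):W(\Gamma)]<\infty$, Proposition \ref{finiteness} gives that $\Aut(X_a)$ is finite, and Namikawa's Proposition \ref{Namikawa} shows that these thirteen curves are all of the $(-2)$-curves on $X_a$.

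Finally, the construction depends algebraically on the single parameter $a\in k\setminus\{0,1\}$, so $\{X_a\}$ is at most $1$-dimensional, and I would prove non-isotriviality by showing the members are pairwise non-isomorphic. On $X_a$ there is a unique elliptic fibration with singular fibers $(2\IV^*,\I_3,\I_1)$, so any isomorphism $X_a\xrightarrow{\sim}X_{a'}$ carries it to the corresponding fibration on $X_{a'}$ and induces an automorphism of the base ${\bf P}^1$ preserving fiber types. The three singular fibers sit at $T=0$ ($\I_3$), $T=1$ ($\I_1$) and $T=\infty$ ($\IV^*$), which are of pairwise distinct types, so the base automorphism fixes $0,1,\infty$ and is therefore the identity; it must then send the fourth distinguished point $T=a^2$ (the smooth double fiber) to $T=a'^2$, whence $a^2=a'^2$ and $a=a'$ in characteristic $2$. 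Thus distinct parameters give non-isomorphic surfaces, so the family is genuinely $1$-dimensional and non-isotrivial. The routine part of this proof---divisoriality, $K_{X_a}\equiv 0$, $b_2=10$, and the Vinberg--Namikawa step---transfers from the supersingular case with no new ideas; the one place that needs care, and the main obstacle, is making the second-double-fiber claim precise: one must verify, using $D(t^2)=0$ and Proposition \ref{insep}(i), that the $D$-integral fiber $F_a$ descends to a genuine multiplicity-two fiber of $g$ whose reduced part $F_a'$ is a smooth (tame, ordinary) elliptic curve. Once this is settled, both the classical dichotomy and the non-isotriviality follow immediately, and the positions $0,1,\infty,a^2$ of the four distinguished fibers give an even cleaner non-isotriviality argument than a direct $j$-invariant computation (which yields $j(F_a)=1/(a^6(a+1)^2)$).
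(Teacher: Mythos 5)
Your proposal is correct and follows essentially the same route as the paper: the Enriques-surface and dual-graph verifications are transferred verbatim from the supersingular case via Lemmas \ref{E6Clemma1} and \ref{E6Clemma2}, and the classical dichotomy is settled exactly as in the paper by exhibiting the two double fibers (the image of the $D$-integral fiber $F_a$ over $T=a^2$ and the fiber of type $\IV^*$ at infinity) and invoking Proposition \ref{multi-fiber}. The only divergence is cosmetic: where the paper disposes of non-isotriviality in one line (``the double fiber $F_a$ varies''), you prove the stronger statement that the $X_a$ are pairwise non-isomorphic by using the uniqueness of the elliptic fibration with fibers $(2\IV^*,\I_3,\I_1)$ and the positions $0,1,\infty,a^2$ of its distinguished fibers --- a valid and slightly more robust justification of the same step, consistent with your correct side computation $j=1/(a^6(a+1)^2)$.
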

\begin{proof}
By using Lemmas \ref{E6Clemma1} and \ref{E6Clemma2} and the same argument as in the case of the supersingular surface in the previous subsection, $X_a$ is an Enriques surface.  Since the image of $F_a$ and the singular fiber of type $\IV^*$ are double fibers, $X_a$ is classical by Proposition \ref{multi-fiber}.  
Moreover, since the fibration we used to construct $X_a$ is not isotrivial, the $j$-invariant of the double fiber $F_a$ varies and hence the family $\{X_a\}$ is non-isotrivial. 
By the same proof as that of Theorem \ref{main2}, we prove that $X_a$ contains exactly 13 $(-2)$-curves
whose dual graph is given in Figure \ref{enriques5}.
\end{proof}

\begin{lemma}\label{injective}
The map $\rho_n : {\rm Aut}(X_a) \to {\rm O}({\rm Num}(X_a))$ is injective.
\end{lemma}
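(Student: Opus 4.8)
The plan is to show that the kernel of $\rho_n$, namely $\Aut_{nt}(X_a)$, is trivial, so let $g$ be a numerically trivial automorphism of $X_a$. Since $g$ acts as the identity on $\Num(X_a)$, it fixes the class of every $(-2)$-curve; as each $(-2)$-curve is the unique irreducible divisor in its numerical class, $g$ maps every one of the thirteen $(-2)$-curves $E_1,\ldots,E_{13}$ of Figure \ref{enriques5} to itself. In particular $g$ preserves the elliptic fibration with singular fibers $(2\IV^*,\I_3,\I_1)$ defined by $|E_8+E_9+E_{10}|$, and hence induces an automorphism $\bar{g}$ of the base ${\bf P}^1$.

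Next I would pin down $\bar{g}$. The fibers of type $\IV^*$ and $\I_3$ lie over two distinct points of ${\bf P}^1$, and since $g$ fixes each of their $(-2)$-curve components, $\bar{g}$ fixes these two points. Moreover $g$ permutes the singular fibers, so it fixes the base point of the unique $\I_1$ fiber as well. Thus $\bar{g}$ fixes at least three points of ${\bf P}^1$ and is therefore the identity, i.e. $g$ acts trivially on the base. This is precisely where the classical case diverges from the supersingular one: in Theorem \ref{main2aut} the corresponding fibration was $(2\IV^*,\IV)$ with only two special fibers, leaving room for the numerically trivial automorphism $t\mapsto \zeta t$ of order $5$, whereas here the extra $\I_1$ fiber forces $\bar{g}=\mathrm{id}$.

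Finally I would reduce to the explicit equation. Because $g$ fixes a special $2$-section $C=\{x=\infty\}$ and the fiber $F_\infty$ over $t=\infty$ while acting trivially on the base, the analysis of Subsection \ref{Generalities} (in the elliptic form of the Remark following (\ref{automorphism})) applies, and $g$ has the shape $t\mapsto t$, $x\mapsto d_1(t)x+d_2(t)$, $y\mapsto e_1(t,x)y+e_2(t,x)$. Substituting this into the defining equation of $X_a$ obtained in Example \ref{EX1} and comparing coefficients in the free $k[t,x]$-module $A=k[t,x]\oplus k[t,x]y$, exactly as in the proof of Theorem \ref{main2aut}, should force $d_1=1$, $d_2=0$, $e_1=1$ and $e_2=0$, so that $g$ is the identity and $\rho_n$ is injective.

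The main obstacle is this last coefficient computation: once the base action is trivial one must check that no nonzero translation term $e_2$ or nontrivial scaling survives. Conceptually this amounts to verifying that neither the order-$3$ automorphism $\tau$ from the Jacobian, which permutes the components of the $\IV^*$ and $\I_3$ fibers, nor the involution $\sigma$, which interchanges two of the three $2$-sections, can be numerically trivial. Both are in fact already excluded by the first step, and this gives a useful consistency check on the computation.
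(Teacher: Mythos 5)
Your first two steps are sound: a numerically trivial $g$ fixes each of the thirteen $(-2)$-curves, and the three singular fibers $(2\IV^*,\I_3,\I_1)$ of $|E_8+E_9+E_{10}|$ then pin down three points of the base, so $\bar g=\mathrm{id}$. The gap is in the third step. The coefficient computation you defer to does \emph{not} force $d_1=1$, $d_2=0$, $e_1=1$, $e_2=0$: the surface carries a genuine involution with trivial action on the base of this elliptic fibration and with $e_2\neq 0$, namely the translation $\sigma\colon t\mapsto t,\ x\mapsto x,\ y\mapsto y+c^2tx+\beta c^3t^2$ listed in Subsection \ref{LIST}(1)(ii) (it comes from a $2$-torsion Mordell--Weil section of a quasi-elliptic fibration and interchanges two of the three $2$-sections). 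So the computation, once carried out, leaves you with a nontrivial solution, and the statement you call a ``consistency check'' --- that every surviving solution moves some $(-2)$-curve --- is in fact an essential, unproved step of your argument. Establishing it from the explicit formula for $\sigma$ requires identifying the $2$-sections in the $(t,x,y)$-coordinates and checking that $\sigma$ swaps two of them, which is additional work; alternatively one invokes the Mordell--Weil description, but that is exactly the input used later in Theorem \ref{main2C} to realize the symmetries, so the bookkeeping must be done carefully to avoid circularity. A further technical wrinkle: the normal form of Example \ref{EX1} contains a $c^2txy$ term, so it does not literally match the shape $y^2+g_0(t)y=\cdots$ treated in the Remark after (\ref{automorphism}), and the reduction to the form (\ref{automorphism}) would need to be rechecked for this equation.

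The paper avoids all of this with a short geometric argument that you may want to compare with. Since $g$ fixes each of the thirteen curves, it fixes the three distinct points in which each of $E_8,E_9,E_{10}$ meets the other curves --- three points precisely because the simple fiber here is of type $\I_3$ rather than $\IV$ (in the supersingular case the three components meet in a single point, which is why only two fixed points are available there and a numerically trivial automorphism of order $5$ survives). An automorphism of ${\bf P}^1$ with three fixed points is the identity, so $E_8,E_9,E_{10}$ are fixed pointwise. Then, for the quasi-elliptic fibration $|2(E_8+E_{11})|$ with fibers $(\III^*,2\III)$, the curves $E_9,E_{10}$ are $2$-sections, so a general fiber $F$ carries three fixed points (its cusp and its intersections with $E_9$ and $E_{10}$) and is fixed pointwise; hence $g=\mathrm{id}$. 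This route needs no equations at all.
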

\begin{proof}
Let $g \in \Ker({\rho_n})$.  Then $g$ preserves each of the thirteen curves $E_1$,..., $E_{13}$
(see Figure \ref{enriques5}).  First note that 
$g$ fixes three points on each of $E_8, E_9, E_{10}$ (in contrast to the supersingular case, where only two distinct points are fixed). Hence, $g$ fixes $E_8, E_9$ and $E_{10}$ pointwise.
Let $p$ be the quasi-elliptic fibration with singular fibers of type $(\III^*, 2\III)$ defined by the linear system $|2(E_8+E_{11})|$ and let $F$ be a general fiber of $p$.
The two curves $E_{9}, E_{10}$ are $2$-sections of the fibration $p$.  
Then, $g$ fixes at least three points on $F$ which are the intersection with $E_9$ and $E_{10}$ and the cusp of $F$. 
Hence, $g$ fixes $F$ pointwise.  Thus $\rho_n$ is injective.
\end{proof}

By the same arguments as in Theorems \ref{main2} and \ref{main2aut}, we now have the following theorem.

\begin{theorem}\label{main2C}
The automorphism group ${\rm Aut}(X_a)$ is isomorphic to the symmetric group $\mathfrak{S}_3$ of degree three and $X_a$ contains exactly thirteen $(-2)$-curves.
\end{theorem}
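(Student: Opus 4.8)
The plan is to follow the strategy of Theorems~\ref{main2} and~\ref{main2aut}, the only genuine change being that the equation-based determination of the numerically trivial part is replaced by the injectivity already established in Lemma~\ref{injective}. First I would recall from Theorem~\ref{E6Cmain} that $X_a$ carries exactly thirteen $(-2)$-curves $E_1,\dots,E_{13}$ whose dual graph $\Gamma$ is the one in Figure~\ref{enriques5}; this is proved verbatim as in Theorem~\ref{main2}, by checking that every maximal parabolic subdiagram of $\Gamma$ has rank $8$ and then invoking Vinberg's criterion (Proposition~\ref{Vinberg}) together with Proposition~\ref{Namikawa}. In particular every automorphism of $X_a$ permutes these thirteen curves and preserves their incidences, hence induces a symmetry of $\Gamma$, and since the curves span $\Num(X_a)\otimes\bbR$ this induced permutation determines the action on $\Num(X_a)$.

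Next I would use Lemma~\ref{injective}: the map $\rho_n : \Aut(X_a)\to \O(\Num(X_a))$ is injective, so $\Aut_{nt}(X_a)=\{1\}$ and $\Aut(X_a)\cong \Im(\rho_n)$. As $\Im(\rho_n)$ preserves the nef cone it permutes the $(-2)$-curves, so $\Aut(X_a)$ embeds into the symmetry group of $\Gamma$. A direct inspection of Figure~\ref{enriques5} shows that this symmetry group is $\mathfrak{S}_3$: it permutes the three branches of the $\tilde{E}_6$-configuration, equivalently the three quasi-elliptic pencils $|2(E_8+E_{11})|$, $|2(E_9+E_{12})|$, $|2(E_{10}+E_{13})|$ of type $(\III^*,2\III)$, while fixing the unique elliptic pencil $|E_8+E_9+E_{10}|$. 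This yields the upper bound $\Aut(X_a)\hookrightarrow \mathfrak{S}_3$.

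For the matching lower bound I would exhibit all of $\mathfrak{S}_3$ by Mordell--Weil translations, exactly as in Theorem~\ref{main2aut}. The elliptic pencil of type $(2\IV^*,\I_3,\I_1)$ has Jacobian a rational elliptic surface with reducible fibers $(\IV^*,\I_3,\I_1)$, which is extremal by Proposition~\ref{Lang} with Mordell--Weil group $\bbZ/3\bbZ$; a generator $\rho$ acts on $X_a$ and cyclically permutes the three $2$-sections. The quasi-elliptic pencil of type $(\III^*,2\III)$ has Jacobian of type $(\III^*,\III)$, extremal by Proposition~\ref{Itoh} with Mordell--Weil group $\bbZ/2\bbZ$; a generator $\tau$ interchanges two of these $2$-sections. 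Reading off the action of $\tau$ and $\rho$ on $\Gamma$ shows that $\langle \tau,\rho\rangle$ realizes the full $\mathfrak{S}_3$, whence $\Aut(X_a)\cong\mathfrak{S}_3$, and the count of thirteen $(-2)$-curves is the one already recorded.

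The hard part, and the only place where the classical case truly diverges from the supersingular one, is ruling out numerically trivial automorphisms: in the supersingular surface an order-$5$ numerically trivial automorphism $\sigma$ survives and produces the extra factor $\bbZ/5\bbZ$ in Theorem~\ref{main2aut}, whereas here Lemma~\ref{injective} forbids it. I would therefore lean on Lemma~\ref{injective} for this step; alternatively, to make the argument self-contained, one may rerun the explicit computation of Theorem~\ref{main2aut} with the classical normal form of Subsection~\ref{LIST}(1)(ii), solving the resulting polynomial identity for the coefficients $c_1,c_2,d_1(t),d_2(t),e_1(t,x),e_2(t,x)$ in the form~\eqref{automorphism}. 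That computation returns only the generators of $\mathfrak{S}_3$ and, crucially, no order-$5$ element, giving an independent confirmation consistent with Lemma~\ref{injective}.
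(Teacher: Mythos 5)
Your proposal is correct and follows essentially the same route as the paper: the count of thirteen $(-2)$-curves comes from the Vinberg/Namikawa argument of Theorem~\ref{main2}, the upper bound $\Aut(X_a)\hookrightarrow\mathfrak{S}_3$ comes from Lemma~\ref{injective} together with the symmetry group of the dual graph, and the lower bound is realized by the Mordell--Weil groups of the Jacobians of the genus one fibrations (the $\bbZ/3\bbZ$ from the $(\IV^*,\I_3,\I_1)$ Jacobian and the $\bbZ/2\bbZ$ from the $(\III^*,\III)$ Jacobian), exactly as the paper indicates. Your added detail identifying the specific pencils and the optional equation-based cross-check are consistent elaborations of the paper's terser argument.
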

\begin{proof} 
By Lemma \ref{injective}, $\Aut(X_a)$ is a subgroup of the symmetry group of the dual graph of $(-2)$-curves which
is isomorphic to $\mathfrak{S}_3$.  By considering the actions of the Mordell-Weil groups of the Jacobian fibrations of genus one fibrations on $X_a$,
any symmetry of the dual graph can be realized by an automorphism of $X_a$.
\end{proof}

\section{Enriques surfaces of type ${\rm VII}$}\label{secVII}

The first and the second author proved the following theorem based on a method given in \cite{KK1}.

\begin{theorem}\label{TypeVII}$($\cite{KK2}$)$
There exists a $1$-dimensional non-isotrivial family of Enriques surfaces with the dual graph of 
$(-2)$-curves
given in Figure {\rm \ref{VIIDynkin}}.  A general member of this family is classical and one special member is supersingular.
The automorphism group of any member in this family is isomorphic to the symmetric group $\mathfrak{S}_5$ of degree $5$.  The canonical cover of any member in this family has $12$ ordinary nodes and 
its minimal resolution is the supersingular $K3$ surface with Artin invariant $1$.
\end{theorem}

\begin{figure}[!htb]
 \begin{center}
\includegraphics[width=60mm]{VII1.eps}
 \end{center}
 \caption{}
 \label{VIIDynkin}
\end{figure}

There exist elliptic fibrations with singular fibers of type
$(\I_9, \I_1, \I_1, \I_1)$, $(\I_5, \I_5, \I_1, \I_1)$, 
$(\I_8, 2\III)$
or $(\I_6, 2\IV, \I_2)$ on Enriques surfaces of type $\VII$.
For more details, we refer the reader to \cite{KK2}.


\section{Enriques surfaces of type ${\rm VIII}$}\label{sec8}

In this section we give a construction of a one-dimensional family of classical Enriques surfaces with the dual graph of type $\VIII$.

We consider the relatively minimal
non-singular complete  elliptic surface $\psi : R \longrightarrow {\bf P}^1$
associated with the Weierstrass equation
$$
y^2 + sxy = x^3 + s^2x
$$
with a parameter $s$.
This surface is a rational elliptic surface with a singular fiber of type $\I_1^*$
over the point given by $s = 0$ and a singular fiber of type $\I_4$
over the point given by $s = \infty$ (Lang \cite[\S 2]{L2}).
We consider the base change of $\psi : R \longrightarrow {\bf P}^1$
by $s = t^2$. 
Then, we have the Weierstrass model defined by 
\begin{equation}\label{pencilVIII}
y^2 + txy + ty= x^3 + x^2
\end{equation}
(see Lang \cite[\S 2]{L2}).  
We consider the relatively minimal non-singular complete model 
of this elliptic surface :
\begin{equation}
f : \tilde{R} \longrightarrow {\bf P}^1.
\end{equation}
The rational elliptic surface $f : \tilde{R} \to {\bf P}^1$ 
has a singular fiber of type $\III$ over the point
given by $t=0$ and a singular fiber of type $\I_8$ over the point given by $t=\infty$.

On the singular elliptic surface (\ref{pencilVIII}), we denote by $F_0$ the fiber
over the point defined by $t = 0$, and by $E_0$ the fiber over the point defined by
$t=\infty$. Both $F_0$ and $E_0$ are irreducible, and on each $F_0$ and $E_0$,
the surface (\ref{pencilVIII}) has only one singular point $P_0$ and $P_{\infty}$ respectively.
The surface $\tilde{R}$ is a surface obtained by the minimal resolution of singularities of (\ref{pencilVIII}).
We use the same symbol for the proper transforms of curves on $\tilde{R}$.
The blow-up at the singular point $P_0$ gives one exceptional curve
$F_1$, and the surface is non-singular along $F_0$ and $F_1$. The two curves
$F_1$ and $F_0$ form a singular fiber of type $\III$ of the elliptic surface 
$f : \tilde{R} \longrightarrow {\bf P}^1$.  On the other hand,  
the blow-up at the singular point $P_{\infty}$ gives two exceptional curves
$E_1$, $E_2$, and the surface is non-singular along $E_0$ and has a unique singular point $P_1$ which is
the intersection of $E_1$ and $E_2$.
The blow-up
at the singular point $P_1$ gives two exceptional curves $E_3$ and $E_4$. 
The curves  $E_3$ and $E_4$ meet at one point $P_2$ which
is a singular point of the resulting surface. The blow-up
at the singular point $P_2$ again gives two exceptional curves $E_5$ and $E_6$. 
The curves  $E_5$ and $E_6$ meet at one point $P_3$ which
is a singular point of the resulting surface.
Finally, the blow-up
at the singular point $P_3$ gives an exceptional curve $E_7$ and the resulting surface is non-singular over these curves.
The cycle 
$$E_0 + E_1 + E_2 + E_3 + E_4 + E_5 + E_6 + E_7$$
forms a singular fiber of type $\I_8$ given in Figure \ref{VIII1}.

The elliptic surface $f: \tilde{R} \longrightarrow {\bf P}^1$
has four sections $s_{i}$ $(i = 0, 1, 2, 3)$ given as follows:
$$
\begin{array}{ll}
s_0 : \mbox{the zero section}. \\
s_1 : x = y = 0. \\
s_2 : x = t,\ y = 0. \\
s_3 : x = 0,\ y = t. \\
\end{array}
$$
Also we consider the following two $2$-sections $b_1, b_2$ defined by:
$$
\begin{array}{ll}
b_1 : x+y=0,\ x^2 + tx +t =0.\\
b_2 : x + y + tx + t=0,\ x^2 +tx + t =0.\\
\end{array}
$$

The configuration of singular fibers, four sections and two $2$-sections is given in the following
Figure \ref{VIII1}:

\begin{figure}[!htb]
 \begin{center}
  \includegraphics[width=110mm]{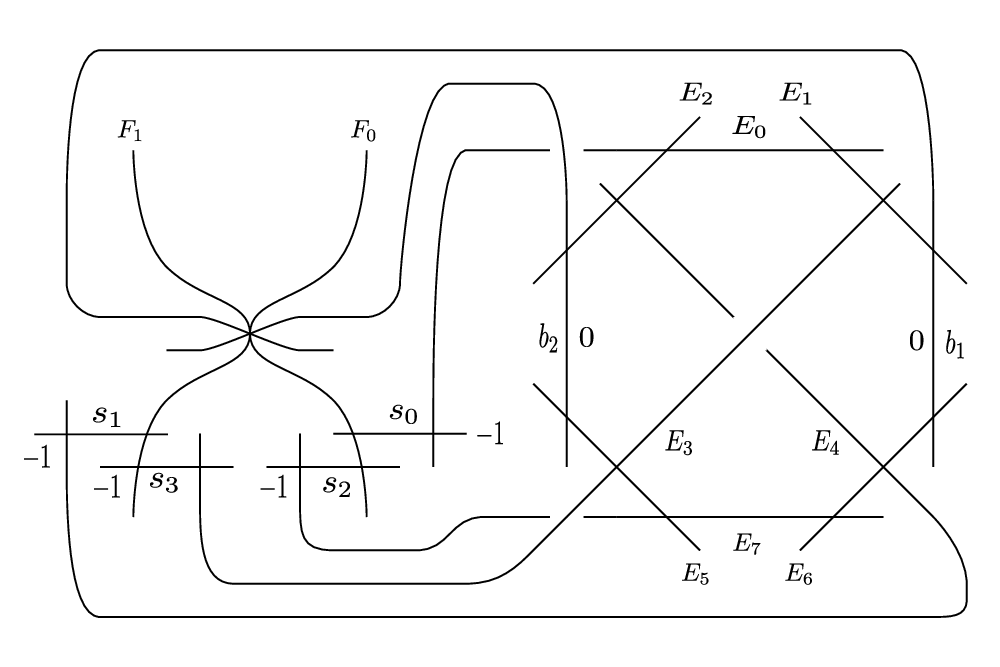}
 \end{center}
 \caption{}
 \label{VIII1}
\end{figure}

Now, we consider a rational vector field on $\tilde{R}$ defined by
$$
 D = D_a = t(at+1)\frac{\partial}{\partial t} + (x+1)\frac{\partial}{\partial x}, \ 0 \neq a \in k.
$$
Then, we have $D^2 = D$, that is, $D$ is $2$-closed. 
However, $D$ has an isolated singularity at the point $P$ which is the singular point of
the fiber of type $\III$, that is, the intersection point of two curves $F_0$ and $F_1$ 
(note that $(x,t)$ is not a local parameter along $F_0$).
To resolve this singularity, we first blow up at $P$.
Denote by $F_2$ the exceptional curve.  We denote the proper transforms of 
$F_0$ and $F_1$ by the same symbols.  Then the induced vector field has three isolated singularities 
one of which is the intersection of three curves and other two of which lie on the curve $F_2$.
Blow up at these three points.  Let $Y$ be the resulting surface and $\psi : Y \to \tilde{R}$ the successive blow-ups.  We denote the induced vector field by the same symbol $D$, and the four
exceptional curves by $F_2$, $F_3$, $F_4$, $F_5$.
Then we have the following Figure \ref{VIII2}.  

\begin{figure}[!htb]
 \begin{center}
  \includegraphics[width=110mm]{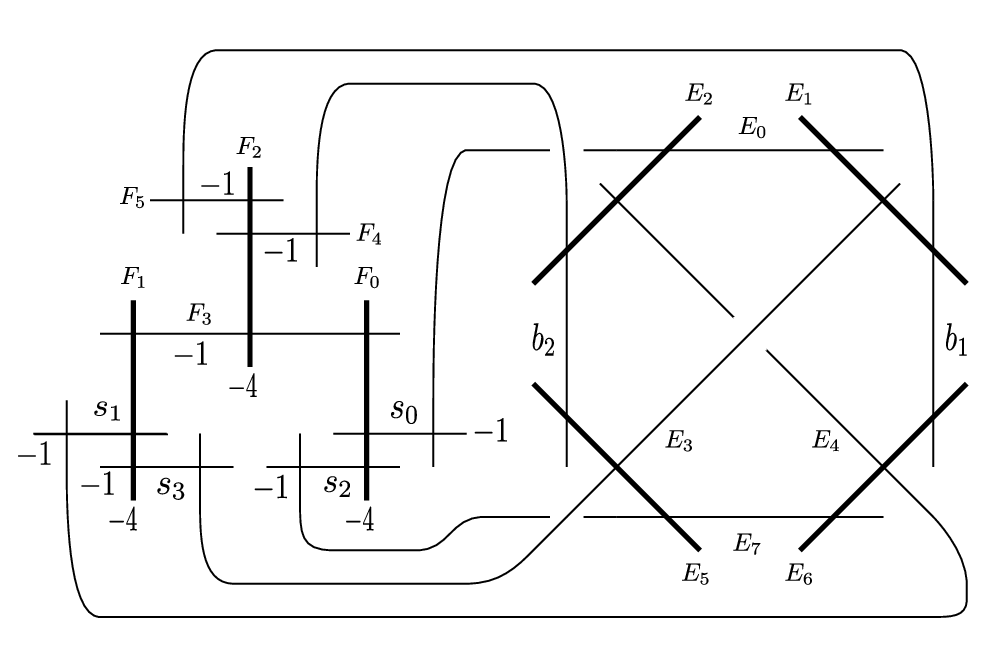}
 \end{center}
 \caption{}
 \label{VIII2}
\end{figure}

\noindent
In the Figure \ref{VIII2} we give the self-intersection numbers of the curves except for the curves with the self-intersection number $-2$.  Also the thick lines are integral curves with respect to $D$. 
Denote by $F_a$ the fiber over the point defined by $t=a^{-1}$.  
Then $F_a$ is integral with respect to $D$.
Now, according to the above blow-ups, we see the following lemmas.

\begin{lemma}\label{divisorialintegralVIII}
{\rm (i)}  The divisorial part $(D)$ of the vector field $D$ on $Y$ is given by
$$-(F_0+F_1+F_2+2F_3+ E_1+E_2+E_5+E_6).$$

{\rm (ii)} The integral curves in Figure {\rm \ref{VIII2}} are 
$$F_0, F_1, F_2, E_1, E_2, E_5, E_6.$$ 

\end{lemma}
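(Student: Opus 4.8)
Both assertions are local, so the lemma reduces to writing $D$ in a normal form $D = f\,\tilde D$ (with $\tilde D$ having coprime coefficients) in suitable charts: the coefficient of a curve $C$ in $(D)$ is $\mathrm{ord}_C(f)$, and $C$ is integral precisely when $\tilde D$ is tangent to $C$. First I would extend $D$ to a derivation of $k(x,y,t)$. Differentiating $W := y^2+txy+ty+x^3+x^2=0$ as in Example \ref{EX3} gives $\partial y/\partial t = y/t$ and $\partial y/\partial x=(ty+x^2)/(t(x+1))$, hence $D(y)=aty+x^2/t$. On the chart where $(x,t)$ are coordinates, that is, where $\partial_yW=t(x+1)\neq 0$, the coefficients $x+1$ and $t(at+1)$ of $D$ are coprime with no common divisorial factor, so $(D)=0$ there and the generic fibre is not integral. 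Thus $(D)$ is supported over $t=0$, over $t=\infty$, and on the four exceptional curves $F_2,\dots,F_5$, and these are the only places to examine.

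\textbf{The $\I_8$ fibre ($t=\infty$).} Setting $t'=1/t$ one computes $D(t')=a+t'$, which equals the nonzero constant $a$ along the whole fibre; hence $D$ is transverse to every component $E_i$. For such a component, with local equation $u$ (so $t'=u\cdot\text{unit}$), the function $D(u)=f\,\tilde D(u)$ is forced to be a nonzero unit on $E_i$, so $\mathrm{ord}_{E_i}(f)=-\mathrm{ord}_{E_i}(\tilde D(u))\le 0$. Consequently each $E_i$ enters $(D)$ with a non-positive coefficient, and it is integral precisely when that coefficient is negative. Running this computation node-by-node around the cycle $E_0,\dots,E_7$ should give coefficient $-1$ exactly along $E_1,E_2,E_5,E_6$ (the integral components) and $0$ along $E_0,E_3,E_4,E_7$.

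\textbf{The $\III$ fibre ($t=0$) and the blow-ups.} Near the generic point of $F_0$ the pair $(y,t)$ is a coordinate system (there $\partial_xW=ty+x^2=x^2\neq0$), and $D=\tfrac1t\big(t^2(at+1)\partial_t+(at^2y+x^2)\partial_y\big)$; the bracket is reduced and tangent to $\{t=0\}$, so $F_0$ occurs in $(D)$ with coefficient $-1$ and is integral, and the analogous computation handles $F_1$. It then remains to resolve the isolated singularity of $D$ at $P=F_0\cap F_1$ by the four blow-ups defining $\psi:Y\to\tilde R$. I would compute the strict transform of $D$ in the exceptional chart after each blow-up, locate the next singular point (the triple point, then the two remaining points on $F_2$), and record how each exceptional curve enters $(D)$: the expected outcome is $F_2$ with coefficient $-1$ (and integral), $F_3$ with coefficient $-2$ (and non-integral), and $F_4,F_5$ absent from $(D)$ and non-integral.

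\textbf{Main obstacle and check.} The delicate step is this last one: obtaining the coefficient $-2$ on $F_3$ while $F_2$ keeps coefficient $-1$, and separating the integral exceptional curve $F_2$ from the non-integral $F_3,F_4,F_5$, all from the local transforms of $D$ --- this is the computation most prone to error in the choice of exceptional coordinates. As an independent check I would use the Euler-number identity (\ref{euler}): since $\tilde R$ is rational elliptic with $c_2=12$ and $Y$ is its blow-up at four points, $c_2(Y)=16$, and the value of $(D)$ just found must be compatible with $\deg\langle D\rangle=0$, i.e.\ with $D$ being divisorial, exactly as in Lemma \ref{e6nonsingular}; this pins down the coefficients and in particular confirms the multiplicity $2$ of $F_3$.
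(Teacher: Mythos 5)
Your proposal is correct and follows essentially the same route as the paper, which presents this lemma as the outcome of a direct chart-by-chart computation through the blow-ups; your local normal forms (e.g.\ $D(y)=aty+x^2/t$, $D(t')=a+t'$, and the factorization $D=\tfrac1t\bigl(t^2(at+1)\partial_t+(at^2y+x^2)\partial_y\bigr)$ along $F_0$) are all correct, and the Euler-number identity (\ref{euler}) is a sensible consistency check on the resulting divisor. The only imprecision is the claim that $D(u)$ is forced to be a unit on each $E_i$: writing $t'=uw$ one has $D(t')=wD(u)+uD(w)$ and $D$ may have a pole along $E_i$, so one should instead use $\mathrm{ord}_{E_i}(f)=-\mathrm{ord}_{E_i}\bigl(w\tilde D(u)+u\tilde D(w)\bigr)$, which gives the same dichotomy (coefficient $0$ and non-integral versus coefficient $\leq -1$ and integral) and hence does not affect your conclusions.
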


\begin{lemma}\label{canonical3VIII}
{\rm (i)} $(D)^2 = -12$. 

{\rm (ii)} The canonical divisor $K_Y$ of $Y$ is given by
$$K_Y = -(F_0+F_1+F_2+2F_3).$$

{\rm (iii)} $K_Y\cdot (D) = -4.$
\end{lemma}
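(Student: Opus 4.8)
The plan is to deduce all three statements by pure intersection theory from the explicit form of the divisorial part $(D)$ given in Lemma \ref{divisorialintegralVIII} and the configuration of curves recorded in Figure \ref{VIII2}. The four blow-ups producing $F_2,F_3,F_4,F_5$ all occur over the cusp $P$ of the type $\III$ fiber, so the curves $F_0,F_1,F_2,F_3$ are disjoint from the components $E_1,E_2,E_5,E_6$ of the $\I_8$ fiber; consequently every product I must compute splits into an ``$F$-part'' over $t=0$ and an ``$E$-part'' over $t=\infty$ with no cross terms, and I can treat the two parts separately.

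For (ii) I would use that $\tilde{R}$ is a relatively minimal rational elliptic surface, so $K_{\tilde{R}}\sim -F$ for a fiber $F$. Taking $F$ to be the fiber over $t=0$ and recording the multiplicity with which each exceptional curve enters, its total transform on $Y$ is $F_0+F_1+2F_2+4F_3+2F_4+2F_5$, whence $\psi^{*}K_{\tilde{R}}=-(F_0+F_1+2F_2+4F_3+2F_4+2F_5)$. On the other hand $K_Y=\psi^{*}K_{\tilde{R}}+\mathcal{F}_2+\mathcal{F}_3+\mathcal{F}_4+\mathcal{F}_5$, where $\mathcal{F}_i$ is the total transform of the $i$-th exceptional curve; since $F_2$ is blown up at the three points carrying $F_3,F_4,F_5$ one has $\mathcal{F}_2=F_2+F_3+F_4+F_5$ and $\mathcal{F}_3=F_3$, $\mathcal{F}_4=F_4$, $\mathcal{F}_5=F_5$. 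Adding $F_2+2F_3+2F_4+2F_5$ to $-(F_0+F_1+2F_2+4F_3+2F_4+2F_5)$ makes the $F_4,F_5$ terms cancel and yields $K_Y=-(F_0+F_1+F_2+2F_3)$. A useful check is $K_Y^{2}=-4$, matching the value $0-4$ expected after a fourfold blow-up of a rational elliptic surface.

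For (i) and (iii) I would first record the self-intersections forced by the blow-ups: each of $F_0,F_1$ is a $(-2)$-curve blown up at $P$ and again at the triple point, so $F_0^{2}=F_1^{2}=-4$; the $(-1)$-curve $F_2$ is blown up at three of its points, so $F_2^{2}=-4$; and $F_3^{2}=-1$ with $F_0\cdot F_3=F_1\cdot F_3=F_2\cdot F_3=1$, while $F_0,F_1,F_2$ become pairwise disjoint once the triple point is separated. This gives $(F_0+F_1+F_2+2F_3)^{2}=-12-4+12=-4$. Since $K_Y$ is supported on precisely these four curves and they meet no component of the $\I_8$ fiber, statement (iii) is immediate: $K_Y\cdot(D)=(F_0+F_1+F_2+2F_3)^{2}=-4$. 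For (i) it then remains to add $(E_1+E_2+E_5+E_6)^{2}$, and here the decisive point is that in the cyclic $\tilde{A}_7$ configuration of the $\I_8$ fiber, namely $E_0-E_1-E_3-E_5-E_7-E_6-E_4-E_2-E_0$, the four curves $E_1,E_2,E_5,E_6$ sit at mutual distance $\geq 2$ and are therefore pairwise disjoint; hence $(E_1+E_2+E_5+E_6)^{2}=4\cdot(-2)=-8$ and $(D)^{2}=-4+(-8)=-12$.

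The main obstacle is exactly this combinatorial point about the $\I_8$ cycle, and it is the only place where a hasty reading of the figure could go wrong. If $E_1,E_2,E_5,E_6$ were mistakenly taken to form two adjacent pairs, then $(E_1+E_2+E_5+E_6)^{2}$ would equal $-4$ and one would obtain the incorrect value $(D)^{2}=-8$; the genuine pairwise disjointness, read off from the order in which the resolution of the $A_7$ point produces the $E_i$, is what makes $(D)^{2}=-12$ and keeps the Euler-number bookkeeping of the following step consistent.
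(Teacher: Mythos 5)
Your proposal is correct, and it carries out exactly the direct calculation that the paper leaves implicit (the authors state both Lemmas \ref{divisorialintegralVIII} and \ref{canonical3VIII} as consequences of the blow-up description with no written proof): $K_Y$ via the total-transform formula for the four blow-ups over the cusp of the $\III$ fiber, and $(D)^2$, $K_Y\cdot(D)$ by splitting the support of $(D)$ into the part over $t=0$ and the part over $t=\infty$. Your key observation that $E_1,E_2,E_5,E_6$ are pairwise disjoint in the cycle $E_0\hbox{--}E_1\hbox{--}E_3\hbox{--}E_5\hbox{--}E_7\hbox{--}E_6\hbox{--}E_4\hbox{--}E_2\hbox{--}E_0$ (forced by the order in which the $A_7$ point is resolved) is indeed the one place where the bookkeeping could go wrong, and you have it right.
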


\noindent
Now take the quotient $Y^{D}$ of $Y$ by $D$.  By using the same argument as in the proof of Lemma \ref{e6non-singular}, $D$ is divisorial and $Y^{D}$ is non-singular.
By Proposition \ref{insep}, we have the configuration of curves in Figure \ref{VIII3}.
In the Figure \ref{VIII3} we give the self-intersection numbers of the curves except for the curves with the self-intersection number $-2$.

\begin{figure}[!htb]
 \begin{center}
  \includegraphics[width=100mm]{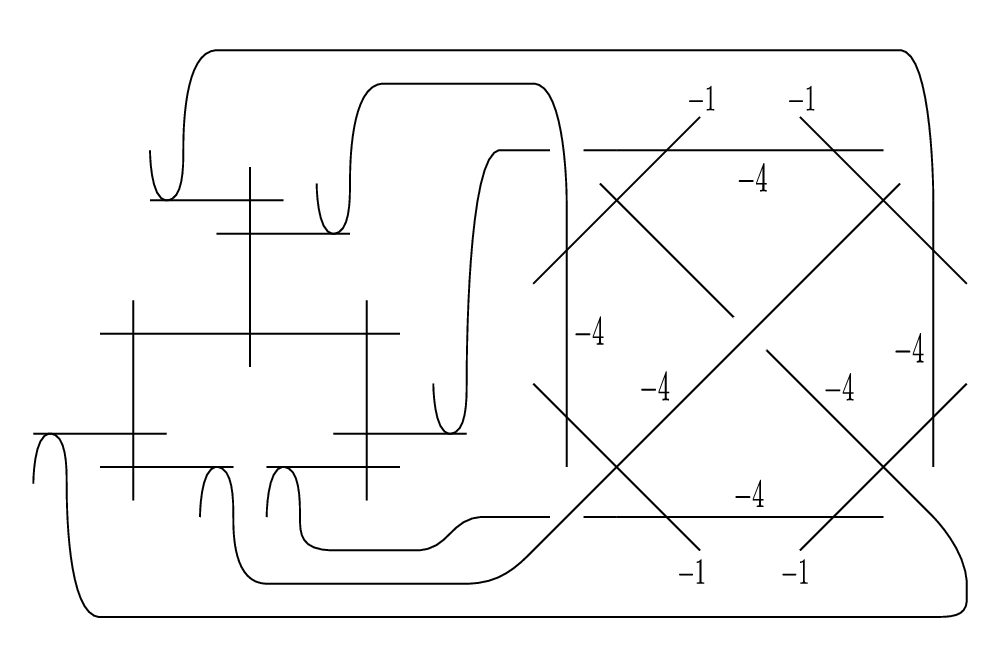}
 \end{center}
 \caption{}
 \label{VIII3}
\end{figure}

Let $X_a$ be the surface obtained by contracting the four exceptional curves in Figure \ref{VIII3}
(Recall that the vector field $D$ contains a parameter $a$).
Then we have the following configuration of $(-2)$-curves in Figure \ref{VIII4}.

\begin{figure}[!htb]
 \begin{center}
  \includegraphics[width=100mm]{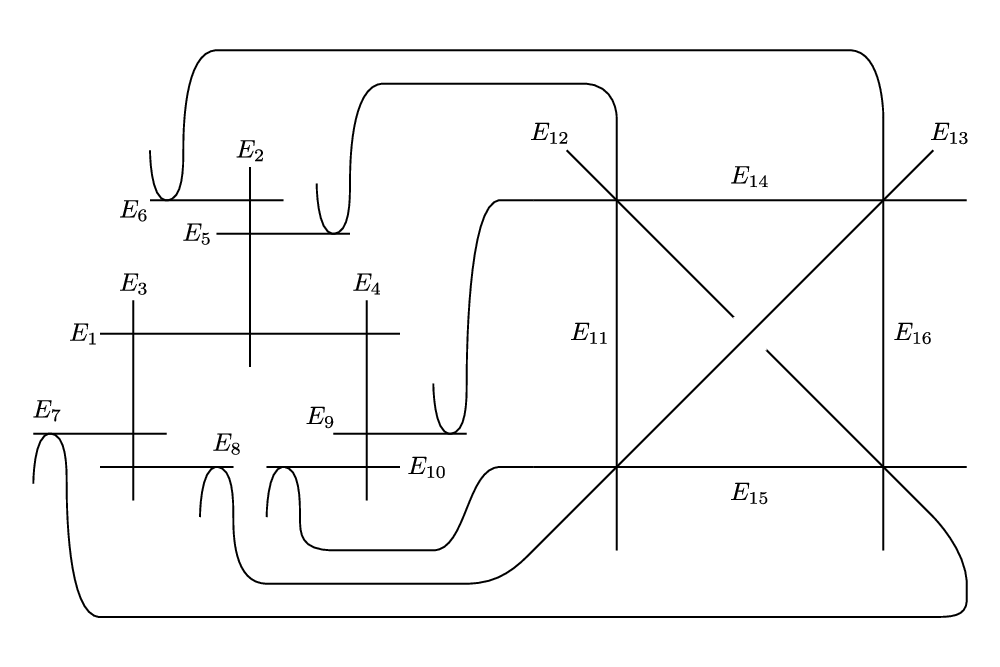}
 \end{center}
 \caption{}
 \label{VIII4}
\end{figure}
\noindent
The dual graph of the sixteen $(-2)$-curves in Figure \ref{VIII4} is nothing but the one given in
Figure \ref{VIIIDynkin}.  Note that any maximal parabolic subdiagram of this diagram is of type
$\tilde{D}_5\oplus \tilde{A}_3$, $\tilde{D}_6\oplus \tilde{A}_1\oplus \tilde{A}_1$ or $\tilde{E}_6\oplus \tilde{A}_2$.  
On $X_a$, there are three types of genus one fibrations:
three elliptic fibrations with singular fibers of type $(2\I_1^*, \I_4)$, three quasi-elliptic fibrations
with singular fibers of type $(\I_2^*, 2\III, 2\III)$ and eight elliptic fibrations with singular fibers of type $(\IV^*, \I_3, \I_1)$.

\begin{figure}[!htb]
 \begin{center}
  \includegraphics[width=80mm]{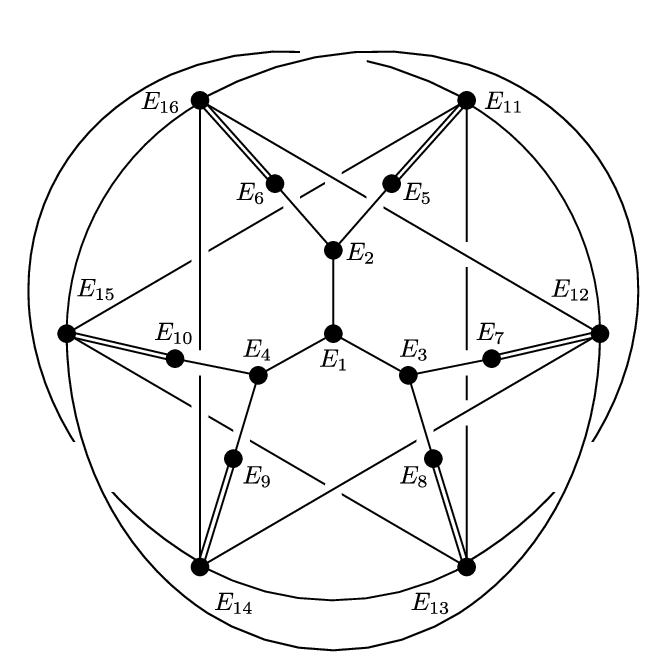}
 \end{center}
 \caption{}
 \label{VIIIDynkin}
\end{figure}

\begin{theorem}
The surfaces $\{X_a\}$ form a non-isotrivial $1$-dimensional family of classical Enriques surfaces with the dual graph given in Figure {\rm \ref{VIIIDynkin}}.  
\end{theorem}
\begin{proof}
By using Lemmas \ref{divisorialintegralVIII} and \ref{canonical3VIII} and the same argument as in the proof of Theorem \ref{main},
$X_a$ is an Enriques surface.
Since $X_a$ has a quasi-elliptic fibration defined by $|2(E_5+E_{11})| = |2(E_6+E_{16})|$ with two double
fibers, $X_a$ is classical (Proposition \ref{multi-fiber}).  Note that the image of $F_a$ is a double fiber of an elliptic fibration with singular fibers of type $(2\I_1^*, \I_4)$.  Since the fibration is non-isotrivial, the $j$-invariant of $F_a$ varies and hence the family $\{X_a\}$ is non-isotrivial.  
By the same proof as that of Theorem \ref{main2}, $X_a$ contains exactly 16 $(-2)$-curves whose dual graph is given in Figure \ref{VIIIDynkin}.
\end{proof}

\begin{theorem}
The automorphism group ${\rm Aut}(X_a)$ is isomorphic to $\mathfrak{S}_4$.
\end{theorem}
\begin{proof}
The quasi-elliptic fibration defined by $|2(E_5+E_{11})|$ has five $2$-sections
$E_2$, $E_{12}$, $E_{13}$, $E_{14}$, $E_{15}$.  Each of these 2-sections meets another $(-2)$-curves at three different points, and hence they are fixed by any numerically trivial automorphism.  Therefore, by the same proof as that of Lemma \ref{injective},
the natural map $\rho_n: \Aut(X_a) \to \O(\Num(X_a))$ is injective.  Note that the automorphism group of the dual graph is isomorphic to the symmetric group $\mathfrak{S}_4$.
By considering the actions of the Mordell-Weil groups of the Jacobian fibrations of genus one fibrations on $X_a$, we have proved that 
$\Aut(X_a)\cong \mathfrak{S}_4$.
\end{proof}

\section{Enriques surfaces of Type $\tilde{E_8}$}\label{sec4}

In this section we give constructions of supersingular and classical Enriques surfaces with the following dual graph of all $(-2)$-curves given in Figure \ref{E10Dynkin}.

\begin{figure}[htbp]
\centerline{
\xy
@={(-10,10),(0,10),(10,10),(20,10),(30,10),(40,10),(50,10),(60,10),(70,10),(10,20)}@@{*{\bullet}};
(-10,10)*{};(70,10)*{}**\dir{-};
(10,10)*{};(10,20)*{}**\dir{-};
\endxy
}
 \caption{}
 \label{E10Dynkin}
\end{figure}

\subsection{Supersingular case}

Let $(x,y)$ be affine coordinates of ${\bf A}^2 \subset {\bf P}^2$.
Consider a rational vector field $D$ defined by
\begin{equation}\label{DE8S}
D= {1\over x^5} \left((xy^6+x^3)\frac{\partial}{\partial x} + (x^6 + y^7 +x^2y)\frac{\partial}{\partial y}\right)
\end{equation}
Then $D^2 = 0$, that is, $D$ is 2-closed.
Note that $D$ has a pole of order 5 along the line $\ell$ defined by $x=0$ and this line is integral with respect to $D$.  We see that $D$ has a unique isolated singularity $(x,y)=(0,0)$.
First blow up at the point $(0,0)$.  Then we see that the exceptional curve is not integral and
the induced vector field has a pole of order 2 along the exceptional curve.
Moreover, the induced vector field has a unique isolated singularity at the intersection of the proper transform of
$\ell$ and the exceptional curve.
Then continue this process until the induced vector field has no isolated singularities.
The final configuration of curves is given in Figure \ref{E8S1}.  Here $F_0$ is the proper transform of
$\ell$ and the suffix $i$ of the exceptional curve $E_i$ corresponds to the order of successive blow-ups.
\begin{figure}[!htb]
 \begin{center}
  \includegraphics[width=140mm]{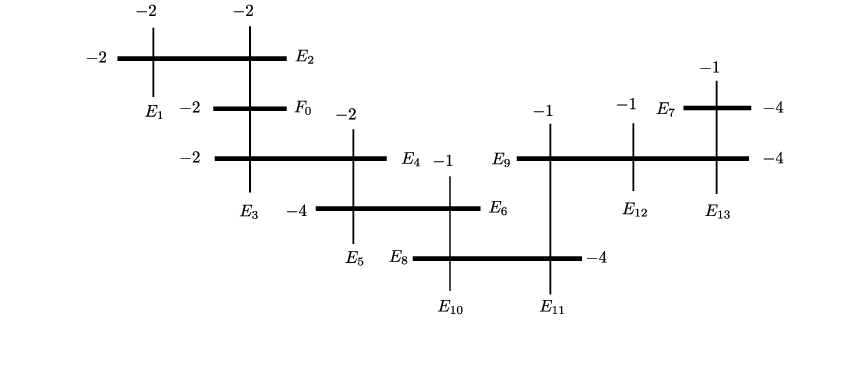}
 \end{center}
 \caption{}
 \label{E8S1}
\end{figure}

We denote by $Y$ the surface obtained by this process.  
Also we denote by the same symbol $D$ the induced vector field on $Y$.
By direct calculations, we have the following lemmas.

\begin{lemma}\label{e8-integral}
{\rm (i)} The integral curves with respect to $D$ in Figure {\rm \ref{E8S1}} are all horizontal curves {\rm (}thick lines{\rm )}.

{\rm (ii)} $(D) =-(5F_0+2E_1+6E_2+8E_3+7E_4+4E_5+3E_6+2E_7+4E_8+5E_9+6E_{10}+8E_{11}+4E_{12}+6E_{13})$.
\end{lemma}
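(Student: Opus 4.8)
The plan is to read off the divisorial part $(D)$ and the integral curves directly on ${\bf P}^2$ from the formula \eqref{DE8S}, and then to propagate both pieces of data through the explicit sequence of blow-ups producing $Y$, using the local transformation rule for a vector field under a single blow-up. First I would work on ${\bf P}^2$ itself. Writing $D = x^{-5}V$ with $V = (xy^6+x^3)\frac{\partial}{\partial x} + (x^6+y^7+x^2y)\frac{\partial}{\partial y}$, one checks that the two components of $V$ have no common factor: along $\ell = \{x=0\}$ the first component vanishes while the second restricts to $y^7 \neq 0$. Hence, in the notation of Rudakov--Shafarevich \cite{RS}, the divisorial part of $D$ along $\ell$ is exactly $-5$, which is the term $5F_0$ in (ii); and since $V(x) = x(y^6+x^2)$ vanishes along $\ell$, the line $\ell$ is integral, giving the first of the thick curves in (i). Solving $V=0$ (note $y^6+x^2=(y^3+x)^2$ in characteristic $2$) shows that the unique isolated singularity of $D$ in the affine chart is the origin, and one verifies that the line at infinity carries no divisorial component of $D$, so that it does not appear in (ii) and the resolution involves only blow-ups lying over $(0,0)$.

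Next I would set up the local transformation rule. Blowing up a point in the chart $x=u,\ y=uv$ turns $\frac{\partial}{\partial x} = \frac{\partial}{\partial u} - \frac{v}{u}\frac{\partial}{\partial v}$ and $\frac{\partial}{\partial y} = \frac{1}{u}\frac{\partial}{\partial v}$, so a vector part $g\frac{\partial}{\partial x} + h\frac{\partial}{\partial y}$ becomes
$$g\frac{\partial}{\partial u} + \frac{h-gv}{u}\frac{\partial}{\partial v},$$
and analogously in the chart $x=sw,\ y=w$. After substituting and factoring out the largest monomial in the exceptional parameter, one reads off three things at once: the order of the pole along the new exceptional curve (its coefficient in $(D)$); whether that curve is invariant, i.e.\ integral, by testing tangency of the cleared vector part; and the location of the remaining isolated singularity, which becomes the centre of the next blow-up. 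I have checked that the first blow-up behaves exactly as stated: in the chart $y=xt$ one obtains $D = u^{-2}\big((u^4v^6+1)\frac{\partial}{\partial u} + u^2\frac{\partial}{\partial v}\big)$, so $E_1$ has pole order $2$ (the term $2E_1$), is non-integral since $u^4v^6+1$ is a unit along $\{u=0\}$, and the singularity migrates to $F_0\cap E_1$; passing to the chart $x=sw,\ y=w$ confirms the divisorial part $-5$ along $F_0$ and $-2$ along $E_1$, with the isolated singularity sitting precisely at their intersection.

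The remainder is to iterate this local computation through the twelve further blow-ups that define $Y$, recording at each stage the pole order of the new exceptional curve together with its integrality. Assembling the pole orders then yields the divisor displayed in (ii), while collecting the invariance data identifies the integral curves as exactly the horizontal (thick) curves of Figure \ref{E8S1}, which is (i). As a cross-check one may feed the resulting $(D)$ into the Euler identity \eqref{euler} with $c_2(Y)=16$ to confirm $\deg\langle D\rangle=0$, and Proposition \ref{insep} gives a further consistency check once one passes to the quotient, since the integral/non-integral dichotomy must match the behaviour of self-intersection numbers under $\pi$. The substantive difficulty is purely one of bookkeeping: the pole orders are irregular and in fact \emph{increase} at several stages (rising to $8$ along $E_3$ and $E_{11}$), the isolated singularity wanders from chart to chart, and there is no symmetry to shorten the work, so the main risk is an arithmetic slip in tracking the monomial orders and the successive blow-up centres.
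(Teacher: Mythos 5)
Your proposal is correct and is exactly the ``direct calculation'' the paper invokes without writing out: read off the divisorial part, integrality, and isolated singularity of $D$ on $\mathbf{P}^2$, then push these through the chain of blow-ups via the local transformation rule, with the Euler formula and Proposition \ref{insep} as consistency checks. The steps you carried out explicitly (no common factor of the components of $V$, pole order $5$ and integrality along $\ell$, the origin as the unique isolated singularity, regularity along the line at infinity, and the first blow-up giving $u^{-2}\bigl((u^4v^6+1)\frac{\partial}{\partial u}+u^2\frac{\partial}{\partial v}\bigr)$ with the singularity migrating to $F_0\cap E_1$) all check out.
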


\begin{lemma}\label{e8-canonical}
{\rm (i)} $(D)^2 = -12$. 

{\rm (ii)} The canonical divisor $K_{Y}$ of ${Y}$ is given by
$K_{Y} = -(3F_0+2E_1+4E_2+6E_3+5E_4+4E_5+3E_6+2E_7+4E_8+5E_9+6E_{10}+8E_{11}+4E_{12}+6E_{13}).$

{\rm (iii)} $K_{Y}\cdot (D) = -4.$
\end{lemma}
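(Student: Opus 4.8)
The plan is to reduce all three assertions to linear algebra on $\Num(Y)$, once the mutual intersection numbers of the fourteen curves $F_0, E_1,\dots,E_{13}$ have been recorded. Since $Y$ arises from ${\bf P}^2$ by the thirteen successive blow-ups resolving the unique isolated singularity of the vector field $D$ of (\ref{DE8S}), we have $\mathrm{rank}\,\Num(Y)=14$ (and $c_2(Y)=3+13=16$). The first step is therefore to write down the symmetric $14\times 14$ intersection matrix $M$ of these curves: the off-diagonal entries $C_i\cdot C_j$ are read from the dual graph in Figure \ref{E8S1} (namely $1$ for an edge and $0$ otherwise), while the diagonal self-intersections are the values marked in that figure, every unmarked curve having self-intersection $-2$. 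The combinatorics here is forced by the resolution: each time a blow-up center lies on a previously constructed curve, that curve's self-intersection drops by one, which is exactly the data the figure encodes.

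Granting $M$, parts (i) and (iii) become single evaluations of the pairing. Writing $(D)=-\sum_i a_i C_i$ with the coefficients $a_i$ supplied by Lemma \ref{e8-integral}(ii), one computes $(D)^2=\sum_{i,j} a_i a_j\,(C_i\cdot C_j)$, a quadratic-form computation against $M$ that I expect to return $-12$; and once (ii) is available, $K_Y\cdot(D)$ is the analogous bilinear pairing, yielding $-4$.

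For part (ii) I would pin down $K_Y$ in two mutually confirming ways. Directly, the blow-up formula gives $K_Y=\sigma^{*}K_{{\bf P}^2}+\sum_{i=1}^{13}\mathcal{E}_i=-3\,\sigma^{*}H+\sum_{i}\mathcal{E}_i$, where $\sigma:Y\to{\bf P}^2$ is the composite morphism and $\mathcal{E}_i$ is the total transform of the $i$-th exceptional curve; taking $H$ to be the line $\ell:x=0$ (whose proper transform is $F_0$) and re-expressing each total transform and $\sigma^{*}H$ in terms of the proper transforms $F_0,E_1,\dots,E_{13}$ through the incidence data of the blow-up sequence produces the asserted coefficients. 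Independently, since each $C_i$ is a smooth rational curve, adjunction gives $K_Y\cdot C_i=-2-C_i^{2}$ for all fourteen curves; provided these classes are linearly independent (equivalently $\det M\neq 0$, so that they span $\Num(Y)\otimes{\bf Q}$), these equations determine $K_Y$ uniquely, and one checks that the proposed divisor satisfies every one of them.

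The main obstacle is the very first step: correctly extracting $M$ from the thirteen-fold blow-up, that is, tracking which center sits on which strict or exceptional transform, and hence the precise drop of each self-intersection together with the multiplicities relating total and proper transforms. This is precisely the ``direct calculation'' hidden in the figure; once the incidence data of Figure \ref{E8S1} is fixed, the three identities are routine matrix arithmetic, and the values $(D)^2=-12$, $K_Y\cdot(D)=-4$ are exactly those needed so that, with $c_2(Y)=16$, the Euler-number formula (\ref{euler}) will later force $\deg\langle D\rangle=0$.
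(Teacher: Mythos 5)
Your proposal is correct and is essentially the paper's own argument made explicit: the authors dispose of this lemma with the phrase ``by direct calculations,'' and the calculation they mean is exactly the one you describe, namely reading the intersection matrix of $F_0,E_1,\dots,E_{13}$ off the blow-up combinatorics of Figure \ref{E8S1}, obtaining $K_Y$ from the blow-up formula $K_Y=\sigma^*K_{{\bf P}^2}+\sum_i\mathcal{E}_i$ (with adjunction $K_Y\cdot C_i=-2-C_i^2$ on the fourteen smooth rational curves, which form a basis of $\Num(Y)$, as a uniqueness check), and then evaluating $(D)^2$ and $K_Y\cdot(D)$ as bilinear pairings against the coefficients from Lemma \ref{e8-integral}(ii). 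The only caveat is that you have outlined rather than executed the arithmetic, but the method is sound and matches the paper.
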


\noindent
Now take the quotient $Y^{D}$ of $Y$ by $D$.  By using the same argument as in the proof of Lemma \ref{e6non-singular}, $D$ is divisorial and hence $Y^{D}$ is non-singular.
By Proposition \ref{insep}, we have the following configuration of curves on $Y^{D}$ in Figure \ref{E8-2}:
\begin{figure}[!htb]
 \begin{center}
  \includegraphics[width=140mm]{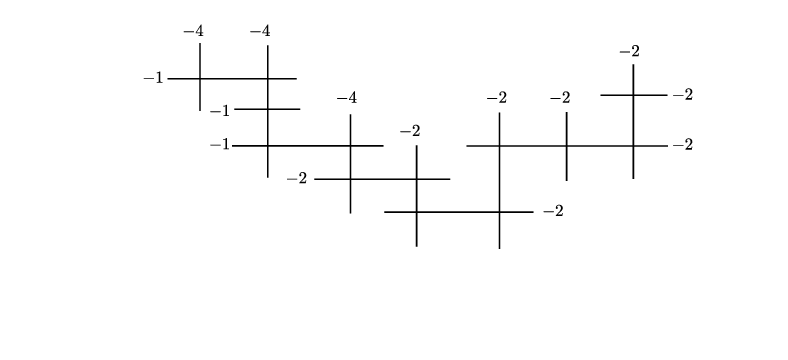}
 \end{center}
 \caption{}
 \label{E8-2}
\end{figure}

\noindent
By contracting the three exceptional curves, we get a new exceptional curve which is the image of the $(-4)$-curve meeting the three exceptional curves.
Let $X$ be the surface obtained by contracting the exceptional curve.
The surface $X$ contains 10 $(-2)$-curves whose dual graph is given by Figure \ref{E10Dynkin}.
Note that this diagram contains a unique maximal parabolic subdiagram which is of type $\tilde{E}_8$.
The pencil of lines in ${\bf P}^2$ through $(x,y)=(0,0)$ induces a quasi-elliptic fibration on $X$ with
a double fiber of type $\II^*$ (the fibration is quasi-elliptic since it is dominated by
a pencil of lines).  

\begin{theorem}\label{E8S-main}
The surface $X$ is a supersingular Enriques surface with the dual graph
given in Figure {\rm \ref{E10Dynkin}}.  
\end{theorem}
\begin{proof}
By using Lemmas \ref{e8-integral} and \ref{e8-canonical} and the same arguments as in the proofs of Theorems \ref{main} and \ref{main2},
$X$ is an Enriques surface with the dual graph given in Figure \ref{E10Dynkin}.  Note that the normalization of the canonical cover of
$X$ is obtained from $Y$ by contracting the divisor $F_0+E_2+E_3+E_4$, and hence it has a rational double point of type $D_4$.  It follows from Lemma \ref{SingNormal} that $X$ is supersingular.
\end{proof}

\begin{theorem}\label{E8S-main2}
${\rm Aut}(X) = {\rm Aut}_{nt}(X)= {\rm Aut}_{ct}(X) \cong {\bf Z}/11{\bf Z}$.
\end{theorem}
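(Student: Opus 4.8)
The plan is to first reduce the statement to an explicit computation of $\Aut(X)$ and then carry out that computation using an affine equation of $X$. Since $X$ is supersingular we have $\NS(X)=\Num(X)$, so $\Aut_{ct}(X)=\Aut_{nt}(X)$ by Definition \ref{cohnumtrivial}, and it remains to prove $\Aut(X)=\Aut_{nt}(X)\cong{\bf Z}/11{\bf Z}$. To get the first equality I would argue that the dual graph in Figure \ref{E10Dynkin} admits no nontrivial symmetry: it is a tree with a single trivalent vertex whose three arms have pairwise distinct lengths $1,2,6$ (this is the $E_{10}=T_{2,3,7}$ diagram). Moreover the intersection matrix of these ten $(-2)$-curves is the nondegenerate $E_{10}$-form (the lattice $U\oplus E_8$ is unimodular), so the curves form a $\bbQ$-basis of $\Num(X)$. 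Any $g\in\Aut(X)$ permutes the ten curves, hence fixes each of them, hence acts trivially on $\Num(X)$; thus $\rho_n$ is trivial and $\Aut(X)=\Aut_{nt}(X)=\Aut_{ct}(X)$.

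It then suffices to determine $\Aut(X)$ itself, and here I would follow the strategy of Theorem \ref{main2aut}. Starting from the vector field (\ref{DE8S}), I would compute invariants of $D$ to produce a normal affine model
\[
A=k[t,x,y]/(y^2+tx^4+x+t^7),\qquad y^2=tx^4+x+t^7,
\]
whose minimal completion is the quasi-elliptic surface $X$ with its fibration $t:X\to\bbP^1$, double fiber $2\II^*$ over $t=\infty$, and curve of cusps $C$ at $x=\infty$. Because the dual graph has a unique maximal parabolic subdiagram (of type $\tilde{E_8}$), this is the only genus one fibration on $X$; hence every automorphism preserves it together with its double fiber and $C$, and therefore takes the normal form (\ref{automorphism}): $t\mapsto c_1t+c_2$, $x\mapsto d_1(t)x+d_2(t)$, $y\mapsto e_1(t,x)y+e_2(t,x)$.

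Next I would substitute this form into the defining relation. Since $\sigma^*$ is a ring automorphism of $A=k[t,x]\oplus k[t,x]y$ and $y^2\in k[t,x]$, comparing the $y^0$-parts gives the identity
\[
e_1^2(tx^4+x+t^7)+e_2^2=(c_1t+c_2)(d_1x+d_2)^4+(d_1x+d_2)+(c_1t+c_2)^7.
\]
A degree analysis in $x$ and $t$ — bounding the $x$-degree of $e_2$ and the $t$-degrees of $d_2,e_2$, then examining the $x$-free part to kill $c_2$ — forces $d_2=e_2=c_2=0$ and $d_1,e_1\in k^{\times}$ constant, exactly as in the $\tilde{E_6}+\tilde{A_2}$ case. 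Matching the coefficients of $tx^4$, $x$ and $t^7$ then yields $e_1^2=c_1d_1^4=d_1=c_1^7$, so $d_1=e_1^2$, $c_1=e_1^{-6}$ and $e_1^{44}=1$. In characteristic $2$ one has $\mu_{44}(k)=\mu_{11}(k)\cong{\bf Z}/11{\bf Z}$, since the $2$-primary part collapses; hence $e_1\in\mu_{11}(k)$ and the resulting automorphisms are precisely the powers of $\sigma:t\mapsto\zeta t,\ x\mapsto\zeta^7x,\ y\mapsto\zeta^9y$ with $\zeta^{11}=1$. This gives $\Aut(X)=\langle\sigma\rangle\cong{\bf Z}/11{\bf Z}$, and together with the first paragraph completes the proof.

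I expect the main obstacle to be the degree-bounding step that eliminates $c_2,d_2,e_2$ and the nonconstant parts of $d_1,e_1$: one must track carefully which monomials in $x$ and $t$ can appear on each side and argue that any higher-degree contribution from the squared terms $e_2^2$, or from $(c_1t+c_2)^7$ with $c_2\neq0$, cannot be cancelled. This is routine but delicate, and is the same mechanism that already appears in Theorem \ref{main2aut}.
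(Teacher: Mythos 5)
Your proposal is correct and follows essentially the same route as the paper: reduce to $\Aut(X)=\Aut_{nt}(X)=\Aut_{ct}(X)$ via the asymmetry of the $T_{2,3,7}$ graph and supersingularity, pass to the affine model $y^2=tx^4+x+t^7$ obtained from the invariants of the vector field, put $\sigma$ in the normal form (\ref{automorphism}), and eliminate $c_2,d_2,e_2$ by the same coefficient-matching that forces $e_1^2=d_1=c_1^7$ and $c_1=e_1^{-6}$, hence $c_1^{11}=1$. The relations you extract and the resulting generator $t\mapsto\zeta t,\ x\mapsto\zeta^7x,\ y\mapsto\zeta^9y$ agree exactly with the paper's computation.
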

\begin{proof}
First note that the dual graph has no symmetries and hence $\Aut(X) = \Aut_{nt}(X)$.  Since $X$ is supersingular, $\Aut_{ct}(X) = \Aut_{nt}(X)$. 

Now we consider the vector field (\ref{DE8S}), and 
we set $u = x^2$, $v = y^2$, $z = x^7 + xy^7 + x^3y$.
Then, we have $D(u) =0$, $D(v) = 0$, $D(z) = 0$ with the equation
\begin{equation}\label{E8SSequation}
z^2 = u^7 + uv^7 + u^3v. 
\end{equation} 
Therefore, the quotient surface of ${\bf P}^2$ by $D$ is
birational to the surface defined by (\ref{E8SSequation}),
which is birational to our Enriques surface. To do a change of coordinates,
we define new variables $x, y, t$ by 
$$
   x = 1/u, \quad y = z/u^4,\quad t = v/u.
$$
Then, the equation becomes 
\begin{equation}\label{E8SSequation2}
y^2 + tx^4 + x + t^7= 0. 
\end{equation}
This equation gives a non-singular affine chart of a
quasi-elliptic surface
$$f : X \to {\bf P}^1$$
by sending $(x,y,t)$ to $t$.  Set
$$
A = k[t, x, y]/(y^2 + tx^4 + x + t^7)
$$ 
and let $\sigma$ be an automorphism of
our Enriques surface. The double fiber of $f$, denoted by $2F_{\infty}$, of type $\II^*$ exists over the point defined by $t = \infty$.
Since $\sigma$ preserves the diagram of $(-2)$-curves, $\sigma$ preserves
the curve $C$ of cusps and $2F_{\infty}$.
Therefore,  $\sigma$ has the form given in (\ref{automorphism}).

Together with the equation (\ref{E8SSequation2}), we have an identity
$$
\begin{array}{l}
e_1(t,x)^2(tx^4 + x + t^7) +e_2(t,x)^2 \\
= (c_1 t  + c_2)(d_1(t)x + d_2(t))^4 + 
(d_1(t)x + d_2(t)) + (c_1 t  + c_2)^7.
\end{array}
$$
Using Lemma~\ref{trivial} and taking the coefficients of $x$, we have $e_1(t,x)^2 + d_1(t) = 0$.
Hence $e_1(t,x)$ is a polynomial of $t$, i.e. we can put $e_1(t,x) = e_1(t)$, and
$d_1(t) = e_1(t)^2$. Taking the coefficients of $t$, we have
$e_1(t)^2x^4 + e_1(t)^2t^6 + c_1(d_1(t)x + d_2(t))^4  + d_2(t)_{odd}/t + c_1(c_1 t  + c_2)^6 = 0$.
Here, $d_2(t)_{odd}$ is the odd terms of $d_2(t)$.
Considering the coefficients of $x^4$ of this equation, we have 
$e_1(t)^2 = c_1d_1(t)^4=c_1e_1(t)^8$. Since we have $e_1(t)\not\equiv 0$, we have
$e_1(t)^6 = 1/c_1$. Therefore, $e_1(t)$ is a constant and we set $e_1(t) =e_1\in k$. Then,
$e_1^6 = 1/c_1$.
Thus we have an identity $e_1^2t^6 + c_1d_2(t)^4 + d_2(t)_{odd}/t  +
 c_1(c_1 t  + c_2)^6 =0$ with $e_1^6 = 1/c_1$.
Let $d_2(t)$ be of degree $m$. If $m\geq 2$ , then we have $\deg ~d_2(t)^4 \geq 8$
and we cannot kill the highest term of $d_2(t)^4$ in the equation. 
Therefore, we can put $d_2(t) = b_0 + b_1t$ $(b_0, b_1 \in k)$
and we have an identity
$$
 (e_1^2 + c_1^7)t^6 + (c_1b_1^4 + c_1^5c_2^2)t^4 + c_1^3c_2^4t^2+ (c_1b_0^4 +b_1 + c_1c_2^6) = 0.
$$
Hence  we have
$e_1^2 + c_1^7 = 0$, $c_1b_1^4 + c_1^5c_2^2= 0$, $c_1^3c_2^4= 0$, $c_1b_0^4 +b_1 + c_1c_2^6 = 0$ 
with $e_1^6 = 1/c_1$. Since $c_1 \neq 0$, we have $c_2 = b_1 = b_0 = 0$ and
$c_1 = \zeta$, $e_1 = \zeta^9$, $d_1 = \zeta^7$ with $\zeta^{11} = 1$.
Putting these data into the original equation, we have $e_2(t, x) = 0$.
Thus we have
$$\sigma(t) = \zeta t, \quad \sigma(x) = \zeta^7x,\quad \sigma(y) = \zeta^9y,$$
and we conclude
${\rm Aut}(X) \cong {\bf Z}/11{\bf Z}$.
\end{proof}

\begin{remark}
The numerically trivial automorphism $\sigma$ of order $11$ found here leads to one of the exceptions in \cite{DM}.
\end{remark}

\subsection{Classical case}

Let $Q = {\bf P}^1\times {\bf P}^1$ be a non-singular quadric and let $((u_0, u_1), (v_0, v_1))$ be
homogeneous coordinates of $Q$. Let $x=u_0/u_1,\ x'=u_1/u_0,\ y =v_0/v_1, y'=v_1/v_0$.
Consider a rational vector field $D$ defined by
\begin{equation}\label{E8C-der}
D= {1\over x^3y^2} \left(x^4y^2\frac{\partial}{\partial x} + (x^2 + ax^4y^4 + y^4)\frac{\partial}{\partial y}\right), \quad  a  \in k^*.
\end{equation}

Then $D^2 = D$, that is, $D$ is 2-closed.
Note that $D$ has a pole of order 3 along the divisor defined by $x=0$, a pole of order 1 along the divisor defined by $x=\infty$ and a pole of order 2 along the divisor defined by $y=0$.
Moreover $D$ has two isolated singularities at $(x,y)= (0,0), (\infty, 0)$.
As in the case of supersingular Enriques surfaces of type $E_8$, we repeatedly blow up the points of isolated 
singularities of $D$ and finally get a vector field $D$, denoted by the same symbol, without isolated singularities.
The configuration of curves is given in Figure \ref{E8C1}.
Here $F_0$, $E_1$, or $E_2$ is the proper transform of the curve defined by $y=0$, $x=0$, or $x=\infty$, respectively, and the suffix $i$ of the other exceptional curve $E_i$ corresponds to the order of successive blow-ups.  We denote by $Y$ the surface obtained by these successive blow-ups.

\begin{figure}[!htb]
 \begin{center}
  \includegraphics[width=140mm]{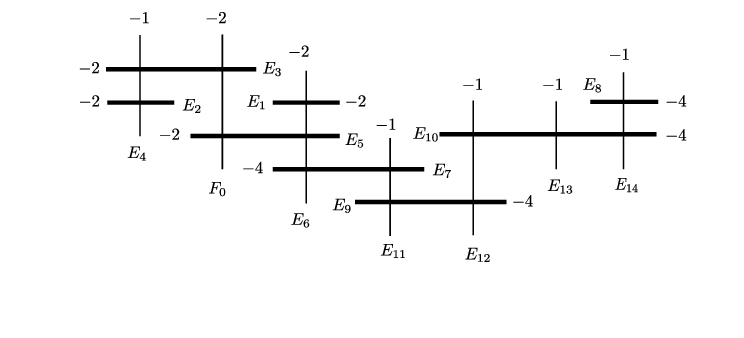}
 \end{center}
 \caption{}
 \label{E8C1}
\end{figure}

A direct calculation shows the following two lemmas \ref{e8C-integral} and \ref{e8C-canonical}.

\begin{lemma}\label{e8C-integral}
{\rm (i)} The integral curves with respect to $D$ in Figure {\rm \ref{E8C1}} are all horizontal curves {\rm (}thick lines{\rm )}.

{\rm (ii)} $(D) =-(2F_0+3E_1+E_2+2E_3+4E_5+4E_6+3E_7+2E_8+4E_9+5E_{10}+6E_{11}+8E_{12}+4E_{13}+6E_{14})$.
\end{lemma}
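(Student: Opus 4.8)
The proof is a direct local computation, and the plan is to carry it out chart by chart. The strategy is to write $D$ everywhere in the Rudakov--Shafarevich normal form $f_i(g_i\partial_{x_i}+h_i\partial_{y_i})$ with $g_i,h_i$ having no common divisor, first on $Q={\bf P}^1\times{\bf P}^1$ and then after each successive blow-up. From the factor $f_i$ one reads off the multiplicity of $(D)$ along each curve, and from the bracket $g_i\partial_{x_i}+h_i\partial_{y_i}$ one reads off, via the definition of an integral curve in Subsection~\ref{derivation} (cf. Proposition~\ref{insep}), whether a given curve is integral.

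First I would analyze $D$ on $Q$ itself. In the chart $(x,y)$ the two bracket coefficients $x^4y^2$ and $x^2+ax^4y^4+y^4$ are already coprime, so $f=1/(x^3y^2)$ and the only isolated zero of the bracket is $(x,y)=(0,0)$; this gives poles of order $3$ and $2$ along $E_1=\{x=0\}$ and $F_0=\{y=0\}$. Passing to the chart $(x',y)=(1/x,y)$ via $\partial_x=-x'^2\partial_{x'}$ and renormalizing yields $f=1/(x'y^2)$, i.e. a pole of order $1$ along $E_2=\{x=\infty\}$ and a second isolated singularity at $(x,y)=(\infty,0)$; the remaining two charts show no pole along $\{y=\infty\}$ and no further singularities. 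Thus the boundary already contributes $-(3E_1+2F_0+E_2)$ to $(D)$, and the resolution has exactly the two centres $(0,0)$ and $(\infty,0)$.

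For both parts I would then iterate the blow-up transformation: in a chart $x=x_1$, $y=x_1y_1$ the bracket $g\partial_x+h\partial_y$ becomes $g\,\partial_{x_1}+x_1^{-1}(h-gy_1)\,\partial_{y_1}$, after which one cancels the largest common power $x_1^{m}$ of the two coefficients and transfers it to $f_i$; the order of $f_i$ along the exceptional curve $\{x_1=0\}$ is its coefficient in $(D)$, and $\{x_1=0\}$ is integral precisely when the $\partial_{x_1}$-coefficient still vanishes on $x_1=0$ after this cancellation. Running this over $(\infty,0)$ produces $E_3$ with $f=1/(s^2t^2)$, hence coefficient $2$ and integral, and then a further blow-up at $E_2\cap E_3$ produces $E_4$ with $f=1/\rho^2$, hence coefficient $0$ and non-integral; running it over the more degenerate centre $(0,0)$ produces the long chain $E_5,\dots,E_{14}$ carrying the remaining coefficients (the first of them, $E_5$, already has $f=1/(x_1^4y_1^2)$, so coefficient $4$ and integral). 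Assembling all twelve exceptional contributions with the three boundary ones gives the divisor in (ii), and the list of curves passing the tangency test is exactly the set of horizontal (thick) curves in Figure~\ref{E8C1}, proving (i).

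The genuine difficulty is purely the bookkeeping of this twelve-fold resolution over $(0,0)$: at each stage one must locate the (possibly new) isolated singularity on the current exceptional curve, write the bracket in whichever of the two charts contains it, and extract the correct common power of the exceptional coordinate, all while keeping the labelling consistent with the figure. As an independent check I would, once $(D)$ and $K_Y$ are known on $Y$, compute $(D)^2$ and $K_Y\cdot(D)$ directly and verify through the Euler-number formula~(\ref{euler}) with $c_2(Y)=c_2(Q)+12=16$ that $\deg\langle D\rangle=0$, which is the expected outcome since $D$ is divisorial.
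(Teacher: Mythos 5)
Your proposal is correct and is essentially the paper's own argument: the paper simply states that ``a direct calculation shows'' the lemma, and the calculation it has in mind is exactly the chart-by-chart Rudakov--Shafarevich normal-form computation you describe (I checked the first stages: the boundary contribution $-(3E_1+2F_0+E_2)$, the values $f=1/(s^2t^2)$ for $E_3$ and $f=1/(u^4v^2)$ for $E_5$, and the vanishing coefficient of the non-integral curve $E_4$ all come out as you claim). One small notational caution: in your sentence about $E_4$ the prefactor $1/\rho^2$ can only refer to the local equation of the \emph{old} exceptional curve $E_3$, not of $E_4$ itself, since the coefficient of $E_4$ in $(D)$ is $0$; with that reading the step is right, and your closing Euler-number check $\deg\langle D\rangle=c_2(Y)+K_Y\cdot(D)+(D)^2=16-4-12=0$ is the same consistency test the paper runs in the proof of Lemma \ref{e6nonsingular}.
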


\begin{lemma}\label{e8C-canonical}
{\rm (i)} $(D)^2 = -12$. 

{\rm (ii)} The canonical divisor $K_{Y}$ of ${Y}$ is given by
$K_{Y} = -(2F_0+2E_1+E_3+3E_5+4E_6+3E_7+2E_8+4E_9+5E_{10}+6E_{11}+8E_{12}+4E_{13}+6E_{14}).$

{\rm (iii)} $K_{Y}\cdot (D) = -4.$
\end{lemma}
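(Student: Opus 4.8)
The plan is to follow the template already used for the analogous Lemmas \ref{canonical3} and \ref{e8-canonical}: once the divisorial part $(D)$ is recorded in Lemma \ref{e8C-integral}(ii), all three assertions become intersection-theoretic computations on the explicit blow-up $\psi : Y \to Q$ of $Q = {\bf P}^1\times {\bf P}^1$. The first thing I would do is write down the intersection matrix of the fifteen curves $F_0, E_1, \ldots, E_{14}$ of Figure \ref{E8C1}. The dual graph supplies the off-diagonal entries (two curves meet transversally exactly when they are joined in the figure, so $C_i\cdot C_j\in\{0,1\}$), while the self-intersection numbers are read directly off the figure: $-2$ for the unlabelled curves and the indicated values otherwise. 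With this matrix in hand, parts (i) and (iii) reduce to a mechanical expansion.

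For part (ii) I would compute $K_Y$ from the blow-up formula $K_Y = \psi^*K_Q + \sum_i \mathcal{E}_i$, where $\mathcal{E}_i$ denotes the total transform of the exceptional curve created by the $i$-th blow-up. On $Q$ one has $K_Q \sim -2f_1 - 2f_2$ with $f_1,f_2$ the two rulings; since $\{y=0\}\in|f_1|$ while $\{x=0\},\{x=\infty\}\in|f_2|$, the pullbacks $\psi^*f_1,\psi^*f_2$ can be rewritten in terms of the proper transforms $F_0,E_1,E_2$ together with exceptional corrections. Expanding every total transform into the proper transforms, stage by stage along the chain of infinitely near centers, should reproduce the asserted divisor. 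As an independent verification I would check that the candidate $K_Y$ satisfies adjunction $K_Y\cdot C = -2 - C^2$ for each of the fifteen smooth rational curves $C$; since $F_0,E_1,E_3,\ldots,E_{14}$ already span $\Num(Y)\otimes{\bf Q}$ and the intersection form is non-degenerate, matching all fifteen intersection numbers pins $K_Y$ down up to numerical equivalence.

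With $K_Y$ and $(D)$ both expressed in the curves of Figure \ref{E8C1}, parts (i) and (iii) follow by substituting into the intersection matrix and summing, yielding $(D)^2=-12$ and $K_Y\cdot(D)=-4$. I expect the main obstacle to lie entirely in part (ii): because most of the blow-ups occur at infinitely near points, the multiplicity with which each exceptional curve enters the proper transforms (and hence $K_Y$) shifts at every stage, so a single misassigned coefficient propagates through the whole answer. The safeguard is to carry the full intersection matrix throughout and to cross-check every coefficient against adjunction. A final sanity check is that these three outputs feed, via the Euler-number formula (\ref{euler}) together with $c_2(Y)=16$ (as $Q$ has $c_2=4$ and $Y$ is its blow-up at twelve points), into $\deg\langle D\rangle = 0$ exactly as in Lemma \ref{e6nonsingular}; any arithmetic slip in the present lemma would surface there as a failure of $D$ to be divisorial.
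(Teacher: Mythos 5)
Your proposal is correct and matches the paper's approach: the paper dispatches this lemma with the single phrase ``a direct calculation shows,'' and the calculation it has in mind is exactly the one you describe — reading the intersection data off Figure \ref{E8C1}, expressing $K_Y$ via the blow-up formula $K_Y=\psi^*K_Q+\sum_i\mathcal{E}_i$ over the twelve (possibly infinitely near) centers, and then expanding $(D)^2$ and $K_Y\cdot(D)$ against the intersection matrix. Your adjunction cross-check on each of the fifteen rational curves and the consistency check $\deg\langle D\rangle=16-4-12=0$ are sensible safeguards that the paper leaves implicit.
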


\noindent
Now take the quotient $Y^{D}$ of $Y$ by $D$.  By using the same argument as in the proof of Lemma \ref{e6non-singular}, $Y^{D}$ is non-singular.
By Proposition \ref{insep}, we have the following configuration of curves in Figure \ref{E8-1}:
\begin{figure}[!htb]
 \begin{center}
  \includegraphics[width=140mm]{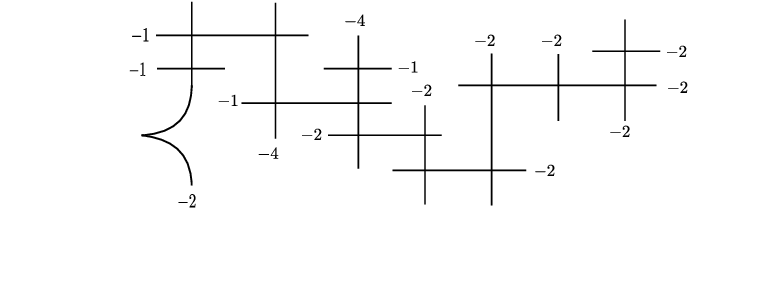}
 \end{center}
 \caption{}
 \label{E8-1}
\end{figure}

Let 
$X_{a}$ be the surface obtained by contracting the four exceptional curves in Figure \ref{E8-1}
(Recall that the vector field $D$ contains  
one parameter $a$ (see (\ref{E8C-der}))).  Then $X_a$ contains 10 $(-2)$-curves whose dual graph is
given by Figure \ref{E10Dynkin}.
Recall that this diagram contains a unique maximal parabolic subdiagram which is of type $\tilde{E}_8$.
The first projection from $Q$ to ${\bf P}^1$ is a ${\bf P}^1$ bundle and it induces a quasi-elliptic fibration on $X_a$ with two double fibers of type $\II^*$ and of type $\II$.

\begin{theorem}\label{E8C-main}
The surfaces
$\{X_{a}\}$ form a 
$1$-dimensional family of classical Enriques surfaces with the dual graph given in Figure {\rm \ref{E10Dynkin}}.
\end{theorem}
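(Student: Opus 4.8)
The plan is to follow the template of the supersingular case (Theorem \ref{E8S-main}) and of the classical families already constructed (Theorems \ref{E6Cmain} and \ref{VIII-main}), changing only the argument that separates the classical surfaces from the supersingular ones.

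First I would confirm that each $X_a$ is an Enriques surface, arguing as in the proof of Theorem \ref{main}. That $D$ is divisorial, so that $\deg\langle D\rangle = 0$ and $Y^D$ is nonsingular, has already been noted above via the argument of Lemma \ref{e6nonsingular}; this is consistent with $c_2(Y)=16$, read off from Figure \ref{E8C1}, and the Euler formula (\ref{euler}) applied to the values $(D)^2=-12$ and $K_Y\cdot(D)=-4$ of Lemma \ref{e8C-canonical}. From $K_Y = \pi^*K_{Y^D} + (D)$, together with Proposition \ref{insep} and the list of integral curves in Lemma \ref{e8C-integral}(i), and tracing $K_{Y^D}$ through the four contractions $Y^D \to X_a$, I would deduce that $K_{X_a}$ is numerically trivial exactly as in the supersingular computation. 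Since $\pi$ is finite and purely inseparable, $b_2(Y^D) = b_2(Y) = 14$, and contracting the four exceptional curves yields $b_2(X_a) = 10$; hence $X_a$ is an Enriques surface.

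Next I would verify that Figure \ref{E10Dynkin} exhausts the $(-2)$-curves. Its unique maximal parabolic subdiagram is of type $\tilde{E}_8$, of rank $8$, so every connected parabolic subdiagram is a component of a rank-$8$ one and Vinberg's criterion (Proposition \ref{Vinberg}) applies; combined with Propositions \ref{finiteness} and \ref{Namikawa}, as in the proof of Theorem \ref{main2}, this shows that the ten curves of Figure \ref{E10Dynkin} are exactly the $(-2)$-curves on $X_a$.

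The one genuinely new point is to prove that $X_a$ is classical, not supersingular. As in Theorem \ref{main}, being a purely inseparable quotient of a rational surface, $X_a$ is classical or supersingular. To decide between the two I would analyse the quasi-elliptic fibration induced by the first projection $Q \to {\bf P}^1$ and show that it has \emph{two} double fibers, namely the reducible fiber of type $\II^*$ over $x=0$ and the irreducible fiber of type $\II$ over $x=\infty$; this is obtained by applying Proposition \ref{insep} to the vertical components over these two base points and reading off their integral or non-integral behaviour from Lemma \ref{e8C-integral}(i). By Proposition \ref{multi-fiber}, a supersingular (or singular) Enriques surface carries a unique double fiber in any genus one fibration, whereas a classical one carries exactly two; the presence of two double fibers therefore forces $X_a$ to be classical. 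I expect this identification of the two double fibers to be the main obstacle, since it is precisely where the ${\bf P}^1\times{\bf P}^1$-construction differs from the ${\bf P}^2$-based supersingular construction, the latter producing only the single double fiber of type $\II^*$. Finally, the vector field (\ref{E8C-der}) depends on the parameter $a \in k\setminus\{0\}$, so the surfaces $X_a$ form a family over a $1$-dimensional base, giving the asserted $1$-dimensional family.
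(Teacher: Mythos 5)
Your proposal is correct and follows essentially the same route as the paper: the Enriques property and the dual graph are obtained from Lemmas \ref{e8C-integral} and \ref{e8C-canonical} by repeating the arguments of Theorems \ref{main} and \ref{main2}, and classicality is deduced from the fact that the quasi-elliptic fibration induced by the first projection $Q\to{\bf P}^1$ has two double fibers (of types $\II^*$ and $\II$), which by Proposition \ref{multi-fiber} rules out the supersingular and singular cases. The paper's own proof is exactly this, stated more briefly.
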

\begin{proof}
By using Lemmas \ref{e8C-integral} and \ref{e8C-canonical} and the same arguments as in the proofs of Theorems \ref{main} and \ref{main2}, 
$X_{a}$ is an Enriques surface with the dual graph given in Figure \ref{E10Dynkin}.
Since $X_{a}$ has a genus one fibration with two double fibers of type $\II^*, \II$, 
$X_{a}$ is classical (Proposition \ref{multi-fiber}).  
\end{proof}

\begin{theorem}\label{E8C-mainAut}
The automorphism group 
${\rm Aut}(X_{a})$ is trivial.
\end{theorem}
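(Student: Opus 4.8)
The plan is to mirror the strategy of Theorem \ref{E8S-main2} but to exploit the extra structure of the classical equation. First I would observe that the dual graph of all $(-2)$-curves, drawn in Figure \ref{E10Dynkin}, is the diagram $E_{10}$ whose three legs issuing from the branch vertex have pairwise distinct lengths ($1$, $2$ and $6$); hence it admits no nontrivial symmetry. By Lemma \ref{injective}-type reasoning (the map $\rho_n$ of (\ref{num-trivial}) lands in the symmetry group of this graph), it follows that $\Aut(X_a)=\Aut_{nt}(X_a)$, so it suffices to rule out nontrivial numerically trivial automorphisms. I would then work on the normal affine model $y^2 = tx^4 + at^3x + t^7 + t^3$ of Subsection \ref{LIST}, on which the quasi-elliptic fibration carries a double fiber $2F_\infty$ of type $\II^*$ over $t=\infty$ and a second double fiber of type $\II$ over $t=0$.

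Let $\sigma$ be a numerically trivial automorphism. It preserves this fibration, its curve of cusps, and each double fiber; since the two double fibers have distinct Kodaira types they cannot be interchanged, so $\sigma$ fixes both $t=0$ and $t=\infty$. Thus $\sigma$ has the shape (\ref{automorphism}) with $c_2=0$, namely $t\mapsto c_1 t$, $x\mapsto d_1(t)x+d_2(t)$, $y\mapsto e_1(t,x)y+e_2(t,x)$. I would substitute this into the defining equation and compare both sides inside the free $k[t,x]$-module $A$ of (\ref{ring}), using Lemma \ref{trivial} and separating the odd and even powers of $x$. Matching the odd-in-$x$ part forces $e_1=e_1(t)$ with $e_1^2=c_1^3d_1$, while the coefficient of $x^4$ gives $e_1^2=c_1d_1^4$; together these yield $d_1^3=c_1^2$, so $d_1$ is a nonzero constant. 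The residual even-in-$x$ identity then expresses $e_2(t)^2$ as an explicit polynomial in $t$, and a degree count forces $d_2$ to be constant, hence (from the surviving $t$-term) equal to $0$.

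The decisive step, and the main obstacle, is the bookkeeping of the remaining $t$-coefficients. Because $e_2^2$ is even in $t$, every odd-degree term of the right-hand polynomial must vanish; the coefficients of $t^7$ and $t^3$ give the two relations $c_1^7+c_1^3d_1=0$ and $c_1^3(1+d_1)=0$. It is precisely the extra term $t^3$ of the classical equation—absent in the supersingular case of Theorem \ref{E8S-main2}—that produces the second relation, forcing $d_1=1$ and hence $c_1^2=1$, i.e. $c_1=1$. Back-substituting gives $e_1=d_1=1$ and $d_2=e_2=0$, so $\sigma$ is the identity. I therefore expect to conclude $\Aut(X_a)=\{1\}$, the rigidity coming entirely from the fact that both $t^7$ and $t^3$ appear, which pins $c_1$ to $1$ rather than to a nontrivial root of unity.
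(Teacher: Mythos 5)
Your proposal is correct and follows essentially the same route as the paper's proof: the same normal form $y^2 = tx^4+at^3x+t^3+t^7$, the same reduction of $\sigma$ to the shape (\ref{automorphism}) with $\sigma^*(t)=c_1t$, and the same coefficient comparison in the free module $A$, the decisive point being that the $t^3$- and $t^7$-coefficients of the odd part together force $d_1=1$ and $c_1=1$. The only cosmetic differences are that you make explicit the (correct) observation that the dual graph admits no symmetries, and that your degree count should first give $\deg d_2\le 1$ (the $t^5$-coefficient then kills the linear term), exactly as in the paper.
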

\begin{proof}
We consider the vector field (8.2), and we set 
$u = x^2$, $v = y^2$, $z = x^3 + ax^5y^4 + xy^4+ x^4y^3$.
Then, we have $D(u) =0$, $D(v) = 0$, $D(z) = 0$ with the equation
\begin{equation}\label{E8Cequation}
z^2 = u^3 + a^2u^5v^4 + uv^4 + u^4v^3\ \ (a \in k^*).
\end{equation}
Therefore, the quotient surface 
${\bf P}^1\times {\bf P}^1$ by $D$ is
birational to the surface defined by (\ref{E8Cequation}) 
which is birational to our Enriques surface. To do a change of coordinates,
we define new variables $x, y, t$ by 
$$
   x = 1/a^{\frac{3}{4}}uv,\quad y = z/a^{\frac{7}{4}}u^4v^2,\quad  t = 1/\sqrt{a}u
$$
and we replace $1/a^{\frac{5}{4}}$ by $b$ for the sake of simplicity.
Then, the equation becomes $y^2 + tx^4 + bt^3x + t^3 + t^7= 0$. 
This equation gives a normal affine
surface.  Now by a similar calculation to the one in the proof of Theorem \ref{E8S-main2},
we see that there are no non-trivial automorphisms, that is,
${\rm Aut}(X_a)$ is trivial.
\end{proof}

\section{Enriques surfaces of type $\tilde{E_7}+\tilde{A_1}^{(1)}$ and $\tilde{E_7}+\tilde{A_1}^{(2)}$}\label{sec6}

\subsection{Classical case of type $\tilde{E_7}+\tilde{A_1}^{(1)}$}\label{E7C1}

In this subsection we give a construction of an Enriques surface with the following dual graph of all 
$(-2)$-curves given in Figure \ref{E7SDynkin}.

\begin{figure}[!htb]
 \centerline{
\xy
(-10,25)*{};
@={(80,10),(90,10),(0,10),(10,10),(20,10),(30,10),(40,10),(50,10),(60,10),(70,10),(30,20)}@@{*{\bullet}};
(0,10)*{};(80,10)*{}**\dir{-};
(90,10)*{};(80,10)*{}**\dir{=};
(30,10)*{};(30,20)*{}**\dir{-};
(70,10)*{};(90,10)*{}**\crv{(80,20)};
\endxy
}
 \caption{}
 \label{E7SDynkin}
\end{figure}

Let $(X_0,X_1,X_2)\in {\bf P}^2$ and $(S,T)\in {\bf P}^1$ be homogeneous coordinates.
Consider the surface $R$ in ${\bf P}^2\times {\bf P}^1$ defined by
\begin{equation}\label{E7Cconic}
S(aX_0^2 +bX_2^2) + T(X_1^2+aX_1X_2 + bX_0X_2) = 0 \ (a, b \in k^*).
\end{equation}
Note that the projection to ${\bf P}^1$ defines a fiber space $\pi:R\to {\bf P}^1$ whose
general fiber is a non-singular conic.
Let $E_1$ be the fiber over the point $(S,T)=(0,1)$ which is non-singular.
The fiber over the point $(S,T)=(1,0)$ is a double line denoted by $2E_2$ and the fiber over the point $(b^2,a^3)$ is a union of two lines denoted by $E_3, E_4$. 
The line defined by $X_2=0$ is a $2$-section of the fiber space which is denoted by $F_0$.
The surface $R$ has two rational double points $Q_i =((\alpha,\beta_i,1), (1,0))$ $(i=1,2)$ of type $A_1$,
where 
$\alpha =\sqrt{b/a}$ and the $\beta_i$'s are the roots of the equation $y^2+ay+\sqrt{b^3/a}=0$.

Let $(x=X_0/X_2,\ y=X_1/X_2, s=S/T)$ be affine coordinates.
Define 
\begin{equation}\label{E7Cderivation}
D= {1\over s} \left(a(s^2+c)\frac{\partial}{\partial x} + (as^2x^2 + bc)\frac{\partial}{\partial y}\right)
\quad (b\not=a^2c)
\end{equation}
where $c$ is a root of the equation of $t^2 + (b/a)t + 1=0$.  Then $D^2=aD$, that is, $D$ is 2-closed.
A direct calculation shows that $D$ has two isolated singularities at the intersection points of $F_0$ and $E_1, E_2$.  
As in the case of supersingular Enriques surfaces of type $E_8$, we blow up the two rational double points and the points of isolated singularities
of $D$ successively, and finally get a vector field, denoted by the same symbol $D$, without isolated singularities.  The configuration of curves is given in Figure \ref{E7-3}.  Here, the suffix $i$ of the exceptional curve $E_i$ corresponds to the order of successive blow-ups.
\begin{figure}[!htb]
 \begin{center}
  \includegraphics[width=150mm]{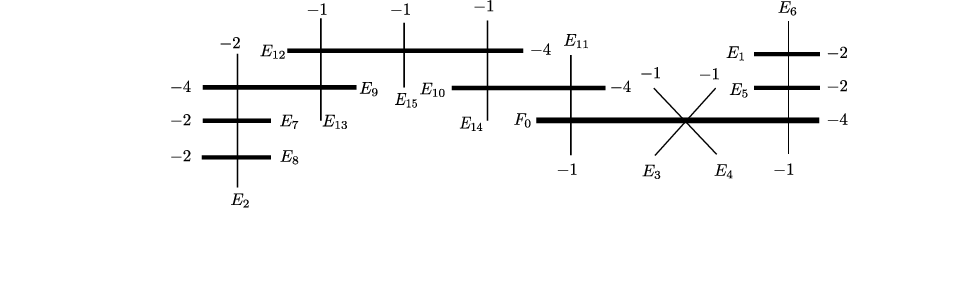}
 \end{center}
 \caption{}
 \label{E7-3}
\end{figure}

Now we denote by $Y$ the surface obtained by successive blow-ups.
By direct calculations, we have the following lemmas.

\begin{lemma}\label{E7C1-integral}
{\rm (i)} The integral curves with respect to $D$ in Figure {\rm \ref{E7-3}} are all horizontal curves {\rm (}thick lines{\rm )}.

{\rm (ii)} $(D) =-(F_0+E_1+2E_2+E_5+2E_7+2E_8+2E_9+2E_{10}+2E_{11}+3E_{12}+4E_{13}+4E_{14}+2E_{15})$.
\end{lemma}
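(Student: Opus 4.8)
The plan is to establish both parts by a local analysis of the vector field $D$ of (\ref{E7Cderivation}) in affine charts on $R$, followed by a step-by-step bookkeeping of how the divisorial part $(D)$ and the set of integral curves transform under the successive blow-ups $\psi : Y \to R$. The two tools are the definition of an integral curve recalled in Subsection \ref{derivation} (the reduced field $g_i\partial_{x_i} + h_i\partial_{y_i}$ is tangent to $C$ at a general point) and the Rudakov-Shafarevich local description of $(D)$ and of the isolated-singularity cycle $\langle D\rangle$.

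For part (i), the starting observation is a short computation on $R$: writing $s$ as the rational function $s = (y^2 + ay + bx)/(ax^2 + b)$ determined by the conic equation (\ref{E7Cconic}) and applying $D$, one finds $D(s) = as$. Hence the reduced field $\bar D = sD = a(s^2+c)\partial_x + (as^2x^2 + bc)\partial_y$ satisfies $\bar D(s) = as^2$, so a general fiber $\{s = s_0\}$ of $\pi$ is not tangent to $D$ and is therefore non-integral; this already rules out all but finitely many vertical curves. It then remains to test, curve by curve, tangency of $\bar D$ (or of the appropriate reduced field in the chart at infinity) along the finitely many remaining candidates: the $2$-section $F_0$, the two special fibres $E_1$ ($s=0$) and $E_2$ ($s=\infty$), and each exceptional curve created in the resolution. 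For instance, on $E_1 = \{s=0\}$ one has $\bar D|_{s=0} = c(a\partial_x + b\partial_y)$, which is proportional to the tangent direction $a\partial_x + b\partial_y$ of the conic $bx = y^2 + ay$, so $E_1$ is integral. Carrying out the same tangency test in each chart should identify the integral curves as exactly the thick (horizontal) curves of Figure \ref{E7-3}.

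For part (ii), I would first read off $(D)$ on $R$ itself from the pole orders of $D$: the factor $1/s$ produces a simple pole along $E_1$, a pole of order two in the direction of the double fibre $E_2$ at $s=\infty$, and a simple pole along the $2$-section $F_0$ (checked in a chart where $X_2=0$ is visible). One then resolves the two rational double points $Q_1, Q_2$ of type $A_1$ lying on $2E_2$ and blows up the isolated singularities of $D$, located first at $F_0 \cap E_1$ and $F_0 \cap E_2$ and thereafter at the moving singular point prescribed by the local form at each stage. At each blow-up $\sigma$ with exceptional curve $E$, the induced field $\tilde D$ has divisorial part $(\tilde D) = \sigma^{*}(D) + m\,E$ for an integer $m$ dictated by the local order of vanishing of the reduced field at the center and by whether $E$ is integral; since the blow-up centers are points, the proper transforms of $F_0, E_1, E_2$ retain their pole orders $1,1,2$, and accumulating the new contributions over the whole tower yields the asserted expression $(D) = -(F_0 + E_1 + 2E_2 + E_5 + 2E_7 + 2E_8 + 2E_9 + 2E_{10} + 2E_{11} + 3E_{12} + 4E_{13} + 4E_{14} + 2E_{15})$.

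The main obstacle is the combinatorial bookkeeping of this tower: correctly locating the center of each successive blow-up (the unique moving isolated singularity of the induced field), reading off the pole order and tangency type there, and propagating the multiplicities consistently across roughly fifteen curves and several infinitely near centers. To guard against arithmetic slips I would use two independent checks that the next lemma supplies: the resulting divisor must satisfy $(D)^2 = -12$ and $K_Y\cdot(D) = -4$, and, via the Euler-number formula (\ref{euler}) with $c_2(Y) = 16$, it must force $\deg\langle D\rangle = 0$, i.e. that $D$ has indeed become divisorial after the final blow-up, which is exactly the condition guaranteeing that the quotient $Y^D$ is smooth.
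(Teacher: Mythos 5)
Your proposal follows essentially the same route as the paper, which justifies this lemma only by the phrase ``by direct calculations'': a chart-by-chart tangency test of the reduced field against each candidate curve (your computations $D(s)=as$ and $\bar D|_{s=0}=c(a\partial_x+b\partial_y)$ tangent to $y^2+ay+bx=0$ check out), combined with tracking pole orders of the divisorial part through the tower of blow-ups at $Q_1,Q_2$ and the isolated singularities at $F_0\cap E_1$, $F_0\cap E_2$. Your cross-checks via $(D)^2=-12$, $K_Y\cdot(D)=-4$ and $\deg\langle D\rangle=0$ are exactly the consistency conditions the paper exploits in Lemmas \ref{E7C1-canonical} and \ref{e6nonsingular}.
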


\begin{lemma}\label{E7C1-canonical}
{\rm (i)} $(D)^2 = -12$. 

{\rm (ii)} The canonical divisor $K_Y$ of $Y$ is given by
$K_Y = -(F_0+2E_2+E_7+E_8+2E_9+2E_{10}+2E_{11}+3E_{12}+4E_{13}+4E_{14}+2E_{15}).$

{\rm (iii)} $K_Y\cdot (D) = -4.$
\end{lemma}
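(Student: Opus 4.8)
The plan is to treat all three parts as explicit computations on the surface $Y$ produced by the blow-up tower $\psi : Y \to R$, taking as input the configuration of curves displayed in Figure \ref{E7-3} (self-intersections and incidences) together with the divisor $(D)$ already determined in Lemma \ref{E7C1-integral}. The first step is to assemble, once and for all, the intersection matrix of the curves $F_0, E_1, \ldots, E_{15}$: their self-intersection numbers are recorded in the figure, each exceptional curve dropping by $1$ every time a further point on it is blown up, and the off-diagonal entries are the transverse incidences shown there. With this matrix in hand, parts (i) and (iii) become purely mechanical bilinear evaluations, while part (ii) is a bookkeeping of the canonical class through the tower.

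For part (ii) I would compute $K_Y$ constructively. The surface $R$ is a divisor of bidegree $(2,1)$ in ${\bf P}^2 \times {\bf P}^1$, so adjunction gives $K_R = (K_{{\bf P}^2\times{\bf P}^1} \otimes \calO(R))|_R = \calO(-1,-1)|_R$. Resolving the two rational double points of type $A_1$ introduces $(-2)$-curves of discrepancy zero, so the minimal resolution $\tilde R$ has $K_{\tilde R} = \mu^* K_R$ and contributes nothing new to the canonical class. Each of the remaining blow-ups at a smooth point, with exceptional curve $E$, transforms the canonical class by $K_{\mathrm{new}} = (\text{pullback}) + E$; carrying the total transforms through the tower and re-expressing the result in terms of $F_0, E_1, \ldots, E_{15}$ yields the asserted formula. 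As an independent check one verifies that the proposed divisor satisfies the adjunction identity $K_Y \cdot C = -2 - C^2$ for every curve $C$ in the configuration (all of which are smooth rational). Since these curves span $\Num(Y) \otimes {\bf Q}$ and the intersection form is nondegenerate, satisfying all of these equalities pins down $K_Y$ uniquely, confirming (ii).

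Parts (i) and (iii) then follow by substituting the explicit expressions for $(D)$ (Lemma \ref{E7C1-integral} (ii)) and for $K_Y$ (part (ii)) into the intersection form and expanding: $(D)^2 = -12$ and $K_Y\cdot(D) = -4$ are single bilinear pairings against the assembled matrix. I emphasise that these two numbers must be obtained directly, not via the Euler-number relation (\ref{euler}): in the companion divisoriality statement (the analogue of Lemma \ref{e6nonsingular}) one uses precisely $(D)^2 = -12$, $K_Y\cdot(D) = -4$ and $c_2(Y)$ to deduce $\deg\langle D\rangle = 0$, so invoking that relation here would be circular.

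The main obstacle is not any single conceptual point but the volume and fragility of the bookkeeping. There are sixteen curves, their self-intersections change as the tower proceeds, and the multiplicities appearing in $(D)$ and in $K_Y$ must be tracked consistently. The genuinely delicate part is the local analysis over the points blown up more than once — the infinitely near points lying over the two $A_1$ singularities and over the isolated singularities of $D$ at $F_0 \cap E_1$ and $F_0 \cap E_2$ — where one must work in local coordinates and keep straight which curve the vector field is tangent to (integral) and which it meets transversally. An error in the pole order or the incidence at any such point propagates into wrong coefficients in all three assertions, so I would organise the computation so that the final self-consistency serves as a cross-check: in particular the coincidence $K_Y = (D) + E_1 + E_5 + E_7 + E_8$ (immediate from comparing the two explicit divisors) and the agreement of $(D)^2$, $K_Y\cdot(D)$, $c_2(Y)$ with (\ref{euler}) together make an arithmetic slip very unlikely to survive.
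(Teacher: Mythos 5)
Your proposal is correct and matches the paper, which simply asserts these formulas ``by direct calculations'': you spell out exactly the computation the authors leave implicit, namely tracking $K_Y$ through the crepant resolution of the two $A_1$ points and the subsequent smooth blow-ups starting from $K_R=\calO(-1,-1)|_R$, and then evaluating the two intersection numbers against the configuration of Figure \ref{E7-3}. Your observations that $(D)^2$ and $K_Y\cdot(D)$ must be computed directly rather than via (\ref{euler}) (to avoid circularity with the divisoriality argument) and that $K_Y=(D)+E_1+E_5+E_7+E_8$ serves as a consistency check are both sound.
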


\noindent
Now take the quotient $Y^{D}$ of $Y$ by $D$.  By using the same argument as in the proof of Lemma \ref{e6non-singular}, $D$ is divisorial and hence $Y^{D}$ is non-singular.
By Proposition \ref{insep}, we have the following configuration of curves in Figure \ref{E7-4}:

\begin{figure}[!htb]
 \begin{center}
  \includegraphics[width=150mm]{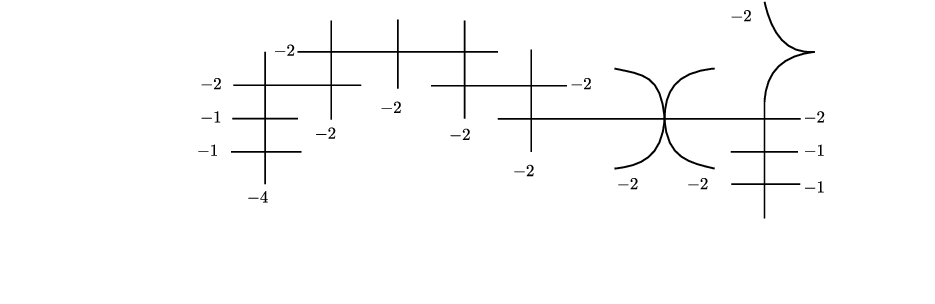}
 \end{center}
 \caption{}
 \label{E7-4}
\end{figure}

Let $X_{a,b}$ be the surface obtained by contracting the four exceptional curves.
The surface $X_{a,b}$ contains 11 $(-2)$-curves whose dual graph is given by Figure \ref{E7SDynkin}.
Note that any maximal parabolic subdiagram of this diagram is of type $\tilde{E}_7\oplus \tilde{A}_1$ 
or $\tilde{E}_8$.

\begin{theorem}\label{E7C-main}
The surfaces $\{X_{a,b}\}$ are classical Enriques surfaces with the dual graph
given in Figure {\rm \ref{E7SDynkin}}.  It contains an at least $1$-dimensional non-isotrivial family. 
\end{theorem}
\begin{proof}
By using Lemmas \ref{E7C1-integral} and \ref{E7C1-canonical} and the same arguments as in the proofs of Theorems \ref{main} and \ref{main2}, 
$X_{a,b}$ is an Enriques surface with the dual graph given in Figure \ref{E7SDynkin}.
Let $p_1$ be the genus one fibration on $X$ with a singular fiber $\III^*$ induced from the fiber space $\pi: R \to \mathbb{P}^1$.  By construction,
$p_1$ has two double fibers (see Figure \ref{E7-4}).  Hence
$X_{a,b}$ is classical (Proposition \ref{multi-fiber}).

In the next Subsection \ref{E7C2III}, we will construct classical Enriques surfaces with double fibers 
of type $\III^*$ and $\III$ which are specializations of $\{X_{a,b}\}$. 
Note that the surface $X_{a,b}$ and the one given in the next subsection are not isomorphic because
their dual graphs of all $(-2)$-curves are different (Figures \ref{E7SDynkin}, \ref{E7CDynkin}).
It will follow from
Matsusaka and Mumford \cite[Theorem 1]{MM} that the family $\{X_{a,b}\}$ contains an at least $1$-dimensional non-isotrivial family. 
\end{proof}

Before computing the automorphism group of $X_{a,b}$, let us summarize what we know about the genus one fibrations on $X_{a,b}$. Using Tables \cite[p.9 and pp.16-18]{ES}, we easily see that the left-most vertex in Figure \ref{E7SDynkin} is part of the conductrix and hence, by Lemma \ref{quellorell} and Proposition \ref{Itoh}, it is the curve of cusps of the two quasi-elliptic fibrations of type $(\II^*)$. Then, similarly, Table \ref{QuasiellipticFibrationsConductrix} shows that the $\tilde{E}_7$ diagram is a double fiber of a quasi-elliptic fibration of type $(2\III^*,\III)$, which is the fibration induced by $\pi$. 

\begin{theorem}\label{E7C-main2}
The automorphism group ${\rm Aut}(X_{a,b})$ is ${\bf Z}/2{\bf Z}$ which is not numerically trivial.
\end{theorem}
\begin{proof}
First, note that it suffices to show that $X_{a,b}$ admits no numerically trivial automorphisms. Indeed, the symmetry group of the dual graph of $(-2)$-curves of $X_{a,b}$ is 
${\bf Z}/2{\bf Z}$ (see Figure \ref{E7SDynkin}) and so is  the Mordell-Weil group of the Jacobian fibration of $p_1$ (see \cite{Ito}). Since the Mordell-Weil group acts effectively on $X_{a,b}$ and it can not act trivially on the graph, it realizes all of ${\rm Aut}(X_{a,b})$.

So, let $g$ be a numerically trivial automorphism.  
Consider a quasi-elliptic fibration $p_2$ with a singular fiber of type $\II^*$.  Let $C_1$, $C_2$ be the double fibers of $p_2$, both of which are rational curves with a cusp, and let $C$ be the cuspidal double fiber of $p_1$.  
Note that $g$ preserves $C$ and fixes at least two points
on it, namely the cusp of $C$ and the intersection of $C$ and the curve of cusps of $p_1$. If $g$ has odd order, then it also fixes $C_i \cap C$, since it will preserve the $C_i$. Hence, by Lemma \ref{genus1auto}, $g$ fixes $C$ pointwise.  

Since $C$ is a $2$-section of $p_2$, $C_i$ is preserved by $g$.
Thus, $g$ fixes
three points on $C_i$, namely the cusp of $C_i$ and the intersection points of $C_i$ with the two double fibers of $p_1$. Hence, again by Lemma \ref{genus1auto}, $g$ fixes $C_1$ and $C_2$ pointwise.  Similarly, $g$ fixes at least 
three points on a general fiber $F$ of $p_1$, namely its cusp and the intersection points with $C_1$ and $C_2$. Therefore $g$ fixes $F$ pointwise.  Thus $g$ is the identity.
\end{proof}

\subsection{The case of type $\tilde{E_7}+\tilde{A_1}^{(2)}$}\label{E7C2III}

In this subsection we give a construction of classical Enriques surfaces with the following dual graph  of all $(-2)$-curves given in Figure \ref{E7CDynkin}.

\begin{figure}[!htb]
 \centerline{
\xy
(-10,25)*{};
@={(80,10),(90,10),(0,10),(10,10),(20,10),(30,10),(40,10),(50,10),(60,10),(70,10),(30,20)}@@{*{\bullet}};
(0,10)*{};(80,10)*{}**\dir{-};
(90,10)*{};(80,10)*{}**\dir{=};
(30,10)*{};(30,20)*{}**\dir{-};
\endxy
}
 \caption{}
 \label{E7CDynkin}
\end{figure}

This example is a specialzation of the previous example given in \ref{E7C1}.
In the previous equations (\ref{E7Cconic}), (\ref{E7Cderivation}), we set $b=0$ and then $c=1$.
The fiber over the point $(S,T)=(0,1)$ is a union of two lines, denoted by $E_1, E_2$, defined by 
$X_1(X_1+aX_2)=0$. 
The fiber over the point $(S,T)=(1,0)$ is a double line denoted by $2E_3$. 
The line defined by $X_2=0$ is a $2$-section of the fiber space which is denoted by $F_0$.
The surface $R$ has two rational double points $Q_1 =((0,0,1), (1,0)), Q_2=((0,a,1), (1,0))$ of type $A_1$.

A direct calculation shows that $D$ has two isolated singularities at the intersection points of the 
$2$-section $F_0$ and the two fibers over the points $(S,T)=(1,0), (0,1)$. As in the previous case, we blow up the two rational double points and the points of isolated singularities
of $D$ successively, and finally get a vector field $D$, denoted by the same symbol, without isolated singularities. 
The configuration of curves is given in Figure \ref{E7-1}.  

\begin{figure}[!htb]
 \begin{center}
  \includegraphics[width=150mm]{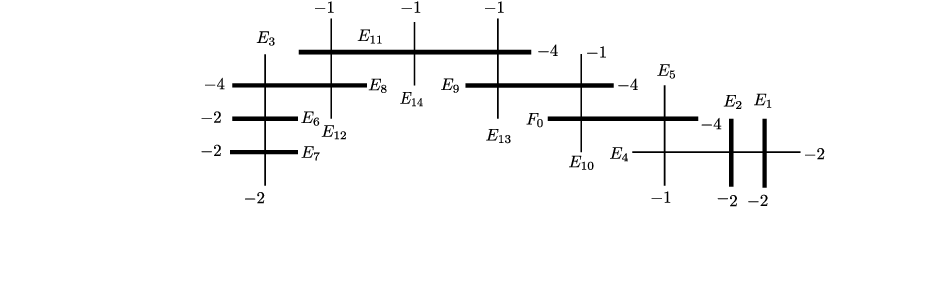}
 \end{center}
 \caption{}
 \label{E7-1}
\end{figure}
\noindent
Here we use the same symbols $F_0$, $E_1$, $E_2$, $E_3$ for the curves and their proper transforms, and the suffixes $i$ of the other exceptional curves $E_i$ correspond to the order of successive blow-ups.  The thick lines are integral curves.
We denote by $Y$ the surface obtained by successive blow-ups.
By direct calculations, we have the following lemmas.

\begin{lemma}\label{E7C2-integral}
{\rm (i)} The integral curves with respect to $D$ in Figure {\rm \ref{E7-1}} are $F_0, E_1, E_2, E_6, E_7$, $E_8, E_9, E_{11}$ $($thick lines$)$.

{\rm (ii)} $(D) =-(F_0+E_1+E_2+2E_3+2E_6+2E_7+2E_8+2E_9+2E_{10}+3E_{11}+4E_{12}+4E_{13}+2E_{14})$.
\end{lemma}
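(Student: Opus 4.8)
The plan is to carry out the same explicit local computation that produced the earlier Lemmas \ref{E6Clemma1} and \ref{E7C1-integral}, now for the field $D$ on the surface $Y$ obtained from $R$ by resolving the two $A_1$-points $Q_1,Q_2$ and the isolated singularities of $D$. Two tools from Subsection \ref{derivation} drive everything. Once $D$ is written in a chart in reduced form $f\bigl(g\frac{\partial}{\partial x}+h\frac{\partial}{\partial y}\bigr)$ with $g,h$ sharing no common divisor, the coefficient of a prime divisor $Z$ in $(D)$ equals $\min\bigl(\mathrm{ord}_Z D(x),\,\mathrm{ord}_Z D(y)\bigr)$ (coprimality of $g,h$ forces $\min(\mathrm{ord}_Z g,\mathrm{ord}_Z h)=0$, and $D(x)=fg$, $D(y)=fh$); and $Z=\{z=0\}$ is integral exactly when the reduced field carries $(z)$ into itself, i.e. $\mathrm{ord}_Z D(z)$ exceeds the coefficient of $Z$ in $(D)$.

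First I would work in the chart $X_2=1,\ T\neq 0$ with coordinates $(x,y)$ and base parameter $s=S/T$, where $R$ is $y^2+ay+asx^2=0$ and hence $s=\frac{y(y+a)}{ax^2}$. A short calculation gives
$$D(s)=s,\qquad D(x)=\frac{s^2+1}{s},\qquad D(y)=sx^2,$$
whence $D^2=D$ is immediate, confirming that $D$ is $2$-closed of multiplicative type. Because $D(y)=sx^2$ vanishes along the fibre $\{s=0\}=E_1\cup E_2$, the components $E_1,E_2$ are integral; passing to the chart $S\neq 0$ that displays the double line $E_3$ over $(1,0)$ shows, by the same token, that $D$ is transverse to $E_3$, so $E_3$ is not integral, and the chart at infinity ($X_0=1$ or $X_1=1$) handles the $2$-section $F_0$. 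Taking the divisorwise minimum of the orders of $D(x)$ and $D(y)$ in these charts yields the part of $(D)$ carried by the curves already on $R$, namely $-(F_0+E_1+E_2+2E_3)$.

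Next I would resolve $Q_1,Q_2$ and the isolated singularities of $D$ in the order recorded in Figure \ref{E7-1}. At each blow-up I introduce a chart $x=u,\ y=uv$ (or the symmetric one) and transform $D$ by the chain rule, which in characteristic $2$ reads $\frac{\partial}{\partial x}\mapsto\frac{\partial}{\partial u}+\frac{v}{u}\frac{\partial}{\partial v}$ and $\frac{\partial}{\partial y}\mapsto\frac{1}{u}\frac{\partial}{\partial v}$, then re-reduce to coprime form. The power of the exceptional coordinate that must be extracted records the multiplicity of the new exceptional curve in $(D)$, while the vanishing or non-vanishing of the normal-derivative coefficient along it decides integrality. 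Running this through the curves $E_4,\dots,E_{14}$ accumulates the remaining multiplicities $2E_6+2E_7+2E_8+2E_9+2E_{10}+3E_{11}+4E_{12}+4E_{13}+2E_{14}$ (with $E_4,E_5$ picking up coefficient $0$) and the integral curves $E_6,E_7,E_8,E_9,E_{11}$; assembled with the $R$-level data this gives both (i) and (ii).

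The main obstacle is the bookkeeping across these successive blow-ups: at each stage one must choose the chart that actually contains the next centre, keep $D$ in genuinely coprime form — the subtlety being that a locus such as $\{x=0\}$ or $\{y=0\}$ is reducible on $R$, so that one must argue divisor by divisor rather than simply read off a denominator — and keep the integral and non-integral exceptional curves apart, since they enter $(D)$ with different multiplicities. As a final safeguard I would test the result against the companion canonical-divisor computation and the identity (\ref{euler}): the expected values $(D)^2=-12$ and $K_Y\cdot(D)=-4$, fed into $c_2(Y)=\deg\langle D\rangle-K_Y\cdot(D)-(D)^2$, force $\deg\langle D\rangle=0$, so any arithmetic error in the accumulated multiplicities would be caught here.
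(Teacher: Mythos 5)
Your proposal is correct and takes essentially the same route as the paper, which justifies this lemma only by the phrase "by direct calculations"; you are simply supplying the organization of that calculation. The explicit pieces you do write down check out ($D(s)=s$, $D(x)=(s^2+1)/s$, $D(y)=sx^2$, hence $D^2=D$; the order-of-vanishing criteria for the coefficient in $(D)$ and for integrality; the characteristic-$2$ chain rule under blow-up), and they reproduce the stated coefficients $-1$ on $F_0,E_1,E_2$ and $-2$ on $E_3$ together with the integrality of $F_0,E_1,E_2$ and non-integrality of $E_3$.
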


\begin{lemma}\label{E7C2-canonical}
{\rm (i)} $(D)^2 = -12$. 

{\rm (ii)} The canonical divisor $K_Y$ of $Y$ is given by
$K_{Y} = -(F_0+2E_3+E_6+E_7+2E_8+2E_9+2E_{10}+3E_{11}+4E_{12}+4E_{13}+2E_{14}).$

{\rm (iii)} $K_Y\cdot (D) = -4.$
\end{lemma}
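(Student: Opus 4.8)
The plan is to treat parts (i) and (iii) as evaluations of the intersection form on $Y$, once two ingredients are in hand: the explicit divisor $(D)$, already recorded in Lemma \ref{E7C2-integral} (ii), and the canonical divisor $K_Y$, which is exactly part (ii). Accordingly I would establish (ii) first, then read off the complete intersection matrix of the curves $F_0, E_1, \ldots, E_{14}$ from Figure \ref{E7-1}, and finally expand the two required products.

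For (ii), the surface $Y$ is a smooth rational surface obtained from the conic bundle $R$ of (\ref{E7Cconic}) by first resolving its two $A_1$-points $Q_1, Q_2$ and then carrying out the successive blow-ups described just before Figure \ref{E7-1}. I would first compute the canonical class of the minimal resolution $\tilde{R} \to R$: since resolving an $A_1$-singularity is crepant, $K_{\tilde R}$ is the pullback of $K_R$, and $K_R$ is determined by the conic-bundle structure $\pi : R \to {\bf P}^1$ (general fibre a smooth conic $\cong {\bf P}^1$, with the double fibre $2E_3$, the reducible fibre $E_1+E_2$, and the $2$-section $F_0$). Writing $\phi : Y \to \tilde R$ for the composite of the remaining blow-ups, the blow-up formula gives $K_Y = \phi^{*}K_{\tilde R} + \sum_i \mathcal{E}_i$, where the $\mathcal{E}_i$ are the total transforms of the exceptional curves; rewriting the right-hand side in terms of the proper transforms $F_0, E_1, \ldots, E_{14}$, with the multiplicities dictated by which of the existing curves each blow-up centre lies on, should yield precisely the stated expression for $K_Y$. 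Equivalently, since every curve in the configuration is smooth rational, one may determine the coefficients of $K_Y$ as a combination of these curves by imposing adjunction $K_Y \cdot C = -2 - C^2$ for each $C \in \{F_0, E_1, \ldots, E_{14}\}$ and solving the resulting linear system, using the self-intersection numbers displayed in Figure \ref{E7-1}.

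With $K_Y$ and the divisor $(D)$ of Lemma \ref{E7C2-integral} (ii) both expressed in the basis of configuration curves, parts (i) and (iii) become direct: I substitute these expressions and expand, using the self-intersection numbers of Figure \ref{E7-1} together with the adjacencies of its dual graph (each incidence contributing $+1$). The main obstacle is the bookkeeping through the infinitely near centres: one must track precisely which of $F_0, E_1, \ldots, E_{14}$ passes through each blown-up point and with what multiplicity, since this simultaneously fixes the self-intersection drops recorded in the figure, the coefficients appearing in $K_Y$, and hence the final products. As a global consistency check I would invoke the Euler-number formula (\ref{euler}): granting that $D$ is divisorial, so that $\langle D \rangle = 0$ (this follows exactly as in Lemma \ref{e6nonsingular}), one obtains $c_2(Y) = -K_Y\cdot(D) - (D)^2 = 4 + 12 = 16$, which matches the Euler number of $Y$ computed from the number of blow-ups performed on the rational surface $\tilde R$; this confirms the three asserted values simultaneously.
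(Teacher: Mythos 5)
Your proposal matches the paper's (unwritten) argument: the paper justifies this lemma only with the phrase ``by direct calculations,'' and the intended calculation is exactly what you describe --- $K_Y$ via the blow-up formula applied to the crepant resolution $\tilde R$ of the conic bundle (so that $E_4,E_5$ over the two $A_1$-points carry coefficient $0$), followed by expansion of $(D)^2$ and $K_Y\cdot(D)$ against the intersection data of Figure \ref{E7-1}. One caveat: your closing ``consistency check'' is circular as stated, because in this paper the divisoriality of $D$, i.e.\ $\langle D\rangle=0$, is itself \emph{deduced} from $(D)^2=-12$ and $K_Y\cdot(D)=-4$ via the Euler-number formula (\ref{euler}) --- that is precisely the Lemma \ref{e6nonsingular} argument you cite --- so it only becomes an independent confirmation if you instead verify by local computation that $D$ has no isolated singularities on $Y$.
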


\noindent
Now take the quotient $Y^{D}$ of $Y$ by $D$.  By using the same argument as in the proof of Lemma \ref{e6non-singular}, $Y^{D}$ is non-singular.
By Proposition \ref{insep}, we have the following configuration of curves in Figure \ref{E7-2}:

\begin{figure}[!htb]
 \begin{center}
  \includegraphics[width=150mm]{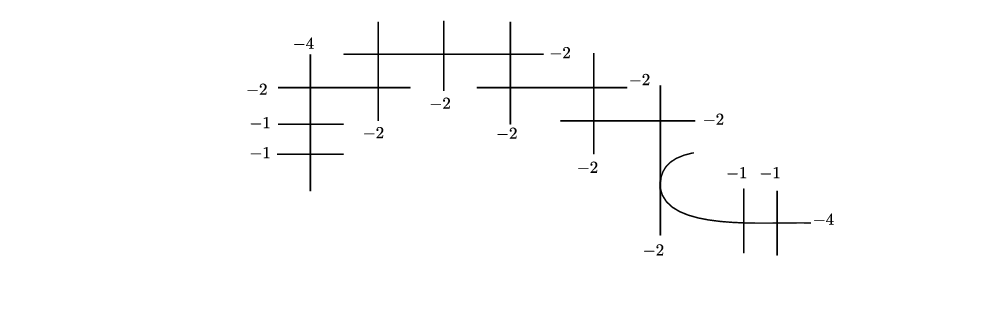}
 \end{center}
 \caption{}
 \label{E7-2}
\end{figure}

Let $X_a$ be the surface obtained by contracting the four exceptional curves.
The surface $X_a$ contains 11 $(-2)$-curves whose dual graph is given by Figure \ref{E7CDynkin}.
Note that any maximal parabolic subdiagram of this diagram is of type $\tilde{E}_7\oplus \tilde{A}_1$ 
or $\tilde{E}_8$.
Once we know that $X_a$ is an Enriques surface, a similar argument to the one in the previous subsection shows that the surface $X_a$ has a quasi-elliptic fibration of type $(2\III^*, 2\III)$ induced from the fiber space $\pi:R\to {\bf P}^1$
and a quasi-elliptic fibration of type $(\II^*)$.

\begin{theorem}\label{E7C2C-main}
The surfaces $\{X_a\}$ form a $1$-dimensional family of classical Enriques surfaces with the dual graph given in Figure {\rm \ref{E7CDynkin}}.
\end{theorem}
\begin{proof}
By using Lemmas \ref{E7C2-integral} and \ref{E7C2-canonical} and the same arguments as in the proofs of Theorems \ref{main} and \ref{main2}, 
$X_a$ is an Enriques surface with the dual graph given in Figure \ref{E7CDynkin}.
Since $X_{a}$ has a quasi-elliptic fibration with two double fibers, $X_{a}$ is classical (Proposition \ref{multi-fiber}).  
\end{proof}

\begin{theorem}\label{E7C2C-main2}
The automorphism group ${\rm Aut}(X_a)$ is ${\bf Z}/2{\bf Z}$ which is numerically trivial.
\end{theorem}
\begin{proof}
By a similar argument to the one in the case of Theorem \ref{E7C-main2}, we see 
$|\Aut_{nt}(X_a)|\leq 2$.  Since the dual graph of $(-2)$-curves on $X_a$ has no symmetries
(see Figure \ref{E7CDynkin}), we have $\Aut(X_a) = \Aut_{nt}(X_a)$.
Let $p$ be the quasi-elliptic fibration with singular fibers of type $(2\III^*, 2\III)$.
Since the Mordell-Weil group of the Jacobian fibration of $p$ is of order $2$ (see \cite{Ito}) and acts effectively on $X_{a}$, we have
$\Aut(X_{a}) \cong {\bf Z}/2{\bf Z}$.  
\end{proof}

\subsection{Supersingular case of type $\tilde{E_7}+\tilde{A_1}^{(1)}$}

In this subsection we give a construction of supersingular Enriques surfaces with the dual graph  of all 
$(-2)$-curves given in Figure \ref{E7SDynkin}.

Let $(X_0,X_1,X_2)\in {\bf P}^2$ and $(S,T)\in {\bf P}^1$ be homogeneous coordinates.
Consider the surface $R$ in ${\bf P}^2\times {\bf P}^1$ defined by
\begin{equation}\label{E7Sconic2}
S(X_0^2 +a^3X_2^2) + T(X_1^2+X_1X_2 + a^2X_0X_2) = 0 \ ( a \in k^*).
\end{equation}
Note that the projection to ${\bf P}^1$ defines a fiber space $\pi:R\to {\bf P}^1$ whose
general fiber is a non-singular conic.
The fiber over the point $(S,T)=(a^4,1)$ is a union of two lines denoted by $E_1, E_2$ and  
the fiber over the point $(S,T)=(1,0)$ is a double line denoted by $2E_3$.
The line defined by $X_2=0$ is a $2$-section, denoted by $F_0$, of the fiber space.

The surface $R$ has two rational double points $Q_i =((\alpha,\beta_i,1), (1,0))$ $(i=1,2)$
where 
$\alpha =\sqrt{a^3}$ and the $\beta_i$'s are the roots of the equation $y^2+y+a^3\sqrt{a}=0$.

Let $(x=X_0/X_2,\ y=X_1/X_2, s=S/T)$ be affine coordinates.
Define 
\begin{equation}\label{E7Sderivation}
D= (s^2+a)\frac{\partial}{\partial x} + (x^2 + a^2s^2)\frac{\partial}{\partial y}.
\end{equation}
Then $D^2=0$, that is, $D$ is 2-closed.
A direct calculation shows that $D$ has an isolated singularity at the intersection point of the 
$2$-section $F_0$ and the fiber over the point $(S,T)=(1,0)$. As in the case of the previous section, we blow up the two rational double points and the point of isolated singularity
of $D$ successively, and finally get a vector field without isolated singularities.
The configuration of curves is given in Figure \ref{E7S}.

\begin{figure}[!htb]
 \begin{center}
  \includegraphics[width=130mm]{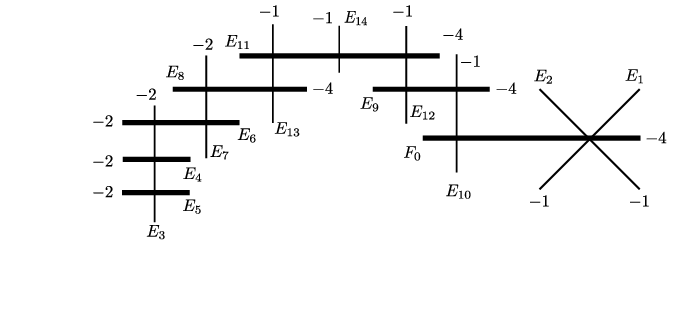}
 \end{center}
 \caption{}
 \label{E7S}
\end{figure}
Here we use the same symbols $F_0$, $E_1$, $E_2$, $E_3$ for the curves and their proper transforms, and the suffix $i$ of the other exceptional curve $E_i$ corresponds to the order of successive blow-ups.  

We denote by $Y$ the surface obtained by successive blow-ups.
By direct calculations, we have the following lemmas.

\begin{lemma}\label{E7S-integral}
{\rm (i)} The integral curves with respect to $D$ in Figure {\rm \ref{E7S}} are all horizontal curves {\rm (}thick lines{\rm )}.

{\rm (ii)} $(D) =-(F_0+4E_3+3E_4+3E_5+4E_6+2E_7+2E_8+2E_9+2E_{10}+3E_{11}+4E_{12}+4E_{13}+2E_{14})$.
\end{lemma}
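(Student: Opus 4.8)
The plan is to perform the stated direct calculations in two organized stages: an integrality analysis governing part (i), and a blow-up bookkeeping of the divisorial part for part (ii). The organizing identity is the transversality of $D$ to the conic fibration $\pi : R \to {\bf P}^1$. Writing \eqref{E7Sconic2} in the affine chart $X_2 = 1$, $s = S/T$ as $s(x^2 + a^3) + y^2 + y + a^2 x = 0$ and using \eqref{E7Sderivation}, implicit differentiation (in characteristic $2$) gives $D(s) = 1$; hence $d\pi(D)$ is the nowhere-zero field $\partial/\partial s$ on the base.

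For part (i) I would argue as follows. Since $d\pi(D)$ never vanishes, $D$ is nowhere tangent to a fiber of $\pi$; therefore no vertical curve can be integral, and this persists on the vertical exceptional curves produced by the blow-ups, where $s$ (or its reciprocal in the chart over $s=\infty$) remains a regular function with $D(s)=1$. It then remains to verify that each horizontal curve is integral. For the $2$-section $F_0 = \{X_2=0\}$ I pass to the chart $X_0=1$ with coordinates $(X_1/X_0,\,X_2/X_0)$, rewrite $D$ there, and check that the component of $D$ transverse to $\{X_2/X_0=0\}$ vanishes along $F_0$; the horizontal exceptional curves are treated by the same tangency check in the successive charts. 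Together these identify the integral curves with exactly the horizontal (thick) curves of Figure \ref{E7S}.

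For part (ii) I would first determine $(D)$ on $R$ itself, writing $D = f\,(g\,\partial_x + h\,\partial_y)$ with $(g,h)$ coprime on the charts covering $F_0$ and the fibre over $s=\infty$; this yields the coefficient $1$ along $F_0$ together with the initial contribution along the double fibre $E_3$ over $s=\infty$ (later augmented through the resolution of $Q_1,Q_2$ and the subsequent blow-ups). I then propagate $(D)$ through the blow-ups in the order fixed before Figure \ref{E7S}: the two $A_1$-points $Q_1,Q_2$ (which must first be resolved), followed by the isolated singularity of $D$ on $F_0$ and its infinitely near points. At each center I use the standard chart $x=u$, $y=uv$, where $\partial_x = \partial_u - (v/u)\,\partial_v$ and $\partial_y = (1/u)\,\partial_v$, substitute into the reduced field, and factor out the largest power $u^{e}$ of the exceptional parameter; the exponent $e$ is the coefficient of the new exceptional curve $E$ in $(D)$, and it is controlled by the vanishing order of the reduced field at the center together with whether $E$ is integral (an extra factor of $u$ appearing exactly when $E$ is non-integral, by the part-(i) criterion). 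Iterating over all the centers produces the thirteen coefficients in the asserted formula for $(D)$.

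The main obstacle is precisely this iterated bookkeeping: there are many blow-up centers, two of them singular points of $R$ requiring a preliminary resolution, and at each step one must correctly identify the reduced field, decide (via $D(s)=1$) whether the new exceptional curve is horizontal/integral or vertical/non-integral, and record the resulting multiplicity. To guard against arithmetic slips I would use the internal consistency check furnished by \eqref{euler}: since $Y$ is rational with $c_2(Y)=16$ and $D$ is divisorial ($\deg\langle D\rangle = 0$), the computed $(D)$ must satisfy $(D)^2 + K_Y\cdot(D) = -16$; any error in one of the coefficients would break this numerical identity, so checking it validates the whole tower of computations.
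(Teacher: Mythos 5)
The paper offers no written proof of this lemma beyond the phrase ``by direct calculations,'' so the only meaningful comparison is whether your plan would actually carry out that calculation correctly. Your part (ii) is fine: computing $(D)$ on the ambient surface, propagating it through each blow-up in the standard chart $x=u$, $y=uv$, and cross-checking the result against $c_2(Y)=\deg\langle D\rangle-K_Y\cdot(D)-(D)^2$ is exactly the methodology the authors themselves use (they invoke this Euler-number identity in Lemma \ref{e6nonsingular} and its analogues to conclude $\deg\langle D\rangle=0$), and your key identity $D(s)=1$ is correct: with $s=(y^2+y+a^2x)/(x^2+a^3)$ one indeed gets $D(s)=\bigl(a^2(s^2+a)+x^2+a^2s^2\bigr)/(x^2+a^3)=1$.

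The gap is in part (i). The field $\partial/\partial s$ is \emph{not} nowhere-zero on ${\bf P}^1$: in the coordinate $u=1/s$ at infinity one has $\partial/\partial s=u^2\,\partial/\partial u$, so $d\pi(D)$ vanishes to order $2$ at $s=\infty$, and correspondingly $D(u)=u^2$ vanishes identically along the fibre over $(S,T)=(1,0)$. Your transversality argument therefore yields no information about integrality there — and that is precisely where everything in Figure \ref{E7S} sits: the fibre over $(1,0)$ is the double line $2E_3$, both $A_1$-points $Q_1,Q_2$ lie on it, and the isolated singularity of $D$ is at $F_0\cap E_3$, so every exceptional curve $E_4,\dots,E_{14}$ lies over $s=\infty$. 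Concretely, if $w$ is a local equation of a component $C$ appearing with multiplicity $m$ in the total transform of that fibre, then $D(u)=D(w^m v)=mw^{m-1}vD(w)+w^mD(v)$, and for $m$ even (e.g.\ $m=2$ for $E_3$ itself) the first term dies in characteristic $2$, so $D(u)=u^2$ places no constraint on $D(w)$; one cannot conclude non-integrality (for $m$ odd the same identity actually forces $C$ to be integral, the opposite of what you want to show for the vertical curves). So the shortcut ``vertical $\Rightarrow$ non-integral because $D(s)=1$'' only disposes of the fibres over finite points ($E_1,E_2$ and the generic fibres); for $E_3$ and all the exceptional curves you must fall back on the same chart-by-chart tangency computation you reserve for the horizontal curves. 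The statement of the lemma is not in doubt, but as written your part (i) does not establish it for the curves that matter.
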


\begin{lemma}\label{E7S-canonical}
{\rm (i)} $(D)^2 = -12$. 

{\rm (ii)} The canonical divisor $K_{Y}$ of ${Y}$ is given by
$K_{Y} = -(F_0+2E_3+E_4+E_5+2E_6+2E_7+2E_8+2E_9+2E_{10}+3E_{11}+4E_{12}+4E_{13}+2E_{14}).$

{\rm (iii)} $K_{Y}\cdot (D) = -4.$
\end{lemma}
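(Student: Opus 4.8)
The plan is to establish (ii) first and then deduce (i) and (iii) from it by a short formal manipulation, so as to keep the explicit intersection bookkeeping to a minimum.

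For (ii) I would compute $K_Y$ by adjunction on the ambient space and then track the tower of blow-ups. Since $R$ is a hypersurface of bidegree $(2,1)$ in ${\bf P}^2\times{\bf P}^1$, adjunction gives $\omega_R\cong\calO_R(-1,-1)$, i.e. $K_R\sim -F_0-\Phi$, where $F_0=\{X_2=0\}\cap R$ is the $2$-section and $\Phi$ is a fibre of $\pi$; computing in the Chow ring of ${\bf P}^2\times{\bf P}^1$ one reads off $F_0^2=1$, $F_0\cdot\Phi=2$, $\Phi^2=0$, so that $K_R^2=(F_0+\Phi)^2=5$. The two singular points $Q_1,Q_2$ are of type $A_1$, hence canonical, so the minimal resolution leaves both $K$ and $K^2$ unchanged and the two $(-2)$-curves over $Q_1,Q_2$ do not enter the canonical class. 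It then remains to transport $-F_0-\Phi$ through the minimal resolution (where it stays equal to the canonical class) and through the subsequent blow-ups at the infinitely near smooth centres resolving the singularity of $D$: at each such blow-up $K\mapsto f^{*}K+E$, so $K_Y$ equals the total transform of $-F_0-\Phi$ plus, over all these blow-ups, the total transform of each exceptional curve. Rewriting every total transform as a proper transform plus the later exceptionals it meets, with multiplicities dictated by the chain of centres lying over the node of the double fibre $2E_3$ in Figure~\ref{E7S}, produces exactly the combination asserted in (ii); the vanishing of the coefficients of $E_1,E_2$ reflects that the reducible fibre is never a blow-up centre.

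For (i) and (iii) I would avoid the full $15\times 15$ Gram matrix by exploiting a relation that falls out of (ii): subtracting the divisor $(D)$ of Lemma~\ref{E7S-integral}(ii) from $K_Y$ term by term yields the clean identity $(D)=K_Y-2W$ with $W=E_3+E_4+E_5+E_6$. Hence $K_Y\cdot(D)=K_Y^2-2\,K_Y\cdot W$ and $(D)^2=K_Y^2-4\,K_Y\cdot W+4W^2$. Now $K_Y^2=-4$, since $K_R^2=5$, the $A_1$-resolutions leave $K^2$ unchanged, and each of the nine smooth-point blow-ups that resolve the singularity of $D$ drops $K^2$ by one; by Noether's formula this gives $c_2(Y)=12-K_Y^2=16$. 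It then remains only to compute the two numbers $K_Y\cdot W$ and $W^2$, which involve the four curves $E_3,\dots,E_6$ alone: using adjunction $K_Y\cdot E_i=-2-E_i^2$ together with their self-intersections and adjacencies in Figure~\ref{E7S}, a direct check gives $K_Y\cdot W=0$ and $W^2=-2$. Substituting yields $K_Y\cdot(D)=-4$ and $(D)^2=-4+4(-2)=-12$, as claimed.

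The only genuine difficulty is the bookkeeping in (ii): because $D$ has a pole along $F_0$ and an isolated singularity where $F_0$ meets the double fibre, resolving it forces a chain of infinitely near centres, and one must record exactly which exceptional curve each successive centre lies on in order to get the multiplicities in both $(D)$ and $K_Y$ right — an error there would corrupt all three assertions simultaneously. The built-in consistency check is the Euler-number formula~(\ref{euler}): with $c_2(Y)=16$ the values $(D)^2=-12$ and $K_Y\cdot(D)=-4$ force $\deg\langle D\rangle=c_2(Y)+K_Y\cdot(D)+(D)^2=0$, so that $D$ is divisorial and $Y^{D}$ nonsingular, which is precisely what the construction needs in the next step; moreover these are the same universal values found in the parallel Lemmas~\ref{canonical3}, \ref{E6Clemma2}, \ref{e8-canonical}, \ref{e8C-canonical}, \ref{E7C1-canonical} and \ref{E7C2-canonical}, which gives strong confidence in the computation.
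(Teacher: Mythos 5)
Your proposal is correct, and it is worth saying up front that the paper gives no argument for this lemma beyond the phrase ``by direct calculations,'' so any explicit derivation goes beyond what is printed. For part (ii) you are doing what that phrase implicitly means --- fixing the canonical class of the ambient conic bundle and pushing it through the resolution --- but you organize the starting point cleanly via adjunction in ${\bf P}^2\times{\bf P}^1$: the bidegree $(2,1)$ hypersurface $R$ has $\omega_R\cong\calO_R(-1,-1)$, hence $K_R\sim -F_0-\Phi$ and $K_R^2=5$, and the two $A_1$-points are crepant. The residual multiplicity bookkeeping over the chain of infinitely near centres is the real content of the lemma, and you have correctly isolated it; it is exactly the same bookkeeping already needed for Lemma \ref{E7S-integral}(ii), which you legitimately take as given. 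Where you genuinely depart from a brute-force computation is in (i) and (iii): rather than expanding $(D)^2$ and $K_Y\cdot(D)$ against the Gram matrix of the thirteen curves in the support of $(D)$, you note that the two divisor formulas differ by $2W$ with $W=E_3+E_4+E_5+E_6$ and reduce everything to $K_Y^2=-4$, $K_Y\cdot W=0$ and $W^2=-2$. These three numbers do come out as you claim: there are exactly nine smooth-point blow-ups ($E_6,\dots,E_{14}$) after the crepant resolution, so $K_Y^2=5-9=-4$; the four curves in $W$ are $(-2)$-curves, so $K_Y\cdot W=0$ by adjunction; and they form a tree with three edges (one curve meeting the other three, which is visible in Figure \ref{E7S2}, where by Proposition \ref{insep} their images are precisely the four curves contracted to reach $X_a$), so $W^2=-8+6=-2$. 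The identity $(D)=K_Y-2W$ is moreover no accident: it is the shadow of $K_Y=\pi^*K_{Y^D}+(D)$ combined with $K_{X_a}\equiv 0$, the very relation the paper exploits in the opposite direction in Section \ref{sec3}; and your closing consistency check via (\ref{euler}), forcing $\deg\langle D\rangle=0$, is the same one the paper runs in Lemma \ref{e6nonsingular}. In short: same route as the paper for (ii), a more structural and less error-prone route for (i) and (iii).
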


\noindent
Now take the quotient $Y^{D}$ of $Y$ by $D$.  By using the same argument as in the proof of Lemma \ref{e6non-singular}, $Y$ is divisorial and hence $Y^{D}$ is non-singular.
By Proposition \ref{insep}, we have a configuration of curves as in Figure \ref{E7S2}.

\begin{figure}[!htb]
 \begin{center}
  \includegraphics[width=130mm]{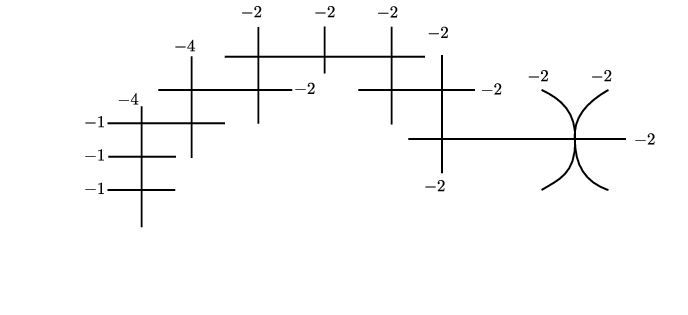}
 \end{center}
 \caption{}
 \label{E7S2}
\end{figure}

Let $X_a$ be the surface obtained by contracting the three exceptional curves and the curve meeting the three exceptional curves.  
The surface $X_a$ contains 11 $(-2)$-curves whose dual graph is given by Figure \ref{E7SDynkin}.
Recall that any maximal parabolic subdiagram of this diagram is of type $\tilde{E}_7\oplus \tilde{A}_1$ or  $\tilde{E}_8$.  By the same argument as in the discussion preceding Theorem \ref{E7C-main2}, we can deduce that the surface $X_a$ contains a unique quasi-elliptic fibration of type $(2\III^*, \III)$  induced from the fiber space $\pi:R\to {\bf P}^1$ and two quasi-elliptic fibrations of type $(\II^*)$ once we know that $X_a$ is an Enriques surface.

\begin{theorem}\label{E7S-main}
The surfaces $\{X_a\}$ are supersingular Enriques surfaces with the dual graph given in Figure {\rm \ref{E7SDynkin}}.
\end{theorem}
\begin{proof}
By using Lemmas \ref{E7S-integral} and \ref{E7S-canonical} and the same arguments as in the proofs of Theorems \ref{main} and \ref{main2}, 
$X_a$ is an Enriques surface with the dual graph given in Figure \ref{E7SDynkin}.  By construction, the normalization of the canonical cover 
has a rational double point of type $D_4$.  It now follows from Lemma \ref{SingNormal} that $X_a$ is supersingular.  
\end{proof}

In contrast to the previous cases, it is not possible to determine ${\rm Aut}(X_{a})$ using only the dual graph of $(-2)$-curves.

\begin{theorem}\label{E7S-mainAut}
If $a^7 \neq 1$, then the automorphism group ${\rm Aut}(X_{a})$ is ${\bf Z}/2{\bf Z}$ which is not numerically trivial. If $a^7 = 1$, then the automorphism group ${\rm Aut}(X_{a})$ is ${\bf Z}/14{\bf Z}$ 
and ${\rm Aut}_{nt}(X_{a})$ is ${\bf Z}/7{\bf Z}$.
\end{theorem}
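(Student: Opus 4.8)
The plan is to follow the strategy of Theorems \ref{main2aut} and \ref{E8S-main2}: first produce an explicit affine equation for $X_a$, and then reduce the determination of $\Aut(X_a)$ to solving a system of polynomial identities among the coefficients of an automorphism written in the normal form (\ref{automorphism}). As a preliminary bookkeeping step I would record that the symmetry group of the dual graph $\Gamma$ in Figure \ref{E7SDynkin} is ${\bf Z}/2{\bf Z}$ (the reflection of the $\tilde{E}_7$ part), so that $\Aut(X_a)/\Aut_{nt}(X_a)$ embeds into ${\bf Z}/2{\bf Z}$; since $X_a$ is supersingular, $\Aut_{ct}(X_a)=\Aut_{nt}(X_a)$ by Definition \ref{cohnumtrivial}. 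The problem therefore splits into computing $\Aut_{nt}(X_a)$ and deciding whether the ${\bf Z}/2{\bf Z}$ quotient is realized.

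To obtain the equation I would compute the ring of $D$-invariants of the conic bundle (\ref{E7Sconic2}) equipped with the vector field (\ref{E7Sderivation}): the functions $x^2,\ y^2,\ s$ are invariant, and, $D$ being additive, the combination $(s^2+a)y + x^3 + a^2 s^2 x$ is a further invariant; eliminating variables and normalizing as in Examples \ref{EX1}--\ref{EX3} yields the form $y^2+y=tx^4+ax+t^7$ of Subsection \ref{LIST}(5)(i). This presents one of the quasi-elliptic fibrations of type $(\II^*)$, with double fiber over $t=\infty$ and curve of cusps at $x=\infty$. Since the two fibrations of type $\II^*$ are interchanged by the symmetry of $\Gamma$, after possibly composing with the involution below I may assume $\sigma$ preserves this fibration, its curve of cusps and the fiber at $t=\infty$, so that $\sigma$ has the form (\ref{automorphism}).

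Next I would set $A=k[t,x,y]/(y^2+y+tx^4+ax+t^7)$ and substitute (\ref{automorphism}) into the defining relation. Using Lemma \ref{trivial} and the freeness of $A$ over $k[t,x]$, the coefficient of $y$ gives $e_1^2+e_1=0$, hence $e_1=1$ (the Artin--Schreier term $+y$ forces this, in contrast to the $\tilde{E}_8$ case); matching the remaining terms bounds the degrees of $d_2(t)$ and $e_2(t,x)$ and, from the $t^7$-term, yields $c_2=0$ and $c_1^7=1$. The translation $\sigma_0\colon y\mapsto y+1$ (with $t,x$ fixed) is always a solution and realizes the nontrivial symmetry of $\Gamma$, so it is not numerically trivial and produces the ${\bf Z}/2{\bf Z}$ quotient. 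The candidates with $c_1$ of order $7$ are base scalings $t\mapsto \zeta t$ fixing $0,\infty$; once they lift to $X_a$ they fix every $(-2)$-curve and hence are numerically trivial, so they contribute exactly $\Aut_{nt}(X_a)$.

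The main obstacle is to pin down precisely when such an order-$7$ base scaling lifts. After the coordinate change $t\mapsto t+a^4,\ y\mapsto y+a^2x^2+ax$ (which cancels the linear term $ax$) the equation becomes $y^2+y=tx^4+(t+a^4)^7$, and lifting $t\mapsto \zeta t$ amounts to absorbing the difference $(\zeta t+a^4)^7+(t+a^4)^7$ into an Artin--Schreier modification $y\mapsto y+e_2(t)$ together with a change $x\mapsto x+q(t)$ feeding the term $tx^4$. Solving this identity coefficient by coefficient in $t$ is the heart of the argument, and I expect it to be solvable in the $\sqrt[4]{\zeta}$-coefficients exactly when $a^7=1$, reproducing the explicit $\sigma$ of Subsection \ref{LIST}(5)(i). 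Assembling the cases then finishes the proof: if $a^7\neq 1$ the only solutions are $1$ and $\sigma_0$, so $\Aut(X_a)\cong{\bf Z}/2{\bf Z}$ with $\Aut_{nt}(X_a)=\{1\}$; if $a^7=1$ the order-$7$ scalings give $\Aut_{nt}(X_a)\cong{\bf Z}/7{\bf Z}$ and, together with $\sigma_0$, generate $\Aut(X_a)\cong{\bf Z}/14{\bf Z}$.
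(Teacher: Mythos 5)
Your overall strategy coincides with the paper's: compute the invariant field of $D$ to reach the normal form $y^2+y=tx^4+ax+t^7$, rewrite it as $y^2+y=tx^4+(t+a^4)^7$, put an automorphism into the shape (\ref{automorphism}), and solve the resulting coefficient identities; the dichotomy $a^7=1$ versus $a^7\neq 1$ does fall out of that system exactly as you predict, and the order-$7$ scalings are numerically trivial simply because the dual graph has no odd-order symmetry. Two slips in your setup, neither fatal: on the surface (\ref{E7Sconic2}) one computes $D(s)=1$, not $0$ (since $s$ is a function of $x,y$ via the defining equation), so the invariants are $T=s^2$, $u=x+as+s^3$, $v=y+sx^2+a^2s^3$ rather than $x^2,y^2,s$; and the fibration exhibited by the resulting equation is the \emph{unique} quasi-elliptic fibration with fibers $(2\III^*,\III)$ (double fiber of type $\III^*$ over $t=\infty$), not one of the two $(\II^*)$ fibrations --- which in fact makes your reduction easier, since every automorphism automatically preserves it and no composition with the involution is needed before invoking (\ref{automorphism}).

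The genuine gap is the one-line assertion that $\sigma_0\colon y\mapsto y+1$ ``realizes the nontrivial symmetry of $\Gamma$, so it is not numerically trivial.'' Nothing in the coefficient computation tells you how $\sigma_0$ permutes the eleven $(-2)$-curves, and this is precisely what the claims ``not numerically trivial'' (for $a^7\neq 1$) and ``$\Aut_{nt}(X_a)\cong{\bf Z}/7{\bf Z}$ rather than ${\bf Z}/14{\bf Z}$'' (for $a^7=1$) require. The paper devotes its closing paragraph to this point and the argument is not formal: assuming $g$ numerically trivial, one first uses the conductrix table to see that a fibration $p_2$ with a fiber of type $\II^*$ is quasi-elliptic with that fiber simple, so its double fiber $C$ is an irreducible cuspidal curve on which $g$ fixes the cusp and the intersection with the curve of cusps of $p_2$ (a component of the $\III^*$ fiber); hence $g$ fixes $C$ pointwise, hence fixes pointwise every general fiber of the $(2\III^*,\III)$ fibration (cusp plus intersection with $C$), hence $g=1$. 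Alternatively one could identify $\sigma_0$ with translation by the nontrivial Mordell--Weil element of the Jacobian of the $(2\III^*,\III)$ fibration and check that this translation moves a $2$-section, as is done for other types in the paper. Either way, some such argument must be supplied; without it your proposal does not establish the ``not numerically trivial'' half of the statement.
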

\begin{proof}
We consider the vector field (\ref{E7Sderivation}), and we set 
$T = s^2$, $u = x + as + s^3$ and $v = y + sx^2 + a^2s^3$. Here,
$s =(y^2 + y + a^2x)/(x^2 + a^3)$ by (\ref{E7Sconic2}).
Then, we have $D(T) =0$, $D(u) = 0$, $D(v) = 0$ with the relation
\begin{equation}\label{E7A1SSequation}
v^2 + v= Tu^4 + a^2u + T^7 
\end{equation} 
and the quotient surface 
${\bf P}^2$ by $D$ is
birational to the surface defined by the equation (\ref{E7A1SSequation}) 
which is birational to our Enriques surface. 
For the sake of simplicity, we replace $a^2$ by $a$ and consider the change of coordinates
with new coordinates $x, y, t$
$$
T = t + a^4,\quad v = y + a^2x^2  + ax,\quad u = x.
$$
Then, the equation becomes 
\begin{equation}\label{E7SSequation2}
y^2 + y = tx^4 + (t + a^4)^7.
\end{equation} 
This equation gives a non-singular affine chart of a genus one fibration 
$f : X_a \to {\bf P}^1$ which is a quasi-elliptic fibration (cf. Subsection \ref{VectFieldsQueen}, Equation (2)) with singular fibers of type $(2\III^*, \III)$
by construction. Set
$$
A = k[t, x, y]/(y^2 + y +  tx^4 + (t + a^4)^7)
$$ 
and let $\sigma$ be an automorphism of
our Enriques surface. The double fiber of $f$, denoted by $2F_{\infty}$, of type $\III^*$ exists over the point defined by $t = \infty$.
Since $\sigma$ preserves the diagram of $(-2)$-curves, $\sigma$ preserves
$2F_{\infty}$. Hence $\sigma$ preserves the structure of this quasi-elliptic surface, and 
has the form in (\ref{automorphism}) (cf. Remark \ref{equ-automorphism}).
Moreover, this quasi-elliptic surface has a singular fiber over the point defined by $t = 0$
and $\sigma$ preserves also the singular fiber. 
Therefore, we have $\sigma^{*}(t) = c_1t$. 

Together with the equation (\ref{E7SSequation2}), we have an identity
$$
\begin{array}{l}
e_1(t,x)^2(y +  tx^4 + (t + a^4)^7) + e_2(t,x)^2 + 
(e_1(t,x)y +e_2(t,x)) \\
= c_1t (d_1(t)x + d_2(t))^4  + (c_1t + a^4)^7.
\end{array}
$$
$A$ is a free $k[t, x]$-module, and $1$ and $y$ are linearly independent over $k[t, x]$.
Taking the coefficient of $y$,  we have $e_1(t,x)^2 + e_1(t,x) = 0$.
Since $e_1(t,x) \neq 0$, we have $e_1(t,x) = 1$.
Hence we have
$$
\begin{array}{l}
tx^4 + (t + a^4)^7 + e_2(t,x)^2 +e_2(t,x) \\
= c_1t (d_1(t)x + d_2(t))^4  + (c_1t + a^4)^7.
\end{array}
$$
As a polynomial of $x$, if $e_2(t,x)$ has a term of degree greater than or equal to 3,
then $e_2(t,x)^2$ has a term greater than or equal to 6. We cannot kill this term in the equation.
By the equation, we know that $e_2(t,x)$ doesn't have terms of $x$ of odd degree.
Therefore, we can put $e_2(t,x) = a_0(t) + a_2(t)x^2$ with $a_0(t), a_2(t) \in k[t]$. 
We take the coefficients of $x^4$. Then, we have
$t + a_2(t)^2 +  c_1t d_1(t)^4= 0$. Hence we have two equations
$1 + c_1d_1(t)^4 = 0$ and $a_2(t)^2 = 0$. Thus we have $a_2(t)= 0$ and 
$d_1(t) = \frac{1}{\sqrt[4]{c_1}}$. The equation becomes 
$(t + a^4)^7 + a_0(t)^2 + a_0(t)  = c_1td_2(t)^4  + (c_1t + a^4)^7$.
Put $\deg~ d_2(t) = \ell$. Suppose $\ell \geq 2$. Then, the right-hand-side has an odd term
whose degree is equal to $4\ell + 1 \geq 9$. Therefore, the left-hand-side must have 
an odd term which is of degree $4\ell + 1$. This means $\deg~ a_0(t) = 4\ell + 1$.
However, in the equation we cannot kill the term of degree $8\ell + 2$ which comes from 
$a_0(t)^2$. Therefore, we can put $d_2(t) = b_0 + b_1t$ with $b_0, b_1 \in k$.
Then, the equation becomes
$$
(t + a^4)^7 + a_0(t)^2 + a_0(t)  
= c_1b_0^4t +  c_1b_1^4t^5  + (c_1t + a^4)^7
$$
If $\deg ~a_0(t) \geq 4$, we cannot kill the term of degree greater than or equal to 8
in the equation which comes from $a_0(t)^2$. Therefore, we can put 
$a_0(t) = \alpha_0 + \alpha_1t + \alpha_2t^2 + \alpha_3t^3$.
Then, we have equations:
$$
\begin{array}{l}
1 = c_1^7,\quad a^4 + \alpha_3^2 = c_1^6a^4,\quad a^8 = c_1b_1^4 + c_1^5a^8, 
a^{12} + \alpha_2^2 = c_1^4a^{12}, \\
a^{16} + \alpha_3 = c_1^3a^{16},\quad 
a^{20} + \alpha_1^2 +  \alpha_2 = c_1^2a^{20}, \\
a^{24} + \alpha_1 = c_1b_0^4 + c_1a^{24},\quad 
a^{28} + \alpha_0^2 + \alpha_0= a^{28}.
\end{array}
$$
Assume $a^7 \neq 1$. Since $\alpha_3 = (c_1^3 + 1)a^2 = (c_1^3 + 1)a^{16}$,
we have $(c_1^3 + 1)a^2(a^7 + 1)^2 = 0$. By $a^7 \neq 1$ and $a \neq 0$, we have
$c_1^3 = 1$. Since $1 = c_1^7$, we have $c_1 = 1$. Therefore, we have
$\alpha_1 =  \alpha_2 = \alpha_3 = 0$, $b_0 = b_1 = 0$, and $\alpha_0 = 1$ or $0$. 
Therefore,  we see that $\sigma$ is given by either
$t \mapsto t,~x \mapsto x, ~ y \mapsto y + 1$ or the identity.
Hence, we have ${\rm Aut}(X_a) \cong {\bf Z}/2{\bf Z}$ if $a^7 \neq 1$.
Now, assume $a^7 = 1$. By $c_1^7 = 1$, $c_1$ is a seventh root of unity.
We denote by $\zeta$ a primitive seventh root of unity.
Then we have a solution
$$
\begin{array}{l}
c_1 = \zeta, \alpha_1 =  0,\quad  \alpha_2 = (1 + \zeta^2)a^6,\quad \alpha_3 = (1 + \zeta^3)a^2, \\
b_0 = \frac{(\zeta^2 + 1)a^6}{\zeta^2},\quad  
b_1 = \frac{(\zeta^3 + 1)a^2}{{\zeta}^2}.
\end{array}
$$
We have also $\alpha_0 = 1$ or $0$. Using this data, we have an automorphism $\sigma$
which is defined by
$$
\begin{array}{rcl}
t & \mapsto &\zeta t\\
x &\mapsto & \frac{1}{\zeta^2}x + \frac{(\zeta^2 + 1)}{\zeta^2}a^6 
+ \frac{(\zeta^3 + 1)}{\zeta^2}a^2t\\
y & \mapsto & y + 1 + (1 + \zeta^2)a^6t^2 + (1 + \zeta^3)a^2t^3.
\end{array}
$$
This $\sigma$ is of order 14, and by our argument the automorphism group is generated by $\sigma$.
This means ${\rm Aut}(X_a) \cong {\bf Z}/14{\bf Z}$ if $a^7 = 1$. 

Finally, to show that ${\rm Aut}(X_a)/{\rm Aut}_{nt}(X_a) \cong {\bf Z}/2{\bf Z}$, it suffices to show that $X_a$ does not admit any numerically trivial involutions, since the symmetry group of the dual graph of $(-2)$-curves is ${\bf Z}/ 2{\bf Z}$. This is similar to the argument in the proof of Theorem \ref{E7C-main2}. By the discussion preceding  Theorem \ref{E7S-main}, we know that all genus one fibrations on $X_{a}$ are quasi-elliptic. Let $p_1$ be the quasi-elliptic fibration of type $(2\III^*,\III)$, let $p_2$ (resp. $p_3$) be a quasi-elliptic fibration of type $(\II^*)$ with a cuspidal double fiber $C_2$ (resp. $C_3$), and let $g$ be a numerically trivial involution. Note that $C_2.C_3 = 1$, as can be read off from the dual graph in Figure \ref{E7SDynkin}. Then, $g$ preserves $C_2$ and $C_3$ and fixes the cusps as well as the intersection $C_2 \cap C_3$ (which is distinct from the cusps since $C_2.C_3 = 1$). Hence, $g$ fixes $C_2$ and $C_3$ pointwise by Lemma \ref{genus1auto} (here is where we use that $g$ has order $2$). Thus, for a general fiber $F$ of $p_1$, $g$ fixes the cusp of $F$, $F \cap C_2$, and $F \cap C_3$, hence it fixes $F$ pointwise by Lemma \ref{genus1auto}. This implies that $g$ is trivial, which shows that there is no numerically trivial involution on $X_a$.
\end{proof}

\begin{remark}
The numerically trivial automorphism $\sigma$ of order $7$ found here leads to one of the exceptions in \cite{DM}.
\end{remark}

\begin{theorem}\label{E7S-main2}
The surfaces $\{X_a\}$ form a $1$-dimensional non-isotrivial family of supersingular Enriques surfaces with the dual graph given in Figure {\rm \ref{E7SDynkin}}.
\end{theorem}
\begin{proof}
It follows from Theorem \ref{E7S-mainAut} that not all $\{X_a\}$ are isomorphic. Therefore, the Theorem of Matsusaka and Mumford \cite[Theorem 1]{MM} implies that
the family $\{X_a\}$ is non-isotrivial.
\end{proof}

\section{Enriques surfaces of type ${\tilde D_8}$}\label{sec7}

In this section we give a construction of Enriques surfaces with the following dual graph of all 
$(-2)$-curves given in Figure \ref{D8Dynkin}.

\begin{figure}[htbp]
\centerline{
\xy
(-20,25)*{};
@={(-10,10),(0,10),(10,10),(20,10),(30,10),(40,10),(50,10),(10,20),(60,10),(50,20)}@@{*{\bullet}};
(-10,10)*{};(60,10)*{}**\dir{-};
(10,10)*{};(10,20)*{}**\dir{-};
(50,10)*{};(50,20)*{}**\dir{-};
\endxy
}
 \caption{}
 \label{D8Dynkin}
\end{figure}

\subsection{Supersingular case}

Let $(x,y)$ be affine coordinates of ${\bf A}^2 \subset {\bf P}^2$.
Consider a rational vector field $D$ defined by
\begin{equation}\label{D8S}
D= D_a= {1\over x^5} \left(x(x^4+x^2 + y^6)\frac{\partial}{\partial x} + (ax^6 + y(x^4 +x^2 + y^6))\frac{\partial}{\partial y}\right)
\end{equation}
where $a\in k^*.$ 
Then $D^2 = 0$, that is, $D$ is 2-closed.
Note that $D$ has poles of order 5 along the line $\ell$ defined by $x=0$, and this line is integral.  
We see that $D$ has a unique isolated singularity $(x,y)=(0,0)$.
First, we blow up at the point $(0,0)$.  Then we see that the exceptional curve is not integral and
the induced vector field has poles of order 2 along the exceptional curve.
Moreover, the induced vector field has a unique isolated singularity at the intersection of the proper transform of $\ell$ and the exceptional curve.
Continue this process until the induced vector field has no isolated singularities.
The final configuration of curves is given in Figure \ref{D8-1}.  Here $F_0$ is the proper transform of
$\ell$ and the suffix $i$ of the exceptional curve $E_i$ corresponds to the order of successive blow-ups.
\begin{figure}[!htb]
 \begin{center}
  \includegraphics[width=140mm]{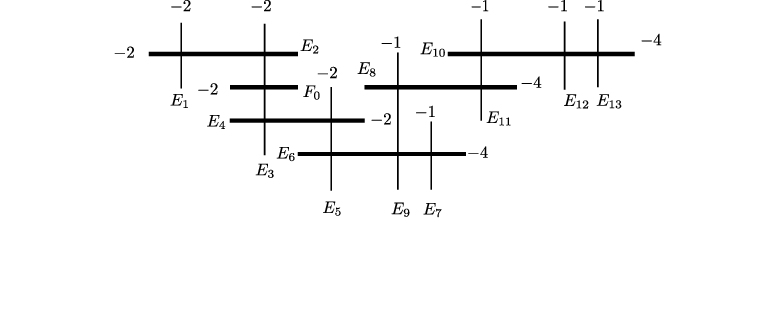}
 \end{center}
 \caption{}
 \label{D8-1}
\end{figure}

We denote by $Y$ the surface obtained by this process.  
Also, abusing notation, we denote by $D$ the induced vector field on $Y$.
By direct calculations, we have the following lemmas.

\begin{lemma}\label{D8S-integral}
{\rm (i)} The integral curves with respect to $D$ in Figure {\rm \ref{D8-1}} are all horizontal curves {\rm (}thick lines{\rm )}.

{\rm (ii)} $(D) =-(5F_0+2E_1+6E_2+8E_3+7E_4+4E_5+3E_6+2E_7+2E_8+4E_9+E_{10}+2E_{11})$.
\end{lemma}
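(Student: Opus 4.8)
The plan is to read off both parts by a local, chart-by-chart analysis of $D$ carried through the chain of eleven blow-ups $\psi : Y \to {\bf P}^2$, exactly in the style of the supersingular $\tilde{E_8}$ case (Lemma \ref{e8-integral}). Two elementary tools are used repeatedly. First, to find the coefficient of a prime divisor in $(D)$, I write $D$ locally as $f\cdot(g\frac{\partial}{\partial x}+h\frac{\partial}{\partial y})$ with $g,h$ having no common factor; the order of $f$ along the divisor is then its coefficient in $(D)$. Second, a curve $C$ is integral precisely when the reduced field $g\frac{\partial}{\partial x}+h\frac{\partial}{\partial y}$ is tangent to $C$, i.e. a local equation $F$ of $C$ divides $g\frac{\partial F}{\partial x}+h\frac{\partial F}{\partial y}$.

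On the affine chart ${\bf A}^2=\{(x,y)\}$ the bracketed coefficients of $D$ are $P=x(x^4+x^2+y^6)$ and $Q=ax^6+y(x^4+x^2+y^6)$. Restricting to $x=0$ makes $P$ vanish while $Q$ becomes $y^7\not\equiv 0$, so $P,Q$ share no common factor; hence the only divisorial contribution on this chart is the pole of order $5$ from the prefactor $x^{-5}$, giving the coefficient $-5$ of $F_0$, and since the reduced field equals $y^7\frac{\partial}{\partial y}$ along $\{x=0\}$ the line $\ell$ is tangent to $D$, i.e. integral. I would then verify that the line at infinity contributes trivially to $(D)$, so that every remaining component of $(D)$ is exceptional.

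The heart of the argument is the inductive blow-up bookkeeping. At each stage the center is the unique isolated singularity of the current field; in one chart $x=u,\ y=uv$ the transform is
$$
\psi^{*}D = P(u,uv)\frac{\partial}{\partial u} + \frac{1}{u}\bigl(Q(u,uv)-vP(u,uv)\bigr)\frac{\partial}{\partial v},
$$
and one factors out the common divisorial part to obtain the coefficient of the new curve $E_i$, tests its integrality by tangency, and locates the next center. Remarkably the combination $Q-vP$ collapses dramatically (already at the first blow-up $Q(u,uv)-vP(u,uv)=au^6$), which is what keeps the computation tractable; this reproduces the coefficient $-2$ of $E_1$, shows $E_1$ is not integral, and puts the next singularity at $E_1\cap F_0$ (one must pass to the second chart $x=\xi\eta,\ y=\eta$, where the same calculation confirms the surviving coefficient $-5$ of $\ell$). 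Since blow-ups preserve the coefficients of proper transforms, iterating produces the coefficients $2,6,8,7,4,3,2,2,4,1,2$ of $E_1,\dots,E_{11}$ one at a time, together with the integrality marking that constitutes part (i).

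The main obstacle is precisely this eleven-fold bookkeeping: the pole orders jump non-monotonically ($5,2,6,8,7,\dots$), and at several centers one must choose the correct chart and carefully separate the divisorial factor from the reduced field, since a wrong choice corrupts both the exceptional coefficient and the position of the next singularity. To guard against arithmetic slips I would cross-check the final divisor against the independent values $(D)^2=-12$ and $K_Y\cdot(D)=-4$ via the Euler-number identity \eqref{euler}, which must force $\deg\langle D\rangle=0$ so that $Y^{D}$ is nonsingular; an error in any single coefficient would generically break one of these relations.
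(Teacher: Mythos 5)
Your proposal is correct and follows essentially the same route as the paper, which offers no written argument beyond ``by direct calculations'': one carries the Rudakov--Shafarevich local form $D=f(g\frac{\partial}{\partial x}+h\frac{\partial}{\partial y})$ through the eleven blow-ups, reading off pole orders for (ii) and tangency for (i), and your first steps (the coefficient $-5$ of $F_0$, the collapse $Q(u,uv)-vP(u,uv)=au^6$, the coefficient $-2$ and non-integrality of $E_1$, and the location of the next center at $F_0\cap E_1$ in the chart $x=\xi\eta$, $y=\eta$) all check out, as does the sanity check against $(D)^2=-12$ and $K_Y\cdot(D)=-4$. One small repairable slip: restricting to $x=0$ only rules out $x$ as a common factor of $P$ and $Q$, and since in characteristic $2$ one has $x^4+x^2+y^6=(x^2+x+y^3)^2$, you must also observe that the irreducible factor $x^2+x+y^3$ of $P$ does not divide $Q$ (it would have to divide $ax^6$), which is immediate but not implied by your restriction test.
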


\begin{lemma}\label{D8S-canonical}
{\rm (i)} $(D)^2 = -12$. 

{\rm (ii)} The canonical divisor $K_Y$ of $Y$ is given by
$K_Y= -(3F_0+2E_1+4E_2+6E_3+5E_4+4E_5+3E_6+2E_7+2E_8+4E_9+E_{10}+2E_{11}).$

{\rm (iii)} $K_Y \cdot (D) = -4.$
\end{lemma}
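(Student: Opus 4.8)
The plan is to prove all three assertions by a single direct computation on $Y$, taking as given the explicit blow-up tower $\psi\colon Y\to {\bf P}^2$ recorded in Figure \ref{D8-1} together with the formula for the divisorial part $(D)$ from Lemma \ref{D8S-integral}. The first step is to make the intersection pairing on $Y$ completely explicit. Since $Y$ is obtained from ${\bf P}^2$ by a sequence of point blow-ups, I would read off from the construction, for each exceptional curve $E_i$ and for the proper transform $F_0$ of the line $\ell=\{x=0\}$, its self-intersection and its intersections with the other curves: a curve $E_i$ produced by blowing up a point lying on (the proper transforms of) certain earlier curves meets exactly those curves, and the self-intersection of any curve drops by $1$ each time it passes through a later center. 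This yields the full intersection matrix of $F_0,E_1,\dots,E_{11}$ (together with the remaining exceptional curves of $\psi$, which enter $(D)$ with coefficient zero). All curves involved are smooth rational.

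For part (ii) I would compute $K_Y$ from the iterated blow-up formula $K_Y=\psi^{*}K_{{\bf P}^2}+\sum_i \mathcal E_i$, where $\mathcal E_i$ denotes the total transform of the $i$-th exceptional curve and $K_{{\bf P}^2}=-3\ell$, and then re-express everything in terms of the proper transforms $F_0,E_1,\dots,E_{11}$ by the standard infinitely-near bookkeeping (each total transform is the corresponding proper transform plus the total transforms of the exceptional curves of all later blow-ups centered on it). Collecting coefficients gives the asserted expression for $K_Y$. Equivalently, and as a useful check, the coefficients $a_0,\dots,a_{11}$ in $K_Y=a_0F_0+\sum_i a_iE_i$ are pinned down by the adjunction relations $(K_Y+C)\cdot C=-2$ imposed for each curve $C$ of the configuration; because the blow-up order makes this system essentially triangular, it solves uniquely.

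With the intersection matrix and $K_Y$ in hand, parts (i) and (iii) are immediate substitutions: I would expand $(D)^2$ and $K_Y\cdot(D)$ using the divisor $(D)$ of Lemma \ref{D8S-integral}, obtaining $-12$ and $-4$ respectively. A reassuring cross-check, to be exploited in the following step rather than in the lemma itself, is the Euler-number formula (\ref{euler}): the values $(D)^2=-12$ and $K_Y\cdot(D)=-4$ are exactly those forcing $\deg\langle D\rangle=0$ once $c_2(Y)$ is computed from the blow-up tower, i.e.\ forcing $D$ to be divisorial. One may also note the conceptual identity $K_Y-(D)=2(F_0+E_2+E_3+E_4)$, the right-hand side being the $D_4$-configuration that contracts to the single rational double point of the normalized canonical cover (compare Proposition \ref{ES}); this is consistent with the canonical bundle formula (\ref{canonical}), but I would not invoke it inside the proof, since the quotient $Y^D$ is analyzed only afterwards.

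The main obstacle is purely the infinitely-near bookkeeping. The delicate points are to determine correctly, at each stage of the tower, whether the next center lies on one, two, or three of the curves already present (this governs both the shape of the dual graph and the pole orders of $D$), and to keep the non-integral curves — along which $D$ acquires poles — distinguished from the integral ones listed in Lemma \ref{D8S-integral}(i). An error in any single incidence would shift a coefficient of $K_Y$ or an entry of the intersection matrix and destroy the clean values $-12$ and $-4$; conversely, the fact that these two numbers emerge exactly as in the parallel $\tilde E_8$, $\tilde E_7+\tilde A_1$ and $\tilde D_4+\tilde D_4$ computations is a strong internal consistency check on the bookkeeping.
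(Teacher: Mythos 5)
Your proposal is correct and follows the same route as the paper, which simply asserts the lemma by ``direct calculation'' on the blow-up tower of Figure \ref{D8-1}; your plan (explicit intersection matrix from the tower, $K_Y$ via the iterated blow-up formula cross-checked by adjunction, then substitution using Lemma \ref{D8S-integral}) is exactly that computation spelled out. The consistency checks you mention, including $K_Y-(D)=2(F_0+E_2+E_3+E_4)$ matching the $D_4$-configuration on the normalized canonical cover, are sound.
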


\noindent
Now take the quotient $Y^{D}$ of $Y$ by $D$.  By using the same argument as in the proof of Lemma \ref{e6non-singular}, $D$ is divisorial and $Y^{D}$ is non-singular.
By Proposition \ref{insep}, we have the following configuration of curves in Figure \ref{D8-2}.

\begin{figure}[!htb]
 \begin{center}
  \includegraphics[width=140mm]{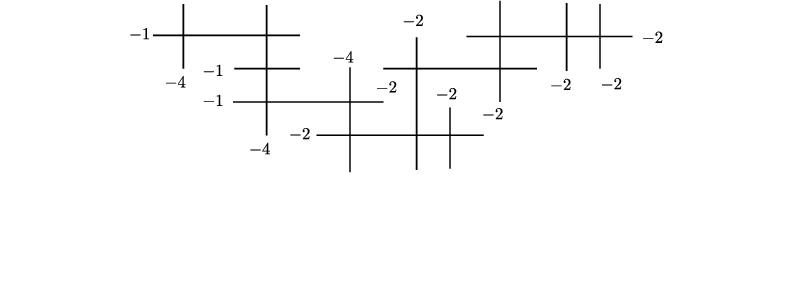}
 \end{center}
 \caption{}
 \label{D8-2}
\end{figure}

\noindent
By contracting the three exceptional curves, we get a new exceptional curve which is the image of the $(-4)$-curve meeting three exceptional curves.
Let $X_a$ be the surface obtained by contracting the new exceptional curve (Recall that the vector field (\ref{DE8S}) contains a parameter $a$). 
The surface $X_a$ contains 10 $(-2)$-curves whose dual graph is given by Figure \ref{D8Dynkin}.
Note that any maximal parabolic subdiagram of this diagram is of type $\tilde{D}_8$ or $\tilde{E}_8$.
Once we know that $X_a$ is an Enriques surface, we can use Tables \ref{ExtremalFibrationsConductrix} and \ref{QuasiellipticFibrationsConductrix} for one of the genus one fibrations with a fiber of type $\II^*$ to deduce that the left-most vertex in Figure \ref{D8Dynkin} is part of the conductrix. Hence, by Lemma \ref{quellorell}, the genus one fibration with a fiber of type $\I_4^*$ is quasi-elliptic of type $(2\I_4^*)$. 
This fibration is
induced from the pencil of lines in ${\bf P}^2$ through $(x,y)=(0,0)$.

\begin{theorem}\label{D8S-main}
The surfaces $\{X_a\}$ form a $1$-dimensional family of supersingular Enriques surfaces with the dual graph given in Figure {\rm \ref{D8Dynkin}}.
\end{theorem}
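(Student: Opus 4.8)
The plan is to follow closely the arguments used in the proofs of Theorems \ref{main}, \ref{main2} and \ref{E8S-main}, now applied to the vector field (\ref{D8S}). Recall that, just before the statement, we have checked that $D$ is divisorial and that the quotient $Y^D$ is nonsingular; the surface $X_a$ is then obtained from $Y^D$ by contracting the three exceptional curves of $Y^D$ and afterwards contracting the image of the $(-4)$-curve meeting them.

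First I would verify that each $X_a$ is an Enriques surface. Feeding Lemma \ref{D8S-canonical} (ii) and the list of integral curves in Lemma \ref{D8S-integral} (ii) into the canonical divisor formula (\ref{canonical}), written as $K_Y=\pi^*K_{Y^D}+(D)$, the same computation as in the numerical-triviality lemma preceding Theorem \ref{main} shows that $K_{X_a}$ is numerically equivalent to $0$; hence $X_a$ is minimal of Kodaira dimension $0$. The Betti number bookkeeping is identical to the one carried out before Theorem \ref{main}: as $\pi\colon Y\to Y^D$ is finite and purely inseparable we have $b_i(Y^D)=b_i(Y)$, and the successive contractions yielding $X_a$ bring this down to $b_2(X_a)=10$, so $X_a$ is an Enriques surface. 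To identify the dual graph I would proceed as in Theorem \ref{main2}: the ten surviving $(-2)$-curves realize the graph $\Gamma$ of Figure \ref{D8Dynkin}, and every maximal parabolic subdiagram of $\Gamma$ is of type $\tilde{D}_8$ or $\tilde{E}_8$, hence of rank $8$. Vinberg's criterion (Proposition \ref{Vinberg}) then shows that $W(\Gamma)$ is of finite index in $\O(\Num(X_a))$, so $\Aut(X_a)$ is finite by Proposition \ref{finiteness}, while Namikawa's Proposition \ref{Namikawa} guarantees that these ten curves are \emph{all} the $(-2)$-curves on $X_a$; thus $\Gamma$ is the complete dual graph.

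It then remains to show that $X_a$ is supersingular and that the parameter genuinely varies. Because the natural genus one pencil here, the quasi-elliptic fibration with fiber $(\I_4^*)$, carries no smooth fibers, I cannot invoke Proposition \ref{multi-fiber} directly as in Theorem \ref{main}; instead I would imitate Theorem \ref{E8S-main} and pass to the canonical cover. Tracking the blow-ups, the normalization of the canonical cover is obtained from $Y$ by contracting the chain of curves lying over the line $\ell=\{x=0\}$, and this chain collapses to a single rational double point of type $D_4$; by Lemma \ref{SingNormal} this forces $X_a$ to be supersingular. The $1$-dimensionality of the family is immediate from the free parameter $a\neq 0$ in (\ref{D8S}). \textbf{The main obstacle} I anticipate is precisely this canonical-cover bookkeeping: using the integral/non-integral dichotomy of Lemma \ref{D8S-integral} (ii) together with the multiplicities recorded in Lemma \ref{D8S-canonical} (ii), one must confirm that the curves over $\ell$ contract to a single $D_4$-singularity rather than to four $A_1$-singularities, since by Proposition \ref{ES} it is exactly this alternative that distinguishes the supersingular case from the classical one.
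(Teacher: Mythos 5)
Your proposal is correct and follows essentially the same route as the paper: the Enriques property and the dual graph are established exactly as in Theorems \ref{main} and \ref{main2} using Lemmas \ref{D8S-integral} and \ref{D8S-canonical}, and supersingularity is deduced, as in Theorem \ref{E8S-main}, from the fact that the normalization of the canonical cover acquires a single rational double point of type $D_4$ together with Lemma \ref{SingNormal}. The only difference is that you spell out the canonical-cover bookkeeping (contracting the chain over $\ell=\{x=0\}$) in slightly more detail than the paper, which simply asserts it "by construction."
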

\begin{proof}
By using Lemmas \ref{D8S-integral} and \ref{D8S-canonical} and the same arguments as in the proofs of Theorems \ref{main} and \ref{main2},
$X$ is an Enriques surface with the dual graph given in Figure \ref{D8Dynkin}.  
By construction, the normalization of the canonical cover has a rational double point of type $D_4$.  
Hence, $X_a$ is supersingular by Lemma \ref{SingNormal}.
\end{proof}

\begin{remark}\label{D8S-coh}
Note that $X_a$ contains exactly three genus one fibrations.
Let $p_1$ be the quasi-elliptic fibration with a double singular fiber $2F_1$ of type $\I_4^*$, and let $p_i$ $(i=2,3)$ be two
genus one fibrations with a singular fiber $F_i$ of type $\II^*$. Note that, by Table \ref{QuasiellipticFibrationsConductrix},
the conductrix of $X_a$ is contained in the singular fiber of
type $\II^*$ of $p_2$ and $p_3$. Hence, these fibrations are elliptic by Lemma \ref{quellorell}.
Also, note that $F_1\cdot F_2=F_1\cdot F_3=F_2\cdot F_3=2$.  If both $F_2$ and $F_3$ are double fibers, then there are no canonical $U$-pairs on this Enriques surface which is a contradiction (Cossec and Dolgachev \cite[Theorem 3.4.1]{CD}).  Hence, one of them, say $F_2$, is double and the other, $F_3$, is simple.
Since there are no automorphisms which change a double fiber and a simple fiber, 
any automorphism of $X_a$ is cohomologically trivial.  
\end{remark}

\begin{theorem}\label{D8S-main2}
The automorphism group ${\rm Aut}(X_{a})$ is the quaternion group $Q_8$ of order $8$ which is cohomologically trivial.
\end{theorem}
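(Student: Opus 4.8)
The plan is to proceed exactly as in the proofs of Theorems \ref{E8S-main2} and \ref{E7S-mainAut}: reduce to an explicit equation for $X_a$ and solve for all automorphisms. The cohomological triviality is already secured by Remark \ref{D8S-coh}, which shows that on $X_a$ exactly one of the two fibrations of type $(\II^*)$ has a double fibre and the other a simple fibre, and that no automorphism can interchange a double and a simple fibre; hence every automorphism is cohomologically trivial. As $X_a$ is supersingular, $\NS(X_a) = \Num(X_a)$ and so $\Aut(X_a) = \Aut_{ct}(X_a) = \Aut_{nt}(X_a)$; in particular every automorphism fixes all ten $(-2)$-curves and therefore preserves the quasi-elliptic fibration $p_1$ of type $(2\I_4^*)$, its curve of cusps $C$, and its double fibre $2F_\infty$ over $t = \infty$. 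This puts us in the framework of Subsection \ref{Generalities}, so it remains to find the equation and to solve the resulting constraints.

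First I would produce the equation. Since $\mathrm{char}\,k = 2$ makes $u = x^2$ and $v = y^2$ automatically $D$-invariant for the vector field (\ref{D8S}), I would exhibit a third invariant $z$ with $D(z) = 0$ and a relation $z^2 = (\text{polynomial in } u, v)$, and then, via the coordinate change illustrated in Examples \ref{EX1}--\ref{EX3}, bring the quotient surface to the normal form
$$ y^2 = tx^4 + tx^2 + ax + t^7 \qquad (a \neq 0) $$
listed in Subsection \ref{LIST}; here the projection to the $t$-line is $p_1$, the curve of cusps is $x = \infty$, and $2F_\infty$ lies over $t = \infty$. Writing $A = k[t,x,y]/(y^2 + tx^4 + tx^2 + ax + t^7)$, any automorphism $\sigma$ has the shape (\ref{automorphism}):
$$ \sigma : \ t \mapsto c_1 t + c_2, \quad x \mapsto d_1(t)x + d_2(t), \quad y \mapsto e_1(t,x)y + e_2(t,x). $$

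The heart of the argument, and the step I expect to be the main obstacle, is the coefficient comparison. Substituting into the defining equation and using that $A$ is free over $k[t,x]$ with basis $1, y$ (together with Lemma \ref{trivial}), I would read off $e_1$ from the coefficient of $y$, extract $d_1(t)$ from the coefficient of $x^4$, bound $\deg d_2(t)$ and the $x$-degree of $e_2(t,x)$ exactly as in the earlier proofs, and then solve the remaining system for $c_1, c_2$ and the surviving data. A structural feature to exploit is that $p_1$ has its only reducible --- indeed its only double --- fibre over $t = \infty$, so the base action need only fix $t = \infty$; this leaves room for a genuine translation $t \mapsto t + c_2$, which is the source of the nonabelian structure and marks the real difference from the cyclic cases treated before. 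I expect the system to force $c_1 = 1$, to constrain $c_2 = \omega$ to a cube root of unity (equivalently $\omega \in {\bf F}_4^{\times}$), and to determine the fibre-direction shift by the quadratic $\alpha^2 + \alpha + \omega\sqrt{a} + 1 = 0$ entering $x \mapsto x + \alpha + \omega^2 t$. Carrying all branches through the $y$-equation simultaneously, and verifying that no spurious solutions survive, is the delicate bookkeeping.

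The outcome is the family $\sigma_{\omega,\alpha}$ of Subsection \ref{LIST}: for each nonzero cube root of unity $\omega$ and each of the two roots $\alpha$ of the quadratic one obtains an automorphism, yielding six elements, which together with the identity generate $\Aut(X_a)$. A short computation of $\sigma_{\omega,\alpha}^2$ shows that it induces $x \mapsto x + \omega^3 = x + 1$ independently of $(\omega,\alpha)$, so each $\sigma_{\omega,\alpha}$ has order $4$ and all six square to one and the same involution. Hence $\Aut(X_a)$ is a group of order $8$ with a unique element of order $2$; this rules out the dihedral group $D_4$ and all abelian groups of order $8$, forcing $\Aut(X_a) \cong Q_8$. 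Cohomological triviality of the entire group is furnished by Remark \ref{D8S-coh}, completing the proof.
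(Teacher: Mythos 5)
Your proposal follows essentially the same route as the paper: the same normal form $y^2 = tx^4 + tx^2 + ax + t^7$, the same reduction via (\ref{automorphism}) and Lemma \ref{trivial} to a coefficient comparison whose solution set is exactly the eight maps $\sigma_{\omega,\alpha}$ of Subsection \ref{LIST}, and the same appeal to Remark \ref{D8S-coh} for cohomological triviality. One small caveat: a unique involution alone does not rule out all abelian groups of order $8$ (it fails for ${\bf Z}/8{\bf Z}$), but your count of six order-$4$ elements all squaring to the same involution does force $Q_8$, so the conclusion stands.
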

\begin{proof}
We consider the vector field (\ref{D8S}), and we set 
$u = x^2$, $v = y^2$, $z = ax^7 + x^5y + x^3y+ xy^7$.
Then, we have $D(u) =0$, $D(v) = 0$, $D(z) = 0$ with the equation
\begin{equation}\label{D8Sequation}
z^2 = a^2u^7 + u^5v + u^3v + uv^7.  
\end{equation}
Therefore, the quotient surface of 
${\bf P}^2$ by $D$ is
birational to the surface defined by the equation (\ref{D8Sequation}),
which is birational to our Enriques surface. To do a change of coordinates,
we define new variables $x, y, t$ by 
$$
   x = 1/u, \quad y = z/u^4,\quad t = v/u
$$
and we replace $a^2$ by $a$ for the sake of simplicity.
Then, the equation becomes 
\begin{equation}\label{D8Sequation2}
y^2 + tx^4 + tx^2 + ax + t^7= 0.
\end{equation}
This equation gives a normal affine chart of a genus one fibration
$$f:X_{a,b}\to {\bf P}^1.$$
Set
$$
A = k[t, x, y]/(y^2 + tx^4 + tx^2 + ax + t^7)
$$ 
and let $\sigma$ be an automorphism of
our Enriques surface. The double fiber of $f$, denoted by $2F_{\infty}$, of type $\I_4^*$ exists over the point defined by $t = \infty$.
Since $\sigma$ preserves the diagram of $(-2)$-curves, $\sigma$ preserves
the curve $C$ of cusps and $2F_{\infty}$.
Thus $\sigma$ has the form in (\ref{automorphism}).

Therefore, 
together with the equation (\ref{D8Sequation2}), we have an identity
$$
\begin{array}{l}
e_1(t,x)^2( tx^4 + tx^2 + ax + t^7) +e_2(t,x)^2 \\
= (c_1 t  + c_2)(d_1(t)x + d_2(t))^4  + (c_1 t  + c_2)(d_1(t)x + d_2(t))^2\\
\quad + a(d_1(t)x + d_2(t)) + (c_1 t  + c_2)^7.
\end{array}
$$
Using Lemma~\ref{trivial} and taking the coefficients of $x$, we have $ae_1(t,x)^2 + ad_1(t) = 0$.
Therefore, $e_1(t,x)$ is a polynomial of $t$, i.e. we can put $e_1(t,x) = e_1(t)$, and
$d_1(t) = e_1(t)^2$. Taking the coefficients of $t$, we have
$e_1(t)^2x^4 +e_1(t)^2x^2 + e_1(t)^2t^6 +  c_1(d_1(t)x + d_2(t))^4  + 
c_1(d_1(t)x + d_2(t))^2 +ad_2(t)_{odd}/t + c_1(c_1 t  + c_2)^6 = 0$.
Here, $d_2(t)_{odd}$ is the odd terms of $d_2(t)$.
Considering the coefficients of $x^4$ of this equation, we have 
$e_1(t)^2 = c_1d_1(t)^4=c_1e_1(t)^8$. Since we have $e_1(t)\not\equiv 0$, we have
$e_1(t)^6 = 1/c_1$. Therefore, $e_1(t)$ is a constant and we set $e_1(t) =e_1\in k$. Then,
$e_1^6 = 1/c_1$. Considering the coefficients of $x^2$, we have 
$e_1^2 = e_1(t)^2 = c_1d_1(t)^2 = c_1e_1^4$. Hence $e_1^2 = 1/c_1$. 
Therefore, we have $c_1 = 1$ and so $e_1 = d_1 = 1$. The equation becomes 
$t^6 + d_2(t)^4 + d_2(t)^2 + ad_2(t)_{odd}/t + (t  + c_2)^6 = 0$.
If the degree of $d_2(t)$ is greater than or equal to 2, then the highest term of 
$d_2(t)^4 $ cannot be killed in the equation. Therefore, we can put $d_2(t) = b_0 + b_1t$ $(b_0, b_1 \in k)$
and we have an identity 
$$
t^6 + (b_0 + b_1t)^4 + (b_0 + b_1t)^2 + ab_1 + (t  + c_2)^6 = 0.
$$
Hence we have $c_2 =b_1^2$, $c_2^2 = b_1$ and $b_0^4 + b_0^2 + ab_1 + c_2^6 = 0$.
Thus we have either $c_2 = 0$, $b_1 = 0$, $b_0 = 0, 1$, or $c_2 =\omega$, $b_1 = \omega^2$ and 
$b_0 = \alpha$ is any root of $z^2 + z + \omega\sqrt{a} + 1 = 0$. Here, $\omega$ is any cube root of unity. There exist 8 solutions. Putting these data into the original equation, we have 
$e_2(t,x) = \sqrt{a}$ or $\omega^2x^2 + \omega^2x + \omega^2 t^3 +\sqrt{a\alpha} + \sqrt{a}$.
Thus we have
$$\sigma(t) = t + \omega, \quad \sigma(x) = x + \alpha + \omega^2t\quad \sigma(y) = y  + \omega^2x^2 + \omega^2x + \omega^2 t^3 +\sqrt{a\alpha} + \sqrt{a},$$
and we conclude
${\rm Aut}(X) \cong {Q}_8$.  The cohomological triviality follows from Remark \ref{D8S-coh}.
\end{proof}

\begin{remark}
The group $Q_8$, which appears here as the first example of a non-commutative group of cohomologically trivial automorphisms, leads to one of the exceptions in \cite{DM}.
\end{remark}

\subsection{Classical case}

Let $Q = {\bf P}^1\times {\bf P}^1$ be a non-singular quadric and let $((u_0, u_1), (v_0, v_1))$ be 
homogeneous coordinates of $Q$. Let $x=u_0/u_1,\ x'=u_1/u_0,\ y =v_0/v_1, y'=v_1/v_0$.
Consider a rational vector field $D$ defined by
\begin{equation}\label{D8Cderi}
D= {1\over xy^2} \left(ax^2y^2\frac{\partial}{\partial x} + (x^4y^4 + by^4 +x^2y^2 + x^2)\frac{\partial}{\partial y}\right)
\end{equation}
where $a, b \in k, a, b \not=0$.
Then $D^2 = aD$, that is, $D$ is 2-closed.
Note that $D$ has a pole of order 1 along the divisor defined by $x=0$, a pole of order 3 along the divisor defined by $x=\infty$ and a pole of order 2 along the divisor defined by $y=0$.
Moreover $D$ has isolated singularities at $(x,y)= (0,0), (\infty, 0)$.
As in the case of supersingular Enriques surfaces of type $\tilde{E}_8$, we successively blow up the points of isolated 
singularities 
of $D$ and finally arrive at a vector field without 
isolated singularities which we will denote by $D$ again.
The configuration of curves is given in Figure \ref{D8C}.

\begin{figure}[!htb]
 \begin{center}
  \includegraphics[width=140mm]{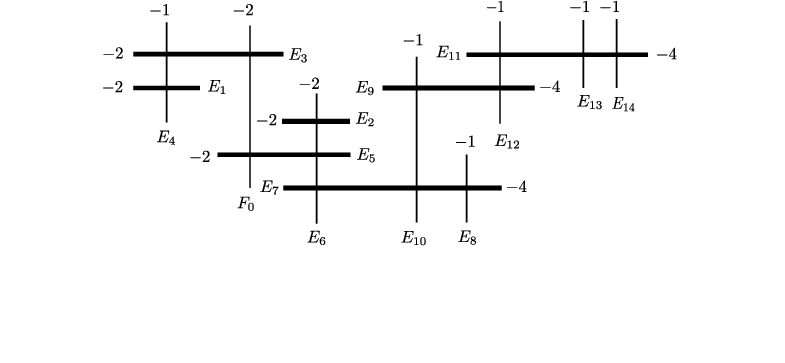}
 \end{center}
 \caption{}
 \label{D8C}
\end{figure}
\noindent
Here $F_0$, $E_1$, or $E_2$ is the proper transform of the curve defined by $y=0$, $x=0$, or $x=\infty$, respectively.  

We denote by $Y$ the surface obtained by the successive blow-ups.
A direct calculation shows the following two lemmas.

\begin{lemma}\label{D8C-integral}
{\rm (i)} The integral curves with respect to $D$ in Figure {\rm \ref{D8C}} are all horizontal curves {\rm (}thick lines{\rm )}.

{\rm (ii)} $(D) =-(2F_0+E_1+3E_2+2E_3+4E_5+4E_6+3E_7+2E_8+2E_9+4E_{10}+E_{11}+2E_{12})$.
\end{lemma}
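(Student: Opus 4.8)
The plan is to reduce both assertions to local computations of the reduced vector field in finitely many affine charts, since integrality and the divisorial part are local notions. Recall from Section \ref{derivation} that if we write $D = f(g\,\partial_u + h\,\partial_w)$ in local coordinates $(u,w)$ with $g,h$ sharing no common divisor, then the divisorial part $(D)$ is the divisor $(f)$, while an irreducible curve $C=\{u=0\}$ is integral precisely when the reduced field $\delta = g\,\partial_u + h\,\partial_w$ is tangent to it, i.e. when $u\mid g$. Throughout I would use the characteristic-$2$ rules $\partial_x = x'^2\,\partial_{x'}$ for $x'=1/x$ (and likewise for $y$).

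First I would record the ``seed'' data on $Q = {\bf P}^1\times{\bf P}^1$, before any blow-up, by rewriting (\ref{D8Cderi}) in the three charts $(x,y)$, $(1/x,y)$, $(x,1/y)$. In the chart $(x,y)$ one clears denominators to $D = \tfrac{1}{xy^2}\bigl(ax^2y^2\,\partial_x + h\,\partial_y\bigr)$ with $h = x^4y^4+by^4+x^2y^2+x^2$; since $h|_{x=0}=by^4\ne 0$ and $h|_{y=0}=x^2\ne 0$, the two numerators are coprime, so $(D)$ has pole order $1$ along $\{x=0\}=E_1$ and order $2$ along $\{y=0\}=F_0$, and $\{x=0\}$ is integral (as $x\mid ax^2y^2$) while $\{y=0\}$ is not (as $y\nmid h$). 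In the chart $(1/x,y)$ the reduced form is $\tfrac{1}{x'^3y^2}\bigl(ax'^4y^2\,\partial_{x'} + (y^4+bx'^4y^4+x'^2y^2+x'^2)\,\partial_y\bigr)$, which gives pole order $3$ along $\{x=\infty\}=E_2$ (again integral) and reconfirms order $2$ along $F_0$; the chart $(x,1/y)$ shows $D$ has no pole along $\{y=\infty\}$. This already produces the leading terms $2F_0+E_1+3E_2$ of (ii) and the integrality pattern of the strict transforms of the coordinate curves.

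Next I would propagate this data through the successive blow-ups at the two isolated singularities $(0,0)$ and $(\infty,0)$, exactly as in the earlier cases (cf. Lemmas \ref{D8S-integral} and \ref{e8C-integral}). At each stage I would pass to the relevant blow-up chart, substitute the coordinate relations into the current reduced field, re-extract the common factor of the two components to restore reduced form, and then read off two things: the coefficient of the new exceptional curve in $(D)$ (the order of $f$ along it, which for $E_4$ turns out to be $0$), and whether that curve is integral via the tangency test; the strict transforms of curves already treated retain their integrality, since invariance is preserved under blow-up. Assembling the local contributions yields the divisor in (ii), and the curves passing the tangency test are exactly the thick (horizontal) curves of Figure \ref{D8C}, which gives (i).

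The hard part will be the purely bookkeeping burden of tracking the field along the long resolution string $E_3,\dots,E_{12}$: one must center each blow-up at the correct (possibly infinitely near) point, select the right chart, and---most error-prone---recompute the greatest common divisor of the two components after each substitution, since a mis-assigned common factor shifts the polar order along the exceptional curve. A reassuring independent check becomes available once the companion values of $K_Y$ and $(D)^2$ are in hand: substituting them together with $c_2(Y)$ into the Euler-number formula (\ref{euler}) must force $\deg\langle D\rangle = 0$, a balance that holds only if the coefficients found in (ii) are correct.
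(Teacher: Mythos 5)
Your proposal is correct and is essentially the paper's own argument: the paper dispatches this lemma with "a direct calculation shows," and the calculation it has in mind is exactly the chart-by-chart procedure you describe (divisorial part read off from the prefactor $f_i$ after reducing $g_i,h_i$ to have no common divisor, integrality via the tangency test of Section \ref{derivation}, propagated through the blow-ups). Your seed computations in the three charts of $Q$ check out, and the Euler-number consistency check you add is the same balance the paper exploits immediately afterwards to conclude $\deg\langle D\rangle=0$.
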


\begin{lemma}\label{D8C-canonical}
{\rm (i)} $(D)^2 = -12$. 

{\rm (ii)} The canonical divisor $K_Y$ of $Y$ is given by
$K_Y = -(2F_0+2E_2+E_3+3E_5+4E_6+3E_7+2E_8+2E_9+4E_{10}+E_{11}+2E_{12}).$

{\rm (iii)} $K_Y\cdot (D) = -4.$
\end{lemma}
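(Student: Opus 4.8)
The plan is to reduce everything to explicit intersection theory on the rational surface $Y$, using the blow-up tower $\psi : Y \to Q$ recorded in Figure \ref{D8C} together with the divisorial part $(D)$ supplied by Lemma \ref{D8C-integral}(ii), which I take as given. First I would read off from the successive blow-ups the full intersection data of the named curves $F_0, E_1, \ldots, E_{12}$: their self-intersection numbers (marked in Figure \ref{D8C}, all equal to $-2$ apart from the finitely many exceptional $(-1)$- and $(-4)$-components) and their mutual incidences, i.e. the dual graph. As a running sanity check I would use the Euler-number count: $Q = {\bf P}^1\times{\bf P}^1$ has $c_2(Q)=4$, and since $Y^D \to X_a$ contracts four curves with $c_2(X_a)=12$, the purely inseparable map $\pi$ forces $c_2(Y)=c_2(Y^D)=16$; hence $\psi$ is a composition of exactly $12$ blow-ups, which I would use throughout to be sure no exceptional curve has been miscounted.

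For part (i) I would expand $(D)^2 = \big(\sum_i m_i C_i\big)^2$ against the intersection matrix assembled above, where $(D) = -\sum_i m_i C_i$ with the coefficients of Lemma \ref{D8C-integral}(ii). The diagonal contributions $\sum_i m_i^2 C_i^2$ and the cross terms $2\sum_{i<j} m_i m_j (C_i\cdot C_j)$ can be summed along the chains of the blow-up trees, and I expect them to combine to $-12$. For part (ii) I would compute $K_Y$ by pulling back the canonical class of the quadric: writing $K_Q \sim -2\ell_1 - 2\ell_2$ for the two rulings of $Q$, one has $K_Y = \psi^* K_Q + \sum_k \mathcal{E}_k$, the sum running over the total transforms of the $(-1)$-curves of the twelve individual blow-ups, and re-expressing $\psi^* K_Q$ and each $\mathcal{E}_k$ in the proper-transform classes $F_0, E_1,\ldots,E_{12}$ then yields the stated formula. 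As an independent check I would verify the adjunction identity $K_Y\cdot C = -2 - C^2$ on each component $C$, every one of which is a smooth rational curve.

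Part (iii) is then almost free and, importantly, does not rely on (ii) or on divisoriality, so no circularity arises. By adjunction $K_Y\cdot C_i = -2 - C_i^2$ for each component, whence $K_Y\cdot(D) = \sum_i m_i\,(2 + C_i^2)$; here every $(-2)$-curve contributes $0$, and only the handful of non-$(-2)$-components survive, so evaluating this short sum should give $-4$.

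The main obstacle is purely the combinatorial bookkeeping of the twelve successive blow-ups — correctly tracking self-intersections, incidences, and the pole orders feeding $(D)$ (the latter assumed from Lemma \ref{D8C-integral}) — since a single misread multiplicity propagates into both $(D)^2$ and $K_Y\cdot(D)$. The uniformity of the target values $(D)^2=-12$ and $K_Y\cdot(D)=-4$ across all the analogous cases treated earlier (for instance Lemmas \ref{e8-canonical} and \ref{D8S-canonical}), together with the $c_2(Y)=16$ count and the relation (\ref{euler}), which with these two values reads $16 = \deg\langle D\rangle + 16$ and hence forces $\deg\langle D\rangle = 0$ as required in the subsequent quotient step, provides a strong consistency cross-check guarding against arithmetic slips.
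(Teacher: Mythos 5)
Your proposal matches the paper's approach: the paper offers no written proof beyond ``a direct calculation shows the following two lemmas,'' and that calculation is precisely the intersection-theoretic bookkeeping on the blow-up tower of Figure \ref{D8C} that you describe (expanding $(D)^2$ against the intersection matrix of the components, writing $K_Y=\psi^*K_Q+\sum_k\mathcal{E}_k$ with $K_Q=-2\ell_1-2\ell_2$, and using adjunction for (iii)). The one caution is that your Euler-number cross-check infers $c_2(Y)=16$ from $c_2(X_a)=12$, which is not yet known at this point in the paper (it is established later using this very lemma), so the count of blow-ups must be read off from the construction itself rather than deduced from the conclusion --- but since you invoke it only as a consistency check and your primary computation is self-contained, the argument stands.
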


\noindent
Now take the quotient $Y^{D}$ of $Y$ by $D$.  By using the same argument as in the proof of Lemma \ref{e6non-singular}, $D$ is divisorial and hence $Y^{D}$ is non-singular.
By Proposition \ref{insep}, we have the following configuration of curves in Figure \ref{D8C2}.
\begin{figure}[!htb]
 \begin{center}
  \includegraphics[width=140mm]{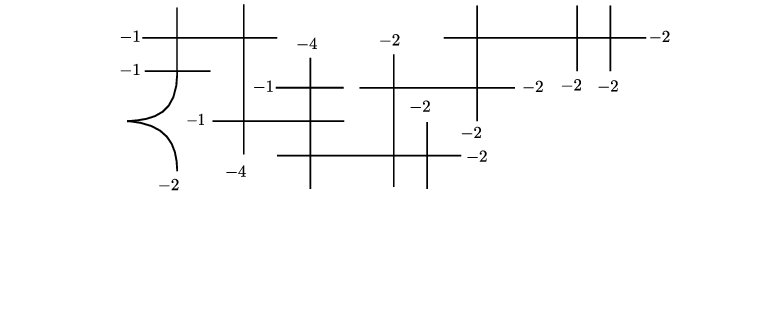}
 \end{center}
 \caption{}
 \label{D8C2}
\end{figure}

Let $X_{a,b}$ be the surface obtained by contracting the four exceptional curves in Figure \ref{D8C2}
(Recall that the vector field $D$ contains two parameters $a, b$ (see (\ref{D8Cderi}))).
On $X_{a,b}$, there exist 10 $(-2)$-curves whose dual graph is given by Figure \ref{D8Dynkin}.
Recall that any maximal parabolic subdiagram of this diagram is of type $\tilde{D}_8$ or $\tilde{E}_8$.
As in the previous case, once we know that $X_{a,b}$ is an Enriques surface, we can deduce that there exists a quasi-elliptic fibration of type $(\I_4^*)$ 
induced by the first projection from $Q$ to ${\bf P}^1$.

\begin{theorem}\label{D8C-main}
The surfaces $\{X_{a,b}\}$ form a $2$-dimensional family of classical Enriques surfaces with the dual graph given in Figure {\rm \ref{D8Dynkin}}.
\end{theorem}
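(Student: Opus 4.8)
The plan is to follow the template already used for the sibling constructions, in particular the supersingular counterpart Theorem~\ref{D8S-main}, modifying only the last step to account for the fact that the derivation~(\ref{D8Cderi}) is of multiplicative type ($D^2=aD$ with $a\neq 0$) rather than additive. Four assertions must be checked: that each $X_{a,b}$ is a minimal surface with numerically trivial canonical class and $b_2=10$ (hence Enriques); that the ten $(-2)$-curves exhibited before the statement are \emph{all} the $(-2)$-curves, so that the complete dual graph is the one in Figure~\ref{D8Dynkin}; that $X_{a,b}$ is classical; and that the construction gives a genuinely two-parameter family.

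First I would dispose of the Enriques and dual-graph claims exactly as in Theorems~\ref{main} and~\ref{main2}. The preceding discussion has already shown, by the argument of Lemma~\ref{e6nonsingular}, that $D$ is divisorial and $Y^D$ nonsingular. Combining the canonical divisor formula $K_Y=\pi^*K_{Y^D}+(D)$ with Lemmas~\ref{D8C-integral} and~\ref{D8C-canonical}, and keeping track through Proposition~\ref{insep} of which of the four contracted curves are integral and which are not, one obtains that $K_{X_{a,b}}$ is numerically trivial. Since $\pi\colon Y\to Y^D$ is finite and purely inseparable it preserves the \'etale Betti numbers, so $b_2(Y^D)=b_2(Y)=14$; contracting the four exceptional curves of Figure~\ref{D8C2} then yields $b_2(X_{a,b})=10$, and hence $X_{a,b}$ is an Enriques surface. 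To see that the listed curves are all the $(-2)$-curves I would apply Vinberg's criterion (Proposition~\ref{Vinberg}): every maximal parabolic subdiagram of Figure~\ref{D8Dynkin} is of type $\tilde{D}_8$ or $\tilde{E}_8$, hence of rank $8$, so $W(\Gamma)$ has finite index in $\O(\Num(X_{a,b}))$, and Namikawa's Proposition~\ref{Namikawa} then shows that the ten curves of Figure~\ref{D8Dynkin} form the full set of $(-2)$-curves.

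The genuinely new input, compared with the already-settled supersingular case, is classicality. Whereas in Theorem~\ref{D8S-main} the additive field $D^2=0$ produces a single wild double fiber, here the multiplicative field should produce \emph{two} tame double fibers. Concretely, I would exhibit two double fibers of the quasi-elliptic fibration of type $(\I_4^*)$ induced by the first projection $Q\to{\bf P}^1$: the reducible double fiber of type $\I_4^*$, together with a second, irreducible (cuspidal) double fiber, precisely as in the classical $\tilde{E}_8$ construction of Theorem~\ref{E8C-main}, where the two double fibers are of types $\II^*$ and $\II$. Both can be read off from Figure~\ref{D8C2} and the list of integral curves in Lemma~\ref{D8C-integral}. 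By Proposition~\ref{multi-fiber} (i), the presence of two double fibers forces $X_{a,b}$ to be classical. The two-dimensionality then follows from the fact that~(\ref{D8Cderi}) depends on the two independent nonzero parameters $a$ and $b$, exactly as in the classical $\tilde{E}_7+\tilde{A}_1$ family of Theorem~\ref{E7C-main}.

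The hard part will be the classicality step, and more precisely the clean identification of the \emph{second} double fiber. Since the notation $(2\I_4^*)$ is shared by the classical and supersingular surfaces of this type, the $\I_4^*$ fiber alone cannot distinguish them; the distinction lives entirely in the extra, irreducible double fiber, which, being irreducible, leaves no trace in the dual graph of Figure~\ref{D8Dynkin} and must instead be located directly from the vector field and the blow-up data, with its non-reducedness confirmed via Proposition~\ref{insep}. Everything else reduces to the Euler-number and divisor bookkeeping packaged into Lemmas~\ref{D8C-integral} and~\ref{D8C-canonical} and the purely combinatorial verification of Vinberg's condition on Figure~\ref{D8Dynkin}.
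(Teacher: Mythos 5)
Your proposal matches the paper's own proof: the Enriques and dual-graph claims are reduced to Lemmas \ref{D8C-integral} and \ref{D8C-canonical} together with the arguments of Theorems \ref{main} and \ref{main2} (Vinberg plus Namikawa), and classicality is obtained by exhibiting two double fibers of the quasi-elliptic fibration (the reducible one of type $\I_4^*$ and an irreducible cuspidal one, visible in Figure \ref{D8C2}) and invoking Proposition \ref{multi-fiber}. This is essentially the same approach, with your identification of the second double fiber being exactly the point the paper delegates to Figure \ref{D8C2}.
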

\begin{proof}
By using Lemmas \ref{D8C-integral} and \ref{D8C-canonical} and the same arguments as in the proofs of Theorems \ref{main} and \ref{main2},
$X_{a,b}$ is an Enriques surface with the dual graph given in Figure \ref{D8Dynkin}.
Since $X_{a,b}$ has a genus one fibration with two double fibers (see Figure \ref{D8C2}), $X_{a,b}$ is classical (Proposition \ref{multi-fiber}).
\end{proof}

\begin{remark}\label{D8C-symm}
There are two genus one fibrations with a singular fiber of type $\II^*$.
As we explained in Remark \ref{D8S-coh}, one of these fibers is double, the other is simple and both fibrations are elliptic. If they were of type $(\II^*)$, then their $j$-invariant would be zero (Lang \cite{L2}) and hence all non-singular fibers are supersingular elliptic curves by Lemma \ref{smoothdoublefiber}.  This contradicts the fact that a double fiber of a genus one fibration on a classical Enriques surface is an ordinary elliptic curve or of additive type (Proposition \ref{genus1}).  Thus, the two elliptic fibrations are of type $(\II^*, \I_1)$ by Lang \cite{L2}. 
\end{remark}

\begin{theorem}\label{D8C-mainAut}
The automorphism group ${\rm Aut}(X_{a,b})$ is ${\bf Z}/2{\bf Z}$ which is numerically trivial.
\end{theorem}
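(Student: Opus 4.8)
The plan is to prove the two assertions separately: first that every automorphism of $X_{a,b}$ is numerically trivial, so that $\Aut(X_{a,b}) = \Aut_{nt}(X_{a,b})$, and second that this group is cyclic of order two. For the first step, recall that by the construction the ten curves in Figure \ref{D8Dynkin} are all the $(-2)$-curves on $X_{a,b}$; this follows, exactly as in the proof of Theorem \ref{main2}, from Propositions \ref{Vinberg} and \ref{Namikawa}, since every maximal parabolic subdiagram is of type $\tilde{D_8}$ or $\tilde{E_8}$ and hence of rank $8$. Consequently $\Im(\rho_n)$ is contained in the symmetry group of this dual graph. A direct inspection of Figure \ref{D8Dynkin} shows that this graph has a single nontrivial symmetry, namely the interchange of the two leaves at the right-hand fork; the two trivalent vertices cannot be exchanged, because the two off-chain branches at the left fork have lengths $\{1,2\}$ whereas those at the right fork have lengths $\{1,1\}$, so the connecting chain cannot be reversed. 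This unique nontrivial symmetry interchanges the two subdiagrams of type $\tilde{E_8}$, that is, the two genus one fibrations carrying a fiber of type $\II^*$.

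By Remark \ref{D8C-symm}, of these two $\II^*$-fibrations one has a double fiber of type $\II^*$ and the other only a simple fiber of type $\II^*$ (together with an $\I_1$). Since no automorphism can carry a double fiber to a simple fiber, the nontrivial graph symmetry is \emph{not} realized by any automorphism of $X_{a,b}$. Therefore $\Im(\rho_n) = \{1\}$, and $\Aut(X_{a,b}) = \Aut_{nt}(X_{a,b})$.

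For the second step I would follow the method of Section \ref{Equation}, exactly as in the proofs of Theorems \ref{D8S-main2} and \ref{E8C-mainAut}. Starting from the vector field (\ref{D8Cderi}) and the invariants $u = x^2$, $v = y^2$ together with a suitable invariant $z$, one obtains an explicit affine model of $X_{a,b}$ and, after a change of coordinates, the normal form
\begin{equation*}
y^2 = tx^4 + at^3x^2 + bt^3x + t^7 + t^3 \qquad (a\neq 0,\ b\neq 0),
\end{equation*}
in which the quasi-elliptic fibration with fiber of type $\I_4^*$ is given by the projection $t$, with curve of cusps $C$ and double fiber $2F_\infty$ over $t=\infty$. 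Any automorphism $\sigma$ preserves the dual graph, hence this fibration, the curve $C$ and the double fiber $2F_\infty$, so $\sigma$ has the shape (\ref{automorphism}). Substituting this shape into the defining equation and comparing coefficients in the free $k[t,x]$-module $A$ via Lemma \ref{trivial} (as in the proof of Theorem \ref{D8S-main2}), one finds first, by matching the coefficients of $x^4$ and $x^2$, that $e_1$ and $d_1$ are constant with $c_1=1$ and $e_1=d_1=1$; the remaining relations among the terms depending only on $t$ then force $c_2=0$ and leave precisely the two solutions $\sigma = \mathrm{id}$ and
\begin{equation*}
\sigma : \ t\mapsto t,\quad x\mapsto x + \sqrt{a}\,t,\quad y\mapsto y + \sqrt[4]{a}\sqrt{b}\,t^2 .
\end{equation*}
Since $\sigma^2=\mathrm{id}$ in characteristic $2$, this gives $\Aut(X_{a,b}) = \langle\sigma\rangle \cong {\bf Z}/2{\bf Z}$, and by the first step this group is numerically trivial.

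I expect the main obstacle to be the coefficient-matching computation of the second step, which, as in Theorems \ref{D8S-main2} and \ref{E8C-mainAut}, requires bounding the degrees of the unknown polynomials $d_2(t)$ and $a_0(t)$ entering $\sigma$ and then solving the resulting polynomial system over a field of characteristic $2$, where the fourth roots $\sqrt[4]{a}$ and the interplay of the $bt^3x$ and $t^3$ terms (which break the extra translational symmetry present in the supersingular $Q_8$-case) must be tracked carefully. By contrast, the conceptual input of the first step — that the single dual-graph symmetry would have to exchange a double and a simple $\II^*$ fiber and is therefore unrealizable — is short once Remark \ref{D8C-symm} is available.
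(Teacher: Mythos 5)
Your proposal is correct and follows essentially the same route as the paper: the paper likewise deduces $\Aut(X_{a,b})=\Aut_{nt}(X_{a,b})$ from Remark \ref{D8C-symm} (the unique graph symmetry would have to swap the double and the simple fiber of type $\II^*$), and then carries out exactly the coefficient-matching computation you describe on the normal form $y^2 = tx^4 + at^3x^2 + bt^3x + t^7 + t^3$, arriving at the same generator $t\mapsto t$, $x\mapsto x+\sqrt{a}\,t$, $y\mapsto y+\sqrt[4]{a}\sqrt{b}\,t^2$. The only cosmetic difference is that the paper fixes $c_2=0$ at the outset, from the fact that $\sigma$ preserves the second double fiber over $t=0$, rather than extracting it from the polynomial system.
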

\begin{proof}
It follows from Remark \ref{D8C-symm} that $\Aut(X_{a,b}) = \Aut_{nt}(X_{a,b})$.
We consider the vector field (10.2), and we set 
$u = x^2$, $v = y^2$, $z = x^5y^4 + bxy^4 + x^3y^2+ x^3 + ax^2y^3$.
Then, we have $D(u) =0$, $D(v) = 0$, $D(z) = 0$ with the equation
\begin{equation}\label{D8Cequation}
z^2 = u^5v^4 + b^2uv^4 + u^3v^2+ u^3 + a^2u^2v^3 \quad (a, b \in k^*).
\end{equation}  
Therefore, the quotient surface of  
${\bf P}^1\times {\bf P}^1$ by $D$ is
birational to the surface defined by the equation (\ref{D8Cequation}),
which is birational to our Enriques surface. To do a change of coordinates,
we define new variables $x, y, t$ by 
$$
   x = \sqrt[4]{b}/uv, \quad y = \sqrt[4]{b^3}z/u^4v^2,\quad t = \sqrt{b}/u.
$$
and we replace $\frac{1}{\sqrt{b}}$ and $\frac{a^2}{\sqrt[4]{b^5}}$ 
by $a$ and $b$, respectively, for the sake of simplicity.
Then, the equation becomes 
\begin{equation}\label{D8Cequation2}
y^2 + tx^4 + at^3x^2 + bt^3x +t^3 + t^7= 0. 
\end{equation}
This equation gives a normal affine chart of a genus one fibration
$$f:X_{a,b}\to {\bf P}^1.$$
Set
$$
A = k[t, x, y]/(y^2 + tx^4 + at^3x^2 + bt^3x +t^3 + t^7)
$$ 
and let $\sigma$ be an automorphism of
our Enriques surface. The double fiber of $f$, denoted by $2F_{\infty}$, of type $\I_4^*$ exists over the point defined by $t = \infty$.
Since $\sigma$ preserves the dual graph of $(-2)$-curves, $\sigma$ preserves
the curve $C$ of cusps and $2F_{\infty}$.
Therefore,  $\sigma$ has the form in (\ref{automorphism}).
Moreover, this quasi-elliptic surface has a singular fiber over the point defined by $t = 0$,
$\sigma$ preserves also the singular fiber. Therefore, we know $c_2 = 0$ and  we have
$\sigma^{*}(t) = c_1t$.

Therefore, 
together with the equation (\ref{D8Cequation2}), we have an identity
$$
\begin{array}{l}
e_1(t,x)^2(tx^4 + at^3x^2 + bt^3x +t^3 + t^7) +e_2(t,x)^2 \\
= c_1 t (d_1(t)x + d_2(t))^4  + a(c_1 t)^3(d_1(t)x + d_2(t))^2\\
\quad + b(c_1 t)^3(d_1(t)x + d_2(t)) + (c_1 t)^3 + (c_1 t)^7.
\end{array}
$$
Differentiate both sides by $x$, and we have 
$be_1(t,x)^2t^3 + bc_1^3d_1(t)t^3 = 0$, that is, $e_1(t,x)^2 = c_1^3d_1(t)$.
Therefore, $e_1(t,x)$ is a polynomial of $t$, i.e. we can put $e_1(t,x) = e_1(t)$, and
$d_1(t) = c_1^{-3}e_1(t)^2$. Using Lemma~\ref{trivial} and  taking the coefficients of $t$, 
we have
$e_1(t,x)^2(x^4 + at^2x^2 +t^2 + t^6) 
+ c_1(c_1^{-3}e_1(t)^2x + d_2(t))^4  + ac_1^3 t^2(c_1^{-3}e_1(t)^2x + d_2(t))^2
+ bc_1^3d_2(t)_{even}t^2 + c_1^3t^2 + c_1^7t^6 = 0$.
Here, $d_2(t)_{even}$ is the even terms of $d_2(t)$.
Considering the coefficients of $x^4$ of this equation, we have 
$e_1(t)^2 =c_1^{-11}e_1(t)^8$. Since we have $e_1(t)\not\equiv 0$, we have
$e_1(t)^6 = c_1^{11}$. Therefore, $e_1(t)$ is a constant and we set $e_1(t) =e_1\in k$. 
Then, we have $e_1^6 = c_1^{11}$. 
Considering the coefficients of $x^2$ of this equation, we have 
$ae_1^2t^2 = ac_1^{-3}e_1^4t^2$, i.e. $e_1^2 =  c_1^3$. Hence we have
$c_1^9 = c_1^{11}$. Since $c_1 \neq 0$, we have $c_1 = 1$. Therefore, we have $e_1 = 1$ 
and $d_1(t) = 1$. Then, the equation becomes 
$d_2(t)^4  + at^2d_2(t)^2 + bd_2(t)_{even}t^2 = 0$.
If the degree of $d_2(t)$ is greater than or equal to 2, then the highest term of 
$d_2(t)^4 $ cannot be killed in the equation. 
Therefore, we can put $d_2(t) = b_0 + b_1t$ $(b_0, b_1 \in k)$
and we have an identity 
$(b_0 + b_1t)^4 + a(b_0 + b_1t)^2t^2 + bb_0t^2  = 0$.
Hence we have $b_1^4 = ab_1^2$, $ab_0^2 = bb_0$ and $b_0^4=  0$.
Thus we have $b_0 = 0$, and $b_1 = \sqrt{a}$ or $0$.  Going to the original equality,
we have $e_2(t,x)^2 =  bt^3\sqrt{a}t$, i.e. $e_2(t,x) = \sqrt[4]{a}\sqrt{b}t^2$.
Therefore, we conclude that $\sigma$ is given by either
$t \mapsto t, ~x \mapsto x + \sqrt{a}t,~ y \mapsto y + \sqrt[4]{a}\sqrt{b}t^2$
or the identity. Hence, we have ${\rm Aut}(X) \cong {\bf Z}/2{\bf Z}$.
\end{proof}

\section{Enriques surfaces of type $\tilde{D_4}+\tilde{D_4}$}\label{sec5}

In this section we give a construction of Enriques surfaces with the following dual graph of all 
$(-2)$-curves given in Figure \ref{2D4Dynkin} .

\begin{figure}[htbp]
\centerline{
\xy
(-10,25)*{};
@={(10,0),(50,0),(0,10),(10,10),(20,10),(30,10),(40,10),(50,10),(10,20),(60,10),(50,20)}@@{*{\bullet}};
(0,10)*{};(60,10)*{}**\dir{-};
(10,0)*{};(10,20)*{}**\dir{-};
(50,0)*{};(50,20)*{}**\dir{-};
\endxy
}
 \caption{}
 \label{2D4Dynkin}
\end{figure}

Let $Q = {\bf P}^1\times {\bf P}^1$ be a non-singular quadric and let $((u_0, u_1), (v_0, v_1))$ be 
homogeneous coordinates of $Q$. Let $x=u_0/u_1,\ x'=u_1/u_0,\ y =v_0/v_1, y'=v_1/v_0$.
Consider a rational vector field $D$ defined by 
\begin{equation}
D= {1\over x^2y^2} \left(bx^3y^2\frac{\partial}{\partial x} + (ax^2y^2 + x^2 + x^4y^4 + y^4 + bx^2y^3)\frac{\partial}{\partial y}\right)
\end{equation}
where $a, b \in k, b\not=0$.  Note that $D^2=bD$, that is, $D$ is 2-closed.
Denote by $E_1, E_2$ and $F_0$ the curves defined by $x=0$, $x'=0$ and $y=0$, respectively.
The vector field $D$ has poles of order 2 along $E_1, E_2, E_3$, and
has isolated singularities $(x,y)=(0,0)$ and $(x',y)=(0,0)$.
The curves $E_1, E_2$ are integral.
Now blow up at the two points $(x,y)=(0,0)$ and $(x',y)=(0,0)$.  The two exceptional curves are integral with respect to the induced vector field.  The induced vector field has poles of order 3 along two exceptional curves and has isolated singularities at the intersections of the exceptional curves and the proper transforms of $E_1$ and $E_2$.  Then, blow up at the isolated singularities of the induced vector field and continue this process until the induced vector field has no isolated singularities.
We denote by $Y$ the surface obtained by this process and use the same symbols $E_1, E_2, F_0$ for the curves and their proper transforms. 
Also we denote by the same symbol $D$ the induced vector field on $Y$.
The final configuration of curves is given in Figure \ref{D4-2}.

\begin{figure}[!htb]
 \begin{center}
  \includegraphics[width=140mm]{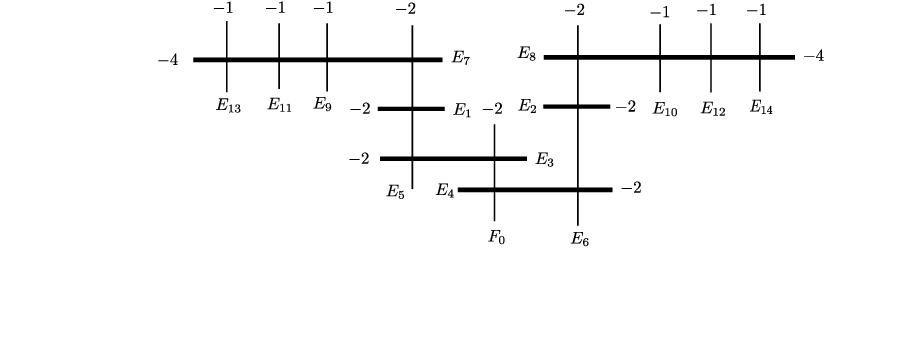}
 \end{center}
 \caption{}
 \label{D4-2}
\end{figure}


\begin{lemma}\label{2D4-integral}
{\rm (i)} The integral curves with respect to $D$ in Figure {\rm \ref{D4-2}} are all horizontal curves {\rm (}thick lines{\rm )}.

{\rm (ii)} $(D) =-(2F_0+2E_1+2E_2+3E_3+3E_4+2E_5+2E_6+E_7+E_8)$.
\end{lemma}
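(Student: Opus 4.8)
The plan is to determine both the divisor $(D)$ and the integral curves by starting on $Q={\bf P}^1\times{\bf P}^1$ and propagating the data through the resolution one blow-up at a time, each time rewriting $D$ in the local normal form $f(g\frac{\partial}{\partial x}+h\frac{\partial}{\partial y})$ of Subsection \ref{derivation}, reading off the prefactor $f$ for $(D)$ and testing the tangency of $g\frac{\partial}{\partial x}+h\frac{\partial}{\partial y}$ to each curve for integrality (Proposition \ref{insep}).

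First I would compute $(D)$ on $Q$. In the chart $(x,y)$ the bracketed field has $g=bx^3y^2$ and $h=ax^2y^2+x^2+x^4y^4+y^4+bx^2y^3$; since $x\nmid h$ and $y\nmid h$ these are coprime, so $(D)$ is read off from $1/(x^2y^2)$ as pole order $2$ along $E_1=\{x=0\}$ and $F_0=\{y=0\}$. Passing to the charts $(x',y)$ and $(x,y')$ gives pole order $2$ along $E_2=\{x'=0\}$ and neither zero nor pole along $\{y'=0\}$, so $(D)|_Q=-2E_1-2E_2-2F_0$. For integrality I would test tangency: on $E_1$ the field is $h(0,y)\frac{\partial}{\partial y}=y^4\frac{\partial}{\partial y}$, tangent, so $E_1$ and (by symmetry) $E_2$ are integral, whereas on $F_0$ it is $h(x,0)\frac{\partial}{\partial y}=x^2\frac{\partial}{\partial y}$, transverse, so $F_0$ is not integral. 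A convenient device, valid precisely because $-1=1$ in characteristic $2$, is that the involution $x\mapsto 1/x$ preserves $D$ and interchanges $E_1\leftrightarrow E_2$ together with the two singular points, so it suffices to resolve one branch.

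Next I would locate the isolated singularities as the common zeros of $g$ and $h$, namely $P_1=E_1\cap F_0$ and $P_2=E_2\cap F_0$, and resolve $P_1$ explicitly via the substitutions $x=x_1,\,y=x_1y_1$ and their successors. Each blow-up hinges on a characteristic-$2$ cancellation in the numerator $h-gy_1$ (first the term $bx_1^5y_1^3$ drops out, later an identity of the form $B+B=0$ appears), and it is this cancellation that forces the pole orders to be as claimed. I expect: blowing up $P_1$ gives $E_3$ with pole order $3$, integral, with the singularity moving to $E_1\cap E_3$; blowing up $E_1\cap E_3$ gives $E_5$ with pole order $2$, whose restricted field $-(1+w^2)\frac{\partial}{\partial s}$ is transverse, so $E_5$ is not integral, and the singularity moves to the single double point $w=1$ (double since $1+w^2=(1+w)^2$); blowing up that point gives $E_7$ with pole order $1$, integral, along which the field restricts to $(\theta^3+a\theta+b)\frac{\partial}{\partial\theta}$.

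The hard part will be the last step. The cubic $\theta^3+a\theta+b$ is exactly the one of Subsection \ref{LIST}(7) whose roots index the automorphism group $({\bf Z}/2{\bf Z})^3$, and its three roots $\theta_0$ produce three new isolated singularities on $E_7$, each an ordinary node of the field. Resolving these is delicate: in the blow-up of such a node the two leading terms of the relevant numerator coincide and cancel in characteristic $2$, which absorbs the residual pole; consequently each of the three ``leg'' curves acquires pole order $0$ (hence is absent from $(D)$) and is non-integral, while the surviving transverse coefficient $\theta_0^2+a$ is nonzero (it cannot vanish, since otherwise $b=\theta_0(\theta_0^2+a)=0$, contradicting $b\neq0$), so no further singularity remains and the resolution terminates. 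Applying the involution to $P_2$ produces $E_4,E_6,E_8$ and three more legs, and collecting the pole orders gives $(D)=-(2F_0+2E_1+2E_2+3E_3+3E_4+2E_5+2E_6+E_7+E_8)$ with integral locus exactly $E_1,E_2,E_3,E_4,E_7,E_8$, the thick curves of Figure \ref{D4-2}; this establishes (i) and (ii). As a consistency check, the twelve blow-ups give $c_2(Y)=16$, which matches (\ref{euler}) with $(D)^2=-12$ and $K_Y\cdot(D)=-4$.
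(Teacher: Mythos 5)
Your proposal is correct and follows essentially the same route as the paper, which simply asserts the lemma as the outcome of a direct calculation through the twelve blow-ups; your local computations (pole orders $2,2,2$ on $Q$, then $3,2,1,0$ along the successive exceptional curves, the characteristic-$2$ cancellations at each stage, the cubic $\theta^3+a\theta+b$ governing the three legs over $E_7$ and $E_8$, and the tangency tests for integrality) reproduce exactly the data recorded in Figure \ref{D4-2} and in parts (i) and (ii). The consistency checks you add (the count of twelve blow-ups against $c_2(Y)=16$ via (\ref{euler}), and $\theta_0^2+a\neq 0$ from $b\neq 0$) match the paper's own bookkeeping in Lemma \ref{2D4-canonical}.
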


\begin{lemma}\label{2D4-canonical}
{\rm (i)} $(D)^2 = -12$. 

{\rm (ii)} The canonical divisor $K_Y$ of $Y$ is given by
$K_Y = -(2F_0+E_1+E_2+2E_3+2E_4+2E_5+2E_6+E_7+E_8).$

{\rm (iii)} $K_Y\cdot (D) = -4.$
\end{lemma}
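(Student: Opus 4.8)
The three assertions are intersection numbers on the rational surface $Y$, which is the explicit iterated blow-up of $Q={\bf P}^1\times{\bf P}^1$ recorded in Figure~\ref{D4-2}. My plan is to first write down the full intersection matrix of the curves in that figure straight from the blow-up tower, and then obtain (i), (ii) and (iii) as bilinear expressions in this matrix together with the divisor $(D)$ already determined in Lemma~\ref{2D4-integral}. The self-intersection of each proper transform is its value on $Q$ lowered by one for every center blown up on it: the three curves $E_1,E_2,F_0$ coming from $Q$ begin at self-intersection $0$, while each exceptional curve enters as a $(-1)$-curve and drops further when a subsequent center lies on it. The adjacencies are read off Figure~\ref{D4-2}; assembling this matrix correctly is the one genuinely laborious part of the argument.

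For part (ii) I would propagate the canonical class through the tower. On $Q$ there is the effective anticanonical representative $-K_Q\sim E_1+E_2+2F_0$, since $E_1,E_2$ are members of one ruling (class $\mathfrak f$) and $F_0$ a member of the other (class $\mathfrak g$), so that $E_1+E_2+2F_0\sim 2\mathfrak f+2\mathfrak g=-K_Q$. Each blow-up contributes $K\mapsto\sigma^{*}K+\mathcal E$, and re-expressing the resulting total transforms through the proper transforms of the named curves gives a specific canonical divisor to be matched with the claimed $-(2F_0+E_1+E_2+2E_3+2E_4+2E_5+2E_6+E_7+E_8)$. The most economical way to confirm the candidate is adjunction: every curve in the figure is smooth rational, so $K_Y\cdot C=-2-C^{2}$ must hold for each, and since these curves span $\Num(Y)\otimes{\bf Q}$ this pins down $K_Y$ uniquely once the intersection matrix is available.

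Parts (i) and (iii) then reduce to substituting $(D)=-(2F_0+2E_1+2E_2+3E_3+3E_4+2E_5+2E_6+E_7+E_8)$ from Lemma~\ref{2D4-integral} and the $K_Y$ of part (ii) into the intersection form and expanding. The expected obstacle is purely the bookkeeping: keeping every proper-transform self-intersection and incidence mutually consistent. I would guard against slips with two global identities. Noether's formula $K_Y^{2}+c_2(Y)=12\chi(\mathcal O_Y)=12$, combined with $c_2(Y)=c_2(Q)+b$ where $b$ is the number of blow-ups in the construction, forces a definite value of $K_Y^{2}$ that I would recompute directly from the divisor in (ii); and the Euler-number formula (\ref{euler}) must read $c_2(Y)=\deg\langle D\rangle-K_Y\cdot(D)-(D)^{2}=\deg\langle D\rangle+16$, so that (i) and (iii) are consistent precisely with $c_2(Y)=16$ and $\deg\langle D\rangle=0$. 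This last equality is exactly what the following step needs in order to conclude, as in Lemma~\ref{e6nonsingular}, that $D$ is divisorial and $Y^{D}$ nonsingular.
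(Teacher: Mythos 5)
The paper offers no written proof of this lemma—it is presented as a "direct calculation" read off from the blow-up tower in Figure \ref{D4-2}—and your plan is exactly that calculation, correctly organized: the intersection matrix from the tower, $K_Y$ via $-K_Q\sim E_1+E_2+2F_0$ pushed through the blow-ups (with adjunction on the smooth rational curves as a cross-check), and then bilinear expansion for (i) and (iii). Your consistency checks via Noether's formula and formula (\ref{euler}) (which force $c_2(Y)=16$, i.e.\ twelve blow-ups, and $\deg\langle D\rangle=0$) are sound and in fact supply more verification than the paper records.
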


\noindent
Now take the quotient $Y^D$ of $Y$ by $D$.
By using the same argument as in the proof of Lemma \ref{e6non-singular}, $D$ is divisorial and $Y^D$ is non-singular.
By Proposition \ref{insep}, we have the following Figure \ref{D4-3}.

\begin{figure}[!htb]
 \begin{center}
  \includegraphics[width=150mm]{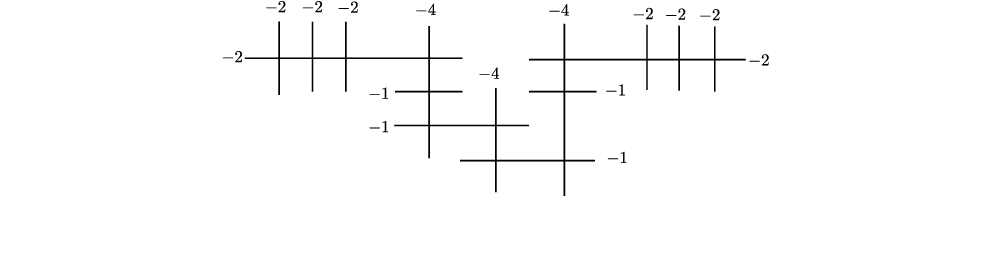}
 \end{center}
 \caption{}
 \label{D4-3}
\end{figure}

Let $X_{a,b}$ be the surface obtained by contracting the four exceptional curves which 
contains 11 $(-2)$-curves whose dual graph is given by Figure \ref{2D4Dynkin}.
Note that any maximal parabolic subdiagram of this diagram is of type $\tilde{D}_8$ or $\tilde{D}_4\oplus \tilde{D}_4$.
The surface $X_{a,b}$ contains a quasi-elliptic fibration $p_1$ with singular fibers of type
$(2\I_0^*, 2\I_0^*)$ induced from the first projection from $Q$ to ${\bf P}^1$ and nine genus one fibrations with a singular fiber of type $(\I_4^*)$.
These nine genus one fibrations are elliptic by comparing to the conductrix given in Ekedahl and Shepherd-Barron \cite[Theorem 2.2, Theorem 3.1]{ES} (see Tables \ref{ExtremalFibrationsConductrix} and \ref{QuasiellipticFibrationsConductrix} in the Section \ref{possibledualgraph}).
%

\begin{theorem}\label{D4C-main}
The surfaces $\{X_{a,b}\}$ form a $2$-dimensional family of classical Enriques surfaces with the dual graph
given in Figure {\rm \ref{2D4Dynkin}}.  It contains an at least $1$-dimensional non-isotrivial family.
\end{theorem}
\begin{proof}
By using Lemmas \ref{2D4-integral} and \ref{2D4-canonical} and the same arguments as in the proofs of Theorems \ref{main} and \ref{main2}, 
$X_{a,b}$ is an Enriques surface with the dual graph given in Figure \ref{2D4Dynkin}.

By equation (\ref{equation0}) in Subsection \ref{ExampleD4D4}, the surface $X_{a,b}$ is the quasi-elliptic surface given affinely  by the equation
$$
u^2 + Sv^4 + a^2S^3v^2 + b^2S^4v + S^3 + S^7= 0
$$
By Queen \cite[Theorem 2]{Q1}, its Jacobian is the quasi-elliptic surface given by
$$
u^2 + Sv^4+a^2S^3v^2 + b^2S^4v = 0
$$
Now we change coordinates
$$
   Y = {u\over bS^2v^2},\quad X = {1\over v} + {a^2\over b^2S},\quad T = {1\over S}
$$
which yields
$$
Y^2 = X^3 + (a^4/b^4)T^2X + (1/b^2)T^3
$$
Since these Jacobian quasi-elliptic surfaces form a $1$-dimensional non-isotrivial family by Ito \cite{Ito}, the family $\{X_{a,b}\}$ contains an at least $1$-dimensional non-isotrivial family.
\end{proof}

\begin{theorem}\label{D4C-main2}
The automorphism group ${\rm Aut}(X_{a,b})$ is isomorphic to
$({\bf Z}/2{\bf Z})^3$.  Moreover, ${\rm Aut}_{nt}(X_{a,b}) \cong ({\bf Z}/2{\bf Z})^2$.
\end{theorem}
\begin{proof}
We use the equation of $X_{a,b}$ given in Theorem \ref{D4C-main}.
We set 
$x = v, ~y = u,~ t = S$
and we replace $a^2$ (resp. $b^2$) by $a$ (resp. $b$) for the sake of simplicity.
Then, the equation becomes 
\begin{equation}\label{2D4equation}
y^2 + tx^4 + at^3x^2 + bt^4x + t^3 + t^7= 0. 
\end{equation}
This equation gives a normal affine
surface. Set
$$
A = k[t, x, y]/(y^2 + tx^4 + at^3x^2 + bt^4x + t^3 + t^7).
$$
Our quasi-elliptic surface $\varphi : X_{a,b} \longrightarrow {\bf P}^1$ has
two double fibers of type ${\rm I}_0^*$ over the points defined by $t = 0$ (resp. $t = \infty$). 
First, we consider an automorphism $\tau$ of $X_{a,b}$ defined by
$$
\tau: ~t \mapsto 1/t,\quad x \mapsto x/t^2, \quad y \mapsto y/t^5.
$$
This automorphism is of order 2 and exchanges the two double fibers.
Let $\sigma$ be an automorphism of
our Enriques surface. Then $\sigma$ either preserves the double fibers or exchanges them.
If $\sigma$ exchanges the double fibers, then we consider $\tau \circ \sigma$.
This preserves the double fibers. Therefore, we assume that $\sigma$ preserves
the double fibers.
Since $\sigma$ preserves the diagram of $(-2)$-curves, $\sigma$ preserves
the curve $C$ of cusps and the double fiber $2F_{\infty}$ over $t=\infty$.
Therefore,  $\sigma$ has the form in (\ref{automorphism}). Moreover,
by our assumption, $\sigma$ preserves the double fiber over the point defined by $t = 0$.
Therefore, we may assume $\sigma^* (t) = c_1t$.
 Using these data, 
we have an identity
$$
\begin{array}{l}
e_1(t,x)^2(tx^4 + at^3x^2 + bt^4x + t^3 + t^7) +e_2(t,x)^2 \\
= c_1 t (d_1(t)x + d_2(t))^4 + a(c_1 t)^3(d_1(t)x + d_2(t))^2\\
\quad + b(c_1 t)^4(d_1(t)x + d_2(t)) + (c_1 t)^3 + (c_1 t)^7.
\end{array}
$$
Using Lemma~\ref{trivial} and taking the coefficients of $x$, we have 
$be_1(t,x)^2t^4 + bc_1^4t^4d_1(t) = 0$. Therefore, we have 
$e_1(t,x)^2 + c_1^4d_1(t) = 0$
and $e_1(t,x)$ is a polynomial of $t$, i.e. we can put $e_1(t,x) = e_1(t)$, and
$d_1(t) = e_1(t)^2/c_1^4$. Taking the coefficients of $t$, we have
$$
\begin{array}{l}
e_1(t)^2x^4 +ae_1(t)^2t^2x^2 +  e_1(t)^2t^2 +  e_1(t)^2t^6 
+  c_1(d_1(t)x + d_2(t))^4  \\
+ ac_1^3t^2(d_1(t)x + d_2(t))^2 +bc_1^4t^4d_2(t)_{odd}/t + c_1^3t^2 +  c_1^7t^6= 0.
\end{array}
$$
Here, $d_2(t)_{odd}$ collects the odd terms of $d_2(t)$.
Considering the coefficients of $x^4$ of this equation, we have 
$e_1(t)^2 = c_1d_1(t)^4=e_1(t)^8/c_1^{15}$. Since we have $e_1(t)\not\equiv 0$, we have
$e_1(t)^6 = c_1^{15}$. Therefore, $e_1(t)$ is a constant and we set $e_1(t) =e_1\in k$. Then,
$e_1^6 = c_1^{15}$. Considering the coefficients of $x^2$, we have 
$at^2e_1^2 = ac_1^3t^2 d_1(t)^2  =ac_1^3t^2 (e_1^2/c_1^4)^2= at^2e_1^4/c_1^5$. Therefore, $e_1^2 = c_1^5$ and $d_1(t) = c_1$.  The equation becomes 
$$
e_1^2t^2 + e_1^2t^6 + c_1d_2(t)^4 + ac_1^3t^2d_2(t)^2 + bc_1^4t^4d_2(t)_{odd}/t 
+c_1^3t^2 +c_1^7t^6 = 0.
$$
If $\deg\ d_2(t) \geq 2$, then we cannot kill the highest term of $c_1d_2(t)^4$ in the equation.

Therefore, we can put $d_2(t) = b_0 + b_1t$, and we have equations
$$
e_1^2 = c_1^7,~c_1b_1^4  + ac_1^3b_1^2 +bc_1^4b_1 = 0 ,
e_1^2 + ac_1^3b_0^2 + c_1^3 = 0 , c_1b_0^4 = 0.
$$
Solving these equations with $e_1^2 = c_1^5$, we have $b_0 = 0$, $c_1 = e_1 = d_1 = 1$, and 
$b_1$ is either 0 or a root of the equation $z^3 + az  + b = 0$. Putting this data into the
original equation, we have $e_2(t, x) = 0$. Hence, we have 4 automorphisms, 
which are the identity and
$$\sigma_{\alpha}(t) = t, \quad \sigma_{\alpha}(x) = x + \alpha t,\quad \sigma_{\alpha}(y) = y,$$
where $\alpha$ is a root of the equation $z^3 + az + b= 0$.
By direct calculation we can prove that 
the involution $\tau$ and these automorphisms commute with each other. We now conclude ${\rm Aut}(X) \cong ({\bf Z}/2{\bf Z})^3$.

Obviously $\tau$ is not numerically trivial.  We show that any involution $\sigma$ preserving each double fiber of type $\I_0^*$ is numerically trivial.  Let $F$ be a double fiber of type $\I_0^*$ and let $E$ be the component with multiplicity 2 of $F$.
Then $\sigma$ preserves $E$ and a simple component $C$ of $F$ meeting with the special 2-section of the fibration, and hence it preserves one more simple component
$C'$ of $F$.  This implies that $\sigma$ fixes two points on $E$ which are intersection points of $E$ with
$C$ and $C'$.  Therefore, $\sigma$ fixes $E$ pointwise and hence $\sigma$ preserves all components of $F$.
Thus $\sigma$ is numerically trivial.
\end{proof}

\begin{remark}
The group ${\rm Aut}_{nt}(X_{a,b}) \cong ({\bf Z}/2{\bf Z})^2$ is the first example of a non-cyclic group of numerically trivial automorphisms of a classical Enriques surface.
\end{remark}

\section{Appendix}\label{appendix}

\subsection{Genus one fibrations}\label{AppendixGenus1fibration}

We summarize genus one fibrations on each of the Enriques surfaces in Theorems \ref{mainSupersingular} \rm{(B)} and \ref{mainClassical} \rm{(B)}.
The list holds not only for the above examples but also for any Enriques surface with the same dual graph of $(-2)$-curves.
We indicate that it is either elliptic or quasi-elliptic 
after the type of singular fibers.    

\begin{itemize}
\item Type $\tilde{E}_8$: \ $(2\II^*)$ (quasi-elliptic);
\item Type $\tilde{E}_7 + \tilde{A}_1^{(1)}$
 supersingular : $(2\III^*, \III)$ (quasi-elliptic),\ \ $(\II^*)$ (quasi-elliptic);\\
 classical: $(2\III^*, \III)$ (quasi-elliptic),\ \ $(\II^*)$ (quasi-elliptic);
\item Type $\tilde{E}_7 + \tilde{A}_1^{(2)}$ 
: $(2\III^*, 2\III)$ (quasi-elliptic),\ \ $(\II^*)$ (quasi-elliptic);
\item Type $\tilde{E}_6+\tilde{A}_2$, \\ 
supersingular: $(2\IV^*, \IV)$ (elliptic),\ \ $(\III^*, 2\III)$ (quasi-elliptic);\\
classical : $(2\IV^*, \I_3, \I_1)$ (elliptic),\ \ $(\III^*, 2\III)$ (quasi-elliptic);
\item Type $\tilde{D}_8$: \\
 supersingular: $(2\I_4^*)$ (quasi-elliptic),\ \ $(2\II^*)$ (elliptic),\ \ $(\II^*)$ (elliptic); \\
 classical: $(2\I_4^*)$ (quasi-elliptic),\ \ $(2\II^*, \I_1)$ (elliptic),\ \ $(\II^*,\I_1)$ (elliptic);
\item Type $\tilde{D}_4 + \tilde{D}_4$: \ $(2\I_0^*, 2\I_0^*)$ (quasi-elliptic),\ \ $(\I_4^*)$ (elliptic),\ \ $(2\I_4^*)$ (elliptic); 
\item Type $\VII$: \ $(\I_9, \I_1, \I_1, \I_1)$ (elliptic),\quad $(\I_8, 2\III)$ (elliptic),\quad $(\I_5, \I_5, \I_1, \I_1)$ (elliptic),\quad
$(\I_6, 2\IV, \I_2)$ (elliptic);
\item Type $\VIII$: \ $(2\I_1^*, \I_4)$ (elliptic),\ \ $(\I_2^*, 2\III, 2\III)$ (quasi-elliptic),\ \ $(\IV^*, \I_3, \I_1)$ (elliptic). 
\end{itemize}

\subsection{List of examples of equations and automorphisms}\label{LIST}

In the following cases, we calculated equations of Enriques surfaces to determine their automorphism
groups: type $\tilde{E}_6 + \tilde{A}_2$, supersingular; type $\tilde{E}_8$, supersingular and classical;
type $\tilde{D}_8$, supersingular and classical; type $\tilde{D}_4+\tilde{D}_4$. 
In this appendix, we give equations for the remaining examples of Enriques surfaces in Theorems \ref{mainSupersingular} (B), \ref{mainClassical}, (B).  

\medskip
\noindent
(1) Enriques surfaces of type $\tilde{E}_6 + \tilde{A}_2$, Classical case:
$$y^2 + c^2txy + \beta c^3t^2y = tx^4 + c^2t^3x^2 + (c^3t^4 + c^5\alpha t^3)x + t^7 + t^3,$$
\noindent
where
$c = \frac{1}{a + \sqrt[4]{a^3}}\quad (a \in k \setminus \{0,1\})$, 
$\alpha$ is a root of $z^8 + z^6 + z^5 + a^2 z^4 + a^4z^3 + a^8z^2 + a^{16} = 0$, and 
$\beta = \frac{\alpha^2 + a^4}{\alpha}$.


\medskip
\noindent
(2) Enriques surfaces of type ${\rm VII}$: 
$$y^2 + t(t+1)(t+a^2)(t+b^2)xy + \{(ab+1)t+ab\}(t+1)(t+a^2)(t+b^2)y = $$
$$tx^4 +\{(ab+1)t + ab\}(t+1)(t+a^2)(t+b^2)x^3 + \{t^2 + (t+1)(t+a^2)(t+b^2)\}(t+1)(t+a^2)(t+b^2)x^2$$
$$+ \{(ab+1)t+ab\}t(t+1)(t+a^2)(t+b^2)x + t^3 +t^3(t+1)(t+a^2)(t+b^2)$$
$$+ t(t+1)^2(t+a^2)^2(t+b^2)^2 + t(t+1)^3(t+a^2)^3(t+b^2)^3,$$
where $a, b\in k, \ a+b=ab, \ a^3\not= 1$.

\medskip
\noindent
(3) Enriques surfaces of type ${\rm VIII}$:
$$y^2 = tx^4 + at^2x^3 + at^3(t + 1)^2x + t^7 + t^3 \quad (a \in k^*).$$


\medskip
\noindent
(4) Enriques surfaces of type $\tilde{E}_7 + \tilde{A}_1^{(1)}$, Classical case: 
$$y^2 + at^2y = tx^4 + bt^3x + t^7 + t^3, \quad \quad (a, b\in k^*).$$


\medskip
\noindent
(5) Enriques surfaces of type $\tilde{E}_7 + \tilde{A}_1^{(2)}$:
$$y^2 +  at^2y = tx^4 + t^7 + t^3, \quad (a\in k^*).$$


\end{document}